\newcommand{\mycomment}[1]{}
\newcommand{\calB}{\mathcal{B}}
\title{The algebraic structures of social organizations: \\ the operad of cooperative games}
\author{Dylan Laplace Mermoud \quad and \quad Victor Roca i Lucio}
\thanks{This research was performed while the second author, in alphabetical order, benefited from the support of the ANR project SHoCos ANR-22-CE40-0008.}
\address{Dylan Laplace Mermoud, UMA, ENSTA, Institut Polytechnique de Paris, Palaiseau, France, CEDRIC, Conservatoire National des Arts et M{\'e}tiers, 75003 Paris, France}
\email{\href{mailto:dylan.laplace.mermoud@protonmail.com}{dylan.laplace.mermoud@protonmail.com}}
\address{Victor Roca i Lucio, Université Paris Cité and Sorbonne Université, CNRS, IMJ-PRG, F-75013 Paris, France}
\email{\href{mailto:rocalucio@imj-prg.fr}{rocalucio@imj-prg.fr}}
\date{\today}
\subjclass[2020]{18M60, 18M70, 18M80, 91A12, 91A8, 52B05, 28E10, 05E99}
\keywords{Cooperative game theory, algebraic operads, fuzzy measures, capacities, evidence theory, polymatroids, generalized permutahedra}
\newtheorem*{corollaryintro}{Corollary}
\newtheorem{theoremintro}{Theorem}
\begin{document}

\theoremstyle{plain}
\newtheorem{theorem}{Theorem}
\newtheorem*{theorem*}{Theorem}
\newtheorem{lemma}{Lemma}
\newtheorem{proposition}{Proposition}
\newtheorem{assumption}{Assumption}
\newtheorem{corollary}{Corollary}

\theoremstyle{definition}
\newtheorem{definition}{Definition}
\newtheorem{hypothesis}{Hypothesis}

\theoremstyle{remark}
\newtheorem{remark}{\sc Remark}
\newtheorem{example}{\sc Example}
\newtheorem*{notation}{\sc Notation}


\newcommand{\qi}{\xrightarrow{ \,\smash{\raisebox{-0.65ex}{\ensuremath{\scriptstyle\sim}}}\,}}
\newcommand{\lqi}{\xleftarrow{ \,\smash{\raisebox{-0.65ex}{\ensuremath{\scriptstyle\sim}}}\,}}

\newcommand{\draftnote}[1]{\marginpar{\raggedright\textsf{\hspace{0pt} \tiny #1}}}
\newcommand{\ac}{{\scriptstyle \text{\rm !`}}}

\newcommand{\Ch}{\categ{Ch}}
\newcommand{\Catesmall}{\categ{Cat}_{\categ E, \mathrm{small}}}
\newcommand{\Operade}{\Operad_{\categ E}}
\newcommand{\Operadecprime}{\Operad_{\categ E}}
\newcommand{\Operadesmall}{\Operad_{\categ E, \mathrm{small}}}
\newcommand{\eII}{\mathcal{I}}
\newcommand{\ecateg}[1]{\mathcal{#1}}
\newcommand{\cmonlax}{\categ{CMon}_\lax}
\newcommand{\cmonoplax}{\categ{CMon}_\oplax}
\newcommand{\cmonstrong}{\categ{CMon}_\strong}
\newcommand{\cmonstrict}{\categ{CMon}_\strict}
\newcommand{\cat}{\mathrm{cat}}
\newcommand{\lax}{\mathrm{lax}}
\newcommand{\oplax}{\mathrm{oplax}}
\newcommand{\strong}{\mathrm{strong}}
\newcommand{\strict}{\mathrm{strict}}
\newcommand{\pl}{\mathrm{pl}}
\newcommand{\gr}{\mathrm{gr}}

\newcommand{\Cats}{\mathsf{Cats}}
\newcommand{\Functors}{\mathsf{Functors}}

\newcommand{\Ob}{\mathrm{Ob}}
\newcommand{\tr}{\mathrm{tr}}
\newcommand{\catch}{\mathsf{Ch}}
\newcommand{\categ}[1]{\mathsf{#1}}
\newcommand{\set}[1]{\mathrm{#1}}
\newcommand{\catoperad}[1]{\mathsf{#1}}
\newcommand{\operad}[1]{\mathcal{#1}}
\newcommand{\algebra}[1]{\mathrm{#1}}
\newcommand{\coalgebra}[1]{\mathrm{#1}}
\newcommand{\cooperad}[1]{\mathcal{#1}}
\newcommand{\ocooperad}[1]{\overline{\mathcal{#1}}}
\newcommand{\catofmod}[1]{{#1}\mathrm{-}\mathsf{mod}}
\newcommand{\catofcog}[1]{#1\mathrm{-}\mathsf{cog}}
\newcommand{\catcog}[1]{\Cog\left(#1\right)}

\newcommand{\catdgmod}[1]{\categ{dg}~#1\text{-}\categ{mod}}
\newcommand{\catpdgmod}[1]{\categ{pdg}~#1\text{-}\categ{mod}}
\newcommand{\catgrmod}[1]{\categ{gr}~#1\text{-}\categ{mod}}

\newcommand{\catdgalg}[1]{\categ{dg}~#1\text{-}\categ{alg}}
\newcommand{\catpdgalg}[1]{\categ{pdg}~#1\text{-}\categ{alg}}
\newcommand{\catgralg}[1]{\categ{gr}~#1\text{-}\categ{alg}}
\newcommand{\catcurvalg}[1]{\categ{curv}~#1\text{-}\categ{alg}}

\newcommand{\catdgcompalg}[1]{\categ{dg}~#1\text{-}\categ{alg}^{\mathsf{comp}}}
\newcommand{\catpdgcompalg}[1]{\categ{pdg}~#1\text{-}\categ{alg}^{\mathsf{comp}}}
\newcommand{\catgrcompalg}[1]{\categ{gr}~#1\text{-}\categ{alg}^{\mathsf{comp}}}
\newcommand{\catcurvcompalg}[1]{\categ{curv}~#1\text{-}\categ{alg}^{\mathsf{comp}}}

\newcommand{\catdgcog}[1]{\categ{dg}~#1\text{-}\categ{cog}}
\newcommand{\catpdgcog}[1]{\categ{pdg}~#1\text{-}\categ{cog}}
\newcommand{\catgrcog}[1]{\categ{gr}~#1\text{-}\categ{cog}}
\newcommand{\catcurvcog}[1]{\categ{curv}~#1\text{-}\categ{cog}}

\newcommand{\dgoperads}{\categ{dg}~\categ{Operads}}
\newcommand{\pdgoperads}{\categ{pdg}~\categ{Operads}}
\newcommand{\groperads}{\categ{gr}~\categ{Operads}}
\newcommand{\curvcooperads}{\categ{curv}~\categ{Cooperads}}
\newcommand{\grcooperads}{\categ{gr}~\categ{Cooperads}}

\newcommand{\pdgcooperads}{\categ{pdg}~\categ{Cooperads}}
\newcommand{\dgcooperads}{\categ{dg}~\categ{Cooperads}}

\newcommand{\catalg}[1]{\Alg\left(#1\right)}

\newcommand{\catofcolcomod}[1]{\mathsf{Col}\mathrm{-}{#1}\mathrm{-}\mathsf{comod}}
\newcommand{\catofcoalgebra}[1]{{#1}\mathrm{-}\mathsf{cog}}
\newcommand{\catofalg}[1]{\operad{#1}\mathrm{-}\mathsf{alg}}
\newcommand{\catofalgebra}[1]{{#1}\mathrm{-}\mathsf{alg}}
\newcommand{\mbs}{\mathsf{S}}
\newcommand{\catocol}[1]{\mathsf{O}_{\set{#1}}}
\newcommand{\catoftrees}{\mathsf{Trees}}
\newcommand{\cattcol}[1]{\catoftrees_{\set{#1}}}
\newcommand{\catcorcol}[1]{\mathsf{Corol}_{\set{#1}}}
\newcommand{\Einfty}{\mathcal{E}_{\infty}}
\newcommand{\nuEinfty}{\mathcal{nuE}_{\infty}}
\newcommand{\Fun}[3]{\mathrm{Fun}^{#1}\left(#2,#3\right)}
\newcommand{\III}{\operad{I}}
\newcommand{\treeoperad}{\mathbb{T}}
\newcommand{\treemodule}{\mathbb{T}}
\newcommand{\core}{\mathrm{Core}}
\newcommand{\forget}{\mathrm{U}}
\newcommand{\treemonad}{\mathbb{O}}
\newcommand{\cogcomonad}[1]{\mathbb{L}^{#1}}
\newcommand{\cofreecog}[1]{\mathrm{L}^{#1}}

\newcommand{\barfunctor}[1]{\mathrm{B}_{#1}}
\newcommand{\baradjoint}[1]{\mathrm{B}^\dag_{#1}}
\newcommand{\cobarfunctor}[1]{\mathrm{C}_{#1}}
\newcommand{\cobaradjoint}[1]{\mathrm{C}^\dag_{#1}}
\newcommand{\Operad}{\mathsf{Operad}}
\newcommand{\coOperad}{\mathsf{coOperad}}

\newcommand{\Aut}[1]{\mathrm{Aut}(#1)}

\newcommand{\verte}[1]{\mathrm{vert}(#1)}
\newcommand{\edge}[1]{\mathrm{edge}(#1)}
\newcommand{\leaves}[1]{\mathrm{leaves}(#1)}
\newcommand{\inner}[1]{\mathrm{inner}(#1)}
\newcommand{\inp}[1]{\mathrm{input}(#1)}

\newcommand{\field}{\mathbb{K}}
\newcommand{\mbk}{\mathbb{K}}
\newcommand{\mbn}{\mathbb{N}}

\newcommand{\id}{\mathrm{Id}}
\newcommand{\ii}{\mathrm{id}}
\newcommand{\unit}{\mathds{1}}

\newcommand{\Lin}{Lin}

\newcommand{\BijC}{\mathsf{Bij}_{C}}

\newcommand{\kk}{\Bbbk}
\newcommand{\PP}{\mathcal{P}}
\newcommand{\C}{\mathcal{C}}
\newcommand{\Sy}{\mathbb{S}}
\newcommand{\Tree}{\mathsf{Tree}}
\newcommand{\treemod}{\mathbb{T}}
\newcommand{\Dend}{\Omega}
\newcommand{\aDend}{\Omega^{\mathsf{act}}}
\newcommand{\cDend}{\Omega^{\mathsf{core}}}
\newcommand{\cDendpart}{\cDend_{\mathsf{part}}}

\newcommand{\build}{\mathrm{Build}}
\newcommand{\col}{\mathrm{col}}

\newcommand{\HOM}{\mathrm{HOM}}
\newcommand{\Hom}[3]{\mathrm{hom}_{#1}\left(#2 , #3 \right)}
\newcommand{\ov}{\overline}
\newcommand{\otimeshadamard}{\otimes_{\mathbb{H}}}

\newcommand{\I}{\mathcal{I}}
\newcommand{\Aa}{\mathcal{A}}
\newcommand{\BB}{\mathcal{B}}
\newcommand{\CC}{\mathcal{C}}
\newcommand{\DD}{\mathcal{D}}
\newcommand{\EE}{\mathcal{E}}
\newcommand{\FF}{\mathcal{F}}
\newcommand{\II}{\mathbb{1}}
\newcommand{\RR}{\mathcal{R}}
\newcommand{\UU}{\mathcal{U}}
\newcommand{\VV}{\mathcal{V}}
\newcommand{\WW}{\mathcal{W}}
\newcommand{\AAA}{\mathscr{A}}
\newcommand{\BBB}{\mathscr{B}}
\newcommand{\CCC}{\mathscr{C}}
\newcommand{\DDD}{\mathscr{D}}
\newcommand{\EEE}{\mathscr{E}}
\newcommand{\FFF}{\mathscr{F}}

\newcommand{\PPP}{\mathscr{P}}
\newcommand{\QQQ}{\mathscr{Q}}

\newcommand{\QQ}{\mathcal{Q}}

\newcommand{\KKK}{\mathscr{K}}
\newcommand{\KK}{\mathcal{K}}

\newcommand{\ra}{\rightarrow}

\newcommand{\Ai}{\mathcal{A}_{\infty}}
\newcommand{\uAi}{u\mathcal{A}_{\infty}}
\newcommand{\uEinfty}{u\mathcal{E}_{\infty}}
\newcommand{\uAW}{u\mathcal{AW}}

\newcommand{\uAlg}{\mathsf{Alg}}
\newcommand{\nuAlg}{\mathsf{nuAlg}}
\newcommand{\cAlg}{\mathsf{cAlg}}

\newcommand{\ucAlg}{\mathsf{ucAlg}}
\newcommand{\Cog}{\mathsf{Cog}}
\newcommand{\nuCog}{\mathsf{nuCog}}
\newcommand{\uAWcog}{u\mathcal{AW}-\mathsf{cog}}

\newcommand{\uCog}{\mathsf{uCog}}
\newcommand{\cCog}{\mathsf{cCog}}
\newcommand{\ucCog}{\mathsf{ucCog}}
\newcommand{\cNilCog}{\mathsf{cNilCog}}
\newcommand{\ucNilCog}{\mathsf{ucNilCog}}
\newcommand{\NilCog}{\mathsf{NilCog}}

\newcommand{\Cocom}{\mathsf{Cocom}}
\newcommand{\uCocom}{\mathsf{uCocom}}
\newcommand{\NilCocom}{\mathsf{NilCocom}}
\newcommand{\uNilCocom}{\mathsf{uNilCocom}}
\newcommand{\Liealg}{\mathsf{Lie}-\mathsf{alg}}
\newcommand{\cLiealg}{\mathsf{cLie}-\mathsf{alg}}
\newcommand{\Alg}{\mathsf{Alg}}
\newcommand{\Linfty}{\mathcal{L}_{\infty}}
\newcommand{\CMC}{\mathfrak{CMC}}
\newcommand{\Tfree}{\mathbb{T}}

\newcommand{\Hinich}{\mathsf{Hinich} -\mathsf{cog}}

\newcommand{\Ccomod}{\mathscr C -\mathsf{comod}}
\newcommand{\Pmod}{\mathscr P -\mathsf{mod}}

\newcommand{\cCoop}{\mathsf{cCoop}}

\newcommand{\Set}{\mathsf{Set}}
\newcommand{\sSet}{\mathsf{sSet}}
\newcommand{\dgMod}{\mathsf{dgMod}}
\newcommand{\gMod}{\mathsf{gMod}}
\newcommand{\catOrd}{\mathsf{Ord}}
\newcommand{\catBij}{\mathsf{Bij}}
\newcommand{\catSmod}{\mbs\mathsf{mod}}
\newcommand{\EEtw}{\mathcal{E}\text{-}\mathsf{Tw}}
\newcommand{\OpBim}{\mathsf{Op}\text{-}\mathsf{Bim}}

\newcommand{\Palg}{\mathcal{P}-\mathsf{alg}}
\newcommand{\Qalg}{\mathcal{Q}-\mathsf{alg}}
\newcommand{\Pcog}{\mathcal{P}-\mathsf{cog}}
\newcommand{\Qcog}{\mathcal{Q}-\mathsf{cog}}
\newcommand{\Ccog}{\mathcal{C}-\mathsf{cog}}
\newcommand{\Dcog}{\mathcal{D}-\mathsf{cog}}
\newcommand{\uCoCog}{\mathsf{uCoCog}}

\newcommand{\Artinalg}{\mathsf{Artin}-\mathsf{alg}}

\newcommand{\colim}[1]{\underset{#1}{\mathrm{colim}}}
\newcommand{\Map}{\mathrm{Map}}
\newcommand{\Def}{\mathrm{Def}}
\newcommand{\Bij}{\mathrm{Bij}}
\newcommand{\op}{\mathrm{op}}

\newcommand{\undern}{\underline{n}}
\newcommand{\dginterval}{{N{[1]}}}
\newcommand{\dgsimplex}[1]{{N{[#1]}}}

\newcommand{\cofree}{ T^c}
\newcommand{\Tw}{ Tw}
\newcommand{\End}{\mathcal{E}\mathrm{nd}}
\newcommand{\catEnd}{\mathsf{End}}
\newcommand{\coEnd}{\mathrm{co}\End}
\newcommand{\Mult}{\mathrm{Mult}}
\newcommand{\coMult}{\mathrm{coMult}}

\newcommand{\Lie}{\mathcal{L}\mathr{ie}}
\newcommand{\As}{\mathcal{A}\mathrm{s}}
\newcommand{\uAs}{\mathrm{u}\As}
\newcommand{\coAs}{\mathrm{co}\As}
\newcommand{\Com}{\mathcal{C}\mathrm{om}}
\newcommand{\uCom}{\mathrm{u}\Com}
\newcommand{\Perm}{\catoperad{Perm}}
\newcommand{\uBE}{\mathrm{u}\mathcal{BE}}
\newcommand{\uBEs}{{\uBE}^{\mathrm s}}

\newcommand{\comp}{\circ}
\newcommand{\restrictionextension}{\mathrm{RE}}
\newcommand{\extension}{\mathrm{E}}
\newcommand{\extensionone}{\mathrm{E}_1}
\newcommand{\extensiontwo}{\mathrm{E}_2}
\newcommand{\restrictionone}{\mathrm{R}_1}
\newcommand{\restrictiontwo}{\mathrm{R}_2}

\newcommand{\itemt}{\item[$\triangleright$]}

\newcommand{\poubelle}[1]{}

\newcommand{\bbR}{\mathbb{R}}

\definecolor{softblue}{RGB}{60, 100, 210}
\newcommand{\Victor}[1]{\textcolor{softblue}{#1}}
\definecolor{softcherry}{RGB}{200, 60, 80} 
\definecolor{cherryblossom}{RGB}{220, 120, 140}
\newcommand{\Dylan}[1]{\begingroup\color{softcherry}#1\endgroup}

\begin{abstract}
The main goal of this paper is to settle a conceptual framework for cooperative game theory in which the notion of composition/aggregation of games is the defining structure. This is done via the mathematical theory of algebraic operads: we start by endowing the collection of all cooperative games with any number of players with an operad structure, and we show that it generalises all the previous notions of sums, products and compositions of games considered by Owen, Shapley, von Neumann and Morgenstern, and many others. Furthermore, we explicitly compute this operad in terms of generators and relations, showing that the Möbius transform map induces a canonical isomorphism between the operad of cooperative games and the operad that encodes commutative triassociative algebras. In other words, we prove that any cooperative game is a linear combination of iterated compositions of the $2$-player bargaining game and the $2$-player dictator games. We show that many interesting classes of games (simple, balanced, capacities a.k.a fuzzy measures and convex functions, totally monotone, etc) are stable under compositions, and thus form suboperads. In the convex case, this gives by the submodularity theorem a new operad structure on the family of all generalized permutahedra. Finally, we focus on how solution concepts in cooperative game theory behave under composition: we study the core of a composite and describe it in terms of the core of its components, and we give explicit formulas for the Shapley value and the Banzhaf index of a compound game. 
\end{abstract}

\maketitle

\setcounter{tocdepth}{1}

\tableofcontents

\section*{Introduction}

\vspace{2pc}

\subsection*{Cooperative game theory} Social organizations amount to groups of individuals deciding to cooperate amongst themselves; it is nevertheless clear that, in practice, cooperation does not appear amongst every possible group of individuals. A simple model of this complex human behaviour is the notion of a \textit{cooperative game}. Mathematically, a cooperative game on a set of $n$-players is the data of a function $v$ from the set of subsets of $\{1,\cdots,n\}$ to the real numbers $\mathbb{R}$ such that $v(\emptyset) = 0$. One should interpret this definition as follows: $v$ is the function that assigns to every possible coalition the \textit{payoff} that the players in it would get if they form it. For example, a simple majority election can be modeled by a \textit{simple} game where winning coalitions are sent to $1$ and losing coalition to $0$. The theory of cooperative game seeks to understand how, given these payoffs, the players of the game are going to interact, and in particular, whether there are optimal coalitions for them to form. 

\medskip

This subject has a rich history dating back to Emile Borel and to John von Neumann, see \cite{Borel1924-BORAPD, vonNewmann1, leonard2010neumann}. After a major breakthrough by von Neumann and Morgenstern in \cite{vonNeumann2}, the subject grew considerably in the 60's, particularly under the influence of John Forbes Nash and Lloyd Stowell Shapley. Cooperative games predate strategic games, which were later introduced by Nash in the 50's as modeling \textit{competitive} behaviours. Further, understanding and developing the theory of cooperative games is even mentioned as one of the main open problems in mathematics in \cite{Openproblems}, alongside other well-known conjectures.

\medskip

One of the main difficulties in the theory of cooperative games is that, \textit{a priori}, they carry very little structure. The coalition function assigns to any possible coalition a value, but these values are not related to each other. So the only way to fully describe a $n$-player game is to write down a list of $2^{n-1}$ values in $\mathbb{R}$. Therefore, as the number of players increases, the complexity increases very quickly. 

\medskip

One way to decrease the complexity of a cooperative game is to view it as a \textit{composition} of different, simpler games and analyze them individually. For example, many types of elections systems are compositions of simpler voting games. Take, for instance, the formation of a government in a parliamentary system where each representative is elected on a specific constituency, as it is the case in France or in the United Kingdom. Each of these constituencies is a simple majority voting game, and all these voting games are aggregated into a \emph{quotient game}, which is the coalitional game played in the chamber of representatives. In fact, most social situations modeled by cooperative games are composite in nature, since the behavior of a player is very often determined by the strength relations in a subgame. A good example of this is the European Council, which is a voting game made of $27$ players, and where the behavior of each of the players is decided by the subgame of their respective national elections, which determine the political orientation of the choices made by the players. In turn, as we just discussed, the elections that determine the behavior of the player like France are themselves composite games. This shows that one has to consider not only compositions but \textit{iterated compositions} of games in order to analyze some social situation. 

\medskip

As early as the publication of \emph{Theory of Games and Economic Behavior}, von Neumann and Morgenstern~\cite{vonNeumann2} devoted a whole chapter (Chapter~IX) to the question of game decomposition, thus the study of decomposition of games is as old as the study of coalitional games themselves. However, in \textit{op. cit.}, they only treat the case where the larger game into which the subgames are inserted, called the \textit{quotient game} in the literature, is additive. Additive games are the simplest class of games, where the value of a coalition is determined by the sum of the values of each of the players. 

\medskip

A (non-additive) \textit{composition of games} was first mentioned by Shapley during a lecture at Princeton during the academic year of 1953/1954 \cite{shapley1954simple} for simple games, which are games where the values associated to coalitions are either $0$ or $1$. He later devoted several papers to the study of the properties of this composition of simple games, see \cite{ShapleyCompoundIII, ShapleyCompoundI, ShapleyCompoundII, ShapleyComposition}, which also considered in \cite{birnbaum1965modules}. This composition was independently considered and also generalized to normalized non-negative games by Owen in \cite{OwenTensor}. In \cite{ShapleyCompoundII}, Shapley raised the following question, called the \emph{aggregation problem} and formulated as follows: 

\vspace{1pc}

\begin{quoting}[leftmargin=1cm, rightmargin=1cm]
An important question in the application of game theory to economics and the social sciences is the extent to which it is permissible to treat firms, committees, political parties, labor unions, etc., as though they were individual players. Behind every game model played by such aggregates, there lies another, more detailed model: a compound game of which the original is the quotient. Given any solution concept, it is legitimate to ask how well it stands up under aggregation --- or disaggregation. To what extent are the theoretical predictions sensitive to the particular level of refinement adopted in the model?
\end{quoting}

\vspace{1pc}

Although there have been multiple works (see for instance \cite{OwenMultilinear, MegiddoKernel, MegiddoNucleolus, megiddo1975tensor}) on the compositions of games, there has been no systematic algebraic treatment of them. The first goal of this paper is to lay down an algebraic framework for the composition of cooperative games that encompasses all previous notions of compositions and that allows us to treat systematically any type of aggregation problem. This will be done via the theory of (algebraic) operads. 

\subsection*{Operad theory}
Operads are algebraic objects whose main function is to encode the different types of algebraic structures that appear in mathematics. They first appeared in algebraic topology in the 60's in order to encode the natural structure of iterated loop spaces, see the work of Boardman--Vogt \cite{BoardmanVogt73} and of May \cite{May72}. In these first examples, operads were defined as collections of topological spaces with an extra structure. However, it was in the beginning of the 90's that several authors realized that one could define operads in different settings, such as sets or vector spaces, and that the theory of these operads was crucial in understanding many types of algebraic structures. In particular, the theory of \textit{algebraic operads}, which are operads in vector spaces (or more generally, in chain complexes of vector spaces) became increasingly important in homotopical algebra. Since then, this theory has been successfully applied to various areas of mathematics such as algebraic geometry, representation theory, symplectic geometry, mathematical physics or combinatorics. We refer to \cite{Renaissance} for some examples of applications of operads, to \cite{MarklStasheff} for a textbook on operads in different contexts and to \cite{LodayVallette} for a standard reference on algebraic operads. See also \cite{OperadicCalculus} for a recent account of some applications of the theory of operads. 

\medskip

Let us give the unfamiliar reader an idea of what an algebraic operad is and of what it usually does. The basis data is that of a family of vector spaces $\{\mathcal{P}(n)\}_{n \geq 0}$ for all $n \geq 0$, and the structure is given by that of a family of partial composition bilinear maps 
\[
\circ_i: \mathcal{P}(n) \times \mathcal{P}(m) \longrightarrow \mathcal{P}(n+m-1)
\]
for all $i \in [n]$. One should interpret elements in $\mu$ in $\mathcal{P}(n)$ as "abstract operations" with $n$-inputs and one output: these abstract operations are best represented as rooted trees with $n$-leaves and one root. Interpreted in this way, the partial composition maps $\{\circ_i\}_{i \in [n]}$ then correspond to inserting the operation $\nu$ in $\mathcal{P}(m)$ in the $i$-th input of the operation $\mu$ in $\mathcal{P}(n)$ and can be represented as inserting the root of the rooted tree with $m$ leaves into the $i$-th leaf of the rooted tree with $n$ leaves. It is clear that the resulting tree has $(n+m-1)$ leaves, which explains way these operations land in $\mathcal{P}(n+m-1)$. These partial composition maps are subject to some axioms, which amount to saying that "the order in which we compose operations does not matter". They usually incorporate an action of the symmetric groups in order to encode the eventual symmetries of the operations. We refer to Definition \ref{definition: unital partial operad} for the full definition. 

\medskip

Using this abstract definition, one can encode different algebraic structures. For every operad, there is a natural notion of algebra over this operad. And, for every algebraic structure (where operations have many inputs and one output), there is a corresponding operad which encodes it. For instance, an \textit{associative algebra} is a vector space $A$ together with a bilinear binary product $\mu_A: A \times A \longrightarrow A$ such that $\mu_A(\mu_A(a,b),c) = \mu_A(a, \mu_A(b,c))$ for all $a,b,c \in A$. This operation with $2$-inputs and one output can be represented as a binary tree, and the associativity relation amounts to the equality $\mu_A \circ_1 \mu_A = \mu_A \circ_2 \mu_A$. There is an operad, denoted by $\mathcal{A}ss$, which is freely generated by such a binary operation which satisfies the associativity relation and algebras over this operad are associative algebras in the classical sense. Other structures like Lie algebras, commutative algebras, Poisson algebras and so on have their corresponding algebraic operad which encodes them in the same way. See Examples \ref{example: associative operad}, \ref{example: permutative operad} or \cite[Chapter 13]{LodayVallette} for more details on this example and for other examples.

\subsection*{The operad of all cooperative games}
The main new idea of this paper is to use the theory of algebraic operads in order to encode the compositional aspect of cooperative game theory. We define an algebraic operad structure on the collection of all cooperative games in any number of players, which is given by replacing in (an appropriate way) the $i$-th player in a game by the subgame that determines its behaviour. Thus, players and subgames become interchangeable in this framework via this operad structure. 

\medskip

More concretely, let $\mathbb{G}(n)$ be the set of all possible $n$-players game, which is naturally an $\mathbb{R}$-vector space. For any $n$-player game $\Gamma_A = (A, \alpha)$ with value function $\alpha$ and any $m$-player $\Gamma_B = (B,\beta)$ game with the value function $\beta$, we define the $(n+m-1)$-player game $\Gamma_A \circ_i \Gamma_B$ on the player set $A \diamond_i B = (A \setminus \{i\}) \cup B$ (we replace the $i$-th player in $A$ by the player in $B$) whose value function, for any $S \subseteq A \diamond_i B$, is given by: 

\[
\alpha \circ_i \beta(S) \coloneqq \beta(B)\alpha(S_A) + \partial_i \alpha(S_A) \beta(S_B)~, 
\]
\vspace{0.1pc}

where $S_A = S \cap A, S_B = S \cap B$ and $\partial_i \alpha(S_A) = \alpha(S_A \cup \{i\}) - \alpha(S_A)$. Let us explain this formula from a game theoretic point of view. First, the contribution of the players in $B$ which where previously composing the $i$-th player of $\alpha$ should be proportional to what the $i$-th player could provide: this is reflected by the term $\partial_i \alpha(S_A)$, which is the \textit{derivative} at the player $i$ and reflects the marginal contribution of this player to the coalition $S_A$. This term is multiplied by the value of the coalition $S_B$ which is given by the players in $B$. This term is added to the value of the coalition $\alpha(S_A)$ which no longer contains the player $i$. Finally, the term $\beta(B)$ acts as a normalization: what matters in a cooperative game is not the absolute value of each coalition, but the relative strength of the coalitions with respect to each other. 

\begin{theoremintro}[Theorem \ref{thm: gamers operad}]
The partial composition maps $\{\circ_i\}_i$ endow the collection of all cooperative games $\mathbb{G}$ with an operad structure.
\end{theoremintro}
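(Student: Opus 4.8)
The plan is to verify, in order, that the maps $\circ_i$ are well defined, bilinear, unital, equivariant, and associative, the last being the only substantial point. First I would record that $\Gamma_A \circ_i \Gamma_B$ is a genuine cooperative game: evaluating the defining formula at $S = \emptyset$ gives $\beta(B)\alpha(\emptyset) + \partial_i\alpha(\emptyset)\beta(\emptyset) = 0$ since $\alpha(\emptyset)=\beta(\emptyset)=0$. Bilinearity over $\mathbb{R}$ is immediate, since each of the two summands $\beta(B)\alpha(S_A)$ and $\partial_i\alpha(S_A)\beta(S_B)$ is linear in $\alpha$ (the discrete derivative $\partial_i$ being linear) and in $\beta$. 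The unit is the one-player game $\mathrm{id}$ with $\mathrm{id}(\{*\}) = 1$: substituting $\beta = \mathrm{id}$ collapses the formula to $\alpha(S_A)$ when the inserted player is absent from $S$ and to $\alpha(S_A \cup \{i\})$ when it is present, which is exactly $\alpha$ read on the relabelled player set, while $\mathrm{id}\circ_1\beta = \beta$ follows from $\partial_1 \mathrm{id}(\emptyset) = 1$ and $\mathrm{id}(\emptyset) = 0$. Equivariance reduces to naturality in the finite labelled player sets: relabelling by a bijection intertwines restriction, the derivative $\partial_i$, and the two products, so the symmetric group actions are compatible with the partial compositions.

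The heart of the proof is the two associativity axioms, which I would establish by directly evaluating both sides on an arbitrary coalition $S$, decomposed into its parts $S_A$, $S_B$, $S_C$ lying in the three underlying player sets. For the sequential axiom, with $\alpha$ on $A$, $\beta$ on $B$, $\gamma$ on $C$, I compare $\alpha\circ_i(\beta\circ_j\gamma)$ with $(\alpha\circ_i\beta)\circ_{j'}\gamma$, where $j'$ is the image of $j$ inside $A\circ_i B$. Two elementary identities drive the computation. The first is a telescoping normalization: for the inner game $\delta = \beta\circ_j\gamma$ on $D = B\circ_j C$, evaluating on the full set gives $\delta(D) = \gamma(C)\beta(B\setminus\{j\}) + \partial_j\beta(B\setminus\{j\})\gamma(C) = \beta(B)\gamma(C)$, using $\partial_j\beta(B\setminus\{j\}) = \beta(B) - \beta(B\setminus\{j\})$. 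The second is that precomposition turns the derivative at the inserted player into a product of derivatives of the two games, $\partial_{j'}(\alpha\circ_i\beta)(S_A\sqcup S_B) = \partial_i\alpha(S_A)\,\partial_j\beta(S_B)$. Substituting these, both sides expand to the same three-term sum $\beta(B)\gamma(C)\alpha(S_A) + \gamma(C)\,\partial_i\alpha(S_A)\,\beta(S_B) + \partial_i\alpha(S_A)\,\partial_j\beta(S_B)\,\gamma(S_C)$.

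For the parallel axiom, inserting $\beta$ and $\gamma$ into two distinct players $i \neq k$ of $\alpha$, the same bookkeeping yields on each side the four-term sum $\gamma(C)\beta(B)\alpha(S_A) + \gamma(C)\,\partial_i\alpha(S_A)\,\beta(S_B) + \beta(B)\,\partial_k\alpha(S_A)\,\gamma(S_C) + \partial_i\partial_k\alpha(S_A)\,\beta(S_B)\,\gamma(S_C)$. The first three terms are manifestly invariant under exchanging the two insertions, and the last term is invariant precisely because the iterated discrete derivatives commute, $\partial_i\partial_k\alpha = \partial_k\partial_i\alpha$, both equalling the second mixed difference $\alpha(S\cup\{i,k\}) - \alpha(S\cup\{i\}) - \alpha(S\cup\{k\}) + \alpha(S)$; hence the two orders of composition agree. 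The main obstacle is organizational rather than conceptual: tracking the player-set relabellings such as $j \mapsto j'$ and the way the restriction $S \mapsto (S_A, S_B, S_C)$ interacts with the derivatives when a composed game is itself evaluated on a subset meeting the newly inserted block. Once the factorization $\delta(D) = \beta(B)\gamma(C)$ and the commutation $\partial_i\partial_k = \partial_k\partial_i$ are isolated, both associativity checks reduce to term-by-term matchings, completing the verification that $\mathbb{G}$ is an operad.
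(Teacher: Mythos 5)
Your proposal is correct and follows essentially the same route as the paper: both arguments verify bilinearity, the unit, equivariance, and then establish the sequential and parallel axioms by expanding both sides on an arbitrary coalition, using the two key facts that the composite evaluated on its full player set equals $\beta(B)\gamma(C)$ and that the derivative of a composite factors through the derivatives of its components (which the paper isolates as a separate lemma rather than deriving inline). Your observation that the parallel axiom ultimately rests on the commutation of iterated discrete derivatives is implicit in the paper's computation as well.
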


Associated to any operad structure defined in terms of partial composition maps, there are total composition maps, which are obtained by iterating the partial compositions on all the inputs. These total compositions give, for any $k$-player game $\Gamma_0 = (K,\mu_k)$ and any $k$-tuple of games $\Gamma_j = (M_j, \mu_{i_j})$ for $j \in K$, a composite game $\Gamma_0[\Gamma_1,\cdots,\Gamma_k]$ on the player set given by the union of the player sets $\cup_{j = 1}^k M_j$. We show that these total composition maps recover the composition of simple games defined by Shapley in \cite{ShapleyComposition} and the composition of normalized games defined by Owen in \cite{OwenTensor}. So our partial composition maps generalize all the previously considered notions of compositions in cooperative game theory. 

\medskip

Now that we know what kind of algebraic structure the composition of cooperative games is, it is natural to ask whether the operad of cooperative games admits a presentation in terms of generators and relations —like, for instance, the operad that encodes associative algebras or the operad that encodes Lie algebras does. We build, using the Möbius transform (which encodes the Harsanyi dividends of a game), an explicit isomorphism of operads between the operad of all cooperative games and the operad $\mathcal{C}om\mathcal{T}riass$, which encodes a particular type of algebraic structure called \textit{commutative triassociative algebras}, and which was first defined by Vallette in \cite{Vallettepartition}. 

\begin{theoremintro}[Theorem \ref{thm: iso comtriass et operade}]
The Möbius transform defines a canonical isomorphism of operads 
\[
\mathbb{G} \cong \mathcal{C}om\mathcal{T}riass~,
\]
where $\mathbb{G}$ denotes the operad of cooperative games. 
\end{theoremintro}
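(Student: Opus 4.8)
The plan is to upgrade the Möbius transform into an explicit morphism of $\mathbb{S}$-modules and then verify that it intertwines the two operadic compositions. The guiding principle is that the composition formula for games, which is opaque in the value coordinates $v(S)$, becomes a transparent combinatorial substitution rule once rewritten in the dividend coordinates $\widehat{v}(S)$ produced by the Möbius transform.

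First I would fix, at each arity $n$, the \emph{unanimity games} $u_S$, for $\emptyset \neq S \subseteq [n]$, given by $u_S(X) = 1$ if $S \subseteq X$ and $u_S(X) = 0$ otherwise. These are exactly the Möbius-dual basis, since the dividend of $u_S$ is the indicator $\delta_S$; hence the Möbius transform is a linear isomorphism from $\mathbb{G}(n)$ onto the dividend space $\mathbb{R}\{\,S : \emptyset \neq S \subseteq [n]\,\}$ at every arity. Because relabelling players commutes with Möbius inversion, i.e.\ $u_{\sigma S} = \sigma \cdot u_S$, this is moreover an isomorphism of $\mathbb{S}$-modules, and in particular $\dim \mathbb{G}(n) = 2^{n}-1$.

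The heart of the argument is the evaluation of the partial composition on this basis. I would compute $u_S \circ_i u_T$ directly from the defining formula $\alpha \circ_i \beta(X) = \beta(B)\alpha(X_A) + \partial_i \alpha(X_A)\beta(X_B)$, splitting into two cases. If $i \notin S$, then $\partial_i u_S \equiv 0$ while $u_T(B) = 1$, giving $u_S \circ_i u_T = u_S$. If $i \in S$, then $u_S(X_A) = 0$ (as $i \notin X_A$), whereas $\partial_i u_S(X_A) = [\,S \setminus \{i\} \subseteq X\,]$ and $u_T(X_B) = [\,T \subseteq X\,]$, giving $u_S \circ_i u_T = u_{(S \setminus \{i\}) \cup T}$. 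This short case analysis, though elementary, is the crux: it shows that in dividend coordinates the operadic composition is nothing but substitution of the set $T$ for the element $i$ inside $S$ when $i \in S$, and leaves $S$ unchanged (the newly inserted slots being inactive) when $i \notin S$. Since $\circ_i$ is bilinear, matching it on this basis suffices to identify the composition globally.

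It then remains to recognise this substitution rule as the partial composition of $\mathcal{C}om\mathcal{T}riass$. I would recall Vallette's presentation of commutative triassociative algebras by three binary generators and match them with the three arity-two unanimity games $u_{\{1\}}, u_{\{2\}}, u_{\{1,2\}}$: the two dictator games, swapped by $\mathbb{S}_2$, correspond to the two one-sided operations, and the bargaining game, fixed by $\mathbb{S}_2$, to the commutative one. Invoking the normal-form model of $\mathcal{C}om\mathcal{T}riass(n)$, whose basis is indexed by nonempty subsets of $[n]$ with precisely the above substitution composition, the assignment $u_S \mapsto S$ becomes an $\mathbb{S}$-equivariant bijection intertwining every $\circ_i$, hence an operad isomorphism; as it involves no auxiliary choices, it is canonical. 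I expect the genuine obstacle to lie exactly here: neither in matching the three generators (immediate) nor in the basis computation (elementary), but in establishing—or correctly citing from \cite{Vallettepartition}—that the commutative triassociative relations admit this subset-substitution model with no further relations, equivalently that $\dim \mathcal{C}om\mathcal{T}riass(n) = 2^{n}-1$, which is what forces the $\mathbb{S}$-module isomorphism above to be the desired one.
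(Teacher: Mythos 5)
Your proposal is correct and follows essentially the same route as the paper: the identity $u_S \circ_i u_T = u_{S \diamond_i T}$ is the paper's Proposition~\ref{Proposition: universal property of the partial composition of games}, and the identification with $\mathcal{C}om\mathcal{T}riass$ rests, exactly as you flag, on Vallette's Theorem~14 providing the basis indexed by nonempty subsets and the dimension count $2^n-1$. The only cosmetic difference is that the paper's official proof is phrased as ``check the relations on $d_1$ and $B$, then count dimensions,'' with your basis-level description appearing immediately afterwards as the explicit factorization $\mathcal{C}om\mathcal{T}riass \to \mathbb{G}^u \to \mathbb{G}$ through the Zeta transform.
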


As a direct consequence, we obtain an explicit presentation of the operad of all cooperative games by generators and relations. Concretely, we show that any cooperative game with $n$-players can be obtained as a sum of iterated compositions of the $2$-player dictator games and the $2$-player bargaining game (in a non-unique way). Let us also mention that this isomorphism restricts to an isomorphism of suboperad between the suboperad of additive games and the permutative operad of Example \ref{example: permutative operad}.

\subsection*{Distinguished suboperads and applications} In practice, the definition of a cooperative game is too large for it to be well behaved, and one often considers cooperative games which satisfy additional conditions. We show that many well studied classes of cooperative games are stable under the partial composition operations and therefore define suboperads of the operad of all cooperative games. Let us mention that, in general, these classes are not linear subspaces, so these suboperads are given by sets together with an operad structure (where the compositions are not linear, only set functions) instead of subvector spaces with an operad structure (which is bilinear). 

\begin{theoremintro}[Theorems \ref{prop: normalized games suboperads},\ref{prop: simple games suboperad},\ref{proposition: capacities are stable under composition},\ref{thm: opérade des convexes},\ref{thm: opérade des k-monotones},\ref{thm: operad of infinity alternating} and \ref{prop: balanced}]\label{thmintro: suboperads}
The following classes of cooperative games are stable under the partial composition operations and thus form suboperads of the operad of cooperative games.
\begin{enumerate}
\item Normalized games and simple monotone games.

\item Non-negative monotone games (capacities).

\item Non-negative convex games (and more generally, $k$-monotone games for all $k \geq 2$). 

\item Non-negative submodular games (and more generally, $k$-alternating games for $k \geq 2$). 

\item Belief functions and plausibility measures.

\item Non-negative and monotone balanced games.
\end{enumerate}
\end{theoremintro}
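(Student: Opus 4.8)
The plan is to use Theorem~\ref{thm: gamers operad}: once $\mathbb{G}$ is known to be an operad, proving that a class $\mathcal{C}$ is a suboperad reduces to checking that the operadic unit lies in $\mathcal{C}$ and that $\mathcal{C}$ is closed under each partial composition $\circ_i$. The unit is the normalized one-player game and manifestly belongs to all six classes, so the entire content is the closure property. The computational engine will be two reformulations of the composition law. Expanding $\partial_i\alpha(S_A)$ rewrites the defining formula as a $\beta$-weighted interpolation
\[
(\alpha \circ_i \beta)(S) = \bigl[\beta(B) - \beta(S_B)\bigr]\,\alpha(S_A) + \beta(S_B)\,\alpha(S_A \cup \{i\}),
\]
and differencing yields, for a player $j$ of the inserted game and a player $l$ of the outer game,
\[
\partial_j(\alpha\circ_i\beta)(S) = \partial_i\alpha(S_A)\,\partial_j\beta(S_B), \qquad \partial_l(\alpha\circ_i\beta)(S) = \bigl[\beta(B)-\beta(S_B)\bigr]\partial_l\alpha(S_A) + \beta(S_B)\,\partial_l\alpha(S_A\cup\{i\}).
\]
From the interpolation formula I would read off items (1) and (2) directly: when $\beta$ is monotone and non-negative the coefficients $\beta(B)-\beta(S_B)$ and $\beta(S_B)$ are non-negative, so the composite is non-negative; both first-difference formulas are then non-negative, giving monotonicity; for simple games the weight $\beta(S_B)\in\{0,1\}$ merely selects one of the $\{0,1\}$-values $\alpha(S_A),\alpha(S_A\cup\{i\})$; and the normalization $\alpha(A)=\beta(B)=1$ forces the value on the grand coalition to equal $\alpha(A)=1$.

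For the higher-order classes, namely the $k$-monotone games in (3) and the belief functions in (5), I would iterate the difference formulas. Since the inserted-game difference $\partial_j\beta(S_B)$ does not depend on the outer players and $\partial_i\alpha(S_A)$ does not depend on the inserted players, any mixed difference of $\alpha\circ_i\beta$ that involves at least one inserted direction factors as $\partial_{j_1}\!\cdots\partial_{j_p}\beta(S_B)\cdot\partial_i\partial_{l_1}\!\cdots\partial_{l_q}\alpha(S_A)$, while a purely outer mixed difference is the $\beta$-interpolation of $\partial_{l_1}\!\cdots\partial_{l_q}\alpha$ at $S_A$ and at $S_A\cup\{i\}$. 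Recalling that $k$-monotonicity is equivalent to non-negativity of every mixed difference of order $1\le m\le k$, the factorization preserves signs: if the total order is at most $k$ then $p\le k$ and $q+1\le k$, so each factor is itself a non-negative difference of $\alpha$ or of $\beta$. Letting $k\to\infty$ gives preservation of total monotonicity, and adjoining the normalization already treated shows that belief functions form a suboperad.

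The alternating classes, namely (4) and the plausibility measures in (5), I would reduce to the monotone ones by duality. With the conjugate $v^{*}(S) \coloneqq v(N)-v(N\setminus S)$ on the grand coalition $N$, a short computation from the interpolation formula (using $\beta^{*}(B)=\beta(B)$) should give that conjugation intertwines the partial compositions,
\[
(\alpha\circ_i\beta)^{*} = \alpha^{*}\circ_i\beta^{*};
\]
since $v$ is $k$-alternating exactly when $v^{*}$ is $k$-monotone, and plausibility measures are the conjugates of belief functions, the statements for (4) and for the plausibility measures in (5) would follow at once from the monotone cases above. For the balanced games in (6) I would instead argue on the core: given $x\in\core(\alpha)$ and $y\in\core(\beta)$, set $z_j=\beta(B)\,x_j$ for $j\in A\setminus\{i\}$ and $z_j=x_i\,y_j$ for $j\in B$. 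Efficiency $z(N)=\beta(B)\alpha(A)$ is immediate, and for a coalition $S$ one computes $z(S)-(\alpha\circ_i\beta)(S)=\beta(B)\bigl[x(S_A)-\alpha(S_A)\bigr]+x_i\,y(S_B)-\beta(S_B)\,\partial_i\alpha(S_A)$; bounding $x_i\,y(S_B)\ge x_i\,\beta(S_B)$ (legitimate because $x_i\ge 0$ for core vectors of non-negative monotone games) and then using $\beta(B)\ge\beta(S_B)$ collapses this to $\beta(S_B)\bigl[x(S_A\cup\{i\})-\alpha(S_A\cup\{i\})\bigr]\ge 0$, so $z\in\core(\alpha\circ_i\beta)$ and the composite is balanced.

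The hard part will be the balanced case. Balancedness, unlike the other five properties, is not a pointwise condition on the values of the game and is invisible to the difference calculus, so it cannot be checked termwise; one must instead produce the correct core vector $z$ and push through the inequality above, and separately confirm that the constructed game remains non-negative and monotone. The order-bookkeeping of the second paragraph is the other delicate point, since it relies on the precise equivalence between $k$-monotonicity and the signs of mixed differences and on controlling exactly which orders appear in the factorization.
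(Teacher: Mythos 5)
Your proposal is correct, and for items (1)--(5) it follows essentially the paper's own route: your interpolation identity $(\alpha\circ_i\beta)(S)=[\beta(B)-\beta(S_B)]\,\alpha(S_A)+\beta(S_B)\,\alpha(S_A\cup\{i\})$ and the factorization of mixed differences are exactly the content of Lemma~\ref{lemma: dérivée de la composée} and Proposition~\ref{Proposition: dérivée itérée}, which the paper uses for capacities, convex and $k$-monotone games (the paper runs an induction on $k$ where you check all orders at once, but the inequalities are identical), and your duality reduction for the alternating classes is precisely Propositions~\ref{prop: compatibility of the composition and duality} and~\ref{prop: the dual of a capacity is a capacity} combined with Theorem~\ref{thm: duality between k-monotone and k-alternating}. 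One point you should make explicit: the dual of a non-negative $k$-alternating game is only guaranteed non-negative when the game is also \emph{monotone}, so the monotonicity hypothesis (present in Theorem~\ref{thm: operad of infinity alternating} but elided in your write-up) is genuinely needed for the reduction to ``the monotone cases above'' to go through. The only place you take a genuinely different route is the balanced case: the paper's designated proof of Theorem~\ref{prop: balanced} verifies the Bondareva--Shapley inequality directly against an arbitrary balanced collection of the composite, replacing each nonempty trace $S_B$ by $\{i\}$ to produce a balanced collection on $A$; you instead exhibit the explicit core element $x\otimes_i y$ and invoke Bondareva--Shapley twice. Your computation is correct and is in fact Theorem~\ref{thm: inclusion of the tensor i of the cores} of the paper, which the authors explicitly record as a second proof of this stability. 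The trade-off is that your route needs both directions of the Bondareva--Shapley theorem (and the prior fact that the composite is again a capacity), whereas the collection-based argument is self-contained; in exchange, your argument produces explicit core elements of the composite, which is strictly more information than bare balancedness.
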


This is where cooperative game theory intersects with many different areas of mathematics. Indeed, functions from the power set of a finite set to $\mathbb{R}$ appear under different names in many areas. They are also known as non-additive discrete measures, also called \textit{fuzzy measures} or \emph{capacities} when non-negative and monotone, and frequently appear in decision theory~\cite{choquet1954theory, Sugeno, grabisch2010decade}. They are a generalization of classical measures and their integration theories (such as the Choquet or the Sugeno integrals) generalize the classical Lebesgue integral. See also \cite{Grabisch} for a textbook account of this. Thus, our operad structure can also be understood as an operad structure on all fuzzy measures. 

\medskip

Simple monotone games represent combinatorial objects known as \textit{clutters}, see \cite{billera1971composition}, and form a suboperad of our operad. Totally monotone games are also known as \textit{belief functions,} and appear in \emph{evidence theory}, also known as \emph{Dempster-Shafer theory}~\cite{dempster1967upper, shafer1976mathematical, kohlas2013mathematical}, which is a general framework for reasoning with uncertainty, and which generalizes the theory of Bayesian probabilities. \textit{Plausibility measures} are another name for $\infty$-alternating games which also appear in Dempster--Shafer theory, see \cite{shafer1976mathematical, Hohle87}. Balanced games are precisely those with a non-empty core by the Bondareva--Shapley theorem \cite{Bondareva, ShapleyCore}. 

\medskip

Let us point out that, in general, a suboperad is in particular an operad, so we get an operad structure on each of these sets of games. Moreover, we suspect that in many cases, these operad have extra structure and define operads in affine spaces, cones and other, more structured objects. This should be the subject of future research. 

\medskip

\subsection*{Generalized permutahedra and (poly)matroids} Coming back to Theorem \ref{thmintro: suboperads}, non-negative convex games (cooperative games whose payoff function is non-negative and convex) are perhaps the most interesting subset of games from the point of view of algebraic combinatorics and discrete geometry. They are in bijection with \textit{generalized permutahedra} in the sense of Postnikov in \cite{Postnikov} and this bijection is realized, on the one side, by a canonical polytope that can be associated to any cooperative game called the \textit{core}, which is also one of the main solution concepts in cooperative game theory (see below). Points in the core represent distributions of the total value of the game that benefit all players. Convex games are uniquely characterized by their core, which is always a generalized permutahedra.  

\begin{corollaryintro}
The collection of all generalized permutahedra admits an operad structure.
\end{corollaryintro}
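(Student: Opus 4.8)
The plan is to obtain the operad structure on generalized permutahedra by \emph{transport of structure} along the core bijection. By part (3) of Theorem~\ref{thmintro: suboperads} (concretely, Theorem~\ref{thm: opérade des convexes}), the non-negative convex games are stable under the partial compositions $\{\circ_i\}_i$ and therefore assemble into a suboperad of $\mathbb{G}$; write $\mathbb{G}^{\mathrm{conv}}$ for this suboperad, so that $\mathbb{G}^{\mathrm{conv}}(n)$ is the set of non-negative convex games on $n$ players. On the other hand, the submodularity theorem provides, for each $n$, a bijection
\[
\mathrm{core} \colon \mathbb{G}^{\mathrm{conv}}(n) \longrightarrow \mathsf{GP}(n)
\]
between non-negative convex games on $n$ players and generalized permutahedra $\mathsf{GP}(n)$ in $\mathbb{R}^n$, sending a game to its core.

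First I would check that this family of bijections is a morphism of symmetric sequences, i.e.\ that it intertwines the relevant $\mathbb{S}_n$-actions: relabelling the players of a game by $\sigma \in \mathbb{S}_n$ corresponds, under $\mathrm{core}$, exactly to permuting the coordinates of $\mathbb{R}^n$ by $\sigma$, since the defining inequalities and the balance equation cutting out the core are permuted accordingly. Granting this, the collection $\{\mathsf{GP}(n)\}_n$ inherits a unique operad structure for which each $\mathrm{core}$ is an isomorphism: one simply sets
\[
P \circ_i Q \;\coloneqq\; \mathrm{core}\big(\mathrm{core}^{-1}(P) \circ_i \mathrm{core}^{-1}(Q)\big)
\]
for $P \in \mathsf{GP}(n)$, $Q \in \mathsf{GP}(m)$ and $i \in [n]$, and takes the core of the one-player unit game as the operadic unit. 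The associativity, unitality and equivariance axioms for these compositions then hold automatically, being transported verbatim from the corresponding axioms in $\mathbb{G}^{\mathrm{conv}}$ through the equivariant bijections $\mathrm{core}$.

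Since the genuinely operadic content — that convex games form a suboperad — is already supplied by Theorem~\ref{thm: opérade des convexes}, and the bijection with generalized permutahedra by the submodularity theorem, there is no substantial obstacle in the formal argument: the statement is a transport-of-structure corollary. The only point requiring care is the equivariance of $\mathrm{core}$, which is immediate from the description of the core by its supporting halfspaces. The more interesting, and genuinely nontrivial, task is to describe the transported operation $P \circ_i Q$ \emph{intrinsically}, in purely polytopal terms (Minkowski sums, dilations, and products of faces), rather than by conjugation through $\mathrm{core}$; this can be read off from the explicit description of the core of a composite game in terms of the cores of its factors, and I expect it to be the most illuminating part of the argument.
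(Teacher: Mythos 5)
Your proof is correct and matches the paper's argument exactly: the paper likewise obtains the operad structure on generalized permutahedra by transporting the suboperad structure on non-negative convex games (Theorem~\ref{thm: opérade des convexes}) through the core/base-polytope correspondence of \cite[Section 12]{AguiarArdila}, setting $\mathfrak{p}_A \circ_i \mathfrak{p}_B \coloneqq C(\Gamma_A \circ_i \Gamma_B)$. One small caution on your closing aside: the paper's Theorem~\ref{thm: inclusion of the tensor i of the cores} only gives an \emph{inclusion} of the partial tensor product of the cores into the core of the composite (with explicit counterexamples to equality), so the intrinsic polytopal description cannot simply be read off from it and is deferred to future work.
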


This essentially follows from the bijection between generalized permutahedra and non-negative convex functions, see for instance \cite[Section 12]{AguiarArdila}, together with Theorem \ref{thmintro: suboperads}. So far, the formula for the partial compositions is  determined at the level of the associated function by the formula above, but we intend to also determine it on the geometrical side. A first step toward this direction is Theorem \ref{thm: inclusion of the tensor i of the cores}, which gives a partial description. However, the full answer lies beyond the scope of the present paper and will be the subject of future, forthcoming work, see \cite{deuxiemepapier}. Finally, let us mention that yet another name for non-negative convex functions is that of a \textit{polymatroid}, since they can be defined entirely by their rank functions, see \cite[Definition 1.1]{Polymatroids}. Therefore our operad structure induces an operad structure on all polymatroids, which are generalizations of the \textit{matroids} introduced by Whitney in \cite{Whitney35}. 

\subsection*{Aggregation of solutions of cooperative games} In social sciences, one is limited by the restricted accuracy of the measure instruments and the lack of control over the conditions under which the observations are made. This implies that the theoretical tools we apply to these observations should be robust enough to perform well under (dis)aggregation in order to meaningfully describe social interactions. We investigate, for some of the most popular solution concepts in cooperative game theory, the extend to which they behave well under the partial composition operations that we have defined. 

\medskip

It should be noted that expecting a strict compatibility with the partial composition is often too strong. Indeed, most concepts of solutions in cooperative game theory try to describe the relative strengths of the individuals and the coalitions they form, as well as how the eventual gains of a coalition will be shared among its members  depending on their individual strengths. These strengths might vary to some extent in the composite game: when we replace the $i$-th player of $A$ by a set of players $B$, some players in $B$ might gain bargaining strength by joining individually a coalition with other players in $A$, something which was not possible before. So it seems reasonable that while solutions of each of the games induce solutions of the composite game, not every solution of the composite game can be described like this, as in the composite game new non-trivial interactions between the two set of players can appear. However, game theory and especially its cooperative aspects, focuses on power balance, and comparison between wealth, or strength. Hence, it is natural to expect and work with order-preserving structures, especially orders defined with respect to inclusions of sets. 

\medskip

We start our study the solutions of compound games by studying their preimputations and imputations. Let $\Gamma = (N,v)$ be a game. Its affine hyperplane of preimputations $X(\Gamma)$ is given by vectors in $\mathbb{R}^N$ (the vector space generated by all the players of the game) which share the total value $v(N)$ of the grand coalition of the game. Within this affine hyperplane lies the simplex of imputations $I(\Gamma)$, which is given by all the ways to share this value which are individually rational (meaning that a player wins more than what he would alone). A key ingredient of our study of compound solutions is the \textit{partial tensor product} $\otimes_i$ bilinear map
\[
(- \otimes_i -): \mathbb{R}^{A} \otimes \mathbb{R}^{B} \longrightarrow \mathbb{R}^{A \diamond_i B}
\]
which, in fact, corresponds to the composition of additive games in the cooperative game operad. We start by showing that this map restricts to preimputations and imputations, meaning that the partial tensor product of two (pre)imputations of the components of a game gives an imputation of their composite. Moreover, we given conditions under which is map is injective and surjective. 

\medskip

Let us denote by $C(\Gamma)$ the \textit{core} of a game, which is the polytope of all the ways to share the total value of the game $v(N)$, where each coalition does better than it would do alone. Cores are straightforward generalization of deformed permutahedra, and, beyond their combinatorial interest as a natural and interesting class of polytopes, they carry fundamental property about the modeled social situation:  their non-emptiness certifies the possibility of cooperation among the players under consideration. As Shubik~\cite{shubik1982game} stated, ``\emph{a game that has a core has less potential for social
conflict than one without a core}''. Indeed, even when the total value of the game is large, if there exists no payment satisfying every coalition, it is very unlikely and irrational to assume that cooperation arises without any external intervention. Because of its intuitive definition, most of the concept solutions in cooperative game theory are compared to the core in their analysis. 

\medskip 

Notice that this polytope can be empty unless the game is balanced. We show that the core is compatible with the operad structure in the following way. 

\begin{theoremintro}[Theorems \ref{thm: inclusion of the tensor i of the cores} and \ref{thm: fake surjectivity des coeurs}]
Let \(\Gamma_A = (A, \alpha)\) and \(\Gamma_B = (B, \beta)\) be two non-negative games, and consider \(i \in A\). The partial tensor product restricts to a well defined map
    \[
    (- \otimes_i -): C(\Gamma_A) \times C(\Gamma_B) \longrightarrow C(\Gamma_A \circ_i \Gamma_B)
    \]
between the cores of the components and the core of the composite. Furthermore: 

\medskip

\begin{enumerate}
    \item suppose that $\alpha(\{i\})$ and $\beta(B)$ are positive, then this map is injective;

\medskip

    \item suppose that $\alpha(\{i\}) = 0$ and \(\beta(\{l\}) = 0\) for all \(l \in B\),  as well as $\beta(B) > 0$. Then any element $z \in C(\Gamma_A \circ_i \Gamma_B)$ decomposes as $x \otimes_i y$ with $x \in C(\Gamma_A)$ and $y \in I(\Gamma_B)$.
\end{enumerate}
\end{theoremintro}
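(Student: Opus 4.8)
The plan is to first make the map $x \otimes_i y$ fully explicit and then treat the three assertions (well-definedness, injectivity, decomposition) in turn. Since $\otimes_i$ is by definition the composition of additive games, evaluating the composition formula on singletons gives, for $x \in \mathbb{R}^A$ and $y \in \mathbb{R}^B$,
\[
(x \otimes_i y)_j = \Big(\textstyle\sum_{l \in B} y_l\Big)\, x_j \quad (j \in A \setminus \{i\}), \qquad (x \otimes_i y)_k = x_i\, y_k \quad (k \in B).
\]
On core or imputation elements the scalar $\sum_{l} y_l$ equals $\beta(B)$ by efficiency, so the first slot is simply $\beta(B)\,x_j$. I would also record once and for all that taking $S_A = A \setminus \{i\}$, $S_B = B$ in the composition formula collapses to $(\alpha \circ_i \beta)(A \diamond_i B) = \beta(B)\,\alpha(A)$, the total value to be shared in the composite.

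For well-definedness I would verify efficiency and coalitional rationality of $z = x \otimes_i y$ separately. Efficiency is immediate: $\sum_j z_j = \beta(B)\sum_{j \ne i} x_j + x_i \sum_{k} y_k = \beta(B)\,\alpha(A)$, using efficiency of $x$ and $y$. The heart of the argument is coalitional rationality. Writing $z(S) = \beta(B)\,x(S_A) + x_i\, y(S_B)$ and bounding $y(S_B) \ge \beta(S_B) \ge 0$ together with $x_i \ge \alpha(\{i\}) \ge 0$, I would compare against the algebraic rewriting
\[
(\alpha \circ_i \beta)(S) = \beta(S_B)\,\alpha(S_A \cup \{i\}) + \big(\beta(B) - \beta(S_B)\big)\,\alpha(S_A)
\]
and the parallel identity $\beta(B)\,x(S_A) + x_i\,\beta(S_B) = \beta(S_B)\,x(S_A \cup \{i\}) + (\beta(B) - \beta(S_B))\,x(S_A)$. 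The two terms then match the core inequalities $x(S_A \cup \{i\}) \ge \alpha(S_A \cup \{i\})$ and $x(S_A) \ge \alpha(S_A)$ of $\Gamma_A$, provided both coefficients are non-negative. Here $\beta(S_B) \ge 0$ by hypothesis, but the non-negativity of $\beta(B) - \beta(S_B)$ is the \emph{main obstacle}: it does not follow from non-negativity of $\beta$ alone. The point I would exploit is that it comes for free from the non-emptiness of $C(\Gamma_B)$: summing the core inequalities of $y$ over $S_B$ and its complement $B \setminus S_B$ gives $\beta(B) = y(B) \ge \beta(S_B) + \beta(B \setminus S_B) \ge \beta(S_B)$. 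Since we are handed an actual $y \in C(\Gamma_B)$, this is exactly the required monotonicity, and $z(S) \ge (\alpha \circ_i \beta)(S)$ follows.

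For injectivity (item (1)) I would invert the formula: the $A \setminus \{i\}$ slots and $\beta(B) > 0$ give $x_j = z_j / \beta(B)$; efficiency of $x$ pins down $x_i = \alpha(A) - \sum_{j \ne i} x_j$; and since the core constraint at $\{i\}$ forces $x_i \ge \alpha(\{i\}) > 0$, the $B$-slots yield $y_k = z_k / x_i$, so $(x,y)$ is uniquely recovered from $z$.

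For the decomposition (item (2)) I would run the same inversion but now without the luxury $x_i > 0$. I set $x_j = z_j/\beta(B)$ for $j \ne i$ and $x_i := \alpha(A) - \sum_{j \ne i} x_j$; a short computation identifies $x_i = z(B)/\beta(B)$, and the core inequality of $z$ at $S = B$ (whose value is $\alpha(\{i\})\beta(B) = 0$) shows $x_i \ge 0$. I would verify $x \in C(\Gamma_A)$ by testing an arbitrary $T \subseteq A$ against the core inequality of $z$ at $S = T$ when $i \notin T$, or at $S = (T \setminus \{i\}) \cup B$ when $i \in T$; in both cases the composite value equals $\beta(B)\,\alpha(T)$, giving $x(T) \ge \alpha(T)$. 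To produce $y$, the core inequalities of $z$ at the singletons $\{k\}$ (whose values vanish since $\alpha(\{i\}) = 0$ and $\beta(\{k\}) = 0$) give $z_k \ge 0$. The last obstacle is the degenerate case: if $x_i > 0$ I put $y_k = z_k/x_i$, which is non-negative with $\sum_k y_k = z(B)/x_i = \beta(B)$, so $y \in I(\Gamma_B)$ and $z = x \otimes_i y$; if $x_i = 0$ then $z(B) = 0$ forces every $z_k = 0$, and any imputation $y \in I(\Gamma_B)$ (the simplex being non-empty as $\beta(B) > 0$) satisfies $z = x \otimes_i y$, completing the decomposition.
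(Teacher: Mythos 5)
Your proposal is correct. For well-definedness and injectivity it follows essentially the same path as the paper: the paper's proof of Theorem \ref{thm: inclusion of the tensor i of the cores} uses exactly the rewriting $(\alpha \circ_i \beta)(S) = \bigl(\beta(B)-\beta(S_B)\bigr)\alpha(S_A) + \beta(S_B)\,\alpha(S_A \cup \{i\})$ and the two core inequalities of $x$, and obtains injectivity by the same slot-by-slot inversion (there via Proposition \ref{prop: tenseur i des imputations}). One small point in your favour: the paper applies the core inequalities of $x$ without explicitly remarking that the coefficient $\beta(B)-\beta(S_B)$ must be non-negative for that step to go in the right direction; your observation that this follows from the non-emptiness of $C(\Gamma_B)$ (summing the constraints on $S_B$ and $B\setminus S_B$) makes that step airtight and is exactly the right justification.

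Where you genuinely diverge is the decomposition statement. The paper's proof of Theorem \ref{thm: fake surjectivity des coeurs} first invokes Proposition \ref{prop: tensor of imputations is surjective} --- a separate vertex-and-convexity argument showing that every imputation of the composite splits as a partial tensor product of imputations --- to obtain $z = x \otimes_i y$ with $x \in I(\Gamma_A)$, $y \in I(\Gamma_B)$, and only then upgrades $x$ to a core element by testing $T$ against $S=T$ or $S=(T\setminus\{i\})\cup B$, exactly as you do. You instead reconstruct $x$ and $y$ directly by inverting the formula ($x_j = z_j/\beta(B)$, $x_i = z(B)/\beta(B)$, $y_k = z_k/x_i$), which makes the argument self-contained and avoids the imputation-surjectivity machinery; the price is that you must handle the degenerate case $x_i = 0$ by hand, which you do correctly (all $z_k$ vanish, and any imputation of $\Gamma_B$ serves as $y$). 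Both routes are valid; yours is more elementary for this particular statement, while the paper's factorization through imputations reuses a result it needs anyway.
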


From the second point of the above theorem we can deduce that of $C(\Gamma_A)$ is empty, so is $C(\Gamma_A \circ_i \Gamma_B).$ This fits well with the game theoretic point of view, since composing games heuristically corresponds to adding details to the social context: one cannot expect to create a solution by such an operation in a context where there is none. 

\medskip

Finally, we turn our attention to well known solution concepts such as the Shapley value and the Banzhaf index. The Shapley value, introduced in \cite{Shapley1953}, is a solution concept for fairly distributing the total gains or costs of a game among its players by weighting their respective contributions. It can be characterized as the unique vector associated to a game that satisfies \textit{efficiency, symmetry and linearity axioms} among others. We give an explicit formula for the Shapley value of a composite game in Proposition \ref{prop: shapley value of a composite game} using its description in terms of the Möbius transform given in \cite{harsanyi1958bargaining}. We then consider Banzhaf index, introduced by \cite{Penrose}, which is a power index which measures the power of a voter in a voting game where voting rights are not distributed equally. It was used by Banzhaf to study the voting power of voters in different states in the U.S.A with respect to the Electoral College system in \cite{banzhaf1968one}. He showed that a voter in New York has 3,312 more voting power that a voter in the district of Columbia, and that in general the Electoral College system favors bigger states. We show that the Banzhaf index is particularly well behaved with respect to the operad structure: its value on a compound game is the partial tensor product of the Banzhaf index of the quotient game and the Shapley value of the component game. 

\begin{theoremintro}[Theorem \ref{thm: Banzhaf value of a composite game}]
Let \(\Gamma_A = (A, \alpha)\) and \(\Gamma_B = (B, \beta)\) be two games, and let \(i \in A\). The Banzhaf index of the composite game is given by 
    \[
    \psi \left(\Gamma_A \circ_i \Gamma_B \right) = \frac{\psi(\Gamma_A) \otimes_i \phi(\Gamma_B)}{2^{\lvert B \rvert - 1}} ~,
    \]
    where these are two equal vectors in $\mathbb{R}^{\lvert A \rvert + \lvert B \rvert -1}$.    
\end{theoremintro}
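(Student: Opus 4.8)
The plan is to reduce everything to the two formulas we already have in hand: the explicit formula for the Banzhaf index of a game in terms of marginal contributions (equivalently, in terms of the Möbius transform), and the formula defining the partial composition $\alpha \circ_i \beta$ given in the introduction. Recall that the Banzhaf index $\psi_j(\Gamma)$ of player $j$ in a game $\Gamma = (N,v)$ is the average of the marginal contributions $\partial_j v(S) = v(S \cup \{j\}) - v(S)$ over all coalitions $S \subseteq N \setminus \{j\}$, normalized by $2^{\lvert N \rvert - 1}$; so the whole theorem amounts to computing, for each player $j$ of the composite game $\Gamma_A \circ_i \Gamma_B$, the sum $\sum_{S} \partial_j(\alpha \circ_i \beta)(S)$ and matching it against the corresponding coordinate of $\psi(\Gamma_A) \otimes_i \phi(\Gamma_B)$.

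First I would split the players of $\Gamma_A \circ_i \Gamma_B$ into two cases, since the partial tensor product $\otimes_i$ treats them asymmetrically: a player $a \in A \setminus \{i\}$ coming from the quotient game, and a player $b \in B$ coming from the component game that replaces $i$. Fix such a player and compute its marginal contribution to a coalition $S$ by writing $S_A = S \cap A$ and $S_B = S \cap B$ and substituting into $\alpha \circ_i \beta(S) = \beta(B)\alpha(S_A) + \partial_i\alpha(S_A)\beta(S_B)$. The key structural observation is that this formula is \emph{separable}: one term depends only on $S_A$ and the other factors as (a function of $S_A$) times (a function of $S_B$). Consequently the derivative $\partial_j$ in the direction of a player only acts nontrivially on the factor belonging to that player's own set, and the sum over all coalitions $S$ factors as a product of a sum over $S_A \subseteq A \setminus \{i\}$ and a sum over $S_B \subseteq B$ (or over $B \setminus \{b\}$, depending on the case). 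This factorization is exactly what turns the single average over $\Gamma_A \circ_i \Gamma_B$ into a product of an average over $\Gamma_A$ and an average over $\Gamma_B$.

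Concretely, for a player $b \in B$ the only term that varies with $b$ is $\partial_i\alpha(S_A)\,\beta(S_B)$, and taking $\partial_b$ kills the $\beta(B)\alpha(S_A)$ piece and turns $\beta(S_B)$ into $\partial_b\beta(S_B)$; summing over $S_A \subseteq A \setminus \{i\}$ produces (up to normalization) the Banzhaf marginal sum of $\alpha$ at $i$, while summing over $S_B \subseteq B \setminus \{b\}$ produces the Shapley marginal sum of $\beta$ at $b$ — and here one must be careful that the coalitions $S_A$ range over subsets of $A \setminus \{i\}$ whereas in the composite game the player $i$ has been removed, so the index set for $S_A$ is genuinely that of $\Gamma_A$ with $i$ absent, matching the Banzhaf average for $\Gamma_A$. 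For a player $a \in A \setminus \{i\}$ both terms of the formula contribute, and after differentiating and summing over $S_B \subseteq B$ one should see $\beta(B)$ and the telescoping sum $\sum_{S_B} \beta(S_B)$ combine so that the component-game dependence collapses to a single factor, leaving the Banzhaf marginal of $\alpha$ at $a$; this is where the normalization $2^{\lvert B \rvert - 1}$ and the factor $\beta(B)$ interact and must be tracked precisely.

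The main obstacle I anticipate is purely bookkeeping rather than conceptual: getting the normalization constants exactly right. The Banzhaf index divides by $2^{\lvert N \rvert - 1}$ where $N$ is the full player set, the Shapley value uses the weighted $\lvert S \rvert ! (\lvert N \rvert - \lvert S \rvert - 1)!/\lvert N \rvert!$ coefficients, and the composite game has $\lvert A \rvert + \lvert B \rvert - 1$ players; the claim that the Banzhaf index appears for the $A$-factor but the \emph{Shapley} value $\phi$ appears for the $B$-factor means the two sums must be normalized differently, and reconciling $2^{\lvert A \rvert + \lvert B \rvert - 2}$ against $2^{\lvert A \rvert - 1} \cdot 2^{\lvert B \rvert - 1}$ together with the explicit $2^{\lvert B \rvert - 1}$ in the denominator of the statement is the delicate point. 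I would handle this by first proving the identity at the level of the Möbius transform, where the Banzhaf and Shapley values both have clean expressions and where the composition formula for $\otimes_i$ is, by the operad isomorphism with $\mathcal{C}om\mathcal{T}riass$, simplest; this should make the emergence of the Shapley coefficients on the $B$-side transparent and the constant-chasing a one-line verification.
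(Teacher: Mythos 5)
The factorization you describe is correct, but the step where you assert that summing over $S_B \subseteq B \setminus \{b\}$ ``produces the Shapley marginal sum of $\beta$ at $b$'' does not hold, and it sits exactly at the point you yourself flagged as delicate. For $j \in B$ the derivative of the composite is $\partial_j(\alpha \circ_i \beta)(S) = \partial_i\alpha(S_A)\,\partial_j\beta(S_B)$, so the unweighted sum over all $S \subseteq (A \diamond_i B)\setminus\{j\}$ factors as $\bigl(\sum_{S_A \subseteq A\setminus\{i\}}\partial_i\alpha(S_A)\bigr)\cdot\bigl(\sum_{S_B\subseteq B\setminus\{j\}}\partial_j\beta(S_B)\bigr)$; after dividing by $2^{\lvert A\rvert+\lvert B\rvert-2}$ this is $\psi_i(\Gamma_A)\,\psi_j(\Gamma_B)$. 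An unweighted average over coalitions has no mechanism to generate the Shapley weights $1/\lvert T\rvert$ (equivalently $\lvert S\rvert!\,(\lvert B\rvert-\lvert S\rvert-1)!/\lvert B\rvert!$): what your computation produces on the $B$-side is the \emph{Banzhaf} index of $\Gamma_B$, not its Shapley value. Since $\psi_j(\Gamma_B)$ and $\phi_j(\Gamma_B)/2^{\lvert B\rvert-1}$ differ in general (for the $2$-player bargaining game, $\psi_1=1/2$ while $\phi_1/2=1/4$), your route as written will not land on the displayed formula; you would have to locate and resolve this discrepancy before the argument can close.

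The paper's actual proof is the one you defer to only in your final sentence: expand $\alpha$ and $\beta$ in the unanimity basis, use bilinearity of $\circ_i$ together with $u_S \circ_i u_T = u_{S\diamond_i T}$ and $\psi(u_K)=\tfrac{1}{2^{n-1}}\mathbf{1}^K$, and then invoke the identity $\mathbf{1}^{S\diamond_i T}=\mathbf{1}^S\otimes_i\tfrac{1}{\lvert T\rvert}\mathbf{1}^T$ to recognize the factor $\phi(\Gamma_B)=\sum_T \mu^\beta_T\tfrac{1}{\lvert T\rvert}\mathbf{1}^T$. In other words, the $1/\lvert T\rvert$ that turns the $B$-factor into a Shapley value is injected by that single identity, not produced by any coalition averaging --- this is the one ingredient your plan is missing, and you should verify it carefully against the definition of $\otimes_i$ (for $i\in S$ the $B$-coordinates of the two sides are $1$ and $1/\lvert T\rvert$ respectively), because your marginal-contribution computation and the unanimity-basis computation must ultimately agree, and reconciling them is where the real content of this theorem lies.
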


To conclude, let us mention other solution concepts present in the literature. One of the first solution concepts, introduced in \cite{vonNeumann2}, was the theory of \textit{stable sets} with respect to the domination relation. Unfortunately, stable sets are not compatible with the partial tensor product in general, and the precise extent to which domination is preserved by these operations remains a mystery. There is also the \textit{nucleolus}, introduced in \cite{schmeidler1969nucleolus}, which is an allocation of the value of the grand coalition $v(N)$ which maximizes the smallest excess of coalitions, and which is a vector that always lies in the core (when it is non-empty). Its compatibility with Shapley's and Owen's compositions was studied by Megiddo in \cite{MegiddoKernel,MegiddoNucleolus}, where he shows it is compatible with the composition of simple games but gives counterexamples with respect to Owen's composition. There are also the \textit{kernel}, introduced in \cite{DavisMaschler63} and the \textit{bargaining set}, introduced in \cite{AumannMaschler64}, which are specific subsets of the imputations simplex given by allocations of the total value $v(N)$ which are stable under certain power relations of the game. Although we suspect that they are in general compatible with the operadic composition of cooperative games, this is beyond the scope of the present paper and deserves future research. 

\subsection*{Acknowledgments} We wish to thank the two anonymous referees for their valuable comments. The first author would like to thank ENSTA, Institut Polytechnique de Paris and the Sorbonne Center of Economics (CES) and the second author would like to thank the EPFL and the Université Paris Cité for the excellent working conditions that allowed this project to be carried out. Both authors would like to thank the blackboard in the corridor of the third floor of the math building of the EPFL, where this project began unexpectedly. 

\vspace{1.5pc}

\section{Recollections on cooperative game theory and on algebraic operads}

\vspace{2pc}

In this section, we review basic definitions and examples of both cooperative game theory and algebraic operad theory. 

\subsection{Recollections on cooperative game theory} In this subsection, we recall the definition of a cooperative game as well as some of the related definitions and properties that will be useful for us the following sections. Finally, we recall the definition of the \textit{core} of a game, which is one of the most popular solution concepts in this theory. 

\subsubsection{First definitions} We first recall the original definition of a cooperative/coalitional game introduced by von Neumann and Morgenstern in \cite{vonNeumann2}.

\begin{definition}[Coalitional game]
A \emph{coalitional} $n$\textit{-player game} $\Gamma$ amounts to the data of a pair $(N, v)$, where $N = \{1, \ldots, n\}$  is called the \emph{grand coalition}, composed of \textit{players}, and $v$ is a real-valued function, called the \emph{coalition function}, defined over all the subsets of $N$.
\end{definition}

\begin{notation}
Unless stated otherwise, we will from now on refer to coalitional games simply as games or as cooperative games. By the data of $\Gamma$, we will always mean a pair $(N, v)$. 
\end{notation}

\begin{remark}
Many of the statements and results in this paper work for games where the coalition function takes values in an \textit{ordered ring}, such as the integers $\mathbb{Z}$ or the rationals $\mathbb{Q}$. We will not emphasize this aspect in this work, and we will consider the coalition function to take values in the real numbers $\mathbb{R}$ for simplicity.
\end{remark}

\begin{remark}
A game can also be thought as a non-additive measure on the set $N = \{1, \ldots, n\}$. See the theory of Choquet and Sugeno integrals~\cite{Grabisch} for non-additive measures (also known as \textit{fuzzy measure}), which generalize the Lesbegue measure. 
\end{remark}

\begin{remark}
We consider games defined on an \textit{ordered set} $\{a_1, \cdots,a_n\}$ of players; we may use different labels on the sets of players in order to distinguish them.
\end{remark}

\begin{definition}[Grounded coalition game]
A game $\Gamma = (N,v)$ is \textit{grounded} if it satisfies $v(\emptyset) = 0$.
\end{definition}

It is very common in the literature to focus on \textit{grounded} coalition games, as it is not obvious at first sight how to interpret the value of $v(\emptyset)$. From now on,\textit{ we will assume all the games we consider are grounded unless stated otherwise. }

\begin{remark}
Non-necessarily grounded games have nevertheless been considered, for example by Shapley~\cite{ShapleyCompoundIII}. Some constructions, including Theorem \ref{thm: gamers operad}, do generalize to this setting but we will leave this type of games outside the scope of the paper. 
\end{remark}

We recall the definitions of a few classes of games which are of particular interest in the theory of cooperative games. For a more exhaustive recollection, we refer for instance to the textbook account of Grabisch in \cite{Grabisch}. 

\begin{definition}[Normalized game]
A game $\Gamma = (N, v)$ is \emph{normalized} if $v(N) = 1$. 
\end{definition}

When studying a coalitional game, the focus is on the relative power of coalitions, which can be measured in multiple ways. That is why it is customary to set the value of the grand coalition to be \(1\). 

\begin{definition}[Non-negative game]
A game $\Gamma = (N, v)$ is \emph{non-negative} if $v(S) \geq 0$ for all $S \subseteq N$. 
\end{definition}

\begin{definition}[Monotone game]
A game $\Gamma = (N, v)$ is \emph{monotone} if \(v(S) \leq v(T)\) whenever \(S \subseteq T\).
\end{definition}

\begin{remark}
Notice that a grounded monotone game $\Gamma = (N, v)$ is in particular non-negative, since we have that $0 = v(\emptyset) \leq v(S)$ for any coalition $S$ in $N$.
\end{remark}

\subsubsection{Duality, derivatives and the Möbius transform}\label{Subsubsection: duality and derivatives}

\begin{definition}[Dual of a game]\label{definition: dual game}
Let $\Gamma = (N, v)$ be a game. Its \textit{dual game} $\Gamma^* = (N, v^*)$ is the game determined by the coalition function $v^*$, which is given, for all coalition \(S\), by
\[
v^*(S) \coloneqq v(N) - v(N \setminus S)~.
\]
\end{definition}

\begin{remark}
As already noticed by Fujishige and Murofushi~\cite{fujimoto2007some}, an excellent interpretation of the concept of duality has been formulated by Funaki~\cite{funaki1998dual}, who wrote the following. Usually, the value assigned to a coalition in a cooperative game is interpreted as the maximum that this coalition can obtain in the game given the worst possible context for them. It is the value it can guarantee to itself no matter what. This can be considered the  \emph{pessimistic} interpretation of the value of a game. 

\medskip

According to Funaki, the dual game represents the \emph{optimistic} version of the game: its value at a coalition is what it can get in the best possible situation for them. Given a social context described as a coalition function $v$, the observed value of a coalition $S$ is expected to lie between \(v(S)\) and \(v^*(S)\). 
\end{remark}

Another important concept is the \textit{derivative} of a game at a given player. It can be interpreted as the complete description of the contribution of player $i$ in the game: for each coalition, the derivative provides quantitative information about the marginal contribution of the new player. 

\begin{definition}[Derivative of a game]
Let $\Gamma = (N, v)$ be a $n$-player game and let $i \in N$ be a player. The \textit{derivative} of $\Gamma$ at the player $i$ is defined, for all \(S \subseteq N\), as 
\[
\partial_i v(S) \coloneqq v(S \cup \{i\}) - v(S\setminus \{i\})~. 
\] 
\end{definition}

Alternatively, the derivative of a game at \(i \in N\) can be seen as a game \(\partial_i \Gamma = (N \setminus \{i\}, \partial_i v)\) with \(\partial_i v(S) = v(S \cup \{i\}) - v(S)\) for all \(S \subseteq N \setminus \{i\}\). 

\medskip 

It is possible to define the derivative of a game with respect to a coalition $T \in \mathcal{P}(N)$ by induction. For all $S \in \mathcal{P}(N)$ and for all $i \in T$, we set 
\[
\partial_T v(S) \coloneqq \partial_{T \setminus \{i\}} \left( \partial_i v(S) \right).  
\]
This derivative can be computed directly: for all $S \in \mathcal{P}(N)$ and $T \in N \setminus S$, it is given by 
\[
\partial_T v(S) = \sum_{K \subseteq T} (-1)^{\lvert T \setminus K \rvert} v(S \cup K).
\]

Closely related is the Möbius transform. In general, any function defined on a poset admits a M{\"o}bius inverse, computed in terms of the M{\"o}bius functions of the poset. In our particular case, we are considering the poset of all subsets of $N$, whose M{\"o}bius function on an interval $[T,S]$ is given by $(-1)^{\lvert S \setminus T \rvert}$. See \cite{Rota1} for more details. 

\begin{definition}[Möbius transform]\label{definition: Mobius inversion}
Let $v: \mathcal{P}(N) \longrightarrow \mathbb{R}$ be a function from the poset of subsets of $N = \{1,\cdots,n\}$ to the real numbers. Its \textit{Möbius transform} $\mu^v: \mathcal{P}(N) \longrightarrow \mathbb{R}$ is the function whose value at $S \subseteq N$ is given by
\[
\mu^v(S) \coloneqq \sum_{T \subseteq S} (-1)^{\lvert S \setminus T \rvert} v(T)~.
\]
\end{definition}

This procedure defines an involution on functions, as one can recover the original coalition function $v$ from its Möbius inverse $\mu^v$, through the Zeta transform, given by
\[
\zeta^{\mu^v}(S) = v(S) = \sum_{T \subseteq S} \mu^v(T)~.
\]
Looking at this equation gives some intuition about these transforms. Because the value of a coalition is given by the sum of the values of its subcoalitions of its M{\"o}bius inverse, the latter can be consider as a map associating with any coalition an additional amount, possibly negative, that it is providing to the larger coalition it is a part of. 

\medskip 

One can understand the Möbius transform procedure as a change of basis. A canonical basis of the vector space of functions $v: \mathcal{P}(N) \longrightarrow \mathbb{R}$ is given by the Dirac games $\{\delta_S\}_{S \subseteq N}$~\cite{Shapley1953}. For any $S \subseteq N$, the value of $\delta_S$ is $1$ on $S$ and zero elsewhere. This basis is sent, under the Möbius transform, to the basis given by \textit{unanimity games} $\{u_S\}_{S \subseteq N}$. For any $S \subseteq N$, the value of $u_S$ on a subset $T$ is $1$ if it contains $S$ and zero otherwise.

\begin{lemma}[{\cite[Theorem~2.16, Lemma~2.34]{Grabisch}, \cite[Definition~3]{fujimoto2007some}}]\label{lemma: duality and derivatives}
Let \(\Gamma = (N, v)\) be a game and let $i \in N$ be a player.
\begin{enumerate}
    \item The duality involution and the derivative are compatible in the following sense:
    \[
    \partial_i v^*(S) = \partial_i v(N \setminus S). 
    \]
    \item The duality involution and the M{\"o}bius transform are compatible in the following sense: 
\[
\mu^{v^*}(S) = (-1)^{\lvert S \rvert + 1} \sum_{T \supseteq S} \mu^v(T) = \sum_{T \cap S \neq \emptyset} \mu^v(T). 
\]
\end{enumerate}
\end{lemma}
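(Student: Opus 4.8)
The plan is to treat the two statements separately, since the first is a one-line substitution while the second rests on a double-summation cancellation.

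For part (1) I would simply unfold both sides. Writing $\partial_i v^*(S) = v^*(S\cup\{i\}) - v^*(S\setminus\{i\})$ and inserting the definition $v^*(R) = v(N) - v(N\setminus R)$, the two copies of $v(N)$ cancel, leaving
\[
\partial_i v^*(S) = v\bigl(N\setminus(S\setminus\{i\})\bigr) - v\bigl(N\setminus(S\cup\{i\})\bigr).
\]
The proof then closes on the elementary set identities $N\setminus(S\setminus\{i\}) = (N\setminus S)\cup\{i\}$ and $N\setminus(S\cup\{i\}) = (N\setminus S)\setminus\{i\}$, which exhibit the right-hand side as exactly $\partial_i v(N\setminus S)$. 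Arguing elementwise avoids any case distinction on whether $i\in S$.

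For the first equality of part (2) I would start from the Möbius transform of $v^*$ and substitute the dual:
\[
\mu^{v^*}(S) = \sum_{T\subseteq S}(-1)^{\lvert S\setminus T\rvert}\bigl(v(N) - v(N\setminus T)\bigr).
\]
Assuming $S\neq\emptyset$, the term carrying $v(N)$ vanishes by the signed binomial identity $\sum_{T\subseteq S}(-1)^{\lvert S\setminus T\rvert} = 0$. In the remaining sum I would rewrite $v(N\setminus T)$ by the Zeta transform as $\sum_{U\subseteq N\setminus T}\mu^v(U)$ and exchange the order of summation. The coefficient of a fixed $\mu^v(U)$ then reduces to $\sum_{T\subseteq S\setminus U}(-1)^{\lvert S\setminus T\rvert} = (-1)^{\lvert S\rvert}\sum_{T\subseteq S\setminus U}(-1)^{\lvert T\rvert}$, which is again a signed binomial sum; it vanishes unless $S\setminus U=\emptyset$, i.e. unless $U\supseteq S$, in which case it equals $(-1)^{\lvert S\rvert}$. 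Collecting the surviving terms gives $\mu^{v^*}(S) = (-1)^{\lvert S\rvert+1}\sum_{U\supseteq S}\mu^v(U)$, as claimed.

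For the second equality I would switch from cancellation to a complementation argument: splitting the subsets of $N$ according to whether they meet $S$ yields $\sum_{T\cap S\neq\emptyset}\mu^v(T) = \sum_{T\subseteq N}\mu^v(T) - \sum_{T\subseteq N\setminus S}\mu^v(T)$, where both sums on the right are Zeta transforms, equal to $v(N)$ and $v(N\setminus S)$. This is precisely the belief--plausibility--commonality relation of Dempster--Shafer theory, and it is the step where the bookkeeping is most delicate: one must keep careful track of the sign $(-1)^{\lvert S\rvert+1}$ and of the degenerate contributions, in particular the empty-set term, which is why the identity is stated for $S\neq\emptyset$ under the grounded convention $\mu^v(\emptyset)=0$. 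I expect this reconciliation of the ``superset'' form with the ``intersection'' form to be the only genuinely subtle point; everything else is routine manipulation of signed binomial sums.
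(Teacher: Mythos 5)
The paper does not actually prove this lemma: it is quoted from Grabisch's textbook and from Fujimoto--Murofushi, so there is no internal proof to compare against. Your argument for part (1) is correct and complete: the two copies of $v(N)$ cancel and the set identities $N\setminus(S\setminus\{i\}) = (N\setminus S)\cup\{i\}$ and $N\setminus(S\cup\{i\}) = (N\setminus S)\setminus\{i\}$ finish it. Your derivation of the first equality of part (2) is also correct: the $v(N)$ term dies by the signed binomial identity for $S\neq\emptyset$, and after interchanging summations the coefficient $\sum_{T\subseteq S\setminus U}(-1)^{\lvert S\setminus T\rvert}$ indeed vanishes unless $S\subseteq U$, where it equals $(-1)^{\lvert S\rvert}$, yielding $(-1)^{\lvert S\rvert+1}\sum_{U\supseteq S}\mu^v(U)$.

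The last step, however, does not close, and the difficulty you defer as ``bookkeeping'' is not repairable. Your complementation argument correctly shows $\sum_{T\cap S\neq\emptyset}\mu^v(T) = v(N) - v(N\setminus S) = v^*(S)$: the intersection sum is the \emph{Zeta transform} of $\mu^{v^*}$ evaluated at $S$ (the plausibility of $S$), not the Möbius transform $\mu^{v^*}(S)$. These are genuinely different quantities, so the chain of equalities in the statement is false as written. Concretely, take $N=\{1,2\}$ and $v = u_N$ the unanimity game, so that $\mu^v$ is $1$ on $N$ and $0$ elsewhere; for $S=N$ one gets $\mu^{v^*}(N) = (-1)^{3}\,\mu^v(N) = -1$, whereas $\sum_{T\cap N\neq\emptyset}\mu^v(T) = 1$. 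No tracking of signs or of empty-set terms will reconcile the ``superset'' form with the ``intersection'' form; the correct companion identity is $v^*(S) = \sum_{T\cap S\neq\emptyset}\mu^v(T)$, which is what your computation actually establishes. You should have flagged that you landed on $v^*(S)$ rather than on $\mu^{v^*}(S)$, instead of asserting that the reconciliation would follow from careful sign-chasing.
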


\subsubsection{The core of a game}\label{subsubsection: the core of a game} Let us recall one of the most popular solution concepts in cooperative game theory. See Section \ref{Section: aggregation of solutions} for more solution concepts. 

\medskip

Let us consider a game \(\Gamma = (N, v)\). We can consider the vector space generated by the players of the game $\bbR^N$, where each element of the canonical basis is a player in $N$. 

\medskip

Let $x = (x_1, \cdots, x_n)$ be a vector in $\bbR^N$, where $N = \{1,\cdots,n\}$. For any $S \subseteq N$, we set
    \[
    x(S) \coloneqq \sum_{i \in S} x_i~. 
    \]
Two particular subsets of $\bbR^N$ are of particular interest from the point of view of cooperative games:

\begin{itemize}
    \item a payment vector $x \in \bbR^N$ is a \emph{preimputation} if $x(N) = v(N)$, i.e., if the whole value $v(N)$ is allocated among the players. The set of preimputations is denoted by $X(\Gamma)$.
    \item a payment vector $x \in X(\Gamma)$ is an \emph{imputation} if $x_i \geq v(\{i\})$ for all \(i \in N\), i.e., if each player gets at least its own value. The set of imputations is denoted by $I(\Gamma)$.
\end{itemize} 

We define the \textit{core} of the game \(\Gamma = (N, v)\) as the set of (pre)imputations $x$ where for any coalition $S$, the payment $x(S)$ is greater than their worth $v(S)$. In other terms, it is the polytope $C(\Gamma)$ given by 
\[
C(\Gamma) = \{x \in X(\Gamma) \mid x(S) \geq v(S), \hspace{2pt} \forall S \in \mathcal{P}(N)\}. 
\]
This polytope might be empty. However, when it is not empty, elements in it correspond to ways of distributing the total wealth of the game $v(N)$ in such a way that any proper coalition is better off than if it stayed alone. In this sense, they correspond to solutions of the game, that is, to all the ways in which all the players in $N$ can cooperate, form the grand coalition and distribute the total value of the game amongst themselves. The nonemptiness of the core is a vital property of the game, and it is commonly admitted that a game with an empty core represents a social situation in which cooperation will not naturally emerge. Moreover, many of the other solution concepts are compared to the core when they are defined or studied, whether they include or are included in it, such as the market equilibria in the Arrow-Debreu model of an economy~\cite{debreu1963limit}, model for which they were awarded the Nobel prize in Economics, the Shapley value for convex games~\cite{shapley1971cores}, the nucleolus~\cite{schmeidler1969nucleolus}, or the bargaining set~\cite{AumannMaschler64}. For a more detailed account on the core of cooperative games, see~\cite{grabisch2013core, Grabisch}.

\subsection{Recollections on algebraic operads}
In this subsection, we recall the definition of an algebraic operad, that is, of an operad defined in terms of vector spaces together with linear composition maps. Roughly speaking, algebraic operads are algebraic structures which, on one hand, generalize associative algebras, and on the other, encode other types of algebraic structures. A standard textbook account of this theory can be found in \cite{LodayVallette}.

\subsubsection{Representations of finite groups} Let us fix $G$ a finite group, we recall the definition of a linear representation of the group $G$. 

\begin{definition}[$G$-module]
A $G$\textit{-module}, also called a \textit{representation} of the group $G$, amounts 
to the data of a pair $(V,\rho)$, where $V$ is a vector space and 
\[
\rho: G \longrightarrow \mathrm{GL}(V)~,
\]
is a group morphism, where $\mathrm{GL}(V)$ is the automorphism group of $V$.
\end{definition}

Morphisms of $G$-modules are given by \textit{equivariant} linear maps. Let $(V,\rho_V)$ and $(W,\rho_W)$ be two $G$-modules, a linear map $f: V \longrightarrow W$ is equivariant if, for all $g$ in $G$, 
\[
f \circ \rho_V(g) = \rho_W(g) \circ f~.
\]

\begin{notation}
Given a $G$-module $(V,\rho)$, we will from now on omit the structural map $\rho$, denoting $\rho(g)(v)$ simply by $g \star v$ (the image of a vector $v$ in $V$ by the action of $G$). 
\end{notation}

\begin{example}\leavevmode
\begin{enumerate}
\item Let $V$ be any vector space and let $g \star v = v$ for any $v \in V$ and any $g \in G$. Then $V$ is a vector space endowed with the \textit{trivial} $G$-module structure. In particular, when $V = \mathbb{R}$, it is called the \textit{trivial representation of} $G$. 

\medskip

\item Consider the vector space $\mathbb{R}[G]$ defined as 
\[
\mathbb{R}[G] \coloneqq \bigoplus_{g \in G}\mathbb{R}e_g~,
\]
that is, the vector space whose basis is indexed by elements in $G$, and consider the $G$-action on it given by $g \star e_h = e_{gh}$. This vector space with this $G$-module structure is called the \textit{regular representation of }$G$. 

\medskip

\item Consider $G$ to be the symmetric group of permutations of $n$ elements $\mathbb{S}_n$ and let $V$ be $\mathbb{R}^n$
with its canonical basis. The action $\sigma \star (x_1,\cdots,x_n) = (x_{\sigma(1)},\cdots,x_{\sigma(n)})$ endows $\mathbb{R}^n$ with an $\mathbb{S}_n$-module structure, called the \textit{standard representation} of $\mathbb{S}_n$. 
\end{enumerate}
\end{example}

\subsubsection{$\mathbb S$-modules} We are going to consider families of $\mathbb{S}_n$-modules, for $n \geq 0$, where $\mathbb{S}_n$ are the symmetric group of permutations of $n$ elements. 

\begin{definition}[$\mathbb{S}$-module]
An $\mathbb{S}$-module $M$ amounts to the data of a collection $\{M(n)\}_{n \geq 0}$, where $M(n)$ is an $\mathbb{S}_n$-module for all $n \geq 0$. 
\end{definition} 

A morphism of $\mathbb{S}$-modules $f: M \longrightarrow N$ is the data of a family of $\mathbb{S}_n$-equivariant maps $f(n): M(n) \longrightarrow N(n)$ for all $n \geq 0$.

\begin{remark}[Pictorial description]
The elements in $M(n)$ are called \textit{arity} $n$ \textit{operations}, and can be depicted as rooted trees with $n$ inputs or leaves and one output or root, where the leaves have labels ranging from $1$ to $n$. The action of $\mathbb{S}_n$ on an element $m_n$ in $M(n)$ can be depicted as the action that permutes the labels of the leaves

\[
\includegraphics[width=75mm,scale=1]{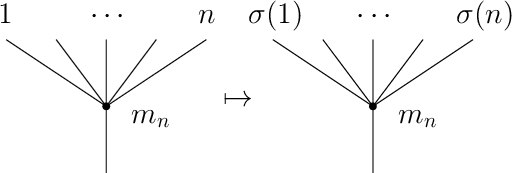}
\]

of the rooted tree $m_n$. This action encodes the \textit{symmetries} of the operation $m_n$. Beware that they might be invariant under this action. For example, an arity $2$ operation $m_2$ might satisfy $(21) \star m_2 = m_2$; in that case it encodes a symmetric operation, like a commutative product.
\end{remark}

\subsubsection{Operads defined in terms of partial compositions} A partial operad is the data of an extra structure on an $\mathbb{S}$-module. Following the description of elements in arity $n$ as rooted trees with $n$ inputs and one output, the partial compositions give operations that allow us to \textit{insert} the root of any given tree into one of the inputs of another tree.

\begin{notation}
When dealing with several sets at the same time, we will often use the notation $[k]$ for the set $\{1, \cdots, k\}$ with $k$ elements.
\end{notation}

\begin{definition}[Unital partial operad]\label{definition: unital partial operad}
A \textit{unital partial operad} $\operad P$ amounts to the data of a triple $(\operad P,\{\circ_i\},\eta)$, where $\operad P$ is an $\mathbb{S}$-module endowed with \textit{partial composition maps}:
\[
\circ_i: \operad P(n) \times \operad P(m) \longrightarrow \operad P(n+m-1)~,
\]
for all $1 \leq i \leq n$, and with a unit map $\eta: \mathbb{R} \longrightarrow \operad P(1)$. The partial composition maps have to be \textit{bilinear} and have to satisfy the following two axioms, for any $\lambda \in \operad P(p), \mu \in \operad P(q)$ et $\nu \in \operad P(r)$, where $p,q,r \geq 0$. 

\begin{enumerate}

\medskip

    \item The \textit{sequential axiom} 
    \[
    (\lambda \circ_i \mu) \circ_{i+j-1} \nu = \lambda \circ_i (\mu \circ_j \nu)
    \]
    for any $i \in [p]$ and any $j \in [q]$.

    \medskip

    \item The \textit{parallel axiom}
    \[
    (\lambda \circ_i \mu) \circ_{k+q-1} \nu = (\lambda \circ_k \nu) \circ_i \mu
    \]
    for any $1 \leq i < k \leq p$.
\end{enumerate}

\medskip 

Furthermore, the partial composition maps are compatible with the action of the symmetric groups. For any $\lambda \in \operad P(p), \mu \in \operad P(q)$:
\begin{enumerate}
\medskip

    \item we have that
    \[
    \lambda \circ_i (\sigma \star \mu) = {\sigma'} \star (\lambda \circ_i \mu)~,
    \]
    where $\sigma$ is in $\mathbb S_q$, and where $\sigma'$ is the unique permutation in $\mathbb S_{p+q-1}$ which acts as $\sigma$ on $\llbracket i, i+q-1 \rrbracket$ and as the identity elsewhere.

\medskip
    \item We have that 
    \[
    (\tau \star \lambda) \circ_{\tau(i)} \mu = \tau' \star (\lambda \circ_i \mu)~,
    \]
    where $\tau$ is in $\mathbb S_p$, and where $\tau'$ is the unique permutation in $\mathbb S_{p + q - 1}$ which acts as $\tau$ on $[p+q-1] \setminus \llbracket i + 1, i + q - 1 \rrbracket$ and which sends $\llbracket i, i + q - 1 \rrbracket$ to $\llbracket \tau(i), \tau(i) + q - 1 \rrbracket$.
\end{enumerate}

\medskip

Finally, the unit map $\eta$ specifies an element $\eta(1)$ in $\operad P(1)$ which acts as a unit with respect to the partial composition maps: for any $\mu$ in $P(n)$, $\mu \circ_i \eta(1) = \mu$ and $\eta(1) \circ_1 \mu = \mu$, for all $n \geq 0$ and $1 \leq i \leq n$. 
\end{definition}

A morphisms of operads $f: \operad P \longrightarrow \operad Q$ amounts to the data of a morphism of $\mathbb{S}$-modules $f$ which commutes with the partial composition maps and the units.

\begin{remark}[Pictorial description]
Remember that one can depict an operation in $\operad P(n)$ and $\operad P(m)$ as, respectively, a rooted trees with $n$ inputs and a rooted tree with $m$ inputs. Given $\mu_1$ in $\operad P(n)$ and $\mu_2$ in $\operad P(m)$, the partial composite $\mu_1 \circ_i \mu_2$ can be depicted as the rooted tree with $(n+m-1)$ inputs obtained by inserting the tree $\mu_2$ in the $i$-th leaf of the tree $\mu_1$
\[
\includegraphics[width=100mm,scale=1]{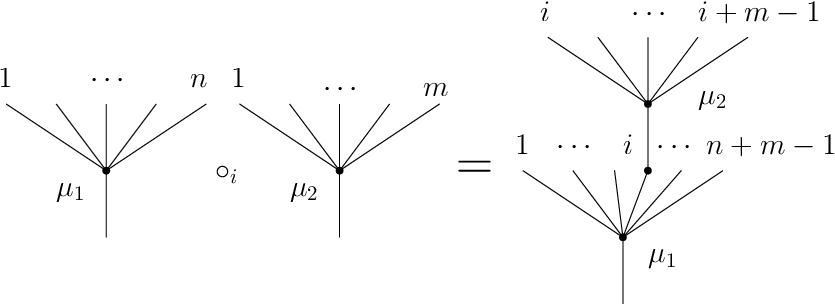}
\]

We invite the unfamiliar reader to represent, as an exercise, the axioms and the compatibilities with respect to the actions of the symmetric groups in terms of rooted trees, as they are easier to conceptualize.
\end{remark}

\begin{notation}\label{notation: linear operads versus other operads}
One can define operads in different contexts (or symmetric monoidal categories), such as sets, polytopes, topological spaces, affine spaces and many more. For example, a \textit{set operad} is a collection of sets $\{\operad P(n)\}$ together with partial composition \textit{set maps} $\{\circ_i\}$ which satisfy the same axioms as in Definition \ref{definition: unital partial operad}. If $\{\operad P(n)\}$ is now a family of polytopes, then the maps $\{\circ_i\}$ are required to satisfy an extra compatibility condition with respect to this polytope structure, see for instance \cite[Definition 7]{DiagonalAssociahedra}. And finally, if  $\{\operad P(n)\}$ is a family of vector spaces, then the maps $\{\circ_i\}$ are required to be \textit{bilinear} and this corresponds precisely to Definition \ref{definition: unital partial operad}. In order to distinguish operads in vector spaces from other types of operads, we will sometimes use the terms \textit{linear operad} or \textit{algebraic operad} to refer to them.
\end{notation}

\subsubsection{Total composition of an operad} 
Operads can also be defined in terms of total composition maps, as it was done in \cite{May72}. These are families of maps 

\[
\gamma(i_1, \cdots, i_k): \PP(k) \otimes \PP(i_1) \otimes \cdots \otimes \PP(i_k) \longrightarrow \PP(n)~,
\]
\vspace{0.1pc}

for any $k \geq 0$ and any $k$-tuple $(i_1,\cdots,i_k)$ such that $i_1 + \cdots i_k =n$, which satisfy explicit equivariance and the associativity conditions, as stated in \cite[Proposition 5.3.1]{LodayVallette} for example. Notice that in the definition of these total composition maps we consider the tensor product of these vector spaces in order to encode the multilinearity conditions that they satisfy. 

\begin{remark}[Pictorial description]
An element in $\PP(k) \otimes \PP(i_1) \otimes \cdots \otimes \PP(i_k) $ can be depicted by a finite sum of \textit{two-levelled rooted trees}, where the bottom tree as $k$-inputs and the top trees have $i_j$-inputs each. We can depict an operation $\mu_k$ in $\operad P(k)$ at the bottom and operations $\mu_{i_j}$ in $\operad P(i_j)$ at the top, for $1 \leq j \leq k$, as follows:
\[
\includegraphics[scale=0.725]{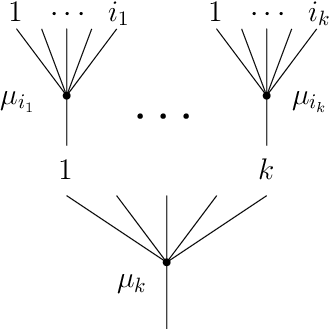}
\] 
where $i_1 + \cdots + i_k = n$. Then, the image of the total composition map $\gamma(i_1, \cdots, i_k)$ is precisely given by inserting every operation at the top into its corresponding leaf at the bottom.
\end{remark}

\begin{proposition}[{\cite[Proposition 5.3.4]{LodayVallette}}]\label{prop: compose total = composee partielle}
The data of an operad defined in terms of total composition maps is equivalent to the data of a unital partial operad.
\end{proposition}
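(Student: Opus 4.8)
The plan is to exhibit mutually inverse constructions passing between the two sets of data and to check that the respective axioms correspond under these constructions. Starting from a unital partial operad $(\operad P, \{\circ_i\}, \eta)$, I would define the total composition maps by iterating the partial compositions: for $\mu \in \operad P(k)$ and $\nu_j \in \operad P(i_j)$ with $i_1 + \cdots + i_k = n$, set
\[
\gamma(i_1, \cdots, i_k)(\mu; \nu_1, \cdots, \nu_k) \coloneqq \left( \cdots \left( (\mu \circ_k \nu_k) \circ_{k-1} \nu_{k-1} \right) \cdots \right) \circ_1 \nu_1~,
\]
inserting the operations from the last input to the first, so that the slot index of each $\nu_j$ is never shifted by an earlier insertion. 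Conversely, starting from total composition maps $\gamma$, I would recover the partial compositions by inserting the unit in all slots but one:
\[
\mu \circ_i \nu \coloneqq \gamma(1, \cdots, 1, m, 1, \cdots, 1)(\mu; \eta(1), \cdots, \eta(1), \nu, \eta(1), \cdots, \eta(1))~,
\]
where $\nu \in \operad P(m)$ sits in the $i$-th slot and the unit $\eta(1) \in \operad P(1)$ occupies the remaining $k-1$ slots.

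Next I would check that the first construction satisfies the axioms of the total composition definition. Multilinearity of $\gamma$ is immediate from the bilinearity of the partial compositions, and the equivariance of $\gamma$ with respect to the symmetric groups follows by unwinding the two equivariance axioms of Definition \ref{definition: unital partial operad} applied at each insertion step. The associativity of the total composition is the heart of the matter: it states that inserting a family of trees at the top and then a further family at the very top agrees with first grafting the top two levels and then inserting into the bottom. I would prove this by repeatedly applying the sequential axiom, which governs nested insertions along a single branch, together with the parallel axiom, which governs insertions into distinct branches, reducing both sides of the equation to the same fully iterated expression of partial compositions.

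For the reverse construction, I would verify that the partial compositions $\circ_i$ defined from $\gamma$ satisfy the sequential and parallel axioms. Each of these is a special case of the associativity of $\gamma$ in which all but one or two of the slots are filled by the unit $\eta(1)$, using the unit axioms $\mu \circ_i \eta(1) = \mu$ and $\eta(1) \circ_1 \mu = \mu$ to absorb the trivial insertions. Likewise, the two symmetric-group compatibilities for $\circ_i$ descend from the equivariance of $\gamma$ by the same unit-insertion specialization.

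Finally, I would show the two assignments are mutually inverse. The round trip partial $\to$ total $\to$ partial returns the original $\circ_i$, because inserting the unit into every slot except the $i$-th of the iterated composition collapses, by the unit axioms, to the single partial composition $\mu \circ_i \nu$. The round trip total $\to$ partial $\to$ total returns the original $\gamma$, because the iterated partial composition built from the unit-insertion formula reassembles, via the associativity of $\gamma$, into the single total composition. The main obstacle I anticipate is purely bookkeeping: making the order of insertions and the index shifts in the iterated formula precise enough that well-definedness, namely independence of the chosen order of insertion, follows cleanly from the sequential and parallel axioms, and dually that associativity of $\gamma$ translates into exactly these two axioms. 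This is a coherence-type verification in which the combinatorics of the index arithmetic, rather than any conceptual difficulty, is what requires care.
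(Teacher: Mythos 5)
Your proposal is correct and follows essentially the same route as the paper, which itself only sketches the two mutually inverse constructions (iterating partial compositions to build $\gamma$, and plugging the unit into all but one slot to recover $\circ_i$) and defers the full verification to \cite[Proposition 5.3.4]{LodayVallette}. Your choice to insert from the last input to the first to avoid index shifts is a harmless variant of the paper's nested iteration, and your outline of the axiom checks and the round-trip arguments is the standard coherence verification.
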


\begin{proof}[Sketch of proof]
Let $(\PP,\{\circ_i\},\eta)$ be a unital partial operad, one defines the maps 
\[
\gamma(i_1, \cdots, i_k): \PP(k) \otimes \PP(i_1) \otimes \cdots \otimes \PP(i_k) \longrightarrow \PP(n)~,
\]
given by iterating $k$ times the partial composition maps $(- \circ_1 (- \circ_2 ( \cdots (\circ_k -)))))$ lands on $\PP(k + i_1 + \cdots + i_k -k)$. Conversely, given the maps $\gamma(i_1, \cdots, i_k)$, we can construct partial composition maps by plugging the unit almost everywhere $\mu \circ_i \nu \coloneqq \gamma(\mu; \mathrm{id}, \cdots,  \nu, \cdots, \mathrm{id})$, except at the $i^{\text{th}}$ place. It can be checked that, under this correspondence, both sets of axioms can be identified.
\end{proof}

\begin{remark}
Yet another equivalent definition of an operad is as a monoid in the monoidal category of $\mathbb{S}$-modules, where the  monoidal product $\circ$ called the \textit{composition product} is defined as 

\[
M \comp L(n) \coloneqq \bigoplus_{k\geq 0} M(k) \otimes_{\mathbb{S}_k} \left( \bigoplus_{i_1 + \cdots + i_k = n} \mathrm{Ind}_{\mathbb{S}_{i_1} \times \cdots \times \mathbb{S}_{i_k}}^{\mathbb{S}_n} (L(i_1) \otimes \cdots \otimes L(i_k))\right)~.
\]
\vspace{0.1pc}

for any two $\mathbb{S}$-modules $M,L$. We refer to \cite[Chapter 5]{LodayVallette} for more details.
\end{remark}

\subsubsection{Free operads, generators and relations} Let us first briefly explain what we mean by generators, relations and presentations. Recall that the \textit{free group} on a set $X = \{x_1,\cdots,x_n\}$, denoted by $\mathbb{F}(X)$, is obtained by considering the set of all possible words using the elements $\{x_i\}$ and their formal inverses $\{x_i^{-1}\}$. And a (finite) presentation of a group $G$ amounts to an isomorphism 
\[
G \cong \mathbb{F}(E)/(R)~,
\]
where $E$ is some finite set, and where we consider the quotient of this free group on $E$ by the normal subgroup generated by a finite number of words $R$. The key ingredient to make sense of this is simply the \textit{existence} of the free group on arbitrary generators.

\medskip

Let $M$ be an $\mathbb{S}$-module. The \textit{free operad} on $M$, denoted by $\mathbb{T}(M)$, is given by all possible partial compositions of the generators in $M$. Concretely, one can represent the $\mathbb{S}$-module of all these possible partial compositions of elements in $M$ as rooted trees with vertices labeled by the elements in $M$. To construct $\mathbb{T}(M)$, one then sums all possible labels by elements in $M$ over all possible rooted trees, where the arity corresponds to the numbers of leaves in the tree. 

\medskip

An explicit construction is as follows. Let $\mathrm{RT}$ denote the set of all rooted trees, and let $t$ be a rooted tree. The \textit{tree tensor} of $M$ is defined as 
\[
M(t) \coloneqq \bigotimes_{v \in \mathrm{vert}(t)} M(|\mathrm{in}(v)|)~,
\]
where $v$ ranges over all the vertices of the rooted tree $t$, and where $|\mathrm{in}(v)|$ denotes the number of incoming edges of the vertex $v$. The precise definition of the above tensor product over the ordered set $\mathrm{vert}(t)$ is given in \cite[Section 5.1.14]{LodayVallette}; roughly, one can think of it as taking the tensor product of the different $M(|\mathrm{in}(v)|)$ over all the elements $v$ in $\mathrm{vert}(t)$, were we have equated the action that permutes the elements $v$ in $\mathrm{vert}(t)$ with the action that permutes the order of the different $M(|\mathrm{in}(v)|)$ that appear in the tensor product. The free operad $\mathbb{T}(M)$ is then given, as an $\mathbb{S}$-module, by 
\[
\mathbb{T}(M)(n) \coloneqq \bigoplus_{t \in \mathrm{RT}_n} M(t)~, 
\]
where the sum ranges over all rooted trees $t$ of arity $n$. The free operad $\mathbb{T}(M)$ on an $\mathbb{S}$-module $M$ admits canonical composition maps, given by grafting rooted trees, and a canonical unit. We refer to \cite[Chapter 5, Section 6]{LodayVallette} for further details. 

\begin{example}
Let $M = (0,0, \mathbb{R}[\mathbb{S}_2].\mu, 0, 0, \cdots)$ be the $\mathbb{S}$-module given by the regular representation of $\mathbb{S}_2$ in arity $2$ and zero elsewhere. This corresponds to a single arity $2$ operation $\mu$ with no symmetries. The basis elements of the free operad $\mathbb{T}(M)$ are pictorially given as follows

\[
\includegraphics[scale=0.675]{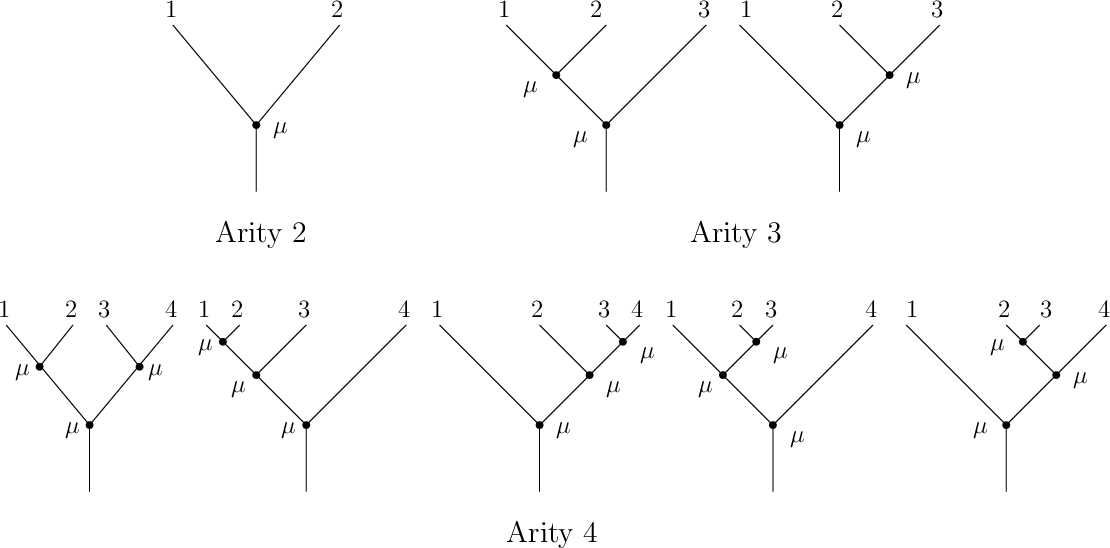}
\] 

in each corresponding arity, up to freely permuting the labels on the leaves of the rooted trees. In fact, in arity $n$, the vector space $\mathbb{T}(M)(n)$ admits a basis given by all binary rooted trees with $n$-leaves.
\end{example}

Let $\mathcal{P}$ be an operad. A \textit{presentation} $(M,R)$ of $\mathcal{P}$ amounts to the data of an $\mathbb{S}$-module $M$, called the \textit{generators}, a sub-$\mathbb{S}$-module $R$ of $\mathbb{T}(M)$, called the relations, and an isomorphism of operads
\[
\mathcal{P} \cong \mathbb{T}(M)/(R)~,
\]

where on the right we consider the free operad on $M$ quotiented by the \textit{operadic ideal} generated by the relations $R$. This essentially means that $\mathcal{P}$ is obtained by taking finite linear sums of all the possible iterated compositions of the operations in $M$ and then identifying terms along the relations specified in $R$. 

\begin{example}[The associative operad]\label{example: associative operad}
The associative operad $\mathcal{A}ss$ is given by the following presentation 
\[
\mathcal{A}ss \coloneqq \mathbb{T}(M_1)/(R_1)~, 
\]
where $M_1 = (0,0, \mathbb{R}[\mathbb{S}_2].\mu, 0, 0, \cdots)$ is the $\mathbb{S}$-module given by the regular representation of $\mathbb{S}_2$ in arity $2$ and zero elsewhere and where $R_1 \subset \mathbb{T}(M_1)(3)$, the generator of the ideal of relations, is the $\mathbb{S}$-module generated by 
\[
\mu \circ_1 \mu = \mu \circ_2 \mu~,
\]
which can depicted as 

\[
\includegraphics[scale=0.5]{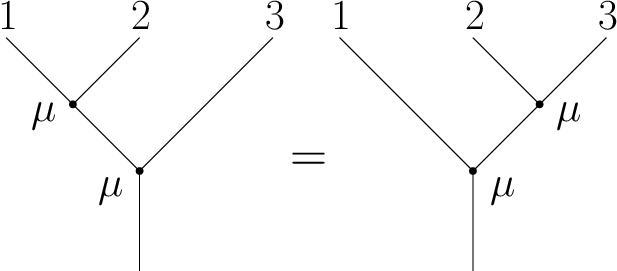}
\] 

It can be computed that $\mathcal{A}ss(n)$ is the regular representation $\mathbb{R}[\mathbb{S}_n]$ of $\mathbb{S}_n$ for all $n \geq 1$, which is of dimension $n!$ over $\mathbb{R}$. Algebras over this operad correspond to classical non-unital associative algebras. For more information on this operad, see \cite[Chapter 9, Section 9.1.3]{LodayVallette}. 
\end{example}

\begin{example}[The commutative operad]
The commutative operad $\mathcal{C}om$ is given by the following presentation 
\[
\mathcal{C}om \coloneqq \mathbb{T}(M_2)/(R_2)~, 
\]
where $M_2 = (0,0, \mathbb{R}.\nu, 0, 0, \cdots)$ is the $\mathbb{S}$-module given by the trivial representation of $\mathbb{S}_2$ in arity $2$ and zero elsewhere. This corresponds to an arity $2$ operad $\nu$ which is \textit{symmetric}. The generator of the ideal of relations $R_2 \subset \mathbb{T}(M_2)(3)$ is again given by 
\[
\mu \circ_1 \mu = \mu \circ_2 \mu~,
\]
which imposes the associativity condition on the product $\nu$. It can be computed that $\mathcal{C}om(n)$ is the trivial representation $\mathbb{R}$ of $\mathbb{S}_n$ for all $n \geq 1$, which is of dimension $1$ over $\mathbb{R}$. Algebras over this operad correspond to classical non-unital commutative algebras. Notice how the symmetry of the commutative product is encoded by the action of the symmetric groups and not by a supplementary relation. For more information on this operad, see \cite[Chapter 13, Section 13.1]{LodayVallette}. 
\end{example}

\begin{example}[The permutative operad]\label{example: permutative operad}
The permutative operad $\mathcal{P}erm$ is given by the following presentation 
\[
\mathcal{P}erm \coloneqq \mathbb{T}(M_3)/(R_3)~, 
\]
where $M_3 = (0,0, \mathbb{R}[\mathbb{S}_2].\rho, 0, 0, \cdots)$ is the $\mathbb{S}$-module given by the regular representation of $\mathbb{S}_2$ in arity $2$ and zero elsewhere and where $R_3 \subset \mathbb{T}(M_3)(3)$, the generator of the ideal of relations, is the $\mathbb{S}$-module generated by 
\[
\rho \circ_1 \rho = \rho \circ_2 \rho = \rho \circ_2 ((21) \star \rho)~, 
\]
which we call the \textit{(right) permutativity relation,} and which can depicted as 

\[
\includegraphics[scale=0.5]{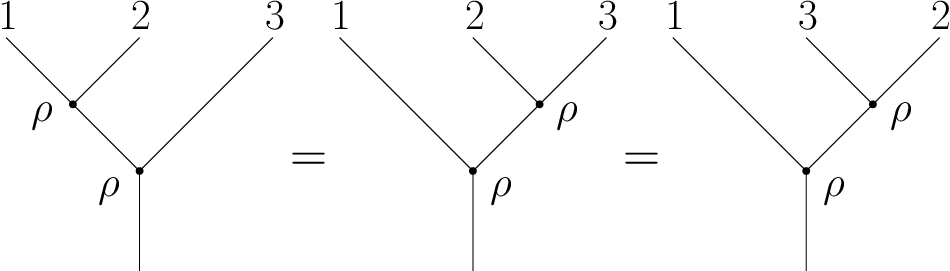}
\] 

It can be computed that $\mathcal{P}erm(n)$ is the standard representation $\mathbb{R}^n$ of $\mathbb{S}_n$ for all $n \geq 1$, which is of dimension $n$ over $\mathbb{R}$. Algebras over this operad correspond to a type of algebraic structure called permutative algebras, see \cite[Chapter 13, Section 13.4.6]{LodayVallette} for more details on this operad. 
\end{example}

\vspace{1.5pc}

\section{The operad of cooperative games}

\vspace{2pc}

We define a linear operad structure on the collection of all cooperative games. The partial composition of two games gives a game where the bottom game is rescaled and where the value of the top game, multiplied by the marginal contribution of the player into which it is inserted, is added. Furthermore, we compare our composition with all the previous notions of compositions of cooperative games present in the literature, such as \cite{ShapleyComposition} or \cite{OwenTensor}, and explain how to recover them as particular cases of our general construction. 

\subsection{The partial composition of cooperative games}\label{subsection: partial composition}
We define \textit{ab initio} partial composition maps for cooperative games and show that they form a linear operad structure. These composition maps can be understood as the universal bilinear composition maps that preserve unanimity games. Recall that we work with \textit{grounded} games, meaning that $v(\emptyset) = 0$.

\medskip

The set $\mathbb{G}(n)$ of all $n$-player games has a natural $\mathbb{R}$-vector space structure. Let $\Gamma_1 =(N,v_1)$ and $\Gamma_2 = (N,v_2)$ be two games, we define 
\[
\lambda \cdot \Gamma_1 \coloneqq (N, \lambda \cdot v_1) \quad \text{and} \quad \Gamma_1 + \Gamma_2 \coloneqq (N, v_1 + v_2)~,
\]
by multiplying and adding the coalition functions that define the games. It is a $2^n-1$-dimensional vector space, since we consider grounded games. It is canonically included in the set of non-necessarily grounded games $\mathbb{G}^\emptyset(n)$, which is a real vector space of dimension $2^n$. 

\medskip

A convenient base of these vector spaces has been identified by Shapley~\cite{Shapley1953}. It is formed of \emph{Dirac games}, one per coalition. The \(n\)-ary Dirac game associated with the coalition \(S\) is denoted by \(\delta_S\) and defined as the simple game satisfying \(\delta_S(T) = 1\) if and only if \(T = S\). In this base, the coalition functions of games are simply written \(v = \sum_{S \subseteq N} v_S \delta_S\). Shapley gave another base in the same paper, formed by \emph{unanimity games}. The unanimity game associated with the coalition \(S\) is denoted by \(u_S\) and defined as the simple game satisfying \(u_S(T) = 1\) if and only if \(T \supseteq S\). In this base, the coalition functions of games are written \(v = \sum_{S \subseteq N} \mu^v_S u_S\), with \(\mu^v\) being the M{\"o}bius inverse of \(v\). Notice that, contrarily to the Dirac games, the unanimity games are monotone. 

\begin{lemma}
The collection $\{\mathbb{G}(n)\}_{n \geq 0}$ has a natural $\mathbb{S}$-module structure, where the $\hspace{1pt}\mathbb{S}_n$-module on $\mathbb{G}(n)$ is constructed as follows. Let $\sigma$ be in $\mathbb{S}_n$ and let $\Gamma = (N,v)$ be a $n$-player game on the set $N = \{1,\cdots,n\}$. The game $\sigma \star \Gamma$ is defined on the set of players $\sigma \star N = \{\sigma(1),\cdots,\sigma(n)\}$ with the coalition function \( \sigma \star v \) defined, for all coalition \(S\), by
\[
\sigma \star v(S) \coloneqq v(\{\sigma^{-1}(i) \mid i \in S\} )~.
\]
\end{lemma}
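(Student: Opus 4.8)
The plan is to recognize this action as an instance of the standard representation carried by the space of functions on a $G$-set, and then to check that it restricts to the grounded games. First I would view $\mathbb{G}(n)$ (and $\mathbb{G}^\emptyset(n)$) as a subspace of the function space $\mathbb{R}^{\mathcal{P}(N)}$ of all real-valued functions on the power set of $N = \{1,\cdots,n\}$. The symmetric group $\mathbb{S}_n$ acts on $N$ and hence on subsets by $\sigma \cdot S \coloneqq \{\sigma(i) \mid i \in S\}$; the point is that $S \mapsto \sigma \cdot S$ is itself a (left) action of $\mathbb{S}_n$ on $\mathcal{P}(N)$, i.e. $(\sigma\tau)\cdot S = \sigma\cdot(\tau\cdot S)$ and $\mathrm{id}\cdot S = S$, which is immediate from the definition. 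The proposed formula is then exactly the induced representation on functions, $\sigma \star v(S) = v(\sigma^{-1}\cdot S)$.

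The key step is to verify that $v \mapsto \sigma \star v$ is a genuine group action, i.e. that $\sigma \mapsto (\sigma \star -)$ is a group homomorphism rather than an anti-homomorphism. This is precisely where the inverse in the definition is needed: for any $\sigma,\tau \in \mathbb{S}_n$ and any coalition $S$,
\[
(\sigma\tau)\star v(S) = v\bigl((\sigma\tau)^{-1}\cdot S\bigr) = v\bigl(\tau^{-1}\cdot(\sigma^{-1}\cdot S)\bigr) = (\tau\star v)(\sigma^{-1}\cdot S) = \sigma\star(\tau\star v)(S),
\]
using $(\sigma\tau)^{-1} = \tau^{-1}\sigma^{-1}$ and the fact that $\cdot$ is a left action on $\mathcal{P}(N)$. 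Together with $\mathrm{id}\star v(S) = v(S)$, this gives the action axioms. Linearity in $v$ for each fixed $\sigma$ is immediate, since evaluating at the fixed coalition $\sigma^{-1}\cdot S$ is a linear operation; hence each $\sigma \star (-)$ is a linear endomorphism, and it is invertible with inverse $\sigma^{-1}\star(-)$, so the assignment lands in $\mathrm{GL}(\mathbb{G}(n))$.

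Finally I would check that the subspace of grounded games is stable: since $\sigma^{-1}\cdot\emptyset = \emptyset$, one has $\sigma \star v(\emptyset) = v(\emptyset) = 0$, so $\sigma\star\Gamma$ is again grounded (and the same computation shows $\mathbb{G}^\emptyset(n)$ is stable as well). This exhibits each $\mathbb{G}(n)$ as an $\mathbb{S}_n$-module, and as this holds for every $n \geq 0$, the collection $\{\mathbb{G}(n)\}_{n\geq 0}$ is an $\mathbb{S}$-module, as claimed. I do not expect a genuine obstacle here, as the argument is a routine verification; the only subtle point worth emphasizing is the role of the inverse $\sigma^{-1}$, which is exactly what converts the natural contravariant behaviour of pullback along $\sigma$ into a covariant left action.
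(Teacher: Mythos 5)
Your proposal is correct and takes essentially the same route as the paper: the paper's proof simply asserts that the identity $(\sigma\tau)\star\Gamma = \sigma\star(\tau\star\Gamma)$ is straightforward to check, and your argument is exactly that check, carried out via the standard pullback-along-$\sigma^{-1}$ action on functions on $\mathcal{P}(N)$, together with the (correct) observations about linearity and stability of the grounded subspace. Nothing is missing or different in substance; you have merely written out the verification the paper leaves implicit.
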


\begin{proof}
It is straightforward to check that, for all $n \geq 0$, all $\sigma, \tau$ in $\mathbb{S}_n$ and all $\Gamma \in \mathbb{G}(n)$, we have 
\[
(\sigma \tau) \star \Gamma = \sigma \star (\tau \star \Gamma)~.
\]
\end{proof}

\begin{remark}
This $\mathbb{S}_n$-module structure on $\mathbb{G}(n)$ has been considered in \cite{jeuxrepdesgroupessymetriques2,jeuxrepdesgroupessymetriques1}; there the authors study the irreducible components of $\mathbb{G}(n)$ as a representation of $\mathbb{S}_n$, which can be described in terms of $k$\textit{-inessential games}, a generalization of additive games. 
\end{remark}

\textbf{The partial composition of totally ordered sets.} 
Our goal is to first define the partial composition of player sets and coalitions. Let $A = \{a_1, \cdots, a_n\}$ and $B = \{b_1, \cdots, b_m \}$ be two totally ordered sets. We define, for $i \in [n]$, the totally ordered set 
\[
A \diamond_i B \coloneqq \{a_1, \ldots, a_{i-1} \} \cup B \cup \{a_{i+1}, \ldots, a_n\}~, 
\]
where the order is the following: players in $B$ keep their internal order and are placed after $a_{i-1}$ and before $a_{i+1}$. Now let $S \subseteq A$ and $T \subseteq B$ be two coalitions and let $i \in [n]$. We define the partial composition of coalitions as 
\[
S \diamond_i T \coloneqq \begin{cases}
\big( S \cap \{a_1, \ldots, a_{i-1}\} \big) \cup T \cup \big( S \cap \{a_{i+1}, \ldots, a_n\} \big) & \text{if the} ~~ i\text{th player of } A \text{ is in } S, \\
S & \text{otherwise}, 
\end{cases}
\]
and where the total order on $S \diamond_i T$ is inherited from the obvious inclusion $S \diamond_i T \subseteq A \diamond_i B$. In other words, if the $i$th player of $A$ is in $S$, then it gets replaced by $T$ and, otherwise, we keep $S$, now seen as a coalition in $A \diamond_i B$.

\begin{definition}[Partial composition of games]\label{definition: partial composition of games}
Let $n, m$ be two positive integers. Let $\Gamma_A = (A, \alpha) \in \mathbb{G}(n)$ and $\Gamma_B = (B, \beta) \in \mathbb{G}(m)$ be two games and let $i \in A$. Their \textit{partial composition} $\Gamma_A \circ_i \Gamma_B$ is defined as follows: the player set is given by $A \diamond_i B = \left(A \setminus \{i\}\right) \cup B$, and its coalition function $\gamma$ is defined, for all $S \subseteq A \diamond_i B$, by
\[
\gamma(S) = \beta(B)\alpha(S_A) + \partial_i \alpha(S_A) \cdot \beta(S_B), 
\]
where $S_A = S \cap A$ and $S_B = S \cap B$. 
\end{definition}

\begin{remark}\label{rmk: meaning of the composition}
Let us explain the meaning of this formula from a game theoretical point of view. The idea behind is that the composition of the two games is given by the sum of their respective coalition functions on the composite set of players, where the second function is multiplied by the marginal contribution to the first game of the player that has been replaced. However, since both games need to be ``comparable'' in order for this sum to be meaningful, we need to rescale the first game by the total value of the second one. 
\end{remark}

\begin{notation}[Trivial game]
We  denote by $\mathbf{1}$ the unique normalized, grounded $1$-player game.
\end{notation}

\begin{theorem}\label{thm: gamers operad}
Let $\mathbb{G}$ be the $\mathbb{S}$-module of cooperative games. The partial composition maps
\[ 
\begin{tabular}{lccc}
    $\circ_i:$ & $\mathbb{G}(n) \times \mathbb{G}(m)$ & $\xrightarrow{\hspace{1cm}}$ & $\mathbb{G}(n + m - 1)$ \\[0.1cm]
    & $\left( \Gamma_1, \Gamma_2 \right)$ & $\xmapsto{\hspace{1cm}}$ & $\Gamma \coloneqq \Gamma_1 \circ_i \Gamma_2$
\end{tabular} 
\]
and the unit map which sends $\mathbf{1}$ to the trivial game endow $\mathbb{G}$ with a unital partial operad structure. 
\end{theorem}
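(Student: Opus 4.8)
The plan is to reduce everything to the basis of unanimity games, where the bilinear composition maps become purely combinatorial, and then to verify the operad axioms on this basis. Bilinearity of $\circ_i$ is immediate from Definition \ref{definition: partial composition of games}, since $\alpha(S_A)$ and $\partial_i \alpha(S_A)$ are linear in $\alpha$ while $\beta(B)$ and $\beta(S_B)$ are linear in $\beta$. Because the sequential and parallel axioms, as well as the two equivariance relations, are multilinear identities in the games involved, it suffices to check them on a basis, and I would take the unanimity basis $\{u_S\}$.

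The computational heart of the proof is the composite of two unanimity games. Writing $\Gamma_A = (A, u_P)$ and $\Gamma_B = (B, u_Q)$ with $P \subseteq A$ and $Q \subseteq B$, and fixing $i \in A$, one has $u_Q(B) = 1$, while the marginal term $\partial_i u_P(S_A)$ vanishes when $i \notin P$ and equals $u_{P \setminus \{i\}}(S_A)$ (as a function on $A \setminus \{i\}$) when $i \in P$. Substituting into the defining formula and distinguishing the two cases yields the clean rule
\[
u_P \circ_i u_Q = \begin{cases} u_{(P \setminus \{i\}) \cup Q} & \text{if } i \in P, \\ u_P & \text{if } i \notin P, \end{cases}
\]
as unanimity games on the player set $A \diamond_i B$. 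This is precisely the assertion that the maps $\circ_i$ preserve unanimity games.

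With this rule in hand, the remaining axioms become elementary. For the sequential axiom I would take $u_P$ on $A$, $u_Q$ on $B$ and $u_R$ on $D$, compose at a player $a \in A$ and then at a player $b \in B$, and check by a short case analysis on the memberships $a \in P$ and $b \in Q$ that both $(u_P \circ_a u_Q) \circ_b u_R$ and $u_P \circ_a (u_Q \circ_b u_R)$ reduce to the same unanimity game in each case; the parallel axiom is handled identically using two distinct players $a, a'$ of $A$, where the two insertions act on independent parts of $P$. The equivariance relations are best read as naturality: since the composition depends on $\beta$ only through $\beta(B)$ and $\beta(S_B)$ and on $\alpha$ only through the sets $S_A$ and the distinguished player $i$, it commutes with any relabelling of the player sets, and unwinding this naturality for the specific permutations $\sigma'$ and $\tau'$ of Definition \ref{definition: unital partial operad} gives the two equivariance axioms. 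Finally, the unit is $\mathbf{1} = u_{\{*\}}$, and the rule above gives $u_P \circ_i u_{\{*\}} = u_P$ and $u_{\{*\}} \circ_1 u_Q = u_Q$ at once (identifying the single player $*$ with $i$), which establishes the unit axioms.

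The main obstacle is the initial computation of $u_P \circ_i u_Q$: one must correctly track the interplay between the normalization factor $\beta(B)$, the marginal contribution $\partial_i u_P(S_A)$, and the fact that the replaced player $i$ no longer belongs to the composite player set, so that the case $i \in P$ genuinely merges $Q$ into $P$ while the case $i \notin P$ leaves $u_P$ untouched and absorbs $\beta(B) = 1$. Once this rule is in place, all of the remaining verifications are routine combinatorics, which is exactly the payoff of passing to the unanimity basis rather than computing directly with the coalition functions.
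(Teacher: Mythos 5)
Your proof is correct, but it takes a genuinely different route from the one in the paper. The paper verifies the sequential and parallel axioms by a direct computation with the coalition functions, using the formula for the derivative of a composite (Lemma \ref{lemma: dérivée de la composée}) to expand both sides of each axiom and match terms; the equivariance is likewise checked by hand. You instead observe that all the axioms are multilinear identities in the games involved, so that it suffices to verify them on the basis of unanimity games, and you then establish the rule $u_P \circ_i u_Q = u_{P \diamond_i Q}$ (with $u_P \circ_i u_Q = u_P$ when $i \notin P$), after which every axiom reduces to an elementary case analysis on memberships such as $i \in P$. This is exactly the content of Proposition \ref{Proposition: universal property of the partial composition of games} and Subsection \ref{subsection: unanimity basis}, which the paper only proves \emph{after} the theorem and uses for other purposes (the change of basis and the isomorphism with $\mathcal{C}om\mathcal{T}riass$); there is no circularity, since that rule is derived directly from Definition \ref{definition: partial composition of games}. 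Your approach buys a cleaner, essentially combinatorial verification in which the operad axioms become statements about the partial composition $\diamond_i$ of coalitions, and it foreshadows the presentation of $\mathbb{G}$ by generators and relations; the paper's direct computation is longer but self-contained and produces, as a by-product, the derivative formula for a composite game that is reused throughout Section 4. The only places where you are somewhat terse are the bookkeeping of player-set labelings in the sequential and parallel axioms and the ``naturality'' argument for the two equivariance relations, but both are routine and the paper itself dispatches them with a comparable level of detail.
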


\proof 
It is immediate to check that the maps $\{\circ_i\}_i$ are bilinear and that the trivial game $\mathbf{1}$ satisfies the unit axiom with respect to these partial compositions. 

\medskip

Let us check the parallel and the sequential axioms. Let $p, q$ and $r$ be three positive integers, let $\Gamma_p = (A, \alpha) \in \mathbb{G}(p)$, $\Gamma_q = (B, \beta) \in \mathbb{G}(q)$ and $\Gamma_r = (C, \gamma) \in \mathbb{G}(r)$ be three games with their player sets being
\[
A = \{a_1, \ldots, a_p\}, \qquad B = \{b_1, \ldots, b_q\} \qquad \text{and} \qquad C = \{c_1, \ldots, c_r\}. 
\]
It is straightforward to check that the sequential and the parallel axioms hold on the player sets. Let us check the coalition functions in both cases. To improve clarity, we will explicitly write down the "name" of the player with respect to whom we take the derivative instead of referring to it by its position in the player set. 

\medskip

$\triangleright$ \emph{Sequential axiom:} 
We have to check that $\left( \Gamma_p \circ_{i} \Gamma_q \right) \circ_{i+j-1} \Gamma_r$ and $\Gamma_p \circ_i \left( \Gamma_q \circ_j \Gamma_r \right)$ have the same coalition functions. 

\medskip

By definition, the coalition function $\delta$ of $\Gamma_p \circ_i \Gamma_q$ is given, for any $S \subseteq D = A \diamond_i B$, by 
\[
\delta(S) = \beta(B)\alpha(S_A) + \partial_{a_i} \alpha(S_A) \cdot \beta(S_B). 
\]

Hence, the coalition function of $\left(\Gamma_p \circ_i \Gamma_q\right) \circ_{i+j-1} \Gamma_r$ is given by 
\[ 
\begin{aligned} 
&\gamma(C)\delta(S_D) + \partial_{b_j} \delta(S_D) \cdot \gamma(S_C) \\
& \hspace{80pt} = \gamma(C)\beta(B)\alpha(S_A) + \gamma(C)\partial_{a_i} \alpha(S_A) \beta(S_B) + \partial_{a_i} \alpha(S_A) \partial_{b_j} \beta(S_B) \gamma(S_C) \\
& \hspace{80pt} = \gamma(C)\beta(B)\alpha(S_A) + \partial_{a_i} \alpha(S_A) \cdot \Big( \gamma(C)\beta(S_B) + \partial_{b_j} \beta(S_B) \cdot \gamma(S_C) \Big)~,
\end{aligned} 
\]
for any subset $S$ of $(A \diamond_i B) \diamond_{i+j-1} C$, using the Lemma \ref{lemma: dérivée de la composée} and the fact that the player $b_j$ is in $B$. On the other hand, the coalition function $\varepsilon$ of $\left( \Gamma_q \circ_j \Gamma_r \right)$ is given by 
\[
\varepsilon(S) = \gamma(C)\beta(S_B) + \partial_{b_j} \beta(S_B) \cdot \gamma(S_C), 
\]
for every subset $S \subseteq E = B \diamond_j C$. The coalition function of $\Gamma_p \circ_i \left( \Gamma_q \circ_j \Gamma_r \right)$ is given by 
\begin{multline*}
\gamma(C)\beta(B)\alpha(S_A) + \partial_{a_i} \alpha(S_A) \cdot \varepsilon(S_E) \\ = \gamma(C)\beta(B)\alpha(S_A) + \partial_{a_i} \alpha(S_A) \cdot \Big( \gamma(C)\beta(S_B) + \partial_{b_j} \beta(S_B) \cdot \gamma(S_C) \Big),  
\end{multline*}
since it can be computed that $\varepsilon(B \diamond_j C) = \gamma(C)\beta(B)$, and thus the sequential axiom holds.

\medskip 

$\triangleright$ \emph{Parallel axiom:} Let $i$ and $k$ be two positive integers such that $i < k \leq p$. Let $\Gamma_p, \Gamma_q$ and $\Gamma_r$ be three games as previously defined. We want to check that the coalition functions of $\left(\Gamma_p \circ_i \Gamma_q \right) \circ_{k+q-1} \Gamma_r$ and of $\left( \Gamma_p \circ_k \Gamma_r \right) \circ_i \Gamma_q$ coincide. 

\medskip

Again, the coalition function $\delta$ of $\Gamma_p \circ_i \Gamma_q$ is given, for any $S \subseteq D = A \diamond_i B$, by 
\[
\delta(S) = \beta(B)\alpha(S_A) + \partial_{a_i} \alpha(S_A) \cdot \beta(S_B). 
\]
Thus, for any subset $S$ of $(A \diamond_i B) \diamond_{k+q-1} C$, the coalition function of $\left(\Gamma_p \circ_i \Gamma_q \right) \circ_{k+q-1} \Gamma_r$ is 
\[
\gamma(C)\delta(S_D) + \partial_{a_k} \delta(S_D) \cdot \gamma(S_C). 
\]
By Lemma \ref{lemma: dérivée de la composée}, we have that
\[ \begin{aligned}
\partial_{a_k} \delta(S_D) & = \beta(B)\partial_{a_k} \alpha(S_A) + \partial_{a_k} \partial_{a_i} \alpha(S_A) \cdot \beta(S_B).
\end{aligned} 
\]
since $a_k$ is in $A \setminus \{i\}$. Thus, the coalition function of $\left( \Gamma_p \circ_i \Gamma_q \right) \circ_{k+q-1} \Gamma_r$ can be rewritten as 
\[ \begin{aligned} 
\gamma(C)\delta(S_D) + \partial_{a_k} & \delta(S_D) \cdot \gamma(S_C) \\ & = \gamma(C)\delta(S_D) + \left( \beta(B)\partial_{a_k} \alpha(S_A) + \partial_{a_k}\partial_{a_i} \alpha(S_A) \cdot \beta(S_B) \right)\cdot \gamma(S_C) \\
& = \gamma(C)\beta(B)\alpha(S_A) + \gamma(C)\partial_{a_i} \alpha(S_A) \cdot \beta(S_B) + \beta(B)\partial_{a_k} \alpha(S_A) \cdot \gamma(S_C) \\ 
& \qquad \qquad \qquad + \partial_{a_k}\partial_{a_i}\alpha(S_A) \cdot \beta(S_B) \cdot \gamma(S_C). 
\end{aligned} \]
In a similar manner, let $\phi$ be the coalition function of $\Gamma_p \circ_k \Gamma_r$, which is given by 
\[
\phi(S) = \gamma(C)\alpha(S_A) + \partial_{a_k} \alpha(S_A) \cdot \gamma(S_C),
\]
for any subset $S \subseteq F = A \diamond_k C$. Thus, the coalition function of $\left( \Gamma_p \circ_k \Gamma_r \right) \circ_i \Gamma_q$ is given, for each coalition in $(A \diamond_k C) \diamond_i B$, by 
\[
\beta(B)\phi(S_F) + \partial_{a_i} \phi(S_F) \cdot \beta(S_B).
\]
By Lemma \ref{lemma: dérivée de la composée}, the derivative of $\phi$ is given by  
\[ \begin{aligned} 
\partial_{a_i} \phi(S_F) & = \gamma(C)\partial_{a_i} \alpha(S_A) + \partial_{a_i} \partial_{a_k} \alpha(S_A ) \cdot \gamma(S_C)
\end{aligned} \]
since the $a_i$ player is in $A \setminus \{k\}.$ Hence, the coalition function of $\left( \Gamma_p \circ_k \Gamma_r \right) \circ_i \Gamma_q$, for any subset $S$, is given by 
\[ \begin{aligned} 
\beta(B)\phi(S_F) + \partial_{a_i} \phi(S_F) \cdot \beta(S_B) & = \beta(B)\phi(S_F) + \left( \partial_{a_i} \alpha(S_A) + \partial_{a_i} \partial_{a_k} \alpha(S_A) \cdot \gamma(S_C) \right) \cdot \beta(S_B) \\
& = \gamma(C)\beta(B)\alpha(S_A) + \beta(B)\partial_{a_k} \alpha(S_A) \cdot \gamma(S_C) ~\\
& \qquad + \gamma(C) \partial_{a_i} \alpha(S_A) \cdot \beta(S_B) + \partial_{a_i} \partial_{a_k} \alpha(S_A) \cdot \gamma(S_C) \cdot \beta(S_B),
\end{aligned} \]
which concludes the proof of the sequential axiom. 

\medskip

$\triangleright$ \emph{Compatibility with the action of the symmetric groups:} Let $p$ and $q$ be two positive integers, let $\Gamma_1 = (A, \alpha) \in \mathbb{G}(p)$ and $\Gamma_2 = (B, \beta) \in \mathbb{G}(q)$ be two games.

\medskip 

Let $\sigma \in \mathbb{S}_q$ and let $\sigma'$ be the unique permutation in $\mathbb{S}_{p+q-1}$ which acts as $\sigma$ on $\llbracket i, i+q-1 \rrbracket$ and the identity elsewhere. 

\medskip

The composition of $\sigma \star \Gamma_2$ at the $i$-th player of $\Gamma_1$ is the game defined on the player set 
\[
C = \{a_1, \ldots, a_{i-1}, b_{\sigma(1)}, \ldots, b_{\sigma(q)}, a_{i+1}, \ldots, a_p\},
\]
and whose coalition function is, for all $S \subseteq C$, given by 
\[
\beta(B)\alpha(S_A) + \partial_{a_i} \alpha(S_A) \cdot (\sigma \star \beta)(S) = \beta(B)\alpha(S_A) + \partial_{a_i} \alpha(S_A) \cdot \beta(\{\sigma^{-1}(i) \mid i \in S_B\}). 
\]
On the other hand, let us compute $\sigma' \star (\Gamma_1 \circ_i \Gamma_2)$. Applying $\sigma'$ on the player set 
\[
\{a_1, \ldots, a_{i-1}, b_1, \ldots, b_q, a_{i+1}, \ldots, a_p\}
\]
gives the player set $C$ above. And the coalition function is given, for any $S \subseteq C$, by 
\[ \begin{aligned} 
(\sigma' \star (\beta(B)\alpha + \partial_{a_i} \alpha \cdot \beta))(S) & = \beta(B)(\sigma' \star \alpha)(S_A) + \partial_{a_i} (\sigma' \star \alpha)(S_A) \cdot (\sigma' \star \beta)(S_B) \\
& = \beta(B)\alpha(S_A) + \partial_{a_i} \alpha(S_A) \cdot \beta(\{\sigma^{-1}(i) \mid i \in S_B\}),
\end{aligned} \]
hence both games coincide. Checking the other compatibility axiom is entirely analogous, and amounts to a straightforward computation. 
\endproof 

\subsection{A characterization of the partial composition of cooperative games}
We show that the partial composition of unanimity games corresponds exactly with the partial composition of their winning coalition. 

\begin{proposition}\label{Proposition: universal property of the partial composition of games}
The partial composition map
\[
\circ_i: \mathbb{G}(n) \times \mathbb{G}(m) \longrightarrow \mathbb{G}(n + m -1)
\]
of Definition \ref{definition: partial composition of games} is the unique bilinear map on the basis of unanimity games such that
\[
u_S \circ_i u_T = u_{S \diamond_i T}~,
\]
holds for any $S \subseteq N$ and $T \subseteq M$. 
\end{proposition}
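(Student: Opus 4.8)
The plan is to separate the statement into two independent parts — the \emph{uniqueness} of such a bilinear map, which is formal, and the \emph{verification} that the map of Definition~\ref{definition: partial composition of games} actually satisfies $u_S \circ_i u_T = u_{S \diamond_i T}$, which is where the genuine computation sits. For uniqueness, I would invoke the fact recalled just before the statement that the unanimity games $\{u_S\}_{S \neq \emptyset}$ form a basis of the space $\mathbb{G}(n)$ of grounded $n$-player games (there are $2^n - 1$ of them, matching the dimension). A bilinear map out of $\mathbb{G}(n) \times \mathbb{G}(m)$ is determined by its values on pairs $(u_S, u_T)$ of basis elements, so prescribing $u_S \circ_i u_T = u_{S \diamond_i T}$ for all such pairs pins down at most one bilinear map. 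This yields the uniqueness assertion at once, reducing everything to checking that Definition~\ref{definition: partial composition of games} is one such map.

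For the verification, I would set $\alpha = u_S$ and $\beta = u_T$ and evaluate the resulting coalition function $\gamma$ on an arbitrary coalition $R \subseteq C = (A \setminus \{i\}) \cup B$, writing $R_A = R \cap A$ and $R_B = R \cap B$ and noting that $i \notin R_A$ since $i \notin C$. The three ingredients are the normalizing factor $u_T(B)$, which is $1$ because $T \subseteq B$; the value $u_S(R_A)$, which is $1$ exactly when $S \subseteq R_A$; and the derivative $\partial_i u_S(R_A) = u_S(R_A \cup \{i\}) - u_S(R_A)$. Writing $[\,P\,]$ for the indicator equal to $1$ when $P$ holds and $0$ otherwise, the formula reduces to $\gamma(R) = [\,S \subseteq R_A\,] + \partial_i u_S(R_A)\cdot[\,T \subseteq R_B\,]$.

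Next I would run the case analysis on whether $i \in S$. If $i \notin S$, then $S \subseteq R_A \cup \{i\}$ is equivalent to $S \subseteq R_A$, so the derivative term vanishes and $\gamma(R) = [\,S \subseteq R_A\,]$; since $S \diamond_i T = S$ and $S \subseteq A \setminus \{i\}$ in this case, this equals $u_{S \diamond_i T}(R)$. If $i \in S$, then $u_S(R_A) = 0$ (as $i \notin R_A$), while $u_S(R_A \cup \{i\}) = [\,S \setminus \{i\} \subseteq R_A\,]$, so $\gamma(R) = [\,S \setminus \{i\} \subseteq R_A\,]\cdot[\,T \subseteq R_B\,]$; this is $1$ precisely when $(S \setminus \{i\}) \cup T \subseteq R$, which is $u_{S \diamond_i T}(R)$ since $S \diamond_i T = (S \setminus \{i\}) \cup T$. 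Hence the two coalition functions agree on every $R$, which establishes existence; combined with the basis argument, it proves the proposition.

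I do not expect a real obstacle here; the only point requiring care is the bookkeeping of supports. One must use that $S \subseteq A$ makes $S \subseteq R$ equivalent to $S \subseteq R_A$, and similarly $T \subseteq B$ makes $T \subseteq R$ equivalent to $T \subseteq R_B$, and must observe that the ordering convention built into $\diamond_i$ is irrelevant to the value of a unanimity game, which depends only on the underlying set and not on the labelling of its elements. Once these identifications are in place, both cases close immediately.
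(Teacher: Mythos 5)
Your proposal is correct and follows essentially the same route as the paper: uniqueness from bilinearity on the unanimity basis, then a direct evaluation of $u_S \circ_i u_T$ on an arbitrary coalition with the same two-case analysis on whether $i \in S$ (derivative vanishing in the first case, $u_S(R_A) = 0$ forcing the product form in the second). The only cosmetic difference is that you make the indicator-function bookkeeping and the irrelevance of the ordering convention in $\diamond_i$ explicit, which the paper leaves implicit.
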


\begin{proof}
Such bilinear map is fully characterized by the assignment of the basis elements, so there exists a unique bilinear map that satisfies the above-mentioned equality. Let us check that the maps $\{\circ_i\}_i$ satisfy this equality. We have 
    \[
    u_S \circ_i u_T(K) = u_T(M)u_S(K_N) + \partial_i u_S(K_N)u_T(K_M) = u_S(K_N) + \partial_i u_S(K_N)u_T(K_M)~,
    \]
for any $K \subseteq N \diamond_i M$, since $u_T(M)=1$ for any $T \subseteq M$. 
\begin{enumerate}
\item If $i \notin S$, then
    \[
    \partial_i u_S(K_N) = u_S(K_N \cup \{i\}) - u_S(K_N) = 0~,
    \]
    since either both terms are $1$ or both are $0$. Thus we have $u_S \circ_i u_T = u_S = u_{S \diamond_i T}$. 
    
\item If $i \in S$, then $K_N$ can not contain $S$ and thus 
	\[
    u_S \circ_i u_T(K) = u_S(K_N \cup \{i\})u_T(K_M)~,
    \]
	which is $1$ if and only if $S \diamond_i T \subseteq K$. Therefore $u_S \circ_i u_T$ is also equal to $u_{S \diamond_i T}$. 
\end{enumerate} 
\end{proof}

\begin{remark}
By definition, in a unanimity game, a coalition is winning if and only if it includes a set of veto players, without which winning is impossible. Hence, everyone of them is required to participate. Let us now consider that a veto player \(i\) was, in fact, the representative of a set of players, distinct from the player considered originally. Additionally, let assume that the decision process of this set of players is also modeled by a unanimity game. A coalition of the composite game is then winning only if it contains all the veto players, which are now of two kinds. On one hand, there are the original veto players, still active, and on the other hand, the veto players that choose the action of player \(i\). Hence, the new set of veto players is indeed the union of the veto players of the component game which act for \(i\), and the veto players of the original game. On the other hand, if the player $i$ was not a veto player, the winning coalitions remain the same. 
\end{remark}

\subsection{The partial composition of cooperative games in the basis of unanimity games.}\label{subsection: unanimity basis} Let us denote by $\mathbb{G}^{u}(n)$ the vector space of $n$-players cooperative games endowed with the basis given by unanimity games $\{u_S\}_S$. Let 
\[
f = \sum_{S \subseteq [n]} \lambda_S u_S \quad \text{and} \quad g = \sum_{T \subseteq [m]} \rho_T u_T~,
\]
where $f$ is an element in $\mathbb{G}^{u}(n)$ and $g$ is an element in $\mathbb{G}^{u}(m)$. The partial composition of cooperative games can be written down in this basis as follows 
\[
f \circ_i^{u} g = \left(\sum_{T \subseteq [m]} \rho_T\right)\sum_{\substack{S \subseteq [n] \\ S \not \ni i}} \lambda_S u_S + \sum_{\substack{S \subseteq [n] \\ S \ni i}} \sum_{T \subseteq [m]} \lambda_{S}\rho_T u_{S \circ_i T}~,
\]
using bilinearity and Proposition \ref{Proposition: universal property of the partial composition of games}.

\begin{proposition}\label{prop: Mobius changes the basis}
The collection of maps 
\[ \begin{aligned} 
\mu(n): \mathbb{G}(n) & \xrightarrow{\hspace{1.5cm}} \mathbb{G}^u(n) \\
v & \xmapsto{\hspace{1.5cm}} \mu(n)(v) \coloneqq \sum_{S \subseteq [n]} \mu^v(s) u_S, 
\end{aligned} \]
where $\mu^v$ is the M{\"o}bius inverse of $v$, is an isomorphism of $\mathbb{S}$-modules and defines a change of basis on the linear operad of cooperative games. 
\end{proposition}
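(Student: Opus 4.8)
The plan is to verify three properties of the collection $\{\mu(n)\}_{n \geq 0}$: that each $\mu(n)$ is a linear isomorphism, that it is $\mathbb{S}_n$-equivariant, and that it intertwines the partial compositions of $\mathbb{G}$ and $\mathbb{G}^u$. The invertibility is the most immediate point. By the Möbius inversion formula recalled above, the Zeta transform furnishes a two-sided inverse, sending a combination $\sum_{S} c_S u_S$ to the game whose coalition function is $v(S) = \sum_{T \subseteq S} c_T$. Since both $\mu(n)$ and this inverse are manifestly linear, each $\mu(n)$ is a linear isomorphism, and one may equally regard $\mu(n)$ as the change-of-coordinates map from the Dirac basis $\{\delta_S\}$ to the unanimity basis $\{u_S\}$.

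Next I would establish $\mathbb{S}_n$-equivariance, which reduces to the combinatorial identity $\mu^{\sigma \star v}(S) = \mu^v(\sigma^{-1}(S))$ for every $\sigma \in \mathbb{S}_n$ and every coalition $S$, where $\sigma^{-1}(S) = \{\sigma^{-1}(i)\mid i \in S\}$. This follows from the definition of the Möbius transform: substituting $\sigma \star v(T) = v(\sigma^{-1}(T))$ into the alternating sum defining $\mu^{\sigma \star v}(S)$ and reindexing by $T \mapsto \sigma^{-1}(T)$ — which preserves both subset inclusions and cardinalities — yields $\mu^v(\sigma^{-1}(S))$. Summing against the unanimity games and using that the $\mathbb{S}_n$-action on $\mathbb{G}^u(n)$ sends $u_S$ to $u_{\sigma(S)}$, this gives $\mu(n)(\sigma \star v) = \sigma \star \mu(n)(v)$, so $\mu(n)$ is an isomorphism of $\mathbb{S}_n$-modules.

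Finally, compatibility with the partial compositions is where the universal property of Proposition \ref{Proposition: universal property of the partial composition of games} does all the work. For games $v \in \mathbb{G}(n)$ and $w \in \mathbb{G}(m)$, writing them in the unanimity basis as $v = \sum_S \mu^v(S) u_S$ and $w = \sum_T \mu^w(T) u_T$ and expanding $v \circ_i w$ by bilinearity, Proposition \ref{Proposition: universal property of the partial composition of games} gives
\[
v \circ_i w = \sum_{S \subseteq [n]} \sum_{T \subseteq [m]} \mu^v(S)\, \mu^w(T)\, u_{S \diamond_i T}~.
\]
Reading off the unanimity-basis coordinates, the right-hand side is precisely the expression defining $\mu(n)(v) \circ_i^u \mu(m)(w)$ in $\mathbb{G}^u$; hence $\mu(n+m-1)(v \circ_i w) = \mu(n)(v) \circ_i^u \mu(m)(w)$, and $\{\mu(n)\}$ is a morphism of operads. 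Combined with the previous two points, it is an isomorphism of operads, that is, a change of basis on the linear operad of cooperative games.

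I do not expect a genuine obstacle here: the statement is essentially a formal consequence of Möbius inversion together with the universal characterization of the composition in Proposition \ref{Proposition: universal property of the partial composition of games}. The only step demanding care is the reindexing in the equivariance computation, where one must track the permutation correctly through the alternating Möbius sum; everything else is bookkeeping.
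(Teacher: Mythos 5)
Your proposal is correct and follows essentially the same route as the paper's proof: Möbius inversion for the linear isomorphism, the identity $\mu^{\sigma \star v}(S) = \mu^v(\sigma^{-1}(S))$ for equivariance, and Proposition \ref{Proposition: universal property of the partial composition of games} together with bilinearity for compatibility with the partial compositions. If anything, your expansion $v \circ_i w = \sum_{S,T} \mu^v(S)\,\mu^w(T)\, u_{S \diamond_i T}$ spells out the last step more explicitly than the paper, which leaves it as "it can be checked".
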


\begin{proof}
The M{\"o}bius inverse is a linear isomorphism from $\mathbb{G}(n)$ to $\mathbb{G}^{u}(n)$ for every $n \geq 0$, and maps $\delta_T$ to $u_T$ for every $T \subseteq N$. Let us check that it commutes with the action of the symmetric groups. Let $v$ be a $n$-player game, and let $\sigma \in \mathbb{S}_n$. The M{\"o}bius transform of $v_\sigma$ is given by 
\[
\mu^{v_\sigma}(S) = \sum_{T \subseteq S} (-1)^{\lvert S \setminus T \rvert} v(\sigma^{-1}(T)) = \sum_{T \subseteq \sigma^{-1}(S)} (-1)^{\lvert S \setminus T \rvert} v(T) = \mu^v(\sigma^{-1}(S)). 
\]
Hence, the image of $v_\sigma$ by $\mu(n)$ is
\[ 
\mu(n)(v_\sigma) = \sum_{S \subseteq N} \mu^{v_\sigma}(S) u_S = \sum_{S \subseteq N} \mu^v(\sigma^{-1}(S)) u_S = \sum_{S \subseteq N} \mu^v(S) S_{\sigma(S)} ~. 
\]
Finally, using Proposition \ref{Proposition: universal property of the partial composition of games} and the linearity of the M{\"o}bius inverse, it can be checked that
\[
\mu(n+m-1)(v \circ_i w) = \mu(n)(v) \circ_i^{u} \mu(m)(w)~, 
\]
for any two games $\Gamma_1 = (N,v)$ in $\mathbb{G}(n)$ and $\Gamma_2 = (M,w)$ in $\mathbb{G}(m)$. 
\end{proof}

\begin{remark}[The Zeta transform]
The inverse map 
\[ \begin{aligned} 
\zeta(n): \mathbb{G}^u(n) & \xrightarrow{\hspace{1.5cm}} \mathbb{G}(n) \\
f = \sum_{S \subseteq [n]} \lambda_S u_S & \xmapsto{\hspace{1.5cm}} \zeta(n)(f) \coloneqq \sum_{S \subseteq [n]} \zeta^f(S) \delta_S, 
\end{aligned} \]
is given by the Zeta transform, whose value is given, for any $S \subseteq [n]$, by 
\[
\zeta^f(S) = \sum_{T \subseteq S} \lambda_T~. 
\]
\end{remark}

\subsection{Comparison with previous notions of products, sums and compositions for cooperative games}
This subsection is devoted to comparing our notion of partial composition of cooperative games with previous definitions present in the literature: the sum and the product considered by Shapley in \cite{ShapleyCompoundI}, the general composition of monotone simple games defined in \cite{ShapleyCompoundII} (see also \cite{ShapleyComposition}), and its generalization by Owen called the tensor composition in \cite{OwenTensor,OwenMultilinear}. 

\subsubsection{Sum, product and compositions of simple games}\label{subsub: sum and product of simple games} We show that Shapley's notion of sum and product of simple monotone games is recovered by composing two simple monotone games into the $2$-players bargaining game or the $2$-player game which the sum of the two dictator games. 

\begin{definition}[Simple game]
A game $\Gamma = (N, v)$ is \emph{simple} if \(v(S) \in \{0, 1\}\) for all \(S \subseteq N\). 
\end{definition}

In a simple game, a coalition is completely winning, or completely losing. It is thus totally characterized by the set of winning coalitions. If furthermore, if the simple game is monotone, then every superset of a winning coalition is winning, and every subcoalition of a losing coalition is losing. Thus it can be fully characterized by its set of \textit{minimal} winning coalitions. In fact, the data of a simple monotone game is equivalent to the data of a \textit{clutter}, see \cite{billera1971composition}. 

\begin{example}
Simple games are often used to model political games, where power play between coalitions are undergoing, and the outcome is Boolean, like whether a bill pass, someone get elected, etc. For instance, a democratic election where the winner is designated by simple majority is an example of a simple monotone cooperative game, as for example the second round of the French presidential election, which is a simple monotone {\raise.17ex\hbox{$\scriptstyle\sim$}}$50$-million players game. 
\end{example}

\begin{definition}[Shapley's sum and product]
Let $\Gamma_1 = (N_1,W_1)$ and $\Gamma_1 = (N_2,W_2)$ be two simple games, where $W_j$ are the set of winning coalitions. 
\begin{enumerate}
\item The \textit{sum} $\Gamma_1 \oplus \Gamma_2$ is the simple game defined on the set of players $N_1 \times N_2$ where the set of winning coalitions is given by 
\[
W_{\Gamma_1 \oplus \Gamma_2} \coloneqq \left\{S \subseteq N_1 \times N_2 ~ | ~ S \cap N_1 \in W_1 \quad \text{or} \quad S \cap N_2 \in W_2 \right\}~. 
\]
\item The \textit{product} $\Gamma_1 \otimes \Gamma_2$ is the simple game defined on the set of players $N_1 \otimes N_2$ where the set of winning coalitions is given by 
\[
W_{\Gamma_1 \otimes \Gamma_2} \coloneqq \left\{S \subseteq N_1 \times N_2~ | ~ S \cap N_1 \in W_1 \quad \text{and} \quad S \cap N_2 \in W_2 \right\}~. 
\]
\end{enumerate}
\end{definition}

Let us denote by $B$ the $2$-player bargaining game, which is $1$ on $\{1,2\}$ and zero elsewhere. Let us denote by $d_1$ (respectively $d_2$) the $2$-player dictator game where $1$ is a dictator, that is, the game defined by $v(S) = 1$ if $1 \in S$. We denote by $B^*$ the $2$-players game $d_1 + d_2 -B$, given by adding the two $2$-players dictator games and subtracting the bargaining game $B$. It is the dual of the bargaining game $B$ in the sense of Definition \ref{definition: dual game}. 

\begin{proposition}
Let $\Gamma_1 = (N_1,v_1)$ and $\Gamma_1 = (N_2,v_2)$ be two simple games. 

\begin{enumerate}

\item Their sum is given by the composition 
\[
\Gamma_1 \oplus \Gamma_2 = (B^* \circ_1 \Gamma_1) \circ_2 \Gamma_2~,
\]
where $B^*$ is the dual of $2$-players the bargaining game. 

\item Their product is given by the composition 
\[
\Gamma_1 \otimes \Gamma_2 = (B \circ_1 \Gamma_1) \circ_2 \Gamma_2~,
\]
where $B$ is the $2$-player bargaining game. 

\end{enumerate}
\end{proposition}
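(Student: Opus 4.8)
The plan is to prove both identities by directly computing the iterated partial composition with the formula of Definition \ref{definition: partial composition of games} and then matching the resulting coalition function against the two prescribed sets of winning coalitions $W_{\Gamma_1 \oplus \Gamma_2}$ and $W_{\Gamma_1 \otimes \Gamma_2}$. First I would record the values needed: writing $\{1,2\}$ for the player set of $B$ and of $B^*$, we have $B(\{1,2\}) = 1$ and $B = 0$ elsewhere, while $B^*(S) = 1$ for every nonempty $S$ and $B^*(\emptyset) = 0$; in particular $B = u_{\{1,2\}}$ is the unanimity game on both players. Since $\Gamma_1, \Gamma_2$ are nontrivial simple games, their grand coalitions are winning, so $v_1(N_1) = v_2(N_2) = 1$. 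Throughout I abbreviate $a = v_1(S \cap N_1)$ and $b = v_2(S \cap N_2)$, both lying in $\{0,1\}$, and I compute the total composite substituting $\Gamma_1$ and $\Gamma_2$ into the two players of $B$ (resp. $B^*$).

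For the product, I would first compute $B \circ_1 \Gamma_1$ on the player set $\{2\} \cup N_1$, where $2$ denotes the surviving second player of $B$. Since $B$ vanishes on every subset of $\{2\}$, the first summand $\beta(B)\alpha(S_A)$ of the composition formula drops out; and because $\partial_1 B(\emptyset) = 0$ while $\partial_1 B(\{2\}) = B(\{1,2\}) - B(\{2\}) = 1$, the intermediate game equals $v_1(S \cap N_1)$ when $2 \in S$ and $0$ otherwise. Substituting $\Gamma_2$ into the remaining player $2$ and applying the formula a second time, the first summand again contributes $0$ (the restriction to $N_1$ never contains $2$), while the marginal term at $2$ recovers the value $v_1(S \cap N_1)$; one obtains the coalition function $S \mapsto a \cdot b$. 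As $a,b \in \{0,1\}$ this is $1$ exactly when $S \cap N_1 \in W_1$ and $S \cap N_2 \in W_2$, which is $W_{\Gamma_1 \otimes \Gamma_2}$. Thus $B$ encodes the Boolean \textsc{and}, recovering the product.

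For the sum I would run the same two-step computation with $B^*$ in place of $B$. Now $\partial_1 B^*(\emptyset) = 1$ and $\partial_1 B^*(\{2\}) = B^*(\{1,2\}) - B^*(\{2\}) = 0$, so the first composition $B^* \circ_1 \Gamma_1$ yields the game that equals $1$ whenever $2 \in S$ and $v_1(S \cap N_1)$ otherwise, where the normalization $v_1(N_1) = 1$ is used. Composing with $\Gamma_2$ at the player $2$, the relevant value of the intermediate game is $a$, its marginal contribution at $2$ is $1 - a$, and $v_2(N_2) = 1$, so the formula gives the coalition function $S \mapsto a + (1-a)b = a + b - ab$. For Boolean $a,b$ this equals $a \vee b$, i.e. it is $1$ exactly when $S \cap N_1 \in W_1$ or $S \cap N_2 \in W_2$, which is $W_{\Gamma_1 \oplus \Gamma_2}$; hence $B^*$ encodes the Boolean \textsc{or}.

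The only genuinely delicate point is the bookkeeping: one must track the restrictions $S_A, S_B$ and especially the marginal contributions $\partial_i$ through both substitutions, invoking Lemma \ref{lemma: dérivée de la composée} for the derivative of a composite, and keep the player orderings straight. Under the convention fixed by $\diamond_i$ the second substitution is inserted at the position occupied by the image of the second player of $B$, so by the parallel axiom of Theorem \ref{thm: gamers operad} the two substitutions may be carried out in either order without affecting the result. Everything else reduces to routine evaluation of $\{0,1\}$-valued functions.
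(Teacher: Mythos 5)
Your proposal is correct and follows essentially the same route as the paper: a direct evaluation of the iterated partial composition, using that $\partial_1 B(\emptyset)=\partial_2 B(\emptyset)=0$, $\partial_{12}B(\emptyset)=1$ for the product and $\partial_1 B^*(\emptyset)=\partial_2 B^*(\emptyset)=1$, $\partial_{12}B^*(\emptyset)=-1$ for the sum, yielding the Boolean \textsc{and} $ab$ and \textsc{or} $a+b-ab$ respectively. The only difference is organizational (you compute the intermediate game $B\circ_1\Gamma_1$ explicitly before the second substitution, while the paper expands the double composite in one formula), and you share with the paper the same implicit normalization $v_1(N_1)=v_2(N_2)=1$, which you at least flag by assuming the games are nontrivial.
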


\begin{proof}
Let us start with product, as it is somewhat easier. The general formula for the composition gives that the coalition function of the term in the right is given by 
\begin{align*}
(B \circ_1 v_1) \circ_2 v_2(S) & = v_1(N_1)v_2(N_2)B(S_{\emptyset}) + v_1(N_1)\partial_1 B(S_{\emptyset}) \cdot v_2(S_{N_2}) \\
& \qquad + v_2(N_2) \partial_2B(S_{\emptyset}) \cdot v_1(S_{N_1}) + \partial_{12} B(S_{\emptyset}) \cdot v_1(S_{N_1}) \cdot v_2(S_{N_2})~,
\end{align*}
for any subset $S \subseteq N_1 \cup N_2$, where $S_{\emptyset} = S \cap \emptyset = \emptyset$. Notice first that $v_1(N_1) = v_2(N_2) = 1$ since these games are simple. Since $\partial_1B(\emptyset) = \partial_2B(\emptyset) = 0$ and that $\partial_{12} B(\emptyset) = 1$, we get that 
\[
(B \circ_1 v_1)(S) \circ_2 v_2(S) = v_1(S_{N_1}) \cdot v_2(S_{N_2})~,
\]
which is $1$ only when $N_1$ is a winning coalition of $\Gamma_1$ and $N_2$ a winning coalition of $\Gamma_2$. 

\medskip

Similar arguments can be applied to recover the sum. One computes that $\partial_1 B^*(\emptyset) = \partial_2 B^*(\emptyset) = 1$ and that $\partial_{12}B^*(\emptyset) = -1$, thus we have that
\[
(B^* \circ_1 v_1) \circ_2 v_2(S) = v_1(N_1) + v_2(N_2) - v_1(S_{N_1}) \cdot v_2(S_{N_2})~,
\]
which is $1$ when either $N_1$ is a winning coalition in $\Gamma_1$ or $N_2$ a winning coalition of $\Gamma_2$. 
\end{proof}

The sum and the product are both generalized by the following construction introduced by Shapley in \cite{ShapleyCompoundII}, which is defined on all simple monotone games. 

\begin{definition}[Shapley's composition of simple games]
Let \(\Gamma_0 = (N_0, \mathcal{W}_0)\) be a simple monotone game and let \(\Gamma_i = (N_i, \mathcal{W}_i)\) be simple games for all \(i \in N_0\). Set \(N = \sqcup_{i \in N_0} N_i\). For any coalition \(S \subseteq N\), we define \(K(S)\) to be the set of players of \(N_0\) that are controlled by \(S\), i.e., 
\[
K(S) \coloneqq \{i \in N_0 \mid S \cap N_i \in \mathcal{W}_i\}. 
\]
The \textit{compound game}, denoted by $\Gamma_0[\Gamma_1, \cdots, \Gamma_n ]$ is defined on the set of players $N$ by the following set of winning coalitions
\[
\mathcal{W} \coloneqq \left\{S \subseteq N ~~|~~ K(S) \in \mathcal{W}_0\right\}. 
\]
\end{definition}

\medskip

Recall that, by Proposition \ref{prop: compose total = composee partielle}, the data of an operad defined in terms of partial compositions is equivalent to the data of an operad defined in terms of total or two-levelled compositions. Informally speaking, the total composition of an operation $\mu_k$ with $k$ inputs with $k$ different operations $\{\mu_{i_j}\}$ with $i_j$ inputs, for $j \in [k]$, is obtained by first plugging $\mu_{i_1}$ in the first input of $\mu_k$, then $\mu_{i_2}$ in the second input of the resulting operation, and so on, until all the inputs of the original operation $\mu_k$ have been filled. Let us denote by $\gamma_{\mathbb{G}}(k;i_1,\cdots,i_k)$ the total composition maps associated to the operad structure defined in Theorem \ref{thm: gamers operad}. 

\begin{proposition}
Let $\Gamma_0 = (N_0, \mathcal{W}_0)$ be a simple game and let $\Gamma_i = (N_i, \mathcal{W}_i)$ be simple games for all \(i \in N_0\). We have the equality:
\[
\Gamma_0[\Gamma_1, \cdots, \Gamma_n] = \gamma_{\mathbb{G}}(n_0;n_1,\cdots,n_{n_0})(\Gamma_0;\Gamma_1,\cdots,\Gamma_n)~,
\]
where $\gamma_{\mathbb{G}}$ denotes the total composition map of the linear operad of cooperative games. 
\end{proposition}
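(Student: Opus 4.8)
The plan is to reduce the identity to an equality of coalition functions and then prove it by a single-slot computation followed by an induction. Both sides live on the same ordered player set $N = N_1 \sqcup \cdots \sqcup N_{n_0}$ (for the operadic side because $\diamond_i$ inserts $N_j$ in the position of the $j$-th player of $N_0$), and by Proposition \ref{prop: compose total = composee partielle} the total composite $\gamma_{\mathbb{G}}(n_0;n_1,\dots,n_{n_0})(\Gamma_0;\Gamma_1,\dots,\Gamma_{n_0})$ is an iterate of the partial compositions $\circ_i$, which by the sequential and parallel axioms may be performed in any order. The compound game $\Gamma_0[\Gamma_1,\dots,\Gamma_{n_0}]$ is the simple game with coalition function $S \mapsto v_0(K(S))$, where $v_0$ denotes the coalition function of $\Gamma_0$ and $K(S) = \{i \in N_0 \mid S \cap N_i \in \mathcal{W}_i\}$, since $v_0(K(S)) = 1$ exactly when $K(S) \in \mathcal{W}_0$. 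Hence it suffices to show that the coalition function of the total composite is $S \mapsto v_0(K(S))$; in particular this re-proves that the composite is simple.

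The key computation is the effect of a single partial composition. Let $\Gamma = (M, w)$ be a simple monotone game, let $j \in M$ be a player, and let $\Gamma' = (M', w')$ be a simple game whose grand coalition wins, i.e.\ $w'(M') = 1$. Inserting $\Gamma'$ at $j$, the formula of Definition \ref{definition: partial composition of games} reads, for $S \subseteq M \diamond_j M'$,
\[
w \circ_j w'(S) = w(S_M) + \partial_j w(S_M)\, w'(S_{M'})~.
\]
Because $w$ is monotone and simple, $\partial_j w(S_M) \in \{0,1\}$, and a three-case analysis (namely: $S_M$ already winning, so $\partial_j w(S_M)=0$ and the value is $w(S_M)=1$; $S_M \cup \{j\}$ still losing, so the value is $w(S_M)=0$; and $j$ pivotal for $S_M$, so $w(S_M)=0$ and the value is $w'(S_{M'})$) shows that $w \circ_j w'(S) = w(\widetilde{S})$, where $\widetilde{S}$ is the coalition of $M$ obtained from $S$ by deleting the block $M'$ and adjoining the player $j$ precisely when $S_{M'} \in \mathcal{W}'$. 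In words, the composite evaluates $w$ on the coalition in which the inserted block has been contracted back to the single player $j$ exactly when that block is a winning coalition of $\Gamma'$.

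Finally I would iterate this, inserting $\Gamma_1,\dots,\Gamma_{n_0}$ one after another, and maintain the claim that after inserting any $k$ of them the composite $\Gamma^{(k)}$ has coalition function $v^{(k)}(S) = v_0(\rho_k(S))$, where the reduction map $\rho_k$ leaves the not-yet-inserted players of $N_0$ untouched and replaces each already-inserted block $N_{i}$ by the player $i$ exactly when $S \cap N_i \in \mathcal{W}_i$. The point that makes the induction go through — and the only genuinely delicate one — is that each further insertion is performed at an original player $j$ of $N_0$, for which $\rho_k(S \cup \{j\}) = \rho_k(S) \cup \{j\}$, and therefore
\[
\partial_j v^{(k)}(S) = v_0\bigl(\rho_k(S)\cup\{j\}\bigr) - v_0\bigl(\rho_k(S)\bigr) = \partial_j v_0\bigl(\rho_k(S)\bigr) \in \{0,1\}~.
\]
Thus the single-slot computation applies verbatim at every step, even though the intermediate games $\Gamma^{(k)}$ are not a priori assembled from monotone pieces: the relevant derivatives stay Boolean because they reduce to derivatives of the monotone game $\Gamma_0$. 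Composing $\rho_k$ with the block contraction of the inserted $N_j$ yields $\rho_{k+1}$, which completes the inductive step. Taking $k = n_0$, the reduction $\rho_{n_0}$ becomes $S \mapsto K(S)$, so the coalition function of the total composite is $S \mapsto v_0(K(S))$, the indicator of $\mathcal{W}$. Hence the operadic total composite coincides with Shapley's compound game $\Gamma_0[\Gamma_1,\dots,\Gamma_{n_0}]$.
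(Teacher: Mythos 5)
Your argument is correct, but it is a genuinely different route from the paper's. The paper disposes of this proposition in one line, by observing that Shapley's compound of simple monotone games is a special case of Owen's tensor composition and invoking Theorem \ref{thm: comparison with Owen's tensor composition} (itself proved by a ``tedious but straightforward'' expansion, or via multilinear extensions). You instead give a self-contained induction on single insertions, whose engine is the contraction identity $w \circ_j w'(S) = w(\widetilde S)$; note that once $w'(M')=1$ and $w'$ is $\{0,1\}$-valued, this identity is immediate from the composition formula by splitting on the value of $w'(S_{M'})$ --- the three-case analysis and the verification that $\partial_j v^{(k)}$ stays Boolean are not actually needed, though your way of securing them (reducing the derivative of the intermediate game to a derivative of the monotone game $\Gamma_0$) is correct and does resolve the one real subtlety of the induction, namely that the intermediate composites need not be monotone. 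Your approach buys two things: it is independent of Owen's multilinear machinery, and it makes visible exactly where the hypothesis $v_i(N_i)=1$ (each component's grand coalition wins) enters --- a hypothesis that the proposition as stated omits but that is genuinely necessary (e.g.\ inserting the zero game breaks the equality, since the normalizing factor $\beta(B)$ kills the quotient game), and that the paper only inherits implicitly from the normalization assumption in the cited theorem. You should state that hypothesis explicitly at the outset; with it, your proof is complete.
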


\begin{proof}
Follows from Theorem \ref{thm: comparison with Owen's tensor composition} which compares the total composition with the tensor composition of Owen in \cite[pp. 308-309]{OwenTensor}, since the composition of simple monotone games is a particular case of this. 
\end{proof}

\begin{remark}
This set theorical operad structure is mentioned, as an example, in \cite[Section 4.4.2]{Mendez15}. 
\end{remark}

\subsubsection{Owen's tensor composition} We compare the tensor composition defined by Owen in \cite{OwenTensor} with the total composition of our operad of cooperative games. We also explain how this composition agrees with the usual composition of multilinear polynomials in the normalized case. 

\begin{definition}[\cite{OwenTensor}]\label{def: tensor composition}
Let $\Gamma_0 = (N_0, v_0)$ be a non-negative game and let $\Gamma_i = (N_i, v_i)$, for $i \in N_0 = \{1, \ldots, n\}$ be normalized, non-negative games. The \emph{tensor composition} of the \emph{components} $\{\Gamma_i\}_{i \in N_0}$, with the \emph{quotient} $\Gamma_0$, is defined to be the game
\[
\Gamma \equiv (N, v) = \Gamma_0 \big[\Gamma_1, \ldots, \Gamma_n],
\]
where $N = N_1 \cup \ldots \cup N_n$ and for every $S \subseteq N$,
\[
v(S) = \sum_{T \subseteq N_0} \left( \prod_{i \in T} v_i(S_i) \prod_{i \not \in T} \big( 1 - v_i(S_i) \big) \right) v_0(T),
\]
where $S_i = S \cap N_i$. 
\end{definition}

\begin{theorem}\label{thm: comparison with Owen's tensor composition}
Let $\Gamma_0 = (N_0, v_0)$ be a non-negative game and let $\Gamma_i = (N_i, v_i)$, for $i \in N_0 = \{1, \ldots, n\}$ be normalized, non-negative games. The tensor composition is given by the total composition of our operad, meaning that 
\[
\Gamma_0 \big[\Gamma_1, \ldots, \Gamma_n] = \gamma_{\mathbb{G}}(n;n_1,\cdots,n_{n_0})(\Gamma_0;\Gamma_1,\cdots,\Gamma_n)~,
\]
where $\gamma_{\mathbb{G}}$ denotes the total composition map of the linear operad of cooperative games. 
\end{theorem}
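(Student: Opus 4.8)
The plan is to recognize Owen's formula as the \emph{multilinear extension} of $v_0$ evaluated at the tuple of component values, and then to show that iterating the partial composition of Definition \ref{definition: partial composition of games} reproduces this extension one component at a time. Concretely, for a point $x = (x_1,\ldots,x_n) \in \mathbb{R}^{N_0}$ write $\widehat{v_0}(x) = \sum_{T \subseteq N_0} \big( \prod_{i \in T} x_i \prod_{i \notin T}(1 - x_i)\big)\, v_0(T)$ for the multilinear extension of $v_0$. Definition \ref{def: tensor composition} then reads exactly as $v(S) = \widehat{v_0}\big(v_1(S_1),\ldots,v_n(S_n)\big)$, and the goal becomes to prove that the total composition $\gamma_{\mathbb{G}}$ computes this same function.

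First I would fix a convenient order for the iterated partial composition. By Proposition \ref{prop: compose total = composee partielle} the total composition is an iterate of partial compositions, and by the sequential and parallel axioms the result is independent of the order chosen. I insert the components into the rightmost slots first: set $\Gamma_{(n+1)} \coloneqq \Gamma_0$ and $\Gamma_{(k)} \coloneqq \Gamma_{(k+1)} \circ_k \Gamma_k$ for $k = n, n-1, \ldots, 1$, so that $\Gamma_{(1)} = \gamma_{\mathbb{G}}(\Gamma_0; \Gamma_1,\ldots,\Gamma_n)$. Filling the rightmost free slot guarantees that the index $k$ of the slot about to be filled never shifts, which removes all index bookkeeping.

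The heart of the argument is the following claim, proved by descending induction on $k$: the coalition function of $\Gamma_{(k)}$ is $v_{(k)}(S) = \widehat{v_0}\big(x_1(S),\ldots,x_n(S)\big)$, where $x_i(S) = v_i(S \cap N_i)$ for $i \ge k$, while $x_i(S)$ equals $1$ when $i \in S$ and $0$ otherwise for $i < k$. The base case $k = n+1$ reduces to $v_{(n+1)} = v_0$: when every $x_i$ is such an indicator, the summand $\prod_{i \in T} x_i \prod_{i \notin T}(1 - x_i)$ vanishes unless $T = S$, so $\widehat{v_0}$ collapses to $v_0(S)$. The case $k = 1$ is precisely Owen's formula. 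For the inductive step I apply Definition \ref{definition: partial composition of games} to $\Gamma_{(k)} = \Gamma_{(k+1)} \circ_k \Gamma_k$, obtaining $v_{(k)}(S) = v_k(N_k)\, v_{(k+1)}(S_A) + \partial_k v_{(k+1)}(S_A)\, v_k(S_B)$ with $S_A = S \cap A$ and $S_B = S \cap N_k$. Here the normalization hypothesis enters decisively: $v_k(N_k) = 1$, so the leading term is simply $v_{(k+1)}(S_A)$. Since the formal player $k$ has been replaced, $S_A$ never contains $k$, so $x_k(S_A) = 0$ at the base point, and $\partial_k v_{(k+1)}(S_A) = v_{(k+1)}(S_A \cup \{k\}) - v_{(k+1)}(S_A)$ amounts to differentiating $\widehat{v_0}$ in its $k$-th (indicator) coordinate. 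Because $\widehat{v_0}$ is affine in each coordinate, pairing each $T$ not containing $k$ with $T \cup \{k\}$ shows this derivative equals the multilinear extension of $\partial_k v_0$ evaluated at the remaining coordinates. Recombining $v_{(k+1)}(S_A) + v_k(S_B)\,\partial_k v_{(k+1)}(S_A)$ then reconstitutes $\widehat{v_0}$ with the $k$-th coordinate replaced by $v_k(S \cap N_k)$ in place of the indicator, which is exactly the asserted form of $v_{(k)}$. This closes the induction, and $k = 1$ gives the theorem.

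The main obstacle is this derivative computation: one must check carefully that $\partial_k v_{(k+1)}$ is the multilinear extension of $\partial_k v_0$, which rests on the affineness of $\widehat{v_0}$ in the $k$-th slot together with the facts that $x_k$ is an indicator at stage $k+1$ and that $S_A$ excludes the replaced player $k$. The remaining points — that the normalization $v_k(N_k) = 1$ is precisely what clears the leading coefficient, and that the rightmost-slot-first order eliminates index shifts — are what keep the bookkeeping manageable. I would also note that non-negativity of the games plays no role in the identity itself; it is needed only for Owen's composition to remain within its intended class.
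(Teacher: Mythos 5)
Your proof is correct, and it follows the same basic route as the paper's: both start from the observation that normalization of the inserted game reduces the partial composition formula to $v_{(k+1)}(S_A) + \partial_k v_{(k+1)}(S_A)\,v_k(S_{N_k})$, and both then iterate. The difference is one of completeness and organization. The paper stops at ``tedious but straightforward'' for the iteration (and offers, as an alternative, a derivation from Owen's Theorem~6 on multilinear extensions combined with the compatibility of $\circ_i$ with composition of multilinear polynomials in the unanimity basis). You instead carry the iteration out explicitly, and the way you do it is genuinely the right bookkeeping: taking the multilinear extension $\widehat{v_0}$ evaluated at a mixed vector of indicators and component values as the inductive invariant, filling slots right-to-left so indices never shift, and reducing the inductive step to the affineness of $\widehat{v_0}$ in its $k$-th coordinate (so that $\partial_k v_{(k+1)}(S_A)$ is exactly the coefficient of $x_k$, i.e.\ the multilinear extension of $\partial_k v_0$, using that $k \notin S_A$). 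This makes your write-up a hybrid of the paper's two suggested routes and, in effect, supplies the computation the paper omits. Your closing remarks are also accurate: normalization $v_k(N_k)=1$ is exactly what kills the rescaling factor, and non-negativity plays no role in the identity itself, only in Owen's original framing.
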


\begin{proof}
Let us begin the proof with the following observation. Let $\Gamma_1 = (N_i, v_i)$ and $\Gamma_2 = (N_i, v_i)$ be two games. If $\Gamma_2$ is normalized, then the partial composition formula simplifies into
\[
v_1 \circ_i v_2(S) = v_1(S_{N_1}) + \partial_i v_1(S_{N_1})\cdot v_2(N_2)~, 
\]
for any player $i \in N_1$ and any subset $S \subseteq N_1 \circ_1 N_2$. Using this formula, it is tedious but straightforward to compute that 
\begin{align*}
\gamma_{\mathbb{G}}(n;n_1,\cdots,n_{n_0})(\Gamma_0;\Gamma_1,\cdots,\Gamma_n) &= (((\Gamma_0 \circ_1 \Gamma_1) \circ_2 \Gamma_2) \cdots \circ_n \Gamma_n)
\end{align*}
is indeed given by the formula of Definition \ref{def: tensor composition}. Alternatively, this result also follows directly from Theorem \ref{th: stability-by-comp} and Proposition \ref{prop: composition of multilinear polynomials}, whose proof is simpler. 
\end{proof}

\subsubsection{Composition of multilinear polynomials and the Möbius transform}
Let $\{e^i\}_{i \in N}$ denote the canonical basis of $\mathbb{R}^N$, and write $e^S$ for $\sum_{i \in S} e^i$. By identifying the coalition $S$ to the vertex of the hypercube $e^S$, the domain of $v$ is now a finite subset of $\mathbb{R}^N$, which we want to extend to the whole hypercube $[0, 1]^N$, such that the extension $f$ of $v$ is continuous. There exist several way of doing so, in particular using the Choquet integral, also called the Lovasz extension. The \emph{multilinear extension} extension was constructed by Owen in \cite{OwenMultilinear}.

\begin{definition}[Multilinear extension]
    Let $\Gamma = (N, v)$ be a game. The \emph{multilinear extension} $f_v$ of $\Gamma$ is given by 
    \[ 
    \begin{tabular}{lccl}
    $f_v:$ & $[0, 1]^N$ & $\xrightarrow{\hspace{1cm}}$ & $\mathbb{R}$ \\[0.1cm]
    & $x$ & $\xmapsto{\hspace{1cm}}$ & $\displaystyle f(x) = \sum_{S \in \mathcal{P}(N)} \left( \prod_{i \in S} x_i \prod_{i \not \in S} (1-x_i) \right) v(S)$
    \end{tabular} 
    \]
    It satisfies that, for all $S \in \mathcal{P}(N)$, $f(e^S) = v(S)$. 
\end{definition}

In \cite[Last Theorem]{OwenMultilinear}, Owen showed that this multilinear extension is unique. If we denote $x_S = \prod_{i \in S} x_i$, then it follows from this uniqueness that for any game $\Gamma = (N, v)$ and for all $x \in [0, 1]^N$, the multilinear extension is given by
    \[
    f_v(x) = \sum_{S \in \mathcal{P}(N)} \mu^v \hspace{-1pt} (S) \hspace{1pt} x_S.
    \]
So it is a polynomial function in variables $x_1,\cdots, x_n$, where the coefficient of $x_S$ is given by the Möbius transform of the game $\mu^v$ evaluated at $S$. The tensor composition of games and the usual composition of functions are related in the following way. 

\begin{theorem}[{\cite[Theorem 6]{OwenMultilinear}}]\label{th: stability-by-comp}
Let $\Gamma_0 = (N_0, v_0)$ be a non-negative game and let $\Gamma_i = (N_i, v_i)$, for $i \in N_0 = \{1, \ldots, n\}$ be normalized, non-negative games. Denote by $f$ the multilinear extension of $\Gamma_0$, and by $g_i$ the multilinear extension of $\Gamma_i$. The multilinear extension of the tensor composition $\Gamma_0[\Gamma_1, \ldots, \Gamma_n]$ is given by the function composition $f(g_1, \ldots, g_n)$.
\end{theorem}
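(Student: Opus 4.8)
The plan is to exploit the uniqueness of the multilinear extension recalled just above, namely that the multilinear extension of a game is the \emph{unique} multilinear polynomial whose values at the vertices $e^S$ of the hypercube agree with the coalition function. Writing $v$ for the coalition function of the tensor composition $\Gamma_0[\Gamma_1,\ldots,\Gamma_n]$ and $h \coloneqq f(g_1,\ldots,g_n)$ for the candidate function on $[0,1]^N$, it then suffices to establish two facts: that $h$ is multilinear in the variables $\{x_j\}_{j \in N}$, and that $h(e^S) = v(S)$ for every $S \subseteq N$. Once both are verified, uniqueness forces $h$ to coincide with the multilinear extension $f_v$ of the composite, which is exactly the claim.

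First I would verify multilinearity. Using the monomial form of the multilinear extension, $f(y) = \sum_{T \subseteq N_0} \mu^{v_0}(T) \prod_{i \in T} y_i$, substitution yields
\[
h = f(g_1,\ldots,g_n) = \sum_{T \subseteq N_0} \mu^{v_0}(T) \prod_{i \in T} g_i~.
\]
Since the player sets $N_1,\ldots,N_n$ are pairwise disjoint and each $g_i$ is multilinear in the variables indexed by $N_i$ only, each product $\prod_{i \in T} g_i$ is multilinear in $\bigcup_{i \in T} N_i$ and involves no other variable; summing over $T$ shows that $h$ has degree at most one in every variable $x_j$, hence is multilinear on $[0,1]^N$.

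Next I would evaluate on the vertices of the hypercube. Fix $S \subseteq N$ and set $S_i = S \cap N_i$. Because $g_i$ depends only on the coordinates indexed by $N_i$, the restriction of $e^S$ to those coordinates is precisely $e^{S_i}$, so the defining property $g_i(e^{S_i}) = v_i(S_i)$ gives $g_i(e^S) = v_i(S_i)$. Plugging this into the \emph{product} form of the multilinear extension of $\Gamma_0$ produces
\[
h(e^S) = f\big(v_1(S_1),\ldots,v_n(S_n)\big) = \sum_{T \subseteq N_0} \left( \prod_{i \in T} v_i(S_i) \prod_{i \notin T} \big(1 - v_i(S_i)\big) \right) v_0(T)~,
\]
which is exactly the coalition function $v(S)$ of the tensor composition of Definition \ref{def: tensor composition}. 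Combining the two steps with uniqueness concludes the argument.

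I do not expect a genuine obstacle here: the argument is essentially a change of viewpoint made possible by uniqueness, trading a direct (and painful) computation of the Möbius coefficients of the composite for two short verifications. The only point requiring care is the bookkeeping around the disjointness of the $N_i$, together with the fact that the two presentations of the multilinear extension must be used at the two different steps: the monomial form in $\mu^{v_0}$ makes multilinearity transparent, whereas the product form in $v_0(T)$ is what makes the match with the tensor-composition formula on vertices transparent.
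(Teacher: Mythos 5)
Your proof is correct. Note that the paper itself does not prove this statement at all: it is imported verbatim as \cite[Theorem 6]{OwenMultilinear}, so there is no in-paper argument to compare against. Your route --- check that $f(g_1,\ldots,g_n)$ is multilinear via the monomial (M\"obius) form of $f$ and the disjointness of the player sets $N_i$, check agreement with the tensor-composition coalition function on the vertices $e^S$ via the product form of $f$, and conclude by the uniqueness of the multilinear extension --- is complete and is essentially Owen's original argument; the only hypotheses actually used are that the $N_i$ are disjoint and that uniqueness of the multilinear extension holds, so the normalization and non-negativity assumptions enter only through the definition of the tensor composition itself, not through your verification.
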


Multilinear polynomials without constants in $n$-variables form a vector space of dimension $2^n -1$, as any such multilinear polynomial $f$ can be written as 
\[
f = \sum_{S \subseteq N} \lambda_S x_S~,  
\]
where $x_S = \prod_{i \in S} x_i$. The partial composition $f \circ_i g$ two multilinear polynomials $f, g$ is simply given by replacing all the instances of $x_i$ in $f$ by $g$. 

\begin{proposition}\label{prop: composition of multilinear polynomials}
The partial composition of the operad $\mathbb{G}^{u}(n)$ (in the basis of unanimity games) agrees with the usual composition of functions for normalized games, under the identification of $u_S$ with $x_S$. 
\end{proposition}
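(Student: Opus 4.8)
The plan is to compare the two operations directly in the unanimity basis, first on basis monomials and then on general elements, isolating exactly where the normalization hypothesis is needed. I would begin by setting up the dictionary: since $u_S$ has M\"obius transform equal to the indicator of $S$, its image under the multilinear extension is $\sum_T \mu^{u_S}(T) x_T = x_S$, so the identification $u_S \leftrightarrow x_S$ is precisely $u_S \leftrightarrow \prod_{j \in S} x_j$. Under this dictionary, the content of Proposition \ref{Proposition: universal property of the partial composition of games}, namely $u_S \circ_i u_T = u_{S \diamond_i T}$, should translate into the assertion that substituting the monomial $x_T$ for the variable $x_i$ inside the monomial $x_S$ produces $x_{S \diamond_i T}$. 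I would check this by hand: if $i \notin S$ the monomial is untouched and equals $x_S = x_{S \diamond_i T}$, whereas if $i \in S$ we replace the single factor $x_i$ by $\prod_{k \in T} x_k$; because the index sets $N_1 \setminus \{i\}$ and $N_2$ are disjoint in the composite player set $N_1 \diamond_i N_2$, no variable is squared and the result stays multilinear, giving exactly $x_{(S \setminus \{i\}) \cup T} = x_{S \diamond_i T}$. Thus the operadic composition and polynomial substitution agree on all basis monomials.

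The subtlety, and the reason for the normalization hypothesis, appears only upon extending to arbitrary elements. Writing $f = \sum_S \lambda_S u_S$ and $g = \sum_T \rho_T u_T$, the operadic partial composition in the unanimity basis was computed in Subsection \ref{subsection: unanimity basis} to be
\[
f \circ_i^u g = \Big(\sum_T \rho_T\Big)\sum_{S \not\ni i} \lambda_S u_S + \sum_{S \ni i}\sum_T \lambda_S \rho_T\, u_{S \diamond_i T}~.
\]
On the other hand, performing the honest substitution $x_i \mapsto g$ in $f$ and using the monomial identity above gives
\[
\sum_{S \not\ni i} \lambda_S x_S + \sum_{S \ni i}\sum_T \lambda_S \rho_T\, x_{S \diamond_i T}~.
\]
Comparing the two expressions term by term under $u \leftrightarrow x$, I find that they are identical on the monomials indexed by sets containing $i$, and that the only discrepancy is the scalar factor $\sum_T \rho_T$ multiplying the block of monomials indexed by sets \emph{not} containing $i$.

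Finally I would identify this scalar. By the Zeta transform, $\sum_T \rho_T = \sum_{T \subseteq N_2} \mu^{v_2}(T) = v_2(N_2)$, the total value of the inserted game $g$. Hence $f \circ_i^u g$ coincides with the polynomial substitution $f(x_1,\dots,x_{i-1},g,x_{i+1},\dots)$ precisely when $v_2(N_2) = 1$, that is, when $g$ is normalized, which is exactly the stated hypothesis. I expect the only real subtlety to be this one: the operadic partial composition is genuinely \emph{bilinear}, whereas polynomial substitution is not, and the entire gap between them is the normalization factor $v_2(N_2)$. Once that factor is pinned down via the Zeta transform, the proposition is immediate, and this also clarifies why the identification of $\circ_i^u$ with function composition is asserted only for normalized games.
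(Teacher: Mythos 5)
Your proposal is correct and follows essentially the same route as the paper: recall the formula for $\circ_i^u$ in the unanimity basis, observe that the only discrepancy with polynomial substitution is the scalar $\sum_T \rho_T = v_2(N_2)$ multiplying the terms indexed by sets not containing $i$, and note that this factor is $1$ exactly for normalized games. Your extra verification of the monomial identity $x_S[x_i \mapsto x_T] = x_{S \diamond_i T}$ is a welcome explicit check of a step the paper leaves implicit, but it does not change the argument.
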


\begin{proof}
Recall from Subsection \ref{subsection: unanimity basis} that the partial composition of 
\[
f = \sum_{S \subseteq [n]} \lambda_S u_S \quad \text{and} \quad g = \sum_{T \subseteq [m]} \rho_T u_T~,
\]
can be written down in the basis of unanimity games as 
\[
f \circ_i^{u} g = \left(\sum_{T \subseteq [m]} \rho_T\right)\sum_{\substack{S \subseteq [n] \\ S \not \ni i}} \lambda_S u_S + \sum_{\substack{S \subseteq [n] \\ S \ni i}} \sum_{T \subseteq [m]} \lambda_{S}\rho_T u_{S \circ_i T}~.
\]
Now observe that a game $\Gamma = (N, v)$ is normalized, meaning $v(N) = 1$, if and only if 
\[
\sum_{S \subset N} \mu^v \hspace{-1pt} (S) = 1~, 
\]
So the partial composition of normalized games, in the basis of unanimity games, is given by 
\[
f \circ_i^{u} g = \sum_{\substack{S \subseteq [n] \\ S \not \ni i}} \lambda_S u_S + \sum_{\substack{S \subseteq [n] \\ S \ni i}} \sum_{T \subseteq [m]} \lambda_{S}\rho_T u_{S \circ_i T}~, 
\]
which under the identification of $u_S$ with $x_S$, exactly corresponds with the usual composition of polynomial functions.
\end{proof}

\begin{remark}
Notice that the partial composition of multilinear polynomials is \textit{not} itself bilinear unless the normalized condition is imposed.
\end{remark}

\vspace{1.5pc}

\section{An explicit presentation of the operad of cooperative games}

\vspace{2pc}

We recall the definition of the operad that encodes commutative trialgebras and show that it is canonically isomorphic to the operad of cooperative games via the Möbius transform. This gives us a presentation in terms of generators and relations of the operad of cooperative games, and thus allows us to show that any cooperative game is a sum of iterated compositions of the $2$-players dictator games and the $2$-players bargaining game.

\subsection{The operad of commutative trialgebras and its presentation}
In this subsection, we recall the notion of a commutative trialgebra, introduced by Vallette in \cite{Vallettepartition}, and give an explicit presentation of the operad that encodes this structure. 

\begin{definition}[Commutative trialgebra]\label{def: commutative trialgebra}
A \textit{commutative trialgebra} amounts to the data of an $\mathbb{R}$-vector space $A$ equipped with two binary operations 
\[
\mu: A \otimes A \longrightarrow A \quad \text{and} \quad \nu: A \otimes A \longrightarrow A~,
\]
which satisfy the following relations. 

\begin{enumerate}
\item The operation $\mu$ is symmetric and associative, meaning $(A,\mu)$ is a commutative $\mathbb{R}$-algebra.

\item The operation $\nu$ satisfies the following relation 
\[
\nu(\nu(a,b),c) = \nu(a,\nu(b,c)) = \nu(a,\nu(c,b))~,
\]
for any $a,b,c$ in $A$, which means that the associator of $\nu$ is \textit{right symmetric.}

\item The two operations satisfy the following relations between them
\[ \begin{cases}
\nu(a,\mu(b,c)) = \nu(a,\nu(b,c))~, \\
\nu(\mu(a,b),c) = \mu(a,\nu(b,c))~,
\end{cases} \]
for any $a,b,c$ in $A$. 
\end{enumerate}
\end{definition}

\begin{remark}
This notion is the \textit{commutative} analogue of the notion of an associative trialgebra introduced by Loday and Ronco in \cite{LodayRonco}.  
\end{remark}

\begin{remark}
The vector space $A$ together with the operation $\nu$ above forms what is called a \textit{permutative algebra}. This type of algebras is encoded by the operad introduced in Example \ref{example: permutative operad}. See \cite{Chapoton01} for more details on this type of algebraic structure.
\end{remark}

\textbf{Generators and relations of the corresponding operad.} The notion of a commutative trialgebra is encoded by an operad, in the sense that there exists an operad such that algebras over this operad are precisely commutative triassociative algebras, see \cite[Section 5.2]{LodayVallette} for the definition of an algebra over an operad. Let us give a presentation of this operad. The $\mathbb{S}$-module of generators $M$ is given by 
    \[
    M \coloneqq (0,0,\kk[\mathbb{S}_2] \cdot \{\nu\} \oplus \kk \cdot \{\mu\}, 0, \cdots)~,
    \]
where $\mu$ is an arity $2$ operation with no symmetries and $\nu$ is a symmetric arity $2$ operation. This follows from the fact that commutative trialgebras are endowed with two binary operations, where one of them is symmetric. The ideal of relations $R$ is generated by the following relations
\[ \begin{aligned}
1) \quad & \mu \circ_1 \mu = \mu \circ_2 \mu, & \quad (\text{associativity of }\mu) \\
2) \quad & \nu \circ_1 \nu = \nu \circ_2 \nu = \nu \circ_2 ((21) \star \nu), & \quad (\text{permutativity of }\nu) \\
3) \quad & \nu \circ_2 \mu = \nu \circ_2 \nu, \\
3') \quad & \nu \circ_1 \mu = \mu \circ_2 \nu. 
\end{aligned} \]
The first two relations have already been depicted in Examples \ref{example: associative operad} and \ref{example: permutative operad}, respectively. The last two relations can be depicted as follows: 
\[
\includegraphics[scale=0.5]{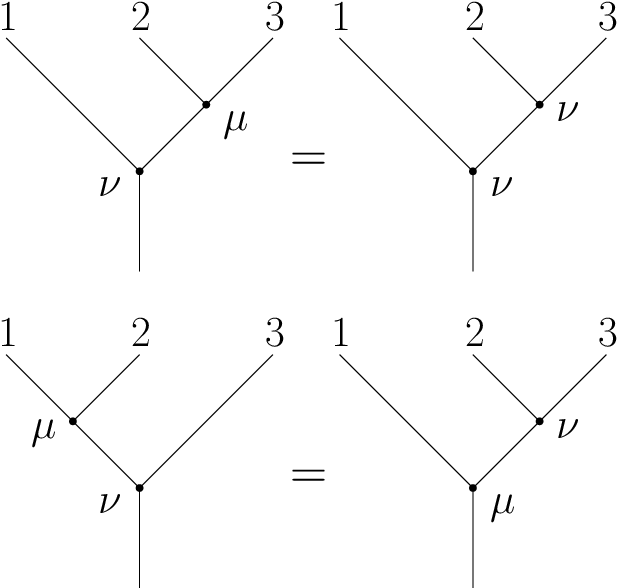}
\] 
We denote by $\mathcal{C}om\mathcal{T}riass$ the operad given by the explicit presentation
\[
\mathcal{C}om\mathcal{T}riass \coloneqq \mathbb{T}(M)/(R)~,
\]
where $M$ and $R$ are the generators and the relations defined above. 

\begin{remark}
There are canonical inclusions $\mathcal{C}om \hookrightarrow \mathcal{C}om\mathcal{T}riass$ and $\mathcal{P}erm \hookrightarrow \mathcal{C}om\mathcal{T}riass$ of operads are determined by, respectively, the inclusion of commutative or the permutative binary product inside the operad $\mathcal{C}om\mathcal{T}riass$. 
\end{remark}

\subsection{The isomorphism with the operad of cooperative games}
In this subsection, we construct an explicit isomorphism of operads between the operad that encodes commutative trialgebras and  the operad of cooperative games. As a result, we obtain an explicit presentation of the operad of cooperative games.

\begin{theorem}\label{thm: iso comtriass et operade}
There is an isomorphism of operads 
\[
\begin{tikzcd}
\varphi: \mathcal{C}om\mathcal{T}riass \xrightarrow{\hspace{1.5cm}} \mathbb{G}~
\end{tikzcd}
\]
determined by the map 
\[ \begin{aligned} 
\varphi(2) : \mathcal{C}om\mathcal{T}riass(2) & \xrightarrow{\hspace{1.5cm}} \mathbb{G}(2) \\
\nu & \xmapsto{\hspace{1.5cm}} d_1 \\
\mu & \xmapsto{\hspace{1.5cm}} B, 
\end{aligned} \]

where $d_1$ stands for the $2$-player dictator game and $B$ stands for the $2$-player bargaining game.
\end{theorem}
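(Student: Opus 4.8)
The plan is to exploit the presentation $\mathcal{C}om\mathcal{T}riass = \mathbb{T}(M)/(R)$: a morphism of operads out of a quotient of a free operad is uniquely determined by an $\mathbb{S}$-equivariant assignment of the generators that annihilates the relations $R$. Sending $\mu \mapsto B$ and $\nu \mapsto d_1$ is legitimate on the level of $\mathbb{S}$-modules precisely because $B$ is symmetric (so it may receive the symmetric generator $\mu$), whereas $\nu$ spans a free $\mathbb{S}_2$-representation and may be sent anywhere, with $(21)\star\nu \mapsto (21)\star d_1 = d_2$ forced. So the first step is to check that $B$ and $d_1$ satisfy, inside $\mathbb{G}$, the four relations generating $R$. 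I would do this not in $\mathbb{G}$ but in the isomorphic operad $\mathbb{G}^u$ of the unanimity basis, via Proposition \ref{prop: Mobius changes the basis}: there $B = u_{\{1,2\}}$ and $d_1 = u_{\{1\}}$, and by Proposition \ref{Proposition: universal property of the partial composition of games} composition is the purely combinatorial rule $u_S \circ_i^u u_T = u_{S \diamond_i T}$. Each relation then collapses to a one-line identity between subsets; for instance the mixed relation $\nu \circ_1 \mu = \mu \circ_2 \nu$ becomes $u_{\{1\}\diamond_1\{1,2\}} = u_{\{1,2\}} = u_{\{1,2\}\diamond_2\{1\}}$, and the permutativity of $\nu$ follows because inserting at the non-veto slot leaves the unanimity game unchanged. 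This establishes that $\varphi$ is a well-defined morphism of operads.

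The second step is to prove bijectivity arity by arity, using $\dim \mathbb{G}(n) = 2^n - 1$. For surjectivity I would show that every basis element $u_S$, with $\emptyset \neq S \subseteq [n]$, is an iterated partial composition of copies of $d_1 = u_{\{1\}}$ and $B = u_{\{1,2\}}$. The rule $u_S \circ_i^u u_T = u_{S \diamond_i T}$ makes the mechanism transparent: inserting $B$ at a \emph{veto} slot $i \in S$ splits that veto player into two (enlarging $S$ by one), while inserting $d_1$ at a veto slot keeps $S$ and adjoins one \emph{dummy} player via its second input. Hence, starting from $B$ (or from $d_1$ when $|S| = 1$), composing with enough further copies of $B$ to reach $|S|$ veto players and then with copies of $d_1$ to adjoin the $n - |S|$ dummies produces some $u_{S_0}$ with $|S_0| = |S|$; applying a suitable $\sigma \in \mathbb{S}_n$ and invoking the $\mathbb{S}$-equivariance of $\varphi$ yields $u_S$. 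This shows $\varphi(n)$ is onto, so $\dim \mathcal{C}om\mathcal{T}riass(n) \geq 2^n - 1$.

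For injectivity it then suffices to prove the reverse inequality $\dim \mathcal{C}om\mathcal{T}riass(n) \leq 2^n - 1$. I would do this by exhibiting a spanning set indexed by the nonempty subsets of $[n]$: to each such $S$ associate the monomial $m_S = \nu\big(\mu(\{x_j\}_{j \in S}) ;\, \{x_k\}_{k \notin S}\big)$, namely the commutative $\mu$-product of the inputs in $S$ placed in the first (pointed) slot of an outer permutative product whose remaining inputs are indexed by the complement, with $m_{[n]} = \mu(x_1, \dots, x_n)$ when the complement is empty. Associativity and commutativity of $\mu$, the permutativity of $\nu$, and the two mixed relations (3) and (3') should provide a terminating rewriting that pushes every tree monomial into this normal form, so the $2^n - 1$ elements $m_S$ span $\mathcal{C}om\mathcal{T}riass(n)$. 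Since $\varphi(m_S) = u_S$ by the surjectivity construction and the $u_S$ are linearly independent in $\mathbb{G}(n)$, the $m_S$ must in fact be a basis; thus $\dim \mathcal{C}om\mathcal{T}riass(n) = 2^n - 1$ and $\varphi(n)$ sends a basis to a basis. Being a morphism of operads that is bijective in every arity, $\varphi$ is an isomorphism.

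The main obstacle is the normal-form argument of the last step: controlling the interaction of the mixed relations (3) and (3') with the associativity of $\mu$ and the permutativity of $\nu$ well enough to guarantee that the rewriting both terminates at the $m_S$ and yields no residual linear relations among them. The relation checks and the surjectivity construction are, by contrast, routine once one transports everything to the unanimity basis, where composition is governed by the elementary operation $\diamond_i$ on subsets.
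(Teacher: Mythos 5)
Your proposal is correct and follows the same skeleton as the paper's proof: verify that $B$ and $d_1$ satisfy the defining relations of $\mathcal{C}om\mathcal{T}riass$ so that $\varphi$ is well defined, then conclude by surjectivity plus an arity-wise dimension count against $\dim \mathbb{G}(n) = 2^n-1$. Your relation checks in the unanimity basis are right (this is exactly the factorization through $\mathbb{G}^u$ via Proposition \ref{Proposition: universal property of the partial composition of games} that the paper records after the theorem), and each relation does collapse to an identity of subsets under $\diamond_i$. Where you genuinely diverge is in how the two remaining inputs are obtained: the paper cites Vallette's Theorem 14 both for surjectivity and for $\dim \mathcal{C}om\mathcal{T}riass(n) = 2^n - 1$, whereas you construct every $u_S$ explicitly as an iterated composite of $B$ and $d_1$ (which gives surjectivity, the lower bound on the dimension, and incidentally a constructive proof of Corollary \ref{cor: generators of the operad of cooperative games}), and you propose to get the upper bound by exhibiting the spanning monomials $m_S$ via a rewriting to normal form. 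That rewriting does work --- relation $(3')$ together with commutativity of $\mu$ pushes every $\nu$ toward the root, and relation $(3)$ with permutativity flattens and reorders the second arguments of the $\nu$-chain, leaving exactly one monomial per nonempty $S \subseteq [n]$ --- but this is precisely the content of the result the paper cites, and you leave it at the level of ``should provide a terminating rewriting.'' So your route is more self-contained and more informative, at the cost that its one nontrivial step is only sketched; to make it a complete proof you would either carry out that confluence/termination argument or, as the paper does, invoke Vallette's computation of $\mathcal{C}om\mathcal{T}riass(n)$.
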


\begin{proof}
It follows from a direct computation that $d_1$ and $B$ satisfy the relations that define the operad $\mathcal{C}om\mathcal{T}riass$, hence the morphism of operads $\varphi$ is well-defined. Using \cite[Theorem 14]{Vallettepartition} it can be checked that this morphism is a surjective morphism and that the $\mathbb{R}$-vector space $\mathcal{C}om\mathcal{T}riass(n)$ are of dimension $2^n-1$, which is the same dimension as $\mathbb{G}(n)$. Therefore $\varphi$ is an isomorphism. 
\end{proof}

\begin{remark}
Since the associator of the permutative product needs to be \textit{right} symmetric, sending $\nu$ to the $2$-player dictator game $d_2$ and $\mu$ to $B$ does \textit{not} define a morphism of operads. The isomorphism $\varphi$ is therefore canonical.
\end{remark}

\begin{corollary}\label{cor: generators of the operad of cooperative games}
The operad of cooperative games $\mathbb{G}$ is generated under composition by the two-players bargaining game $\mathrm{B}$ and the two-players dictator games $d_1$ and $d_2$. Thus any $n$-player game is a linear sum of $n-1$ compositions of bargaining and dictator games. 
\end{corollary}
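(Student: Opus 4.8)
The plan is to deduce everything formally from the operad isomorphism $\varphi \colon \mathcal{C}om\mathcal{T}riass \xrightarrow{\sim} \mathbb{G}$ of Theorem \ref{thm: iso comtriass et operade}, using the general principle that an isomorphism of operads carries a generating set to a generating set. By construction one has $\mathcal{C}om\mathcal{T}riass = \mathbb{T}(M)/(R)$ with $M$ concentrated in arity $2$ and equal to $\kk[\mathbb{S}_2]\cdot\{\nu\} \oplus \kk\cdot\{\mu\}$; the structural surjection $\mathbb{T}(M) \twoheadrightarrow \mathcal{C}om\mathcal{T}riass$ is exactly the assertion that $\mathcal{C}om\mathcal{T}riass$ is generated, as an operad, by the binary operations $\mu$ and $\nu$. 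Transporting along $\varphi$ and recalling $\varphi(\mu)=B$, $\varphi(\nu)=d_1$, I would conclude that $\mathbb{G}$ is generated by $B$ and $d_1$. Since $\varphi$ is in particular a morphism of $\mathbb{S}$-modules, it intertwines the symmetric actions, so the second basis vector $(21)\star\nu$ of the regular representation $\kk[\mathbb{S}_2]$ is sent to $(21)\star d_1 = d_2$. Forgetting the $\mathbb{S}$-structure, the image $\varphi(M)$ is therefore spanned precisely by the three games $B$, $d_1$ and $d_2$, which gives the first assertion.

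For the counting statement I would use the explicit description of the free operad recalled in the excerpt: the arity-$n$ component $\mathbb{T}(M)(n)$ is spanned by the tree tensors $M(t)$ as $t$ ranges over rooted trees with $n$ leaves. Because $M$ is concentrated in arity $2$, only the binary trees $t$ contribute, and each internal vertex of such a $t$ must be decorated by one of $\mu$, $\nu$, $(21)\star\nu$. A binary rooted tree with $n$ leaves has exactly $n-1$ internal vertices, so every such tree tensor is an iterated partial composite involving $n-1$ binary generators. Passing to the quotient $\mathcal{C}om\mathcal{T}riass(n)$ and then applying $\varphi$, every $n$-player game becomes a linear combination of iterated compositions, each assembled from $n-1$ copies of the games $B$, $d_1$, $d_2$ — which is the claimed expression as a linear sum of $n-1$ compositions of bargaining and dictator games.

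I expect no serious obstacle, since this is a direct consequence of Theorem \ref{thm: iso comtriass et operade}; the only point requiring care is the role of the symmetric action, and I would make it explicit. Because $\nu$ is \emph{not} symmetric, its $\mathbb{S}_2$-orbit is two-dimensional, so one genuinely needs both $d_1$ and $d_2$ to generate $\mathbb{G}$ without separately invoking the action of the symmetric groups — which is exactly why the corollary lists three games rather than two. This dovetails with the remark following Theorem \ref{thm: iso comtriass et operade}: sending $\nu$ to $d_2$ in place of $d_1$ fails to define a morphism of operads precisely because the permutative associator must be \emph{right} symmetric, so the two dictator games play genuinely distinct roles and both are required.
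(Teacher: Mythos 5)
Your argument is correct and is exactly the route the paper takes: its proof consists of the single line ``Follows directly from Theorem \ref{thm: iso comtriass et operade}'', and your write-up simply makes explicit the transport of the generating $\mathbb{S}$-module $M$ along $\varphi$ (with $(21)\star d_1 = d_2$ accounting for the third generator) and the count of $n-1$ internal vertices in a binary tree with $n$ leaves.
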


\begin{proof}
Follows directly from Theorem \ref{thm: iso comtriass et operade}. 
\end{proof}

\begin{remark}
There is in particular a morphism of operads $\mathcal{P}erm \hookrightarrow \mathbb{G}$ given by the sending the generator of the permutative product to the $2$-players dictator game $d_1$. The image of this morphism can be identified with the suboperad of $\mathbb{G}$ consisting only in \textit{additive games}. 
\end{remark}

\textbf{The explicit form of the isomorphisms.} The isomorphism of Theorem \ref{thm: iso comtriass et operade} can in fact be factored as the following composition of isomorphisms:

\[
\begin{tikzcd}
\mathcal{C}om\mathcal{T}riass \arrow[r,"\psi"]
&\mathbb{G}^{u} \arrow[r,"\zeta"]
&\mathbb{G}~,
\end{tikzcd}
\]
\vspace{0.1pc}

where $\psi$ sends the basis $\{e_S\}_{S \subseteq N}$ of $\mathcal{C}om\mathcal{T}riass$ constructed in \cite[Theorem 14]{Vallettepartition} to the basis of unanimity games $\{u_S\}_{S \subseteq N}$ of Subsection \ref{subsection: unanimity basis}, and where $\zeta$ is the inverse to the Möbius transform in Proposition \ref{prop: Mobius changes the basis}. Consequently, the inverse of this isomorphism, up to the identification of the linear basis of \cite[Theorem 14]{Vallettepartition} and the linear basis of unanimity games, is given by the Möbius transform of cooperative games: 
\[
\begin{tikzcd}
\mathbb{G} \arrow[r,"\mu"]
&\mathbb{G}^{u} \arrow[r,"\psi^{-1}"]
&\mathcal{C}om\mathcal{T}riass~.
\end{tikzcd}
\]

\vspace{1.5pc}

\section{Distinguished suboperads and properties of the composition}

\vspace{2pc}

The first goal of this section is to first establish a few key properties of the partial composition with respect to derivatives, dual games and marginal vectors. The second goal is to show that almost all the important subclasses of cooperative games that are in practice studied in the literature are stable under partial compositions. All the different suboperads that appear in this section can be depicted in the following inclusion table of suboperads:
\[
\begin{tikzcd}[row sep=2em, column sep=1em, ampersand replacement=\&]
    \boxed{\text{Additive games}} \arrow[d]
    \&
    \& \boxed{k\text{-monotone games}} \arrow[ld]
    \&\boxed{\text{Belief functions}} \arrow[l] \\
    \boxed{\text{Cooperative games}}
    \&\boxed{\text{Capacities}} \arrow[l] 
    \&\boxed{\text{Balanced games}} \arrow[l]
    \&\boxed{\text{Totally balanced games}} \arrow[l,dashed] \\
    \boxed{\text{Normalized games}} \arrow[u]
    \&\boxed{\text{Simple games}} \arrow[l] \arrow[u]
    \&\boxed{k\text{-alternating games}} \arrow[lu]
    \&\boxed{\text{Plausibility measures}} \arrow[l] \\
\end{tikzcd}
\]
Note that the case of totally balanced games is more subtle (see Remark \ref{rmk: totally balanced games}) and will be the subject of future research.

\subsection{The partial tensor product construction and the suboperad of additive games.}\label{subsection: partial tensor product} We start by introducing a useful geometrical construction. We define the  \textit{partial tensor product} bilinear maps
\[
\otimes_i: \mathbb{R}^n \otimes \mathbb{R}^{k} \longrightarrow \mathbb{R}^{n + k -1}~,
\]
for any $n,k \geq 1$ and any element $i \in [n]$ as follows. Let \(x \in \mathbb{R}^n\) and \(y \in \mathbb{R}^k\) be two vectors, let us denote by \( \eta\) the sum of the coefficients of \(y\), i.e., \(\eta = \sum_{i \in [k]} y_i\). The partial tensor product $x \otimes_i y$ of \(x\) and \(y\) at index \(i \in [n]\) is given by 
\[
x \otimes_i y = \underbrace{\eta x_1 \oplus \eta x_2 \oplus \ldots \oplus \eta x_{i-1}}_{\eta x_{\downarrow i}} \oplus \underbrace{x_i y_1 \oplus \ldots \oplus x_i y_k}_{x_i y} \oplus \underbrace{\eta x_{i+1} \oplus \ldots \oplus \eta x_n}_{\eta x_{\uparrow i}},  
\]
with \(x_{\downarrow i} = (x_1, \ldots, x_{i-1})\) and \(x_{\uparrow i} = (x_{i+1}, \ldots, x_n)\). Consider the canonical action of $\mathbb{S}_n$ on $\mathbb{R}^n$ given by permuting the indices of the standard basis. This endows the collection $\mathbb{R}^n$ for all $n \geq 1$ with an $\mathbb{S}$-module structure which we denote by  $\underline{\mathbb{R}}$. It can be checked that $\underline{\mathbb{R}}$, together with the partial tensor product maps $\otimes_i$ forms an linear operad, which we call the \textit{partial tensor operad}.

\begin{proposition}
The partial tensor operad is isomorphic to the suboperad of additive games, and thus in turn isomorphic to the permutative operad. 
\end{proposition}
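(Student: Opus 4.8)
The plan is to construct an explicit $\mathbb{S}$-module map $\Phi \colon \underline{\mathbb{R}} \to \mathbb{G}$ whose image is exactly the additive games and which intertwines the partial tensor products $\otimes_i$ with the partial compositions $\circ_i$. Recall that a game $(N,v)$ is \emph{additive} when $v(S) = \sum_{j \in S} v(\{j\})$ for every coalition $S$, so that it is completely determined by the vector $(v(\{1\}),\ldots,v(\{n\})) \in \mathbb{R}^n$. For each arity $n$ I would therefore define the linear map $\Phi(n) \colon \underline{\mathbb{R}}(n) = \mathbb{R}^n \to \mathbb{G}(n)$ sending $x = (x_1,\ldots,x_n)$ to the additive game $v_x$ with $v_x(S) = \sum_{j \in S} x_j$. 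By construction $\Phi(n)$ is a linear isomorphism onto the subspace of additive $n$-player games, and the fact that it is $\mathbb{S}_n$-equivariant is a direct verification comparing the permutation action on $\mathbb{R}^n$ with the formula $\sigma \star v(S) = v(\sigma^{-1}(S))$ for the action on games.

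The heart of the argument is compatibility with the partial compositions. Given $x \in \mathbb{R}^n$ and $y \in \mathbb{R}^k$ with $\eta = \sum_{l=1}^k y_l$, I would compute $v_x \circ_i v_y$ directly from Definition \ref{definition: partial composition of games}. The key simplification is that for an additive game the marginal contribution of a player is \emph{constant}: $\partial_i v_x(S_A) = v_x(S_A \cup \{i\}) - v_x(S_A) = x_i$ for every $S_A \subseteq A \setminus \{i\}$. Substituting $\beta(B) = \eta$, $\alpha(S_A) = \sum_{j \in S_A} x_j$, $\partial_i \alpha(S_A) = x_i$ and $\beta(S_B) = \sum_{l \in S_B} y_l$ into the composition formula yields
\[
(v_x \circ_i v_y)(S) = \eta \sum_{j \in S \cap (A \setminus \{i\})} x_j + x_i \sum_{l \in S \cap B} y_l~.
\]
On the other hand, the vector $x \otimes_i y \in \mathbb{R}^{n+k-1}$ has entries $\eta x_j$ in the positions coming from $A \setminus \{i\}$ and entries $x_i y_l$ in the positions coming from $B$, so that $v_{x \otimes_i y}$ evaluated on $S$ produces exactly the same expression once the ordering of $A \diamond_i B$ is matched with the ordering defining $x \otimes_i y$. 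This computation shows simultaneously that the composite of two additive games is again additive (so the additive games form a suboperad) and that $\Phi(x \otimes_i y) = \Phi(x) \circ_i \Phi(y)$. Since $\Phi$ also carries the operadic unit $1 \in \underline{\mathbb{R}}(1)$ to the trivial game $\mathbf{1}$, it is a morphism of operads, and being a bijection in each arity onto the additive games, it is the desired isomorphism.

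Finally, the identification with the permutative operad follows formally. By Theorem \ref{thm: iso comtriass et operade}, the Möbius isomorphism $\varphi$ restricts along the inclusion $\mathcal{P}erm \hookrightarrow \mathcal{C}om\mathcal{T}riass$ (sending the permutative generator to the dictator game $d_1$, which is indeed additive) to an isomorphism between $\mathcal{P}erm$ and the suboperad of additive games, as recorded in the remark following Corollary \ref{cor: generators of the operad of cooperative games}. Composing $\Phi$ with the inverse of this restriction gives the claimed isomorphism between the partial tensor operad and $\mathcal{P}erm$. The only genuinely error-prone point is the bookkeeping of the player orderings in the displayed identity, matching the blocks of $x \otimes_i y$ against the three blocks of $A \diamond_i B$; everything else reduces to a direct substitution into the composition formula.
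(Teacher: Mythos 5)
Your proposal is correct and follows essentially the same route as the paper: the paper's proof also consists of defining the bijection $x \mapsto v_x$ onto additive games and checking $v_{x \otimes_i y} = v_x \circ_i v_y$ directly, with the identification with $\mathcal{P}erm$ deferred to the earlier remark on the morphism $\mathcal{P}erm \hookrightarrow \mathbb{G}$. Your version merely spells out the key simplification ($\partial_i v_x \equiv x_i$) that the paper leaves as "straightforward to check."
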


\begin{proof}
Let $x$ be a vector in $\mathbb{R}^n$, it defines a unique additive game by setting 
\[
v_x(S) = \sum_{i \in S} x_i~,
\]
and any additive game is uniquely characterized by the vector of its values on players. Under this correspondence, it is straightforward to check that for any two vectors \(x \in \mathbb{R}^n\) and \(y \in \mathbb{R}^k\) and any $S \subset [n] \circ_i [m]$, we have that
\[
v_{x \otimes_i y}(S) = v_x \circ_i v_y(S)~. 
\]
\end{proof}

\subsection{Compatibilities of the partial composition formula}
We establish several useful compatibilities of the partial composition formula.

\subsubsection{Compatibility with derivatives with respect to a player} The derivative at a given player of a composite game is given by the following explicit formula, which depends on the location of the player in one of the two player sets. 

\begin{lemma}\label{lemma: dérivée de la composée}
Let $\Gamma_A = (A, \alpha)$ and $\Gamma_B = (B, \beta)$ be two games, and let $i \in A$. We have that 
\[
\partial_j(\alpha \circ_i \beta)(S) = \begin{cases} 
    \beta(B)\partial_j\alpha(S_A) + \partial_j\partial_i\alpha(S_A)\beta(S_B) & \quad \text{if } \  j \in (A \setminus \{i\}), \\ 
    \partial_i\alpha(S_A)\partial_j\beta(S_B)& \quad \text{if } \  j \in B,
\end{cases}
\]
for any $S \subseteq A \diamond_i B$. 
\end{lemma}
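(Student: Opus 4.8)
The plan is to unfold the definition of the derivative as the finite difference $\partial_j \gamma(S) = \gamma(S \cup \{j\}) - \gamma(S)$ (valid here since every $S \subseteq A \diamond_i B$ automatically omits $j$) and to apply it directly to the composite coalition function $\gamma = \alpha \circ_i \beta$ of Definition~\ref{definition: partial composition of games}. The whole argument then reduces to the following observation: for $S \subseteq A \diamond_i B = (A \setminus \{i\}) \cup B$, adding a player $j$ affects exactly one of the two pieces $S_A = S \cap A$ and $S_B = S \cap B$, according to whether $j \in A \setminus \{i\}$ or $j \in B$. I would therefore split the computation into these two cases. Note that throughout we have $i \notin S_A$, so that $\partial_i \alpha(S_A) = \alpha(S_A \cup \{i\}) - \alpha(S_A)$, and the two conventions for the derivative recalled in Subsubsection~\ref{Subsubsection: duality and derivatives} agree.

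In the case $j \in A \setminus \{i\}$, adding $j$ to $S$ replaces $S_A$ by $S_A \cup \{j\}$ and leaves $S_B$ unchanged. Substituting into the composition formula and subtracting $\gamma(S)$ gives
\[
\partial_j \gamma(S) = \beta(B)\big(\alpha(S_A \cup \{j\}) - \alpha(S_A)\big) + \big(\partial_i \alpha(S_A \cup \{j\}) - \partial_i \alpha(S_A)\big)\beta(S_B),
\]
where the factors $\beta(B)$ and $\beta(S_B)$ factor out because they do not depend on the $A$-part of $S$. The first difference is by definition $\partial_j \alpha(S_A)$, and the second is the iterated derivative $\partial_j \partial_i \alpha(S_A)$, yielding the first case of the statement.

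In the case $j \in B$, adding $j$ leaves $S_A$ unchanged and replaces $S_B$ by $S_B \cup \{j\}$. Hence $\alpha(S_A)$, $\partial_i \alpha(S_A)$ and the normalising factor $\beta(B)$ are all unaffected, so the term $\beta(B)\alpha(S_A)$ cancels in the difference, and we are left with $\partial_i \alpha(S_A)\big(\beta(S_B \cup \{j\}) - \beta(S_B)\big) = \partial_i \alpha(S_A)\,\partial_j \beta(S_B)$, which is exactly the second case.

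There is no real obstacle here: the computation is a routine finite-difference manipulation, and the only point requiring care is the bookkeeping of which of $S_A$ and $S_B$ is modified by $j$, together with the remark that $i \notin S_A$ makes the two derivative conventions coincide. The true role of this lemma is auxiliary, as it is precisely what feeds into the verification of the sequential and parallel axioms in the proof of Theorem~\ref{thm: gamers operad}.
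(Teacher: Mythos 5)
Your proposal is correct and follows essentially the same route as the paper's own proof: unfold the finite difference $\gamma(S\cup\{j\})-\gamma(S)$ for the composite coalition function and split into the two cases $j\in A\setminus\{i\}$ and $j\in B$, observing that adding $j$ modifies exactly one of $S_A$, $S_B$. The bookkeeping and the resulting identifications with $\partial_j\alpha$, $\partial_j\partial_i\alpha$ and $\partial_j\beta$ match the paper's computation.
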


\begin{proof}
    Let us compute the derivative of the coalition function \(\alpha \circ_i \beta\) with respect to a given player \(j \in A \diamond_i B\). For all \(S \subseteq A \diamond_i B \setminus \{j\}\), we have two cases. First, assume that \(j \in (A\setminus \{i\})\). We get
    \[ \begin{aligned} 
    \partial_j (\alpha \circ_i \beta)(S) & = \alpha \circ_i \beta(S \cup \{j\}) - \alpha \circ_i \beta(S) \\
    & = \beta(B) \alpha(S_A \cup \{j\}) + \partial_i \alpha(S_A \cup \{j\}) \beta(S_B) - \beta(B) \alpha(S_A) - \partial_i \alpha(S_A) \beta(S_B) \\
    & = \beta(B) \left( \alpha(S_A \cup \{j\}) - \alpha(S_A) \right) + \left( \partial_i \alpha(S_A \cup \{j\}) - \partial_i \alpha(S_A) \right) \beta(S_B) \\
    & = \beta(B) \partial_j \alpha(S_A) + \partial_j \partial_i \alpha(S_A) \beta(S_B). \\
    \end{aligned} \]
    The second case occurs when \(j \in B\), which implies 
    \[ \begin{aligned} 
    \partial_j (\alpha \circ_i \beta)(S) & = \alpha \circ_i \beta(S \cup \{j\}) -\alpha \circ_i \beta(S) \\ 
    & = \beta(B) \alpha(S_A) + \partial_i \alpha(S_A) \beta(S_B \cup \{j\}) - \beta(B) \alpha(S_A) - \partial_i \alpha(S_A) \beta(S_B) \\
    & = \partial_i \alpha(S_A) \left( \beta(S_B \cup \{j\} - \beta(S_B) \right) \\
    & = \partial_i \alpha(S_A) \partial_j \beta(S_B)
    \end{aligned} \]
\end{proof}

More generally, we establish the following formula for the derivative with respect to a coalition, as defined in Paragraph~\ref{Subsubsection: duality and derivatives}.

\begin{proposition}\label{Proposition: dérivée itérée}
Let $\Gamma_A = (A, \alpha)$ and $\Gamma_B = (B, \beta)$ be two games, and let $C \subseteq A \diamond_i B$. We have
\[
\partial_{C}(\alpha \circ_i \beta)(S) = \begin{cases} 
    \beta(B)\partial_{C}\alpha(S_A) + \partial_{C \cup \{i\}}\alpha(S_A)\beta(S_B) & \quad \text{if } \  C \subseteq (A \setminus \{i\}), \\ 
    \partial_{C_A \cup \{i\}}\alpha(S_A)\partial_{C_B}\beta(S_B)& \quad \text{otherwise},
\end{cases}
\]
for any $S \subseteq A \diamond_i B$, where $C_A = C \cap A$ and $C_B = C \cap B$. 
\end{proposition}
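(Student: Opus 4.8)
The plan is to prove this by a direct computation using the explicit alternating-sum expression for the coalitional derivative recalled in Subsection~\ref{Subsubsection: duality and derivatives}, namely $\partial_{C} v(S) = \sum_{K \subseteq C} (-1)^{\lvert C \setminus K \rvert} v(S \cup K)$. Taking $S$ disjoint from $C$ (the derivative $\partial_C$ being naturally a function on the complement of $C$), I would substitute the composition formula of Definition~\ref{definition: partial composition of games} for $v = \alpha \circ_i \beta$. Writing $K_A = K \cap A$ and $K_B = K \cap B$, every $K \subseteq C$ decomposes uniquely as a disjoint union $K = K_A \sqcup K_B$ with $K_A \subseteq C_A$ and $K_B \subseteq C_B$ (since $A \setminus \{i\}$ and $B$ are disjoint in $A \diamond_i B$), and $\lvert C \setminus K \rvert = \lvert C_A \setminus K_A \rvert + \lvert C_B \setminus K_B \rvert$. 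Moreover $(S \cup K)_A = S_A \cup K_A$ and $(S \cup K)_B = S_B \cup K_B$. This turns the single sum over $K \subseteq C$ into a double sum over $K_A \subseteq C_A$ and $K_B \subseteq C_B$ of the two summands $\beta(B)\alpha(S_A \cup K_A)$ and $\partial_i\alpha(S_A \cup K_A)\beta(S_B \cup K_B)$.

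The heart of the argument is then the factorization of this double sum. For the first summand, the factor $\beta(B)\alpha(S_A \cup K_A)$ does not depend on $K_B$, so summing over $K_B$ produces $\sum_{K_B \subseteq C_B} (-1)^{\lvert C_B \setminus K_B \rvert}$, which equals $0^{\lvert C_B \rvert}$; this vanishes unless $C_B = \emptyset$, and when $C_B = \emptyset$ the remaining sum over $K_A$ is exactly $\beta(B)\partial_C \alpha(S_A)$. For the second summand, the sum splits as a product of $\sum_{K_A \subseteq C_A}(-1)^{\lvert C_A \setminus K_A \rvert}\partial_i\alpha(S_A \cup K_A)$ and $\sum_{K_B \subseteq C_B}(-1)^{\lvert C_B \setminus K_B \rvert}\beta(S_B \cup K_B)$. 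The second factor is $\partial_{C_B}\beta(S_B)$ by definition, and the first factor equals $\partial_{C_A}(\partial_i\alpha)(S_A) = \partial_{C_A \cup \{i\}}\alpha(S_A)$, using that $i \notin C_A$ so the iterated derivatives compose. Thus the second summand always contributes $\partial_{C_A \cup \{i\}}\alpha(S_A)\,\partial_{C_B}\beta(S_B)$.

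Combining the two contributions gives the stated case distinction: when $C_B = \emptyset$, i.e.\ $C \subseteq A \setminus \{i\}$, both summands survive and yield $\beta(B)\partial_{C}\alpha(S_A) + \partial_{C \cup \{i\}}\alpha(S_A)\beta(S_B)$ (here $C_A = C$ and $\partial_{C_B}\beta = \beta$); otherwise the first summand vanishes and one is left with $\partial_{C_A \cup \{i\}}\alpha(S_A)\,\partial_{C_B}\beta(S_B)$. I expect the main obstacle to be purely bookkeeping: keeping the splitting $K = K_A \sqcup K_B$ consistent and correctly identifying which of the two summands dies through the vanishing identity $\sum_{K_B \subseteq C_B}(-1)^{\lvert C_B \setminus K_B \rvert} = 0^{\lvert C_B \rvert}$. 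As an alternative one could argue by induction on $\lvert C \rvert$ with base case Lemma~\ref{lemma: dérivée de la composée}: peeling off a player $j \in A \setminus \{i\}$ uses the identity $\partial_j(\alpha \circ_i \beta) = (\partial_j\alpha)\circ_i\beta$ to reapply the induction hypothesis, while peeling off a player $j \in B$ reduces to $\partial_i\alpha(S_A)\,\partial_j\beta(S_B)$, after which a short separation-of-variables lemma for finite differences of a product $f(S_A)g(S_B)$ over disjoint supports finishes the case; this route is equivalent but requires that auxiliary product rule, so I would favour the direct computation above.
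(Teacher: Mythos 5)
Your proof is correct, but it takes a genuinely different route from the paper. The paper proves the proposition by induction on $\lvert C \rvert$, with Lemma~\ref{lemma: dérivée de la composée} as base case, and then checks four cases (according to whether $C \subseteq A \setminus \{i\}$ and whether the newly adjoined player $j$ lies in $A \setminus \{i\}$ or in $B$), each being a one-line application of that lemma to the formula already established at the previous stage; this is essentially the ``alternative'' route you sketch at the end, though the paper does not need your auxiliary product rule because the inductive hypothesis already presents $\partial_C(\alpha\circ_i\beta)$ in separated form. Your main argument instead expands $\partial_C(\alpha\circ_i\beta)(S)$ via the closed alternating-sum formula $\partial_T v(S)=\sum_{K\subseteq T}(-1)^{\lvert T\setminus K\rvert}v(S\cup K)$ (which the paper does record in Subsection~\ref{Subsubsection: duality and derivatives}), splits $K=K_A\sqcup K_B$, and factorizes the double sum. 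This is sound: the vanishing of $\sum_{K_B\subseteq C_B}(-1)^{\lvert C_B\setminus K_B\rvert}=0^{\lvert C_B\rvert}$ correctly kills the first summand unless $C_B=\emptyset$, the identification $\partial_{C_A}(\partial_i\alpha)=\partial_{C_A\cup\{i\}}\alpha$ is legitimate since $i\notin C$, and the two cases of the statement fall out uniformly. What your approach buys is a single uniform computation that \emph{explains} the case distinction (it is exactly the dichotomy $C_B=\emptyset$ versus $C_B\neq\emptyset$ in the vanishing identity); what the paper's induction buys is that it only ever manipulates first-order differences and never needs the closed-form formula, at the cost of a fourfold case check. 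The only point to be careful about in your write-up is the standing disjointness hypothesis $S\cap C=\emptyset$ required by the alternating-sum formula, which you do acknowledge and which the paper's statement also implicitly assumes.
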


\begin{proof}
We will prove it by induction on the size of $C \subseteq A \diamond_i B$. When $C$ is of size one, this is exactly Lemma \ref{lemma: dérivée de la composée}. Let us assume it is true for $C$ of a given size and let us add a player $j$ to $C$. There are four cases to deal with. 

\begin{enumerate}
\item If $C \subseteq A \setminus \{i\}$, and
\begin{enumerate} 
\item $j \in A \setminus \{i\}$, then we have 
\[
\partial_j \big[ \beta(B)\partial_{C}\alpha(S_A) + \partial_{C \cup \{i\}}\alpha(S_A)\beta(S_B) \big] = \beta(B)\partial_{C \cup \{j\}}\alpha(S_A) + \partial_{C \cup \{i,j\}}\alpha(S_A)\beta(S_B)~. 
\]

\item $j \in B$, then we have 
\[
\partial_j(\beta(B)\partial_{C}\alpha(S_A) + \partial_{C \cup \{i\}}\alpha(S_A)\beta(S_B)) = \partial_{C \cup \{i\}}\alpha(S_A)\partial_j\beta(S_B)~. 
\]
\end{enumerate}

\item If $C \nsubseteq A \setminus \{i\}$, and 
\begin{enumerate} 
\item $j \in A \setminus \{i\}$, then we have
\[
\partial_j(\partial_{C_A \cup \{i\}}\alpha(S_A)\partial_{C_B}\beta(S_B)) = \partial_{C_A \cup \{i,j\}}\alpha(S_A)\partial_{C_B}\beta(S_B)~. 
\]

\item $j \in B$, then we have 
\[
\partial_j(\partial_{C_A \cup \{i\}}\alpha(S_A)\partial_{C_B}\beta(S_B)) = \partial_{C_A \cup \{i\}}\alpha(S_A)\partial_{C_B \cup \{j\}}\beta(S_B)~. 
\]
\end{enumerate}
\end{enumerate}
Thus, for any of these cases, the result holds.
\end{proof}

\subsubsection{Compatibility with duality} The duality involution on cooperative games is an automorphism of the operad of cooperative games, that is compatible with the composition in the following sense.

\begin{proposition}\label{prop: compatibility of the composition and duality}
Let $\Gamma_A = (A, \alpha)$ and $\Gamma_B = (B, \beta)$ be two games, and let $i \in A$. We have the following equality of value functions
\[
(\alpha \circ_i \beta)^* = \alpha^* \circ_i \beta^*~, 
\]
and therefore the duality involution commutes with the operadic composition, meaning that
\[
(\Gamma_A \circ_i \Gamma_B)^* = \Gamma_A^* \circ_i \Gamma_B^*.
\]
\end{proposition}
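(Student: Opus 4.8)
The plan is to prove the equality of coalition functions $(\alpha \circ_i \beta)^* = \alpha^* \circ_i \beta^*$ by a direct computation, evaluating both sides on an arbitrary coalition $S$ of the composite player set $C = A \diamond_i B$; the equality of games then follows at once, since duality does not alter the player set. Throughout I will use the observation that, because $i \notin C$, every $S \subseteq C$ satisfies $i \notin S$, so that $S_A = S \cap A \subseteq A \setminus \{i\}$ and the complement $A \setminus S_A$ always contains $i$. I will also repeatedly invoke the derivative–duality compatibility of Lemma \ref{lemma: duality and derivatives}(1), namely $\partial_i \alpha^*(S_A) = \partial_i \alpha(A \setminus S_A)$.

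First I would compute the total value of the composite. Writing $\gamma = \alpha \circ_i \beta$ and evaluating at $S = C$, one has $C_A = A \setminus \{i\}$ and $C_B = B$, and since $\partial_i \alpha(A \setminus \{i\}) = \alpha(A) - \alpha(A \setminus \{i\})$ the two terms collapse to $\gamma(C) = \alpha(A)\beta(B)$. Next, by definition of the dual, $\gamma^*(S) = \gamma(C) - \gamma(C \setminus S)$; setting $T = C \setminus S$, so that $T_A = (A \setminus \{i\}) \setminus S_A = A \setminus (S_A \cup \{i\})$ and $T_B = B \setminus S_B$, this gives
\[
\gamma^*(S) = \alpha(A)\beta(B) - \beta(B)\alpha(T_A) - \partial_i\alpha(T_A)\beta(T_B).
\]

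On the other side, I would expand $(\alpha^* \circ_i \beta^*)(S) = \beta^*(B)\alpha^*(S_A) + \partial_i\alpha^*(S_A)\,\beta^*(S_B)$ using $\beta^*(B) = \beta(B)$, $\alpha^*(S_A) = \alpha(A) - \alpha(A \setminus S_A)$ and $\beta^*(S_B) = \beta(B) - \beta(T_B)$. The crucial simplification is the identity $\partial_i\alpha^*(S_A) = \partial_i\alpha(T_A)$: indeed, by the derivative–duality lemma $\partial_i\alpha^*(S_A) = \partial_i\alpha(A \setminus S_A)$, and since $A \setminus S_A = \{i\} \sqcup T_A$ this equals $\alpha(A \setminus S_A) - \alpha(T_A) = \partial_i\alpha(T_A)$. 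Substituting $\alpha(A \setminus S_A) = \alpha(T_A \cup \{i\}) = \alpha(T_A) + \partial_i\alpha(T_A)$ and expanding, the two copies of $\beta(B)\partial_i\alpha(T_A)$ cancel and one is left with exactly $\alpha(A)\beta(B) - \beta(B)\alpha(T_A) - \partial_i\alpha(T_A)\beta(T_B)$, matching $\gamma^*(S)$.

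The computation is entirely elementary; the only real obstacle is the set-theoretic bookkeeping, in particular keeping track of which sets contain the player $i$ and correctly identifying $A \setminus S_A$ with the disjoint union $\{i\} \sqcup T_A$. Everything hinges on the fact that $i$ lies outside every coalition of $C$, which is what makes the derivative–duality identity translate cleanly between the two player sets. Once the identity $\partial_i\alpha^*(S_A) = \partial_i\alpha(T_A)$ is in hand, the matching of the two expressions is a short cancellation. Combined with the straightforward fact that the duality involution is a linear isomorphism commuting with the symmetric group actions and fixing the unit, this shows that duality is an automorphism of the operad $\mathbb{G}$.
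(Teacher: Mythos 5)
Your proof is correct and follows essentially the same route as the paper's: both sides are evaluated on an arbitrary coalition $S$, the derivative–duality identity $\partial_i\alpha^*(S_A)=\partial_i\alpha(A\setminus S_A)=\partial_i\alpha(A\setminus(S_A\cup\{i\}))$ from Lemma \ref{lemma: duality and derivatives} is the key step, and the two expansions are matched by the same cancellation. Your $T=C\setminus S$ is just a renaming of the paper's $A\diamond_i B\setminus S$, so there is nothing substantive to add.
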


\begin{proof}
Let $S \subseteq A \diamond_i B$. One the one hand, we compute that
\[
\begin{aligned} 
   (\alpha \circ_i \beta)^*(S) & = \alpha \circ_i \beta(A \diamond_i B) - \alpha \circ_i \beta(A \diamond_i B \setminus S) \\
   & = \alpha(A)\beta(B) - \beta(B)\alpha((A \diamond_i B \setminus S) \cap A) \\ & \qquad \qquad - \partial_i \alpha((A \diamond_i B \setminus S) \cap A)\beta((A \diamond_i B \setminus S) \cap B) \\
   & = \alpha(A)\beta(B) - \beta(B)\alpha(A \setminus (S_A \cup \{i\})) - \partial_i \alpha(A \setminus (S_A \cup \{i\}))\beta(B \setminus S_B). \\ 
\end{aligned} 
\]
On the other hand, we have that
\[
\begin{aligned} 
   \alpha^* \circ_i \beta^*(S) & = \beta^*(B)\alpha^*(S_A) + \partial_i\alpha^*(S_A)\beta^*(S_B) \\
   &= \alpha(A)\beta(B) - \beta(B)\alpha(A \setminus S_A) + \partial_i\alpha(A \setminus (\{i\} \cup S_A))[\beta(B) - \beta(B \setminus S_B)] \\
   &= \alpha(A)\beta(B) - \beta(B)\alpha(A \setminus (S_A \cup \{i\})) - \partial_i\alpha(A \setminus (\{i\} \cup S_A))\beta(B \setminus S_B)~,
\end{aligned} 
\]
hence we can observe that both sides are indeed equal. 
\end{proof}

\subsubsection{Compatibility with marginal vectors}
Here, we study the compatibility of the partial composition with marginal vectors. 

\begin{definition}[Marginal vectors]
Let $\Gamma = (N,v)$ be an $n$-player game, and let us choose an identification $N = [n]$. For any $\sigma$ in $\mathbb{S}_n$, the \textit{marginal vector} $m^\sigma(v)$ is a vector in $\mathbb{R}^n$ whose $i$-th coordinate is given by 
\[
m^\sigma_i(v) \coloneqq v([i, \sigma]) - v((i, \sigma))~,
\]
where
\[
[i,\sigma] = \left\{j \in N ~~|~~ \sigma^{-1}(j) \leq \sigma^{-1}(i) \right\} \quad \text{and} \quad (i, \sigma) = \left\{j \in N ~~|~~ \sigma^{-1}(j) < \sigma^{-1}(i) \right\}. 
\]
\end{definition}

\begin{example}
For example, if $\sigma = \mathrm{id}_n$, then the $i$-th coordinate of $m^{\mathrm{id}_n}(v)$ is given by
\[
m^\sigma_i(v) \coloneqq v(\{1,\cdots,i\}) - v(\{1,\cdots,i-1\})~.
\]
\end{example}

\begin{remark}
    The set of all marginal vectors can equivalently be indexed by all the possible total orders on the set of players. For a given order $\{i_1,\cdots,i_n\}$, the marginal vector is given by 
    \[
    m^\sigma_{i_j}(v) = \partial_{i_j} v(\{i_1, \ldots, i_{j-1}\}).
    \]
\end{remark}

\begin{proposition}\label{proposition: composition of marginal vectors}
Let $\Gamma_A = (A, \alpha)$ and $\Gamma_B = (B, \beta)$ be two games, and let $i \in A$. Let $m^\sigma(\alpha)$ and $m^\tau(\beta)$ be marginal vectors of $\Gamma_A$ and $\Gamma_B$ respectively. Then $m^\sigma(\alpha) \otimes_i m^\tau(\beta)$ is a marginal vector of the compound game $\Gamma_A \circ_i \Gamma_B$. 
\end{proposition}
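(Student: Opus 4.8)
The plan is to exhibit a single explicit total order on the composite player set $A \diamond_i B$ and to show that the marginal vector it determines is exactly $m^\sigma(\alpha) \otimes_i m^\tau(\beta)$. Using the order-theoretic description of marginal vectors recalled after the definition, write $a_1 \prec \cdots \prec a_n$ for the total order on $A$ encoded by $\sigma$ and $b_1 \prec \cdots \prec b_k$ for the one on $B$ encoded by $\tau$, and let $p$ be the rank of the player $i$, so that $i = a_p$. I would then define the \emph{inserted} order on $A \diamond_i B$ by
\[
a_1 \prec \cdots \prec a_{p-1} \prec b_1 \prec \cdots \prec b_k \prec a_{p+1} \prec \cdots \prec a_n~,
\]
that is, by substituting the whole $\tau$-order on $B$ in place of $i$ inside the $\sigma$-order on $A$; call $\rho$ the resulting permutation of $A \diamond_i B$. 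Recall that, invariantly, the partial tensor product $m^\sigma(\alpha) \otimes_i m^\tau(\beta)$ has coordinate $\eta\, m^\sigma_a(\alpha)$ at each player $a \in A \setminus \{i\}$ and coordinate $m^\sigma_i(\alpha)\, m^\tau_b(\beta)$ at each player $b \in B$, where $\eta = \sum_{l} m^\tau_{b_l}(\beta)$.

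Before computing, I would record the two identities that drive the proof. First, telescoping the marginal contributions of $\beta$ along $\tau$ yields $\eta = \sum_l m^\tau_{b_l}(\beta) = \beta(B) - \beta(\emptyset) = \beta(B)$, since $\Gamma_B$ is grounded. Second, directly from the definition of the marginal vector, $m^\sigma_i(\alpha) = \partial_i \alpha(\{a_1, \ldots, a_{p-1}\})$, the derivative of $\alpha$ at $i$ evaluated on the set of $\sigma$-predecessors of $i$. With these in hand, the claim reduces to computing, for the coalition function $\gamma = \alpha \circ_i \beta$ given by $\gamma(S) = \beta(B)\alpha(S_A) + \partial_i\alpha(S_A)\beta(S_B)$, the marginal contribution $m^\rho_c(\gamma)$ of each player $c$ relative to its set of $\rho$-predecessors, and checking it against the two formulas above.

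I would then split into the three regimes dictated by the position of a player in $\rho$. For a player $a_j$ with $j < p$, its $\rho$-predecessors lie in $A \setminus \{i\}$, so $S_B = \emptyset$ and groundedness of $\Gamma_B$ collapses $\gamma$ to $\beta(B)\alpha$; the marginal contribution is therefore $\beta(B)\,m^\sigma_{a_j}(\alpha) = \eta\, m^\sigma_{a_j}(\alpha)$. For a player $b_l \in B$, the $A$-part of its predecessor set is exactly $\{a_1, \ldots, a_{p-1}\}$ while its $B$-part grows along $\tau$, so the summand $\beta(B)\alpha(S_A)$ is constant and cancels in the difference, leaving $\partial_i\alpha(\{a_1, \ldots, a_{p-1}\})\cdot m^\tau_{b_l}(\beta) = m^\sigma_i(\alpha)\,m^\tau_{b_l}(\beta)$. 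Finally, for a player $a_j$ with $j > p$, all of $B$ is already present so $S_B = B$, and the identity $\alpha(S_A) + \partial_i\alpha(S_A) = \alpha(S_A \cup \{i\})$ collapses $\gamma$ to $\beta(B)\alpha(S_A \cup \{i\})$; the marginal contribution is again $\beta(B)\,m^\sigma_{a_j}(\alpha) = \eta\, m^\sigma_{a_j}(\alpha)$. Matching these three outputs against the invariant description of the partial tensor product shows $m^\rho(\gamma) = m^\sigma(\alpha) \otimes_i m^\tau(\beta)$, which proves the statement. The only genuine difficulty is the careful bookkeeping of the sets $S_A$ and $S_B$ of predecessors across the three regimes; once the identities $\eta = \beta(B)$ and $m^\sigma_i(\alpha) = \partial_i\alpha(\{a_1,\ldots,a_{p-1}\})$ are isolated, each case is a one-line cancellation.
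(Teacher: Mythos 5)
Your proof is correct and follows essentially the same route as the paper: both exhibit the composite order obtained by substituting the $\tau$-order on $B$ for the player $i$ inside the $\sigma$-order on $A$, and verify the three positional cases against the coordinates of the partial tensor product. The only cosmetic difference is that you expand the marginal contributions directly from the formula for $\alpha \circ_i \beta$, whereas the paper invokes Lemma \ref{lemma: dérivée de la composée} at each step; the underlying computations are identical.
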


\begin{proof}
We identify $A$ with $[n]$ and $B$ with $[m]$. Let us chose two orders $\{a_1,\cdots,a_n\}$ and $\{b_1,\cdots,b_m\}$ of $[n]$ and $[m]$, respectively. If $n$ and $m$ are the respective marginal vectors, we have that
\[
(n \otimes_i m)_{p} = 
\begin{cases}
\beta(M)\partial_{a_p}\alpha(\{a_1,\cdots,a_{p-1}\}) & \text{if} \quad p<i~, \\
\partial_{a_i}\alpha(\{a_1,\cdots,a_{i-1}\})\partial_{b_p}\beta(\{b_1,\cdots,b_{p-1}\}) & \text{if} \quad i \leq p \leq i+m-1~, \\
\beta(M)\partial_{a_{p+m-1}}\alpha(\{a_1,\cdots,a_{p-1}\}) & \text{if} \quad p>m+i-1~,
\end{cases}
\]
Let us show that this is precisely the marginal vector of $\alpha \circ_i \beta$ associated to the composite order $\{a_1,\cdots, a_{i-1}, b_1, \cdots, b_m, a_{i+1}, \cdots, a_n\}$ on $[n] \circ_i [m]$. This follows from applying Lemma \ref{lemma: dérivée de la composée} in each of the possible cases.

\begin{enumerate}
    \item When $p<i$, the marginal vector associated to this order is given by 
    \[
    \partial_{a_p}(\alpha \circ_i \beta)(\{a_1,\cdots, a_{p-1}\}) = \beta(M)\partial_{a_p}\alpha(\{a_1,\cdots, a_{p-1}\}) + \partial_{a_p}\partial_{a_i}\alpha(\{a_1,\cdots, a_{p-1}\})\beta(\emptyset) 
    \]
    so it coincides with the previous vector since $\beta(\emptyset) = 0$.

    \medskip
    
    \item When $i \leq p \leq i+m-1$, the marginal vector associated to this order is given by 
    \[
    \partial_{b_p}(\alpha \circ_i \beta)(\{a_1,\cdots, a_{i-1}, b_1, \cdots, b_{p-1}\}) = \partial_{a_i}\alpha(\{a_1,\cdots, a_{i-1}\})\partial_{b_p}\beta(\{b_1, \cdots, b_{p-1}\})~,
    \]
    which also coincides with the previous description. 

    \medskip
    
    \item When $p<m+i-1$, the marginal vector associated to this order is given by 
    \[
    \begin{aligned}
        & \partial_{a_{p+m-1}}(\alpha \circ_i \beta)(\{a_1,\cdots, a_{i-1}, b_1, \cdots, b_{m},a_{i+1}, \cdots, a_{p+m-1}\})  \\ & \quad = \beta(M)\partial_{a_{p+m-1}}\alpha(\{a_1,\cdots, a_{i-1}, a_{i+1}, \cdots, a_{p+m-1}\}) \\ & \qquad + \partial_{a_{p+m-1}}\partial_{a_i}\alpha(\{a_1,\cdots, a_{i-1}, a_{i+1}, \cdots, a_{p+m-1}\})\beta(M) \\
        & \quad = \beta(M)\partial_{a_{p+m-1}}\alpha(\{a_1,\cdots, a_{i-1}, a_i, a_{i+1}, \cdots, a_{p+m-1}\})~,
    \end{aligned}
    \]
    so it does coincide again with the vector obtained by the tensor construction. 
\end{enumerate}
\end{proof}

\subsection{The suboperad of normalized games}
Normalized games, that is, games $\Gamma =(N,v)$ which satisfy that $v(N)= 1$, are stable under compositions. 

\begin{proposition}\label{prop: normalized games suboperads}
Normalized games are stable under composition and thus form a suboperad.
\end{proposition}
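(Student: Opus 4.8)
The plan is to verify the three conditions defining a (set-theoretic) suboperad inside the linear operad $\mathbb{G}$: stability under the action of the symmetric groups, containment of the operadic unit, and closure under the partial composition maps $\circ_i$. The first two are immediate. Permuting the labels of the players of a game $\Gamma = (N,v)$ does not alter the value of the grand coalition, since $\sigma \star v(N) = v(\{\sigma^{-1}(i) \mid i \in N\}) = v(N)$, so the normalized games form a sub-$\mathbb{S}$-module. The unit is the trivial game $\mathbf{1}$, which is normalized by definition. Note that, as pointed out in the introduction, normalized games are not a linear subspace, so this is a suboperad only in the set sense, with the restricted compositions being set maps rather than bilinear ones.

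The heart of the argument is closure under the maps $\circ_i$. Given two normalized games $\Gamma_A = (A,\alpha)$ and $\Gamma_B = (B,\beta)$ with $\alpha(A) = \beta(B) = 1$ and a player $i \in A$, I would evaluate the coalition function $\gamma = \alpha \circ_i \beta$ of the composite on its own grand coalition $C = A \diamond_i B$. The key observation is that, since $A \setminus \{i\}$ and $B$ are disjoint inside $C$, restricting the grand coalition gives $S_A = C \cap A = A \setminus \{i\}$ and $S_B = C \cap B = B$. Substituting these into the composition formula of Definition \ref{definition: partial composition of games} yields
\[
\gamma(C) = \beta(B)\,\alpha(A \setminus \{i\}) + \partial_i \alpha(A \setminus \{i\}) \cdot \beta(B).
\]

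The decisive step is then to expand the derivative term as $\partial_i \alpha(A \setminus \{i\}) = \alpha(A) - \alpha(A \setminus \{i\})$, after which the two occurrences of $\alpha(A \setminus \{i\})$ cancel and one is left with the clean factorization $\gamma(C) = \alpha(A)\,\beta(B)$. In words: the value of a composite game on its grand coalition is always the product of the grand-coalition values of its two components. With $\alpha(A) = \beta(B) = 1$ this gives $\gamma(C) = 1$, so $\Gamma_A \circ_i \Gamma_B$ is again normalized, completing the proof. I do not expect any genuine obstacle here; the only point requiring care is the bookkeeping of the grand coalition under the identification $C = (A \setminus \{i\}) \cup B$ — namely that the removed player $i$ is precisely the one restored by the telescoping derivative — after which the computation reduces to a one-line cancellation.
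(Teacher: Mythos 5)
Your proof is correct and follows essentially the same route as the paper, whose entire argument is the one-line identity $\alpha \circ_i \beta(A \diamond_i B) = \alpha(A)\beta(B) = 1$; you merely make explicit the telescoping cancellation behind that identity and the (routine) checks on the unit and the symmetric group action.
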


\begin{proof}
Let us choose \(\Gamma_A = (A, \alpha)\) and \(\Gamma_B = (B, \beta)\) be two normalized games and let \(i \in A\). Then their composition is again normalized since \(\alpha \circ_i \beta(A \diamond_i B) = \alpha(A)\beta(B) = 1\).
\end{proof}

\begin{remark}[An operad in the category of affine spaces]
Given a vector space $V$ together with a linear map $\pi: V \longrightarrow \mathbb{R}$, the fiber $\pi^{-1}(1)$ naturally acquires the structure of an affine space. In fact, the category of affine spaces is equivalent to the category of vector spaces with a non-zero map to $\mathbb{R}$, and this equivalence respects the monoidal structures. One can obtain normalized games as the fiber of the map $\mathbb{G}(n) \longrightarrow \mathbb{R}$ which sends $v$ to $v(N)$, and since this map is compatible with the operad structure, the resulting operad of normalized games is an operad in the category of affine spaces. 
\end{remark}

\subsection{The suboperad of simple games}
Cooperative games which have values in $\{0,1\}$ are called simple games. We show directly that simple monotone games are stable under the composition, thus recovering the suboperad corresponding to the original composition defined by Shapley in \cite{ShapleyComposition}. See also Paragraph \ref{subsub: sum and product of simple games} for how to recover Shapley's product and sum using the operadic composition. This induces an operad structure on a class of combinatorial objects known as \textit{clutters}, see \cite{billera1971composition} for more information.  

\begin{proposition}\label{prop: simple games suboperad}
The simple monotone games are stable under the composition and thus form a suboperad. 
\end{proposition}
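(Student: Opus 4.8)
The plan is to use the fact, recalled just above, that a grounded simple monotone game is completely determined by its upward-closed family of winning coalitions, so that it suffices to check that the composite $\Gamma_A \circ_i \Gamma_B$ again takes values in $\{0,1\}$ and is monotone (groundedness being immediate from the formula, since $\alpha(\emptyset) = \beta(\emptyset) = 0$). Throughout I would keep in mind two elementary observations about $\Gamma_A = (A,\alpha)$. First, since the player set of the composite is $(A \setminus \{i\}) \cup B$, any coalition $S$ of the composite satisfies $i \notin S_A$, so that $\partial_i\alpha(S_A) = \alpha(S_A \cup \{i\}) - \alpha(S_A)$ lies in $\{0,1\}$ by monotonicity of $\alpha$. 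Second, $\alpha(S_A)$ and $\partial_i\alpha(S_A)$ can never both equal $1$: if $\alpha(S_A) = 1$ then monotonicity forces $\alpha(S_A \cup \{i\}) = 1$, whence $\partial_i\alpha(S_A) = 0$.

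First I would settle simplicity. Because $\beta$ is grounded and monotone it is non-negative, so $\beta(B) = 0$ would force $\beta(S) = 0$ for every $S \subseteq B$, i.e. $\beta \equiv 0$; in that degenerate case the composite coalition function $\gamma$ vanishes identically and is trivially simple and monotone. Otherwise $\beta(B) = 1$ and the formula of Definition \ref{definition: partial composition of games} reduces to
\[
\gamma(S) = \alpha(S_A) + \partial_i\alpha(S_A)\,\beta(S_B).
\]
Here $\alpha(S_A)$, $\partial_i\alpha(S_A)$ and $\beta(S_B)$ all lie in $\{0,1\}$, and by the second observation above the two summands are never simultaneously equal to $1$; hence $\gamma(S) \in \{0,1\}$.

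The main point is monotonicity, and here I would make the connection with Shapley's description explicit. For $\beta(B) = 1$ I claim $\gamma(S) = \alpha\big(K(S)\big)$, where $K(S) \coloneqq S_A \cup \{i\}$ if $\beta(S_B) = 1$ and $K(S) \coloneqq S_A$ otherwise — the set of players of $A$ controlled by $S$. Indeed, when $\beta(S_B) = 0$ the formula gives $\gamma(S) = \alpha(S_A)$, and when $\beta(S_B) = 1$ it telescopes to $\alpha(S_A) + \partial_i\alpha(S_A) = \alpha(S_A \cup \{i\})$. Now if $S \subseteq T$ then $S_A \subseteq T_A$ and $S_B \subseteq T_B$, and monotonicity of $\beta$ gives $\beta(S_B) \leq \beta(T_B)$; it follows that $K(S) \subseteq K(T)$. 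Monotonicity of $\alpha$ then yields $\gamma(S) = \alpha(K(S)) \leq \alpha(K(T)) = \gamma(T)$, as desired.

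Finally I would note that the unit $\mathbf{1}$ is simple and monotone, so the class of grounded simple monotone games contains the unit and is closed under all the partial compositions $\circ_i$, which is precisely what it means to be a suboperad. I do not expect a genuine obstacle here: the computation is short once the two preliminary observations are in place, and the only point requiring care is the degenerate case $\beta \equiv 0$, where the identification $\gamma(S) = \alpha(K(S))$ breaks down but where $\gamma \equiv 0$ makes both properties obvious.
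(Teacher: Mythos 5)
Your proof is correct, and for the simplicity part it follows essentially the same route as the paper: the same case split on $\beta(B)$ and $\beta(S_B)$, and the same telescoping identity $\alpha(S_A) + \partial_i\alpha(S_A) = \alpha(S_A \cup \{i\})$ (valid because $i \notin S_A$). The one genuine difference is that you also verify monotonicity of the composite, via the identification $\gamma(S) = \alpha(K(S))$ with $K(S) = S_A \cup \{i\}$ or $S_A$ according to whether $\beta(S_B) = 1$, together with $K(S) \subseteq K(T)$ for $S \subseteq T$. The paper's own proof only checks that $\gamma$ takes values in $\{0,1\}$ and never addresses monotonicity (which does not follow from the general capacity result, Theorem \ref{proposition: capacities are stable under composition}, within the logical order of the paper, since that theorem appears afterwards). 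So your argument is strictly more complete, and the $K(S)$ device has the added benefit of making the link with Shapley's compound-game description transparent. Your handling of the degenerate case $\beta(B) = 0$ (forcing $\beta \equiv 0$ by groundedness and monotonicity, hence $\gamma \equiv 0$) is also cleaner than the paper's, which treats $\beta(B) = 0$ and $\beta(S_B) = 1$ as a live possibility and dispatches it as ``a product of $1$ and $0$''.
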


\begin{proof}
    Let \(\Gamma_A = (A, \alpha) \) and \(\Gamma_B = (B, \beta)\) be two simple monotone games. Let \(i \in A\) and let \(\Gamma_C = (C, \gamma)\) be the compound game defined by \(\Gamma_C = \Gamma_A \circ_i \Gamma_B\). Notice that since $\alpha$ is simple and monotone, the value of $\partial_i \alpha$ is also either $0$  or $1$. For all \(S \subseteq C\), we have 
    \[
    \gamma(S) = \beta(B) \alpha(S_A) + \partial_i \alpha(S_A) \beta(S_B).
    \]
    If \(\beta(B) = 0\) or \(\beta(S_B) = 0\), then \(\gamma(S)\) is null or a product of \(1\) and \(0\), hence belongs to \(\{0, 1\}\). Henceforth, we assume that \(\beta(B) = \beta(S_B) = 1\). This implies that 
    \[
    \gamma(S) = \alpha(S_A) + \partial_i \alpha(S_A) = \alpha(S_A \cup \{i\}), 
    \]
    which, since $\alpha$ is simple, must either be $0$ or $1$.
\end{proof}

\subsection{The suboperad of capacities}
Capacities, which are defined as games $\Gamma = (N,v)$ whose set function is non-negative and monotone, were introduced by Choquet in \cite{choquet1954theory}. They are ubiquitous in decision theory, in particular in multi-criteria decision making. They also correspond to the original definition of a \textit{fuzzy measure} given by Sugeno in \cite{Sugeno}. We refer to \cite[Section 2.3 and Chapter 6]{Grabisch} for a full textbook account. 

\begin{definition}[Capacity]\label{def: capacity}
A \textit{capacity} is the data of a game $\Gamma = (N,v)$ such that its coalition function $v$ is non-negatively valued and monotone. 
\end{definition}

\begin{remark}
A game $\Gamma = (N,v)$ is monotone if and only if, for all players $i \in N$, the derivative $\partial_i v(S)$ is a non-negative function for all $S \subseteq N \setminus \{i\}$. 
\end{remark}

\begin{remark}
Since we are working with \textit{grounded} games, the monotonicity assumption implies the non-negativeness, since $v(S) \geq v(\emptyset) = 0$. However, we will not use this in the proof, and instead write arguments which generalize trivially also to the non-grounded case. 
\end{remark}

\begin{theorem}\label{proposition: capacities are stable under composition}
Capacities are stable under composition and thus form a suboperad. 
\end{theorem}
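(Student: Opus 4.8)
The plan is to verify directly the two defining properties of a capacity---non-negativity and monotonicity---for the composite $\Gamma_A \circ_i \Gamma_B = (A \diamond_i B, \gamma)$, whose coalition function is $\gamma(S) = \beta(B)\alpha(S_A) + \partial_i\alpha(S_A)\beta(S_B)$. Non-negativity is immediate and does not use groundedness: since $\Gamma_A$ and $\Gamma_B$ are capacities we have $\beta(B) \geq 0$, $\alpha(S_A) \geq 0$, $\beta(S_B) \geq 0$, and $\partial_i\alpha(S_A) \geq 0$ (the latter because $\alpha$ is monotone, so all its first derivatives are non-negative). Hence each of the two summands defining $\gamma(S)$ is a product of non-negative reals, and $\gamma(S) \geq 0$ for every $S$.

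For monotonicity I would invoke the characterization recalled just above the statement: $\gamma$ is monotone if and only if $\partial_j\gamma(S) \geq 0$ for every player $j$ and every coalition $S$ avoiding $j$. The derivative $\partial_j\gamma$ is given by Lemma \ref{lemma: dérivée de la composée}, which splits into two cases according to whether $j$ lies in $B$ or in $A \setminus \{i\}$. When $j \in B$ the formula reads $\partial_j\gamma(S) = \partial_i\alpha(S_A)\,\partial_j\beta(S_B)$, a product of two first derivatives that are each non-negative by monotonicity of $\alpha$ and $\beta$; so this case is settled at once.

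The only delicate case is $j \in A \setminus \{i\}$, where Lemma \ref{lemma: dérivée de la composée} gives
\[
\partial_j\gamma(S) = \beta(B)\,\partial_j\alpha(S_A) + \partial_j\partial_i\alpha(S_A)\,\beta(S_B).
\]
Here the mixed second derivative $\partial_j\partial_i\alpha(S_A)$ has no definite sign for a merely monotone game, so one cannot argue term by term. The key manoeuvre is to rewrite it, using that derivatives commute and $i \notin S_A$, as $\partial_j\partial_i\alpha(S_A) = \partial_j\alpha(S_A \cup \{i\}) - \partial_j\alpha(S_A)$, and then regroup:
\[
\partial_j\gamma(S) = \partial_j\alpha(S_A)\bigl(\beta(B) - \beta(S_B)\bigr) + \partial_j\alpha(S_A \cup \{i\})\,\beta(S_B).
\]
Now each factor is non-negative: $\partial_j\alpha(S_A) \geq 0$ and $\partial_j\alpha(S_A \cup \{i\}) \geq 0$ by monotonicity of $\alpha$; $\beta(B) - \beta(S_B) \geq 0$ by monotonicity of $\beta$ since $S_B \subseteq B$; and $\beta(S_B) \geq 0$ by non-negativity of $\beta$. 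Thus $\partial_j\gamma(S) \geq 0$ in this case too, establishing monotonicity and concluding the argument. The main obstacle is exactly this regrouping: the naive bound fails because monotonicity controls only \emph{first} derivatives, and one must absorb the uncontrolled mixed derivative by trading it against the monotonicity slack $\beta(B) - \beta(S_B)$ of the inner game; the arguments never appeal to $v(\emptyset) = 0$, so they extend verbatim to the non-grounded setting.
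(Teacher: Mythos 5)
Your proposal is correct and follows essentially the same route as the paper: non-negativity term by term, then monotonicity via Lemma \ref{lemma: dérivée de la composée}, with the case $j \in A \setminus \{i\}$ handled by trading the mixed derivative $\partial_j\partial_i\alpha(S_A)$ against the slack $\beta(B)-\beta(S_B)$ to reduce to $\partial_j\alpha(S_A\cup\{i\})\beta(S_B)\geq 0$, which is exactly the paper's chain of inequalities written as a single regrouped identity. The remark that groundedness is never used also matches a remark the paper makes just before its proof.
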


\begin{proof}
Let \(\Gamma_A = (A, \alpha)\) and \(\Gamma_B = (B, \beta)\) be two capacities. We consider their composition 
\[
\alpha \circ_i \beta(S) = \beta(B) \alpha(S_A) + \partial_i \alpha(S_A) \beta(S_B)~,
\]
at a coalition $S \subseteq N \setminus \{i\}$. Since $\Gamma_A$ is monotone, $\partial_i \alpha(S_A) \geq 0$, and as the product and the sum of non-negative numbers, the composition is non-negative.

\medskip

Let us now prove that the partial derivative of the composition at any player \(j \in N \setminus \{i\}\) is also non-negative. By Lemma ~\ref{lemma: dérivée de la composée}, there are two cases to consider. First, if \(j \in B\), the derivative of the composite at the player $j$ is given by \(\partial_j (\alpha \circ_i \beta)(S) = \partial_i \alpha(S_A) \partial_j \beta(S_B)\), which is non-negative as the product of two non-negative functions.  Now, if \(j \in A \setminus \{i\} \), we have that 
\[
\partial_j (\alpha \circ_i \beta)(S) = \beta(B) \partial_j \alpha(S_A) + \partial_i \partial_j \alpha(S_A) \beta(S_B). 
\]
Observe that since \(\Gamma_B\) is monotone, we have that \(\beta(B) \geq \beta(S_B)\). Hence, 
    \[ \begin{aligned} 
    \partial_j (\alpha \circ_i \beta)(S) & \geq \left( \partial_j \alpha(S_A) + \partial_i \partial_j \alpha(S_A) \right) \beta(S_B) \\ 
    &\geq \left( \partial_j \alpha(S_A) + \partial_j \alpha(S_A \cup \{i\}) - \partial_j \alpha(S_A) \right) \beta(S_B) \\
    &\geq \partial_j \alpha(S_A \cup \{i\}) \beta(S_B) \\
    & \geq 0~, 
    \end{aligned} \]
which is indeed non-negative, since $\Gamma_A$ is monotone and $\Gamma_B$ is non-negative. 
\end{proof}

\begin{remark}[On non-negative games]
Observe that the monotonicity assumption is, in fact, crucial for Theorem \ref{proposition: capacities are stable under composition} to hold. Indeed, the composition of two non-negative games is not, in general, non-negative. The reason is that if the partial derivative $\partial_i \alpha(S_A) = [\alpha(S_A \cup \{i\}) - \alpha(S_A)] \ll 0$, then the value of the composite 
\[
\alpha \circ_i \beta(S) = \beta(B) \alpha(S_A) + \partial_i \alpha(S_A) \beta(S_B)~,
\]
can in fact be negative. 

\medskip

Let us give a concrete example of this. Consider the game $\alpha$ on $\{1, 2\}$ given by $\alpha(\{1\}) = 0$, $\alpha(\{2\}) = 1000$ and $\alpha(\{1, 2\}) = 0$ and the game $\beta$ on $\{a, b\}$ defined by $\beta(\{a\}) = 0$, $\beta(\{b\}) = 1$ and $\beta(\{a, b\}) = \frac{1}{2}$. Then, it can be computed that
\[
\alpha \circ_1 \beta(\{b, 2\}) = 500 - 1000 = -500~,
\]
which is negative, while both games are indeed non-negative. 
\end{remark}

Finally, we show that the duality involution of Definition \ref{definition: dual game} descends to the suboperad of capacities. This will be important later on, when studying the suboperads of the operad of capacities given by $k$-monotone and $k$-alternating games, which are exchanged by the dual game involution. 

\begin{proposition}\label{prop: the dual of a capacity is a capacity}
The dual of a capacity is again a capacity. Therefore duality defines an involution of the operad of capacities.
\end{proposition}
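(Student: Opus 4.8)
The plan is to show that if $v$ is non-negative and monotone then so is its dual $v^*$, and then to read off the involution statement from the already-established compatibility of duality with the operadic composition.

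First I would record that the dual game is grounded: by Definition~\ref{definition: dual game} we have $v^*(\emptyset) = v(N) - v(N \setminus \emptyset) = v(N) - v(N) = 0$. The heart of the matter is then monotonicity. I would use the criterion recalled just above, that a game is monotone if and only if all of its derivatives $\partial_i v(S)$ are non-negative, together with the first point of Lemma~\ref{lemma: duality and derivatives}, which gives $\partial_i v^*(S) = \partial_i v(N \setminus S)$. Since $\Gamma = (N,v)$ is monotone, the right-hand side is non-negative for every player $i$ and every coalition $S$, hence $\partial_i v^*(S) \geq 0$ for all $i$ and $S$, so $v^*$ is monotone. Alternatively one can argue directly at the level of the coalition function: if $S \subseteq T$ then $N \setminus T \subseteq N \setminus S$, so monotonicity of $v$ gives $v(N \setminus T) \leq v(N \setminus S)$, whence $v^*(S) = v(N) - v(N \setminus S) \leq v(N) - v(N \setminus T) = v^*(T)$.

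For non-negativity of $v^*$ I would then simply observe that it is now a grounded monotone game, and recall, as noted in the Remark following the definition of a monotone game, that such games are automatically non-negative, so that $v^*(S) \geq v^*(\emptyset) = 0$. (Equivalently, $v^*(S) = v(N) - v(N \setminus S) \geq 0$ follows at once from $N \setminus S \subseteq N$ and monotonicity of $v$.) This shows that $v^*$ is again a capacity.

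Finally, to conclude that duality defines an involution of the operad of capacities, I would combine three facts: the dual construction preserves capacities by the above, so it restricts to a well-defined self-map of the suboperad; it is an involution since $v^{**} = v$ on all games; and it commutes with the partial compositions by Proposition~\ref{prop: compatibility of the composition and duality}, which established $(\alpha \circ_i \beta)^* = \alpha^* \circ_i \beta^*$. Together these exhibit duality as an involutive automorphism of the operad of capacities. I do not expect any serious obstacle here: the only substantive point is the preservation of monotonicity, which reduces to a one-line inequality, while all of the operadic compatibility is already supplied by the earlier results.
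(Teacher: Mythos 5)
Your proposal is correct and follows essentially the same route as the paper: monotonicity of $v^*$ via the derivative identity $\partial_i v^*(S) = \partial_i v(N \setminus S)$ from Lemma~\ref{lemma: duality and derivatives}, non-negativity from monotonicity of $v$, and the operadic involution from Proposition~\ref{prop: compatibility of the composition and duality}. The extra observations (groundedness of $v^*$, the direct complementation argument, and $v^{**}=v$) are harmless additions.
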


\begin{proof}
Let $\Gamma = (N,v)$ be a capacity. Recall that its dual game is given by 
\[
v^*(S) \coloneqq v(N) - v(N \setminus S)~,
\]
for all $S \subseteq N$. Since $v$ is monotone, $v(N) \geq v(N \setminus S)$. Hence, since it is non-negative, we have that $v^*(S) = v(N) - v(N \setminus S) \geq 0$ for all $S \subseteq N$, so the dual game is non-negative. By Lemma \ref{lemma: duality and derivatives}, the derivative of the dual game is given by 
\[
\partial_jv^*(S) = \partial_jv(N \setminus S) \geq 0~, 
\]
for any $j \in N$, so the dual game $\Gamma = (N,v^*)$ is also monotone, thus capacities are stable under the duality involution. By Proposition \ref{prop: compatibility of the composition and duality}, duality is compatible with the operadic composition, so it defines an involution of the suboperad of capacities. 
\end{proof}

\subsection{Convex games and the hierarchy of $k$-monotone suboperads}
Convex non-negative games are very significant objects in combinatorics. They are equivalent to \textit{polymatroids}, as defined for instance in \cite[Definition 1.1]{Polymatroids}. They are also completely characterized by their \textit{cores}, as defined in Paragraph~\ref{subsubsection: the core of a game}. Their cores are \textit{generalized permutahedra} in the sense of Postnikov \cite{Postnikov} and any generalized permutahedra is uniquely realized by a non-negative convex games, as explained in \cite[Section 12]{AguiarArdila}.

\medskip

We show that non-negative convex games are stable under partial compositions, and thus define a (sub)operad. This endows this family of objects with a new operad structure, whose applications shall be explored in \cite{deuxiemepapier}. Furthermore, non-negative convex games are included in the family of $k$-monotone games (where they correspond to the case $k =2$). We show that, in general, $k$-monotone games form a suboperad for any $k \geq 2$. The case of $k = \infty$ corresponds to totally monotone games, which are also known as \textit{belief functions} and play an important role in the Dempster-Shafer theory of evidence, see \cite{dempster1967upper, shafer1976mathematical, kohlas2013mathematical}. 

\subsubsection{Convex games or $2$-monotone games}
We show that convex games with non-negative values are stable under the composition. 

\begin{definition}[Convex game]\label{definition: convex game}
Let $\Gamma = (N,v)$ be a game. It is \textit{convex} if for all $S,T \subseteq N$, the following inequality holds
\[
v(S) + v(T) \leq v(S \cap T) + v(S \cup T)~. 
\]
\end{definition}

One can characterize convex (also known as supermodular) games purely in terms of the partial derivatives of size $2$ of the game. 

\begin{proposition}[{\cite[Corollary~2.23]{Grabisch}}]\label{prop: convex-characterization}
Let $\Gamma = (N,v)$ be a game. It is convex if and only if $\partial_{ij} v \geq 0$, or, equivalently, for every player $i \in N$, the partial derivative $\partial_i v$ is a monotone function. 
\end{proposition}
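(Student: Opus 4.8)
The plan is to prove the equivalence by first reducing convexity — a condition quantified over all pairs of coalitions — to the infinitesimal condition $\partial_{ij}v \geq 0$ on second derivatives, and then bootstrapping that pairwise inequality back up to full supermodularity by a telescoping argument.

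First I would record the explicit form of the second derivative. Using the direct formula for $\partial_T v$ recalled in Paragraph~\ref{Subsubsection: duality and derivatives}, for any $S \subseteq N \setminus \{i,j\}$ one has
\[
\partial_{ij}v(S) = v(S \cup \{i,j\}) - v(S \cup \{i\}) - v(S \cup \{j\}) + v(S).
\]
This immediately yields the equivalence of the two infinitesimal conditions in the statement: viewing $\partial_i v$ as a game on $N \setminus \{i\}$, the quantity $\partial_j(\partial_i v)(S) = \partial_{ij}v(S)$ is precisely the increment of $\partial_i v$ when the player $j$ is added to $S$. Hence $\partial_i v$ is monotone for every $i$ exactly when all such increments are non-negative, that is, when $\partial_{ij}v \geq 0$ for all $i \neq j$ and all $S$.

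Next I would handle the easy direction, that convexity implies $\partial_{ij}v \geq 0$. Given $i \neq j$ and $S \subseteq N \setminus \{i,j\}$, I apply the supermodularity inequality of Definition~\ref{definition: convex game} to the two coalitions $S \cup \{i\}$ and $S \cup \{j\}$, whose intersection is $S$ and whose union is $S \cup \{i,j\}$. The resulting inequality is exactly $\partial_{ij}v(S) \geq 0$.

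The converse — deducing full convexity from the monotonicity of the marginals $\partial_i v$ — is the main step. Writing $C = S \cap T$, $A = S \setminus T$ and $B = T \setminus S$, the desired inequality $v(S) + v(T) \leq v(S \cap T) + v(S \cup T)$ rearranges to
\[
v(A \cup B \cup C) - v(B \cup C) \geq v(A \cup C) - v(C),
\]
namely that the marginal contribution of the block $A$ is larger when it is added on top of $B \cup C$ than when it is added on top of $C$. Enumerating $A = \{a_1, \ldots, a_p\}$, I would expand both sides as telescoping sums of single-player increments $\partial_{a_k}v$, adjoining $a_1, \ldots, a_p$ one at a time over $B \cup C$ on the left and over $C$ on the right. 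Comparing term by term, the $k$-th increment on the left is evaluated at $\{a_1,\ldots,a_{k-1}\} \cup B \cup C$ and the $k$-th on the right at $\{a_1,\ldots,a_{k-1}\} \cup C$; since the former set contains the latter and $\partial_{a_k}v$ is monotone by hypothesis, each left term dominates the corresponding right term, and summing gives the claim. The only delicate point is the bookkeeping of the two telescoping chains so that they line up index by index with nested arguments; once that alignment is set up correctly, the monotonicity of each $\partial_{a_k}v$ does all the work, so I expect no genuine obstacle beyond this routine verification.
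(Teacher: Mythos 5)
Your proof is correct. Note that the paper does not prove this statement at all: it is quoted as \cite[Corollary~2.23]{Grabisch} and used as a black box, so there is no in-text argument to compare against. Your argument is the standard one — the easy direction by specializing supermodularity to $S\cup\{i\}$ and $S\cup\{j\}$, and the converse by the telescoping comparison of the two chains from $C$ to $A\cup C$ and from $B\cup C$ to $A\cup B\cup C$, using monotonicity of each $\partial_{a_k}v$ termwise — and it is complete; the only point worth making explicit is the routine induction showing that non-negativity of all single-element increments of $\partial_i v$ implies its full monotonicity, which you correctly identify as the bridge between the two formulations in the statement.
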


Furthermore, by adding the non-negativeness assumption, we get that convex, non-negative games are also monotone. See \cite[Theorem~2.20, Point iii)]{Grabisch} for more details. These two conditions are required for these games to be stable under the partial composition.

\begin{theorem}\label{thm: opérade des convexes}
    The convex, non-negative games are stable under composition and thus form a suboperad. 
\end{theorem}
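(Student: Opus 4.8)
The plan is to reduce convexity to the second-derivative criterion of Proposition \ref{prop: convex-characterization} and to treat non-negativity separately through the capacities theorem. First, since $\Gamma_A$ and $\Gamma_B$ are convex and non-negative, they are monotone by \cite[Theorem~2.20, Point iii)]{Grabisch}, hence capacities. Theorem \ref{proposition: capacities are stable under composition} then immediately yields that $\Gamma_A \circ_i \Gamma_B$ is non-negative (and monotone), so it only remains to prove that the composite is convex. For this I would show that $\partial_{\{j,k\}}(\alpha \circ_i \beta)(S) \geq 0$ for every pair of distinct players $j,k \in A \diamond_i B$ and every $S$, which by Proposition \ref{prop: convex-characterization} is equivalent to convexity.

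To compute these mixed second derivatives I would apply Proposition \ref{Proposition: dérivée itérée} with $C = \{j,k\}$, distinguishing three cases according to the positions of $j$ and $k$. When $j,k \in B$, the formula gives $\partial_{\{j,k\}}(\alpha \circ_i \beta)(S) = \partial_i \alpha(S_A)\,\partial_{\{j,k\}}\beta(S_B)$, a product of a non-negative factor (monotonicity of $\alpha$) and a non-negative factor (convexity of $\beta$). When $j \in A \setminus \{i\}$ and $k \in B$, the formula gives $\partial_{\{i,j\}}\alpha(S_A)\,\partial_k \beta(S_B)$, again a product of non-negative terms by convexity of $\alpha$ and monotonicity of $\beta$. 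Both cases are immediate.

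The main obstacle is the remaining case $j,k \in A \setminus \{i\}$, where Proposition \ref{Proposition: dérivée itérée} yields
\[
\partial_{\{j,k\}}(\alpha \circ_i \beta)(S) = \beta(B)\,\partial_{\{j,k\}}\alpha(S_A) + \partial_{\{i,j,k\}}\alpha(S_A)\,\beta(S_B),
\]
which involves the third derivative $\partial_{\{i,j,k\}}\alpha$, a quantity \emph{not} controlled by convexity of $\alpha$ alone. The key idea is to rewrite $\partial_{\{i,j,k\}}\alpha(S_A) = \partial_{\{j,k\}}\alpha(S_A \cup \{i\}) - \partial_{\{j,k\}}\alpha(S_A)$ and to regroup the two summands as
\[
\partial_{\{j,k\}}\alpha(S_A)\big(\beta(B) - \beta(S_B)\big) + \partial_{\{j,k\}}\alpha(S_A \cup \{i\})\,\beta(S_B).
\]
Now both summands are manifestly non-negative: $\beta(B) - \beta(S_B) \geq 0$ by monotonicity of $\beta$, the second derivatives $\partial_{\{j,k\}}\alpha(S_A)$ and $\partial_{\{j,k\}}\alpha(S_A \cup \{i\})$ are non-negative by convexity of $\alpha$, and $\beta(S_B) \geq 0$ by non-negativity.

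This establishes $\partial_{\{j,k\}}(\alpha \circ_i \beta) \geq 0$ in all three cases, so by Proposition \ref{prop: convex-characterization} the composite is convex. Together with the non-negativity already obtained, this shows that convexity and non-negativity are preserved by the partial composition at the level of value functions; combined with Theorem \ref{thm: gamers operad}, this proves that the non-negative convex games form a suboperad.
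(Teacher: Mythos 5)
Your proposal is correct and follows essentially the same route as the paper: reduce convexity to non-negativity of the second derivatives via Proposition \ref{prop: convex-characterization}, compute them case by case with Proposition \ref{Proposition: dérivée itérée}, and in the only nontrivial case (both players in $A \setminus \{i\}$) use $\beta(B) \geq \beta(S_B) \geq 0$ together with $\partial_{\{j,k\}}\alpha \geq 0$ to regroup the expression into manifestly non-negative terms — your decomposition $\partial_{\{j,k\}}\alpha(S_A)(\beta(B)-\beta(S_B)) + \partial_{\{j,k\}}\alpha(S_A\cup\{i\})\beta(S_B)$ is the same algebraic manipulation the paper performs when it bounds the quantity below by $\partial_{\{j,k\}}\alpha(S_A\cup\{i\})\beta(S_B)$. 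Your explicit appeal to Theorem \ref{proposition: capacities are stable under composition} for non-negativity and monotonicity of the composite is also how the paper handles that part.
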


\begin{proof}
Let \(\Gamma_A = (A, \alpha) \) and \(\Gamma_B = (B, \beta)\) be two non-negative, convex games and let \(\Gamma_C = \Gamma_A \circ_i \Gamma_B\) be their composite along a player \(i \in A\). Our goal is to show that for any \(S \subseteq A \diamond_i B\) and any two players \(j, k \in (A \diamond_i B) \setminus S\), the derivative $\partial_{jk} \gamma(S) \geq 0$ and thus by Proposition \ref{prop: convex-characterization} the composite game will be convex. By Proposition \ref{Proposition: dérivée itérée}, there are three cases. When both players are in $B$, we have that the partial derivative $\partial_{jk} \gamma(S) = \partial_i\alpha(S_A)\partial_{jk}\beta(S_B)$ is non-negative since it is the product of two non-negative functions (since $\alpha$ is monotone and $\beta$ is convex). When one player is in $B$ and one in $A \setminus \{i\}$, we have that the partial derivative $\partial_{jk} \gamma(S) = \partial_{ij}\alpha(S_A)\partial_{k}\beta(S_B)$ is non-negative since it is the product of two non-negative functions (since $\beta$ is monotone and $\alpha$ is convex). Finally, when both $j,k$ are in $A \setminus \{i\}$, we have that
\[
\partial_{jk} \gamma(S) = \beta(B) \partial_k \partial_j \alpha(S_A) + \partial_i \partial_k \partial_j \alpha(S_A) \beta(S_B)~, 
\]
and now, like in the proof of Theorem \ref{proposition: capacities are stable under composition}, we use that \(\beta(B) \geq \beta(S_B) \geq 0\) to show that 
\[
\partial_{jk} \gamma(S) \geq \partial_{jk} \alpha(S_A \cup \{i\}) \beta(S_B)~,
\]
which is greater than $0$ by our previous convexity and non-negativity assumptions; thus it concludes the proof. 
\end{proof}

\begin{corollary}
There is an operad structure on the set of all generalized permutahedra.     
\end{corollary}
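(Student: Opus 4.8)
The plan is to produce the operad structure on generalized permutahedra by transporting the suboperad structure of non-negative convex games along the core bijection, so that essentially all the work has already been done in Theorem \ref{thm: opérade des convexes}. First I would recall that, by Postnikov \cite{Postnikov} and Aguiar--Ardila \cite[Section 12]{AguiarArdila}, for each $n \geq 1$ the core construction of Paragraph \ref{subsubsection: the core of a game} restricts to a bijection $C(n)$ between the set of non-negative convex games on $n$ players and the set of generalized permutahedra in $\mathbb{R}^N$. Its inverse recovers the game from the polytope, the value at a coalition being read off as $v(S) = \min_{x \in P} x(S)$, which is again non-negative and convex. This is exactly the correspondence between (non-negative convex games, i.e.) polymatroids and generalized permutahedra alluded to in the discussion preceding the statement.

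Next I would observe that these bijections assemble into an isomorphism of $\mathbb{S}$-sets. The symmetric group $\mathbb{S}_n$ acts on games by permuting the players, as in the Lemma establishing the $\mathbb{S}$-module structure on $\mathbb{G}$, and on $\mathbb{R}^N$ by permuting the coordinates of the standard basis, which induces an action on the polytopes it contains. The core map is equivariant for these two actions: permuting the players of $v$ simply permutes the defining inequalities $x(S) \geq v(S)$, so that $C(n)(\sigma \star v) = \sigma \star C(n)(v)$ for every $\sigma \in \mathbb{S}_n$. Hence the family $\{C(n)\}_{n \geq 1}$ is an isomorphism of $\mathbb{S}$-sets between non-negative convex games and generalized permutahedra.

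Then I would transport the structure. By Theorem \ref{thm: opérade des convexes} non-negative convex games form a suboperad of $\mathbb{G}$; denote it $\mathbb{G}^{\mathrm{cvx}}$. Using the bijections $C(n)$, I define partial composition maps on generalized permutahedra by
\[
P \circ_i Q := C\left(C^{-1}(P) \circ_i C^{-1}(Q)\right),
\]
for any generalized permutahedra $P, Q$ and any admissible index $i$, together with the unit given by the image under $C(1)$ of the trivial game $\mathbf{1}$. Because $C$ is an $\mathbb{S}$-equivariant bijection intertwining the two structures, the sequential axiom, the parallel axiom, the two equivariance compatibilities, and the unit axioms of Definition \ref{definition: unital partial operad} transfer verbatim from $\mathbb{G}^{\mathrm{cvx}}$, with no further computation beyond those already performed in Theorems \ref{thm: gamers operad} and \ref{thm: opérade des convexes}.

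The transport of axioms is formal; the only point demanding genuine care, and thus the main obstacle, is pinning down the bijection precisely, namely confirming that the core map is well defined, injective, and surjective onto \emph{all} generalized permutahedra at every arity, and that it is honestly $\mathbb{S}$-equivariant. This is exactly the content of the cited correspondence, and I would lean on it directly. As noted in the discussion of the distinguished suboperads, and in the spirit of Notation \ref{notation: linear operads versus other operads}, the resulting object is a \emph{set} operad rather than a linear one, since generalized permutahedra do not form a vector space. A description of these partial compositions intrinsically on the geometric side, refining the inclusion of Theorem \ref{thm: inclusion of the tensor i of the cores}, is deferred to \cite{deuxiemepapier}.
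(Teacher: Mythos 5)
Your proposal is correct and follows essentially the same route as the paper, which simply transports the suboperad structure of non-negative convex games from Theorem \ref{thm: opérade des convexes} along the correspondence with generalized permutahedra of \cite[Section 12]{AguiarArdila}. You spell out the equivariance and the formal transport of the axioms in more detail than the paper does, but the underlying argument is identical.
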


\begin{proof}
Follows directly from Theorem \ref{thm: opérade des convexes}, under the correspondence explained in \cite[Section 12]{AguiarArdila}. 
\end{proof}

\begin{remark}[About the geometric side of this operad structure] Let $\Gamma = (N,v)$ be a non-negative convex game. Its associated \textit{core} $C(\Gamma)$ is a generalized permutahedra; it corresponds to the \textit{base polytope} associted to the convex function $v$ in the terminology of \cite[Section 12.3]{AguiarArdila}. 

\medskip

Let $\mathfrak{p}_A$ and $\mathfrak{p}_B$ be two generalized permutahedra, and let \(\Gamma_A = (A, \alpha) \) and \(\Gamma_B = (B, \beta)\) be the corresponding convex games to these polytopes. The operad structure induced by Theorem \ref{thm: opérade des convexes} is given, a priori, by taking the core of the composite game
\[
\mathfrak{p}_A \circ_i \mathfrak{p}_B \coloneqq C(\Gamma_A \circ_i \Gamma_B)~,
\]
and hence is so far not explicitly given by a geometrical construction. We can describe a subset of $C(\Gamma_A \circ_i \Gamma_B)$ in terms of the partial tensor product of $C(\Gamma_A)$ and $C(\Gamma_B)$, see Theorem \ref{thm: inclusion of the tensor i of the cores}, but in general this construction is not sufficient to obtain the full polytope $C(\Gamma_A \circ_i \Gamma_B)$ from the other two polytopes. Exploring if there exists a geometrical construction of this operad structure which does not involve the correspondence with convex games should be the subject of future research.  
\end{remark}

\subsubsection{General $k$-monotone games} Being a convex game is a special case of a more general condition called $k$-monotonicity, which we show is also stable under the partial compositions of our operad structure. 

\begin{definition}[$k$-monotone games]
Let $\Gamma = (N,v)$ be a game. Let $k \geq 2$. The game is $k$\textit{-monotone} if for any family of $k$ sets $A_1, \cdots, A_k$, the following inequality holds: 
\[
v\left(\bigcup_{i=1}^k A_i \right) \geq \sum_{\substack{I \subseteq \{1,\cdots,k\}\\ I \neq \emptyset}} (-1)^{|I|+1} v\left(\bigcap_{i\in I} A_i\right)~. 
\]
It is $\infty$-monotone if this holds for all $k \geq 2$. 
\end{definition}

\begin{remark}
For $k =2$, this definition is exactly the definition of a convex game, see Definition \ref{definition: convex game}. More generally, being $k$-monotone implies being $k'$-monotone for any $2 \leq k' \leq k$. 
\end{remark}

Like convex games, $k$-monotone games can be characterized by the positivity of the iterated partial derivatives. 

\begin{theorem}[{\cite[Theorem~2.21]{Grabisch}}]
Let $\Gamma = (N,v)$ be a game. It is $k$-monotone for $k \geq 2$ if and only if for any $K$ of size $2 \leq |K| \leq k$, the iterated partial derivative $\partial_K v$ is a non-negative function. 
\end{theorem}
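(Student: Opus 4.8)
The statement is an equivalence between a family of lattice inequalities ($k$-monotonicity) and the non-negativity of all iterated derivatives $\partial_K v$ of order $2 \le |K| \le k$; recall that $\partial_K v(S) = \sum_{L \subseteq K} (-1)^{|K \setminus L|} v(S \cup L)$. The plan is to prove the two implications separately, after recording two consequences of the Möbius inversion of Definition \ref{definition: Mobius inversion}, both obtained by a routine sign computation. First, for $K$ disjoint from $S$,
\[
\partial_K v(S) = \sum_{R \subseteq S} \mu^v(K \cup R).
\]
Second, for any sets $A_1, \dots, A_k$ with union $U = \bigcup_{i=1}^k A_i$,
\[
v(U) - \sum_{\emptyset \neq I \subseteq [k]} (-1)^{|I|+1} v\Big(\bigcap_{i \in I} A_i\Big) = \sum_{\substack{T \subseteq U \\ J(T) = \emptyset}} \mu^v(T),
\]
where $J(T) = \{ i \in [k] : T \subseteq A_i \}$; this identity follows by expanding each value through $v(X) = \sum_{T \subseteq X} \mu^v(T)$ and evaluating the alternating sum $\sum_{\emptyset \ne I \subseteq J(T)} (-1)^{|I|}$, which equals $[J(T)=\emptyset]-1$.

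For the implication from $k$-monotonicity to positivity of derivatives, I would observe that each $\partial_K v(S)$ with $|K| = m$ and $2 \le m \le k$ is literally an instance of the $m$-monotonicity inequality, which holds because $k$-monotone games are $m$-monotone for $m \le k$. Concretely, given $K = \{i_1, \dots, i_m\}$ and $S$ disjoint from $K$, apply the defining inequality to the family of co-atoms $A_j = (S \cup K) \setminus \{i_j\}$ for $j \in [m]$. Since $m \ge 2$ one checks that $\bigcup_{j} A_j = S \cup K$ and $\bigcap_{j \in I} A_j = S \cup (K \setminus \{i_j : j \in I\})$; substituting and reindexing the intersections by $L = K \setminus \{i_j : j \in I\}$ (so $|I| = m - |L|$), the $m$-monotonicity inequality collapses exactly to $\sum_{L \subseteq K} (-1)^{|K \setminus L|} v(S \cup L) \ge 0$, that is, to $\partial_K v(S) \ge 0$.

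The converse is the substantial direction. Assuming $\partial_K v \ge 0$ for all $2 \le |K| \le k$, I would prove the $k$-monotonicity inequality by showing that its left-hand-minus-right-hand side — which by the second identity equals $\sum_{T \subseteq U,\, J(T) = \emptyset} \mu^v(T)$ — is a non-negative integer combination of the quantities $\partial_K v(S) = \sum_{R \subseteq S} \mu^v(K \cup R)$ with $2 \le |K| \le k$. The mechanism is that any $T$ with $J(T) = \emptyset$ meets the complement of every $A_i$, hence contains a transversal $K \subseteq T$ of size between $2$ (a singleton would force $T \not\subseteq U$) and $k$ with $J(K) = \emptyset$; grouping the summands $\mu^v(T)$ according to a canonically chosen such $K(T)$, with $R = T \setminus K(T)$, reassembles them into the derivative terms. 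The main obstacle is precisely the bookkeeping of this regrouping: one must fix the transversals $K(T)$ (for instance by a lexicographic/greedy rule over $[k]$) so that each $\mu^v(T)$ is produced exactly once and no term $\mu^v(T')$ with $J(T') \neq \emptyset$ is ever created, forcing all coefficients to equal $1$. This can be organized as an induction on $k$, the base case $k = 2$ being supermodularity, which one settles directly by telescoping $v(S \cup P \cup Q) - v(S \cup P) - v(S \cup Q) + v(S)$ (with $S = A_1 \cap A_2$, $P = A_1 \setminus A_2$, $Q = A_2 \setminus A_1$) into a sum of mixed second derivatives $\partial_{\{p,q\}} v \ge 0$ over $p \in P$, $q \in Q$. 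Alternatively, since the statement is quoted from \cite[Theorem~2.21]{Grabisch}, one may simply invoke that reference.
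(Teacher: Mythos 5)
The paper offers no proof of this statement: it is quoted directly from \cite[Theorem~2.21]{Grabisch} (the result goes back to Chateauneuf--Jaffray), so your attempt cannot be compared against an argument in the text. Your two M\"obius identities are correct, and your forward direction is complete and correct: applying $m$-monotonicity to the co-atoms $A_j=(S\cup K)\setminus\{i_j\}$ collapses, after the reindexing $L=K\setminus\{i_j: j\in I\}$, exactly to $\partial_K v(S)\ge 0$. The one genuine gap is the step you yourself flag in the converse: you reduce the $k$-monotonicity inequality to $\sum_{T\subseteq U,\,J(T)=\emptyset}\mu^v(T)\ge 0$ and then assert that the up-set $\mathcal{T}=\{T\subseteq U: J(T)=\emptyset\}$ can be partitioned into Boolean intervals $[K,K\cup S]$ whose bottoms are transversals with $2\le|K|\le k$, but you never exhibit such a partition, and without it no inequality follows --- an arbitrary regrouping could double-count or omit terms, which is precisely why you cannot conclude that "all coefficients equal $1$".

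The gap is closable along the lines you suggest. Fix a linear order on $U$ and, for $T\in\mathcal{T}$, set $K(T)\coloneqq\{\min(T\setminus A_i) : i\in[k]\}$; this is well defined since $J(T)=\emptyset$, satisfies $K(T)\subseteq T$ and $J(K(T))=\emptyset$, and has $2\le|K(T)|\le k$ (a singleton would lie outside $U$). For $K$ in the image of this map, write $c_i=\min(K\setminus A_i)$. If $K(T)=K$ then $K\subseteq T$ and, since $K\setminus A_i\subseteq T\setminus A_i$ forces $\min(T\setminus A_i)\le c_i$ while $\min(T\setminus A_i)\in K\setminus A_i$ forces the reverse inequality, one gets $\min(T\setminus A_i)=c_i$ for every $i$; conversely these conditions imply $K(T)=K$. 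Given $c_i\in T$, the condition $\min(T\setminus A_i)=c_i$ just says that $T$ avoids every $x\notin A_i$ with $x<c_i$, so the fiber of $K$ is exactly the interval
\[
\{T : K(T)=K\}=\bigl[\,K,\; B_K\,\bigr],\qquad B_K\coloneqq U\setminus\bigcup_{i\in[k]}\{x\in U\setminus A_i : x<c_i\}~,
\]
and $K\subseteq B_K$ because no element of $K$ can lie in $U\setminus A_i$ below $c_i$. Summing $\mu^v$ over each fiber gives $\partial_K v(B_K\setminus K)\ge 0$ by your first identity, and summing over the fibers, which partition $\mathcal{T}$, yields the claim. With this lemma supplied your proof is complete; it is also consistent with the Remark following the theorem in the paper, since $\partial_K v(S)$ is precisely the interval sum $\sum_{L\in[K,K\cup S]}\mu^v(L)$. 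Simply invoking the reference, as the paper does, is of course also legitimate.
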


\begin{remark}
There is also a characterization of $k$-monotonicity in terms of the Möbius transform which is as follows. Let $k \geq 2$, a game $(N,v)$ is $k$-monotone if and only 
\[
\sum_{L \in [A,B]} \mu^v(L) \geq 0~,
\]
for any $A \subseteq N$ such that $2 \leq |A| \leq k$ and any $B \subseteq N$ such that $A \subseteq B$. Here $[A,B]$ stands for the set of all subsets $L$ of $N$ such that $A \subseteq L \subseteq B$. 
\end{remark}

Since any $k$-monotone game is $2$-monotone, this implies that if it is non-negative, then it is also monotone. This corresponds to the case where all partial derivatives of any length $\leq k$ are non-negative functions. So they are in particular capacities, see Definition \ref{def: capacity}.

\begin{theorem}\label{thm: opérade des k-monotones}
For any $k \geq 2$, $k$-monotone, non-negative games are stable under composition and thus form a suboperad. 
\end{theorem}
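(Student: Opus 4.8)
The plan is to mimic the proof of Theorem \ref{thm: opérade des convexes}, which is precisely the case $k = 2$, replacing the convexity criterion by the characterization of $k$-monotonicity in terms of the non-negativity of the iterated derivatives $\partial_K v$ for $2 \le |K| \le k$, and feeding it into the iterated-derivative formula of Proposition \ref{Proposition: dérivée itérée}.

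First I would take two non-negative $k$-monotone games $\Gamma_A = (A,\alpha)$ and $\Gamma_B = (B,\beta)$ and form their composite $\Gamma_C = \Gamma_A \circ_i \Gamma_B$ with coalition function $\gamma = \alpha \circ_i \beta$. Since a non-negative $k$-monotone game is in particular $2$-monotone and hence a capacity, Theorem \ref{proposition: capacities are stable under composition} already guarantees that $\gamma$ is non-negative and monotone. It thus remains to show that $\partial_K \gamma(S) \ge 0$ for every $K$ with $2 \le |K| \le k$. Throughout I will use that $i \notin S_A$ and $i \notin K$, which holds because $S$ and $K$ are subsets of $A \diamond_i B = (A \setminus \{i\}) \cup B$.

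By Proposition \ref{Proposition: dérivée itérée} there are two cases. When $K$ meets $B$, writing $K_A = K \cap A$ and $K_B = K \cap B$, the formula reads $\partial_K \gamma(S) = \partial_{K_A \cup \{i\}}\alpha(S_A)\,\partial_{K_B}\beta(S_B)$; since $1 \le |K_A \cup \{i\}| \le |K| \le k$ and $1 \le |K_B| \le |K| \le k$, both factors are derivatives of length between $1$ and $k$, hence non-negative by monotonicity (length $1$) and by $k$-monotonicity (length $\ge 2$), so their product is non-negative.

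The delicate case, which I expect to be the main obstacle, is $K \subseteq A \setminus \{i\}$, where the formula gives $\partial_K \gamma(S) = \beta(B)\,\partial_K \alpha(S_A) + \partial_{K \cup \{i\}}\alpha(S_A)\,\beta(S_B)$: the term $\partial_{K \cup \{i\}}\alpha$ has length $|K| + 1$, which reaches $k+1$ when $|K| = k$ and is therefore not controlled by the $k$-monotonicity of $\alpha$. As in the proof of the convex case, I would dissolve this obstruction by writing $\partial_{K \cup \{i\}}\alpha(S_A) = \partial_K \alpha(S_A \cup \{i\}) - \partial_K \alpha(S_A)$ and regrouping into
\[
\partial_K \gamma(S) = \partial_K \alpha(S_A)\big(\beta(B) - \beta(S_B)\big) + \partial_K \alpha(S_A \cup \{i\})\,\beta(S_B).
\]
Now both occurrences of $\partial_K \alpha$ have length $|K| \le k$ and are non-negative, while $\beta(B) - \beta(S_B) \ge 0$ by monotonicity of $\beta$ and $\beta(S_B) \ge 0$ by non-negativity, so $\partial_K \gamma(S) \ge 0$. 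Having covered both cases, the composite is non-negative and $k$-monotone, and the class forms a suboperad.
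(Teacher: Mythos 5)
Your proof is correct and follows essentially the same route as the paper's: both rest on Proposition \ref{Proposition: dérivée itérée} together with the regrouping $\partial_{K\cup\{i\}}\alpha(S_A) = \partial_K\alpha(S_A\cup\{i\}) - \partial_K\alpha(S_A)$ and the inequality $\beta(B) \geq \beta(S_B) \geq 0$, which is exactly the ``same type of argument'' the paper's proof invokes from Theorems \ref{proposition: capacities are stable under composition} and \ref{thm: opérade des convexes}. The only cosmetic difference is that the paper phrases this as an induction on $k$ while you treat all derivative lengths $2 \leq |K| \leq k$ directly, and you spell out the key cancellation that the paper leaves implicit.
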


\begin{proof}
Let \(\Gamma_A = (A, \alpha) \) and \(\Gamma_B = (B, \beta)\) be two non-negative, $k$-monotone games and let \(\Gamma_C = \Gamma_A \circ_i \Gamma_B\) be their composite along a player \(i \in A\). Let us show that their composite is $k$-monotone. We work by induction of $k \geq 2$. For $k = 2$, this is Theorem \ref{thm: opérade des convexes}. Assume $(k-1)$-monotonicity is preserved, let us show that $k$-monotonicity is also preserved. Let $K$ be a subset of size $k-1$ and let $j$ be in $A \diamond_i B \setminus K$. We want to show that $\partial_{K \cup \{j\}}(\alpha \circ_i \beta)$ is non-negative. Using Proposition \ref{Proposition: dérivée itérée}, the only non-trivial case is when both $K$ and $j$ are in $A \setminus \{i\}$, in which case we apply the same type of arguments as in Theorem \ref{proposition: capacities are stable under composition} and Theorem \ref{thm: opérade des convexes}. 
\end{proof}

As a direct corollary, we get that \textit{belief functions}, which correspond to totally monotone games in the Dempster--Shafer theory of evidence \cite{dempster1967upper, shafer1976mathematical, kohlas2013mathematical}, are also also stable under the partial compositions and thus form a (sub)operad. 

\begin{corollary}\label{cor: totally monotone}
Non-negative, totally monotone (also known as $\infty$-monotone) games are stable under composition and thus form a suboperad.
\end{corollary}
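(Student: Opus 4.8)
The plan is to deduce the statement directly from Theorem \ref{thm: opérade des k-monotones}, by observing that total monotonicity is precisely the conjunction of the $k$-monotonicity conditions over all $k \geq 2$, each of which is individually preserved under the partial compositions.

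First I would recall that, by definition, a non-negative game is $\infty$-monotone if and only if it is $k$-monotone for every $k \geq 2$. Denoting by $\mathcal{M}_k$ the class of non-negative, $k$-monotone games, the class of non-negative, totally monotone games is therefore exactly the intersection $\bigcap_{k \geq 2} \mathcal{M}_k$.

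Next, let $\Gamma_A = (A, \alpha)$ and $\Gamma_B = (B, \beta)$ be two non-negative, totally monotone games and let $i \in A$. For every fixed $k \geq 2$, both games are in particular $k$-monotone, so Theorem \ref{thm: opérade des k-monotones} guarantees that their composite $\Gamma_A \circ_i \Gamma_B$ is again non-negative and $k$-monotone. As $k \geq 2$ was arbitrary, the composite lies in every $\mathcal{M}_k$, hence in their intersection, and is thus non-negative and totally monotone.

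Finally, I would note that the unit game is trivially totally monotone, and that an intersection of suboperads is again a suboperad, so that compatibility with the symmetric group actions and the unit axioms are all inherited from the ambient families $\mathcal{M}_k$. There is essentially no obstacle in this argument; the only point worth emphasizing is that it works despite the intersection being over infinitely many values of $k$, because stability under composition is checked one partial composite at a time, and each such composite is shown to satisfy all the $k$-monotonicity inequalities simultaneously.
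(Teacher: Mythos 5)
Your argument is correct and is essentially the paper's own proof: the corollary is deduced directly from Theorem \ref{thm: opérade des k-monotones} by observing that total monotonicity is exactly $k$-monotonicity for all $k \geq 2$, so the composite inherits each of these conditions simultaneously. The extra remarks about the unit and the symmetric group actions are harmless elaborations of the same reduction.
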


\begin{proof}
Follows from Theorem \ref{thm: opérade des k-monotones}, since totally monotone are $k$-monotone games for all $k \geq 2$.
\end{proof}

\begin{remark}
This result also follows from the description of the partial composition in the basis of unanimity games given in Subsection \ref{subsection: unanimity basis}. Indeed, a non-negative game is totally monotone if and only if its Möbius transform has non-negative coefficients (see \cite[Equation~(2.1)]{shafer1976mathematical}). 
\end{remark}

\subsection{The dual case of submodular games and $k$-alternating games} The convexity condition on a game (also called supermodularity) is sent, under the dual game construction of Definition \ref{definition: dual game}, to a condition called submodularity. The family of $k$-alternating games for $k \geq 2$ generalizes this submodularity condition. We show that non-negative monotone $k$-alternating games are stable under partial compositions, and thus form a suboperad of the operad of capacities. These operads are the duals of the operads of non-negative $k$-monotone games constructed in the previous subsection. It also follows that $\infty$-alternating functions also form a suboperad. These games are known as a \textit{plausibility measures} in the Dempster--Shafer theory of evidence, see \cite{shafer1976mathematical, Hohle87} for more details. 

\begin{definition}[$k$-alternating games]
Let $\Gamma = (N,v)$ be a game let $k \geq 2$. The game is $k$\textit{-alternating} if for any family of $k$ sets $A_1, \cdots, A_k$, the following inequality holds: 
\[
v\left(\bigcup_{i=1}^k A_i \right) \leq \sum_{\substack{I \subseteq \{1,\cdots,k\}\\ I \neq \emptyset}} (-1)^{|I|+1} v\left(\bigcap_{i\in I} A_i\right)~. 
\]
It is $\infty$-alternating if this holds for all $k \geq 2$. 
\end{definition}

\begin{remark}
A game $(N,v)$ is $2$-alternating if for all $S,T \subseteq N$, the following inequality holds
\[
v(S) + v(T) \geq  v(S \cap T) + v(S \cup T)~. 
\]
This type of functions are also known as \textit{submodular functions} (or submodular games). 
\end{remark}

We are going to show that $k$-alternating capicities also define suboperads, for any $k \geq 2$. The main ingredient is the duality that relates $k$-alternating and $k$-monotone games. 

\begin{theorem}[{\cite[Theorem~2.20, Point ii)]{Grabisch}}]\label{thm: duality between k-monotone and k-alternating}
Let $\Gamma = (N,v)$ be a game. It is $k$-monotone (resp. $k$-alternating) if and only if its dual $(N,v^*)$ is $k$-alternating (resp. $k$-monotone). 
\end{theorem}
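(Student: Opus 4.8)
The plan is to derive the statement directly from De Morgan's laws and the defining identity $v^*(S) = v(N) - v(N \setminus S)$ of Definition \ref{definition: dual game}, without computing either side explicitly. Since duality is an involution, that is $v^{**} = v$, it is enough to establish the single equivalence that $v$ is $k$-monotone if and only if $v^*$ is $k$-alternating; the parenthetical (``resp.'') statement then follows by applying this equivalence to the game $v^*$ in place of $v$.

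First I would fix an arbitrary family of subsets $A_1, \dots, A_k \subseteq N$ and put $B_i := N \setminus A_i$. De Morgan's laws give
\[
N \setminus \bigcup_{i \in I} A_i = \bigcap_{i \in I} B_i \qquad \text{and} \qquad N \setminus \bigcap_{i \in I} A_i = \bigcup_{i \in I} B_i
\]
for every nonempty $I \subseteq \{1, \dots, k\}$. I would then write the $k$-alternating inequality for $v^*$ evaluated on $A_1, \dots, A_k$ and replace every occurrence of $v^*$ by $v(N) - v(N \setminus \,\cdot\,)$; by the two displayed identities each term becomes $v(N)$ minus a value of $v$ on a union or intersection of the $B_i$, with the roles of union and intersection exchanged relative to the original inequality.

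The decisive point is the elementary identity $\sum_{\emptyset \neq I \subseteq \{1,\dots,k\}} (-1)^{\lvert I \rvert+1} = 1$, which is just $(1-1)^k = 0$ rewritten. It gathers all the stray $v(N)$ contributions produced by the substitution into a single copy of $v(N)$ on each side, and these cancel. What remains, after one sign reversal, is precisely the $k$-monotonicity inequality for $v$ on the family $B_1, \dots, B_k$. Since the assignment $A_i \mapsto N \setminus A_i$ is a bijection on $k$-tuples of subsets of $N$, ranging over all choices of the $A_i$ is the same as ranging over all choices of the $B_i$; hence the $k$-alternating inequalities for $v^*$ hold for every family exactly when the $k$-monotonicity inequalities for $v$ do. This proves the equivalence for each finite $k \geq 2$, and taking the conjunction over all $k$ disposes of the $\infty$-monotone/$\infty$-alternating case.

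The only real work, and the step I expect to be the main source of errors, is the bookkeeping: one must track carefully how De Morgan exchanges the outer union with the outer intersection (and the inner intersections with the inner unions) inside the alternating sum, and check that after the $v(N)$-cancellation the surviving inequality is genuinely the $k$-monotone inequality rather than a malformed variant. As a cross-check, one can run the purely algebraic route instead: the cited characterization of $k$-monotonicity via iterated derivatives, combined with the inductive extension of part (1) of Lemma \ref{lemma: duality and derivatives} to $\partial_K v^*(S) = (-1)^{\lvert K \rvert + 1}\,\partial_K v\big(N \setminus (S \cup K)\big)$, reduces the claim to matching the signs of the iterated derivatives of $v$ and $v^*$; I would expect the De Morgan argument to be the shorter and more transparent of the two.
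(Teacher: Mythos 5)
The paper offers no proof of this statement: it is imported verbatim from Grabisch's textbook, so there is nothing in the text to compare your argument against. Your De Morgan substitution is precisely the standard proof of the cited result, and its skeleton is sound: the reduction to a single implication via $v^{**}=v$, the identity $\sum_{\emptyset\neq I\subseteq\{1,\dots,k\}}(-1)^{\lvert I\rvert+1}=1$ that collects the stray $v(N)$ terms, and the observation that $A_i\mapsto N\setminus A_i$ is a bijection on $k$-tuples are all correct. The alternative route you sketch through iterated derivatives, using $\partial_K v^*(S)=(-1)^{\lvert K\rvert+1}\partial_K v\big(N\setminus(S\cup K)\big)$, is also viable.

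However, the step you yourself flagged as the likely source of error is exactly where the argument, read against the paper's printed definitions, breaks. Starting from the paper's $k$-alternating inequality for $v^*$ (which, as printed, has $v^*\big(\bigcup_i A_i\big)$ on the left and intersections on the right) and substituting $v^*(S)=v(N)-v(N\setminus S)$, the surviving inequality is
\[
v\Big(\bigcap_{i=1}^k B_i\Big)\;\geq\;\sum_{\emptyset\neq I\subseteq\{1,\dots,k\}}(-1)^{\lvert I\rvert+1}\,v\Big(\bigcup_{i\in I}B_i\Big),
\]
with the intersection on the left and unions on the right. This is \emph{not} ``precisely the $k$-monotonicity inequality for $v$ on $B_1,\dots,B_k$'': the two families of inequalities coincide for $k=2$ but diverge for $k\geq 3$ (specializing $B_i=S\cup\{x_i\}$ in the display forces $\partial_{x_1x_2x_3}v\leq 0$, whereas $3$-monotonicity forces $\partial_{x_1x_2x_3}v\geq 0$). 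The resolution is that the theorem, and your proof, are correct for the \emph{standard} definition of $k$-alternating, namely $v\big(\bigcap_{i=1}^k A_i\big)\leq\sum_{\emptyset\neq I}(-1)^{\lvert I\rvert+1}v\big(\bigcup_{i\in I}A_i\big)$, in which $\cup$ and $\cap$ are interchanged relative to the $k$-monotone inequality rather than only the inequality sign being reversed; with that left-hand side, De Morgan turns it into exactly the $k$-monotone inequality for $v$. With the definition as literally printed in the paper the statement is in fact false for $k\geq 3$: the unanimity game $u_{\{a,b,c\}}$ is $3$-monotone, yet its dual satisfies $\partial_{abc}u^*(\emptyset)=1>0$ and so violates the printed $3$-alternating condition. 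So your proof is the right one, but you should make the left-hand sides explicit and note that you are (correctly) using the standard form of the $k$-alternating condition, not the one displayed in the paper's definition.
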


\begin{theorem}\label{thm: operad of infinity alternating}
For any $k \geq 2$, $k$-alternating, non-negative monotone games are stable under composition and thus form a suboperads.
\end{theorem}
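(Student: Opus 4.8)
The plan is to avoid a direct computation with the $k$-alternating inequalities and instead transport everything to the already-settled case of $k$-monotone games by conjugating with the duality involution of Definition \ref{definition: dual game}. The three ingredients I would assemble are: the duality exchange between $k$-monotonicity and $k$-alternation (Theorem \ref{thm: duality between k-monotone and k-alternating}), the fact that $k$-monotone non-negative games form a suboperad (Theorem \ref{thm: opérade des k-monotones}), and the compatibility of the partial composition with duality, namely $(\Gamma_A \circ_i \Gamma_B)^* = \Gamma_A^* \circ_i \Gamma_B^*$ (Proposition \ref{prop: compatibility of the composition and duality}).

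Concretely, let $\Gamma_A = (A,\alpha)$ and $\Gamma_B = (B,\beta)$ be two non-negative, monotone, $k$-alternating games, and fix $i \in A$. First I would pass to the duals: since $\Gamma_A$ and $\Gamma_B$ are capacities (non-negative and monotone), their duals $\Gamma_A^*$ and $\Gamma_B^*$ are again capacities by Proposition \ref{prop: the dual of a capacity is a capacity}; and since $\Gamma_A,\Gamma_B$ are $k$-alternating, Theorem \ref{thm: duality between k-monotone and k-alternating} shows that $\Gamma_A^*$ and $\Gamma_B^*$ are $k$-monotone. Applying Theorem \ref{thm: opérade des k-monotones} to this pair yields that the composite $\Gamma_A^* \circ_i \Gamma_B^*$ is a non-negative $k$-monotone game. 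By Proposition \ref{prop: compatibility of the composition and duality} this composite is exactly $(\Gamma_A \circ_i \Gamma_B)^*$, so $(\Gamma_A \circ_i \Gamma_B)^*$ is $k$-monotone. Invoking Theorem \ref{thm: duality between k-monotone and k-alternating} once more, together with the fact that duality is an involution, I conclude that $\Gamma_A \circ_i \Gamma_B = \big((\Gamma_A \circ_i \Gamma_B)^*\big)^*$ is $k$-alternating. Finally, non-negativity and monotonicity of the composite follow directly from Theorem \ref{proposition: capacities are stable under composition}, since capacities are stable under composition; hence $\Gamma_A \circ_i \Gamma_B$ is a non-negative monotone $k$-alternating game, which is what we wanted.

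I do not expect a genuine obstacle here, as the argument is purely formal once the duality theorem and the $k$-monotone suboperad result are in hand; the only point requiring a little care is to verify that the two games to which Theorem \ref{thm: opérade des k-monotones} is applied genuinely satisfy its hypotheses, i.e.\ that the duals remain non-negative (which is precisely the content of Proposition \ref{prop: the dual of a capacity is a capacity}, and is why the monotonicity hypothesis cannot be dropped). The same conjugation-by-duality argument, applied with $\infty$-monotone games in place of $k$-monotone ones, then immediately gives as a corollary that non-negative monotone $\infty$-alternating games (the plausibility measures) form a suboperad.
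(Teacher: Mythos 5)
Your proposal is correct and follows essentially the same route as the paper: the paper likewise proves this by dualizing, observing that $(\Gamma_A \circ_i \Gamma_B)^* = \Gamma_A^* \circ_i \Gamma_B^*$ is a non-negative $k$-monotone capacity by Theorem \ref{proposition: capacities are stable under composition}, Proposition \ref{prop: the dual of a capacity is a capacity} and Theorem \ref{thm: opérade des k-monotones}, and then dualizing back via Theorem \ref{thm: duality between k-monotone and k-alternating}. Your write-up merely makes explicit the appeal to Proposition \ref{prop: compatibility of the composition and duality}, which the paper's proof uses implicitly.
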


\begin{proof}
Let \(\Gamma_A = (A, \alpha) \) and \(\Gamma_B = (B, \beta)\) be two non-negative monotone $k$-alternating games and let \(\Gamma_C = \Gamma_A \circ_i \Gamma_B\) be their composite along a player \(i \in A\). By Theorem \ref{proposition: capacities are stable under composition} and Proposition \ref{prop: the dual of a capacity is a capacity}, the composite $\Gamma_C^*$ is again a capacity. By Theorem \ref{thm: opérade des k-monotones}, it is also $k$-monotone. Therefore $\Gamma_C$, as the dual of $\Gamma_C^*$, is again a capacity and by Theorem \ref{thm: duality between k-monotone and k-alternating}, it is $k$-alternating. 
\end{proof}

\begin{corollary}
Non-negative monotone $\infty$-alternating games are stable under composition and thus form a suboperad.
\end{corollary}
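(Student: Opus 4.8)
The plan is to derive this as an immediate consequence of Theorem \ref{thm: operad of infinity alternating}, exactly mirroring the way Corollary \ref{cor: totally monotone} is deduced from Theorem \ref{thm: opérade des k-monotones} on the $k$-monotone side. The key observation is definitional: a game is $\infty$-alternating precisely when it is $k$-alternating for \emph{every} $k \geq 2$, so the class of non-negative monotone $\infty$-alternating games is the intersection $\bigcap_{k \geq 2} \mathcal{A}_k$, where $\mathcal{A}_k$ denotes the (sub)operad of non-negative monotone $k$-alternating games. Since an intersection of suboperads sharing the same units and composition maps is again a suboperad, it suffices to invoke Theorem \ref{thm: operad of infinity alternating} for each finite $k$.

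Concretely, I would take two non-negative monotone $\infty$-alternating games $\Gamma_A = (A,\alpha)$ and $\Gamma_B = (B,\beta)$ together with a player $i \in A$, and fix an arbitrary $k \geq 2$. By definition both games are in particular $k$-alternating, so Theorem \ref{thm: operad of infinity alternating} guarantees that the composite $\Gamma_A \circ_i \Gamma_B$ is a non-negative monotone $k$-alternating game. As $k \geq 2$ was arbitrary, the composite is $k$-alternating for all $k$, hence $\infty$-alternating; it remains non-negative and monotone for the same reason. The unit $\mathbf{1}$ is trivially $\infty$-alternating, and the operad axioms are inherited from the ambient operad $\mathbb{G}$, so this class forms a suboperad.

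There is essentially no obstacle here, since all the analytic work was carried out in the earlier theorems; the content is purely the stability of the intersection. For completeness I would also remark that the same conclusion follows by duality: by Theorem \ref{thm: duality between k-monotone and k-alternating} the $\infty$-alternating games are exactly the duals of the $\infty$-monotone games (the belief functions of Corollary \ref{cor: totally monotone}), and since the dual of a capacity is a capacity (Proposition \ref{prop: the dual of a capacity is a capacity}) and duality commutes with the operadic composition (Proposition \ref{prop: compatibility of the composition and duality}), the suboperad structure of Corollary \ref{cor: totally monotone} transports to the $\infty$-alternating side. This duality argument is the direct analogue of the proof of Theorem \ref{thm: operad of infinity alternating} itself, now applied in the limiting case $k = \infty$.
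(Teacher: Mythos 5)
Your proof is correct and matches the paper's own argument, which likewise deduces the corollary from Theorem \ref{thm: operad of infinity alternating} by observing that $\infty$-alternating games are exactly those that are $k$-alternating for every $k \geq 2$. The additional duality remark is a valid alternative but not needed.
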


\begin{proof}
Follows from Theorem \ref{thm: operad of infinity alternating}, since $\infty$-alternating games are $k$-alternating games for all $k \geq 2$.    
\end{proof} 

\subsection{Balanced games}
Balanced games are games who are well behaved when one evaluates them at any balanced collection. By the famous Bondareva--Shapley theorem, they are precisely the games whose core is non-empty. We show that they are stable under partial compositions and thus form a suboperad. Therefore the core of a compound game is non-empty if the core of the two components are non-empty. We refer to Subsection \ref{Subsection: the core of a composite game} for a more detailed study of the core of a compound game, in particular to Theorem \ref{thm: inclusion of the tensor i of the cores} which gives another proof of the result of this subsection.

\begin{definition}[Balanced collections]
    A collection of coalitions \(\calB \subseteq \mathcal{P}(N)\) is \emph{balanced} over \(N\) if there exists a map \(\lambda: \calB \to \hspace{2pt} ]0, 1]\), called \textit{its weights}, such that 
    \[
    \sum_{S \in \calB} \lambda_S \mathbf{1}^S = \mathbf{1}^N.
    \]
\end{definition}

\begin{definition}[Balanced game]
A game \(\Gamma = (N, v)\) is \emph{balanced} if, for all balanced collections \(\calB\) over \(N\), the following inequality holds
    \[
    \sum_{S \in \calB} \lambda_S v(S) \leq v(N). 
    \]
\end{definition}

\begin{theorem}[Bondareva--Shapley]\label{thm: bondareva-shapley}
A game \(\Gamma = (N, v)\) is balanced if its core $C(\Gamma)$ is non-empty. 
\end{theorem}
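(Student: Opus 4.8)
The plan is to prove this via linear programming duality, which in fact yields the sharper statement that a game is balanced \emph{if and only if} its core is non-empty (the "if and only if" being the content of the Bondareva--Shapley theorem proper). First I would dispatch the easy implication, which is exactly the one stated. Suppose $C(\Gamma) \neq \emptyset$ and fix $x \in C(\Gamma)$. Let $\calB$ be any balanced collection with weights $\lambda \colon \calB \to \,]0,1]$ satisfying $\sum_{S \in \calB} \lambda_S \mathbf{1}^S = \mathbf{1}^N$. Since $x(S) \geq v(S)$ for every $S$ and each $\lambda_S > 0$, I would compute
\[
\sum_{S \in \calB} \lambda_S v(S) \leq \sum_{S \in \calB} \lambda_S x(S) = \sum_{S \in \calB} \lambda_S \sum_{i \in S} x_i = \sum_{i \in N} x_i \sum_{\substack{S \in \calB \\ S \ni i}} \lambda_S = \sum_{i \in N} x_i = x(N) = v(N),
\]
where the interchange of summations is followed by evaluating the balancedness identity coordinate by coordinate, and the final equalities use that $x$ is a preimputation. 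This is a one-line manipulation and establishes that $\Gamma$ is balanced.

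For the converse I would encode non-emptiness of the core as an optimization problem and invoke strong duality. Consider the primal linear program
\[
p^\star = \min_{x \in \mathbb{R}^N} x(N) \quad \text{subject to} \quad x(S) \geq v(S) \ \text{ for all } \emptyset \neq S \subseteq N .
\]
Its feasible region is non-empty (take all coordinates sufficiently large) and the objective is bounded below by the constraint indexed by $S = N$, so an optimum is attained with $p^\star \geq v(N)$; moreover $C(\Gamma) \neq \emptyset$ precisely when $p^\star = v(N)$, in which case an optimal $x$ is a preimputation lying in the core. The dual program reads
\[
d^\star = \max_{y \geq 0} \sum_{\emptyset \neq S \subseteq N} y_S\, v(S) \quad \text{subject to} \quad \sum_{S \ni i} y_S = 1 \ \text{ for all } i \in N ,
\]
and the dual constraints say exactly that the support of $y$, weighted by $y$, is a balanced collection over $N$ (the normalization $\sum_{S \ni i} y_S = 1$ forces $0 \leq y_S \leq 1$, matching the range $]0,1]$). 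Strong duality gives $p^\star = d^\star$, whence
\[
C(\Gamma) \neq \emptyset \iff p^\star = v(N) \iff d^\star \leq v(N) \iff \Gamma \text{ is balanced}.
\]

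The main obstacle is the converse direction: producing a core point out of the mere balancedness inequalities is genuinely non-constructive and rests on strong LP duality (equivalently, on a Farkas-type separating-hyperplane argument). The forward direction needs no such machinery. The technical care in the converse lies in (i) verifying that the primal optimum is attained and satisfies $p^\star \geq v(N)$, so that the equality $p^\star = v(N)$ really delivers a preimputation in the core rather than merely a feasible point, and (ii) the bookkeeping identifying dual feasible points with balanced collections, in particular checking that the constraint $\sum_{S \ni i} y_S = 1$ simultaneously encodes $\sum_{S} y_S \mathbf{1}^S = \mathbf{1}^N$ and automatically enforces the upper bound $y_S \leq 1$.
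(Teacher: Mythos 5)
The paper does not prove this statement: it records the Bondareva--Shapley theorem as a classical result and cites the original sources, so there is no internal proof to compare against. Your argument for the implication actually asserted (core non-empty $\Rightarrow$ balanced) is complete and correct: fixing $x \in C(\Gamma)$, bounding $\lambda_S v(S)$ by $\lambda_S x(S)$, swapping the order of summation, and using $\sum_{S \ni i} \lambda_S = 1$ coordinatewise is exactly the standard one-line computation, and it uses nothing beyond the definitions in the paper. Your sketch of the converse via strong LP duality is likewise the classical Bondareva--Shapley argument and is sound, including the two points of care you flag (attainment of the primal optimum with $p^\star \geq v(N)$, and the identification of dual feasible vectors, restricted to their support, with balanced collections); note only that this direction is not needed for the statement as it is phrased here, and that the paper's later results (e.g.\ Theorem \ref{thm: inclusion of the tensor i of the cores}) implicitly rely on the full equivalence you establish.
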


\begin{theorem}\label{prop: balanced}
    Balanced, non-negative monotone games are stable under composition and form a suboperad. 
\end{theorem}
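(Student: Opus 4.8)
The plan is to reduce balancedness to the non-emptiness of the core via the Bondareva--Shapley theorem (Theorem \ref{thm: bondareva-shapley}), and then to exhibit a point in the core of the composite using the partial tensor product construction of Subsection \ref{subsection: partial tensor product}. The point is that the ``balanced'' half of the hypothesis is really a statement about cores, and cores of composites have already been controlled.

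First I would record that the class under consideration sits inside the class of capacities: a balanced, non-negative monotone game is in particular non-negative and monotone. Hence, by Theorem \ref{proposition: capacities are stable under composition}, the composite $\Gamma_A \circ_i \Gamma_B$ of two such games is again non-negative and monotone. It therefore only remains to verify that the composite is balanced, after which stability under the partial compositions of $\mathbb{G}$ (and hence the induced suboperad structure) follows immediately, the composition maps being the restrictions of those of $\mathbb{G}$.

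For the balancedness, let $\Gamma_A = (A,\alpha)$ and $\Gamma_B = (B,\beta)$ be two balanced, non-negative monotone games and fix $i \in A$. By the Bondareva--Shapley theorem, since both games are balanced their cores $C(\Gamma_A)$ and $C(\Gamma_B)$ are non-empty, so I may choose $x \in C(\Gamma_A)$ and $y \in C(\Gamma_B)$. As both games are in particular non-negative, Theorem \ref{thm: inclusion of the tensor i of the cores} applies and guarantees that the partial tensor product $x \otimes_i y$ lies in the core $C(\Gamma_A \circ_i \Gamma_B)$ of the composite. In particular this core is non-empty, and applying Theorem \ref{thm: bondareva-shapley} once more I conclude that $\Gamma_A \circ_i \Gamma_B$ is balanced, as desired.

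The argument is short precisely because the genuine work is packaged into Theorem \ref{thm: inclusion of the tensor i of the cores}: the only non-formal step is the assertion that the partial tensor product of two core elements lands in the core of the composite, and that is where the explicit composition formula of Definition \ref{definition: partial composition of games} and the defining inequalities of the core must be reconciled. I therefore expect the main obstacle to lie not in the present statement but in that referenced theorem; here the only care needed is to confirm that the hypotheses required to invoke it (non-negativity of the components) are satisfied and to use the Bondareva--Shapley equivalence in both directions.
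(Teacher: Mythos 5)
Your proof is correct, but it is a genuinely different argument from the one the paper gives for Theorem \ref{prop: balanced}. The paper proves balancedness of $\Gamma_A \circ_i \Gamma_B$ directly: it takes an arbitrary balanced collection $\calB$ on $A \diamond_i B$ with weights $\lambda$, bounds $\sum_{S \in \calB} \lambda_S \gamma(S)$ using $\beta(S_B) \leq \beta(B)$ and the identity $\alpha(S_A) + \partial_i \alpha(S_A) = \alpha(S_A \cup \{i\})$, and collapses $\calB$ to a balanced collection $\calB_A$ on $A$ to which balancedness of $\Gamma_A$ applies. You instead route everything through the Bondareva--Shapley equivalence and the core-inclusion map of Theorem \ref{thm: inclusion of the tensor i of the cores}. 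This is legitimate: there is no circularity, since the proof of Theorem \ref{thm: inclusion of the tensor i of the cores} relies only on Lemma \ref{lemma: inclusion of preimputations} and a direct computation, not on Theorem \ref{prop: balanced}; and the paper itself remarks, just before stating the balancedness result, that Theorem \ref{thm: inclusion of the tensor i of the cores} ``gives another proof of the result of this subsection.'' You are also right to dispose of non-negativity and monotonicity of the composite via Theorem \ref{proposition: capacities are stable under composition} first. The trade-off is the usual one: your argument is shorter and more conceptual but invokes Bondareva--Shapley (essentially an LP-duality existence statement) in both directions and forward-references a result proved later in the paper, whereas the paper's computation is elementary, self-contained within its section, and yields the extra combinatorial information of how balanced collections on the composite player set project to balanced collections on $A$. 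One small caution: the paper's formal statement of Theorem \ref{thm: bondareva-shapley} is phrased as only one implication (``balanced if its core is non-empty''), though the surrounding text makes clear the intended ``precisely those''; your proof needs both directions, so you are implicitly relying on the full equivalence rather than the literal statement as printed.
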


\begin{proof}
    Let \(\Gamma_A = (A, \alpha) \) and \(\Gamma_B = (B, \beta)\) be two non-negative monotone balanced games. Let \(i \in A\) and let \(\Gamma_C = (C, \gamma)\) be the compound game defined by \(\Gamma_C = \Gamma_A \circ_i \Gamma_B\). By Theorem \ref{proposition: capacities are stable under composition}, it is again non-negative and monotone. Let us check that it is balanced.

    \medskip
    
    Let \(\calB\) be a balanced collection over \(C\) together with a compatible weight \(\lambda\). We have 
    \[ \begin{aligned} 
    \sum_{S \in \calB} \lambda_S \gamma(S) & = \sum_{S \in \calB} \lambda_S \left( \beta(B) \alpha(S_A) + \partial_i \alpha(S_A) \beta(S_B) \right) \\
    & = \beta(B) \sum_{S \in \calB} \lambda_S \alpha(S_A) + \sum_{S \in \calB} \lambda_S \partial_i \alpha(S_A) \beta(S_B). 
    \end{aligned} \]
    Since \(\Gamma_B\) is balanced and \(\{S_B, B \setminus S_B\}\) is a balanced collection, we have that 
    \[
    \beta(S_B) + \beta(B \setminus S_B) \leq \beta(B). 
    \]
    And since \(\Gamma_B\) is non-negative, we finally have that \(\beta(S_B) \leq \beta(B)\), implying that
    \[ \begin{aligned} 
    \sum_{S \in \calB} \lambda_S \gamma(S) & \leq \beta(B) \sum_{S \in \calB} \lambda_S \alpha(S_A) + \sum_{S \in \calB} \lambda_S \partial_i \alpha(S_A) \beta(B) \\
    & = \beta(B) \Big[ \sum_{\substack{S \in \calB \\ S_B = \emptyset}} \lambda_S \alpha(S_A) + \sum_{\substack{S \in \calB \\ S_B \neq \emptyset}} \lambda_S \left( \alpha(S_A) + \partial_i \alpha(S_A) \right) \Big] 
    \end{aligned} \]
    From previous computations, we know that \(\alpha(S_A) + \partial_i \alpha(S_A) = \alpha(S_A \cup \{i\}) \), therefore
    \[
    \sum_{S \in \calB} \lambda_S \gamma(S) \leq \beta(B) \sum_{S \in \calB_A} \lambda_S \alpha(S), 
    \]
    with \(\calB_A\) is obtained by replacing \(S_B\), if nonempty, by \(\{i\}\) in every coalition of \(\calB\). Because the condition of balancedness expresses itself player by player, the collection \(\calB_A\) is balanced on \(A\). Now, by balancedness of \(\Gamma_A\), we have 
    \[
    \sum_{S \in \calB} \lambda_S \gamma(S) \leq \beta(B) \sum_{S \in \calB_A} \lambda_S \alpha(S) \leq \beta(B) \alpha(A). 
    \]
We conclude using the fact that \(\gamma(C) = \beta(B) \alpha(A)\).  
\end{proof}

\begin{remark}[About totally balanced games]\label{rmk: totally balanced games}
Totally balanced games are those whose restrictions to subcoalitions are balanced for every subcoalition. They appear in combinatorial optimization problems~\cite{curiel1988cooperative}, such as linear programming games~\cite{debreu1963limit, owen1975core}, flow games~\cite{kalai1982totally}, market games~\cite{shapley1969market}, assignment games~\cite{gale1962college, shapley1971assignment} and permutation games~\cite{shapley1974cores}. 

\medskip

Using the same proof strategy as in Theorem \ref{prop: balanced}, it is possible to show that if $\Gamma_A = (A, \alpha)$ is a non-negative game whose derivative $\partial_i \alpha$ is totally balanced (in an appropriate sense) and $\Gamma_B = (B,\beta)$ is a non-negative totally balanced game, their composite $\Gamma_A \circ_i \Gamma_B$ is again a totally balanced game. This happens, for instance, when $\Gamma_A$ is a non-negative $3$-monotone game. Therefore partial compositions endow the subspecies of non-negative totally balanced games with a left module structure over the suboperad of non-negative $3$-monotone game. Since this situation is more involved than with the other suboperads, it is beyond the scope of the present paper and will be the subject of future research. 
\end{remark}

\vspace{1.5pc}

\section{The aggregation of solutions for cooperative games}\label{Section: aggregation of solutions}

\vspace{2pc}

In this section, we study how solution concepts such as the core, the Shapley value and the Banzhaf index behave with respect to the operadic composition. We start by showing that, using the partial tensor product of Subsection \ref{subsection: partial tensor product}, we can construct (pre)imputations of a composite game from (pre)imputations of its components. Moreover, we show that under fairly light hypothesis, this map is surjective on imputations and thus any imputation of the compound game is a partial tensor product of imputations of its components. We study the core of a compound game, which lies in the simplex of imputations. We show that the partial tensor product of two elements in the core of the components lies in the core of the composite, and we give conditions under which this map is injective and also when it is close to being surjective. Finally, we give an explicit formula for the Shapley value of a composite game and show that the Banzhaf index of a composite game can be described as the partial tensor product of the Banzhaf index of the quotient and of the Shapley value of the component. 

\subsection{The (pre)imputations of a composite game}
Let \(\Gamma = (N, v)\) be a game. We consider the vector space $\mathbb{R}^N$ spanned by the player set of the game. For a given vector $x = (x_1, \cdots, x_n)$, we view the value $x_i$ as the gain of the $i$-th player in $N$, and the vector space $\mathbb{R}^N$ as the space of all allocations.

\medskip

\textbf{Preimputations.} Usually, all solution concepts of cooperative game theory live in the affine subspace of \emph{preimputations} of the game $\Gamma$, which is defined by
\[
X(\Gamma) = \{x \in \mathbb{R}^N \mid x(N) = v(N)\}. 
\]
One can think of vectors in this affine hyperplane as possible ways to share the value $v(N)$ of the grand coalition of the game among the players. 
The core, the stable sets, the Banzhaf and Shapley values all lie in this affine subspace. 

\medskip

\textbf{Imputations.} The simplex of imputations \(I(\Gamma)\) is defined as 
\[
I(\Gamma) \coloneqq \{x \in X(\Gamma) \mid x_i \geq v(\{i\})\}. 
\]
Thus vectors in $I(\Gamma)$ are all the possible ways to share the value of the grand coalition among the players in which each player, individually, is better off than if it stayed alone. When a game is normalized and the value of the singletons are all $0$, the set of imputations is simply the standard \((n-1)\)-dimensional simplex, i.e., the convex combination of the canonical basis of \(\mathbb{R}^N\). 

\medskip

Let $\Gamma = (N,v)$ be a non-negative game. We define the \textit{cooperative surplus} as 
\[
v_{\mathrm{sp}} \coloneqq v(N) - \sum_{i \in N} v(\{i\})~. 
\]
In this case, the vertices of $I(\Gamma)$ are given by 
\[
v^{(i)} \coloneqq v_{\mathrm{sp}}\mathbf{1}^{\{i\}} + \sum_{j \in N} v(\{j\})\mathbf{1}^{\{j\}}~, 
\]
and $I(\Gamma)$ is the convex hull of these vectors. We show that the partial tensor product constructed in Subsection \ref{subsection: partial tensor product} allows us to construct (pre)imputations of the composite of two games. 

\begin{lemma}\label{lemma: inclusion of preimputations}
    Let \(\Gamma_A = (A, \alpha)\) and \(\Gamma_B = (B, \beta)\) be two games, and let \(i \in A\). The partial tensor product restricts to a well defined map
    \[
    (- \otimes_i -): X(\Gamma_A) \times X(\Gamma_B) \longrightarrow X(\Gamma_A \circ_i \Gamma_B)~.
    \]
\end{lemma}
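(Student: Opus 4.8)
The plan is to verify directly the single linear equation that defines a preimputation, by computing both sides explicitly. Recall that for $x \in \mathbb{R}^A$ and $y \in \mathbb{R}^B$, writing $\eta = \sum_l y_l = y(B)$, the vector $x \otimes_i y$ has coordinate $\eta x_j$ at each player $j \in A \setminus \{i\}$ and coordinate $x_i y_l$ at each player $l \in B$. To show that $x \otimes_i y$ lies in $X(\Gamma_A \circ_i \Gamma_B)$, I must check that the sum of all its coordinates equals the value of the composite game $\alpha \circ_i \beta$ on its grand coalition $A \diamond_i B$.

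First I would compute the total value of $x \otimes_i y$. Summing its coordinates gives
\[
\sum_{j \in A \setminus \{i\}} \eta x_j + x_i \sum_{l \in B} y_l = \eta \sum_{j \in A \setminus \{i\}} x_j + x_i \eta = \eta \, x(A),
\]
where the last equality reincorporates the $i$-th term, using that $\sum_l y_l = \eta$. Since $x \in X(\Gamma_A)$ means $x(A) = \alpha(A)$ and $y \in X(\Gamma_B)$ means $\eta = y(B) = \beta(B)$, the total value of $x \otimes_i y$ equals $\beta(B)\alpha(A)$.

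Next I would evaluate the coalition function of the composite on the full player set $C = A \diamond_i B$. Taking $S = C$, one has $S_A = C \cap A = A \setminus \{i\}$ and $S_B = C \cap B = B$, so Definition \ref{definition: partial composition of games} yields
\[
(\alpha \circ_i \beta)(C) = \beta(B)\,\alpha(A \setminus \{i\}) + \partial_i \alpha(A \setminus \{i\})\, \beta(B).
\]
Since $\partial_i \alpha(A \setminus \{i\}) = \alpha(A) - \alpha(A \setminus \{i\})$, the two terms telescope and give $(\alpha \circ_i \beta)(C) = \beta(B)\alpha(A)$. Comparing with the previous paragraph, the total value of $x \otimes_i y$ matches $(\alpha \circ_i \beta)(C)$ exactly, which is precisely the preimputation condition, concluding the argument.

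This proof presents no genuine obstacle; it is a direct computation. The only points that require a little care are tracking the normalization factor $\eta = \beta(B)$ through the definition of the partial tensor product, and recognizing the telescoping of the derivative term when the composite game is evaluated on its grand coalition. One should also note that, because $S \subseteq C = (A \setminus \{i\}) \cup B$, the intersection $S_A = S \cap A$ never contains the player $i$, so that the formula of Definition \ref{definition: partial composition of games} is being applied consistently throughout.
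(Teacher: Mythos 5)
Your proof is correct and follows essentially the same route as the paper: sum the coordinates of $x \otimes_i y$, use the preimputation hypotheses to identify the total with $\beta(B)\alpha(A)$, and match this against the value of the composite game on its grand coalition. You are slightly more explicit than the paper in spelling out the telescoping $\beta(B)\alpha(A \setminus \{i\}) + \partial_i\alpha(A\setminus\{i\})\beta(B) = \beta(B)\alpha(A)$, which the paper simply asserts, but the argument is the same.
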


\begin{proof}
Let $x$ be in $X(\Gamma_A)$ and let $y$ be in $X(\Gamma_B)$, and let $z = x \otimes_i y$. We can compute that 
\[
\sum_{j \in A \diamond_i B} z_j = \left(\sum_{j \in B} y_j \right) \sum_{l \in A \setminus \{i\}} x_l + x_i \sum_{j \in B} y_j = \alpha(A)\beta(B) = \gamma(A \diamond_i B)~, 
\]
where $\sum_{l \in A\setminus \{i\}} x_l = \alpha(A)$ and $\sum_{j \in B} y_j = \beta(B)$ since we assumed that \(x \in X(\Gamma_A)\) and \(y \in X(\Gamma_B)\). So the map 
\[
(- \otimes_i -): X(\Gamma_A) \times X(\Gamma_B) \longrightarrow X(\Gamma_A \circ_i \Gamma_B)
\]
is well defined.
\end{proof}

\begin{remark}
    Notice that the partial tensor product construction, as a linear map 
    \[
    (- \otimes_i -): \mathbb{R}^{n} \otimes \mathbb{R}^{m} \longrightarrow \mathbb{R}^{n+m-1}
    \]
    is clearly not injective. It is, however, surjective since any element in the canonical basis of $\mathbb{R}^{n+m-1}$ is in its image. 
\end{remark}

\begin{proposition}\label{prop: tenseur i des imputations}
    Let \(\Gamma_A = (A, \alpha)\) and \(\Gamma_B = (B, \beta)\) be two non-negative games, and let \(i \in A\). The partial tensor product restricts to a well defined map
    \[
    (- \otimes_i -): I(\Gamma_A) \times I(\Gamma_B) \longrightarrow I(\Gamma_A \circ_i \Gamma_B)~, 
    \]
    so the partial tensor product of imputations is an imputation in the composite game. Furthermore, if $\alpha(\{i\})$ and  $\beta(B)$ are positive, this map is injective. 
\end{proposition}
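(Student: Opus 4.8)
The plan is to build on Lemma \ref{lemma: inclusion of preimputations}, which already guarantees that $x \otimes_i y$ is a preimputation of $\Gamma_A \circ_i \Gamma_B$ whenever $x \in X(\Gamma_A)$ and $y \in X(\Gamma_B)$. Hence the only thing left to verify for the first assertion is the individual rationality condition: writing $z = x \otimes_i y$ and $\gamma = \alpha \circ_i \beta$, I must check that $z_j \geq \gamma(\{j\})$ for every player $j \in A \diamond_i B$. First I would record the right-hand side on singletons. For $j \in A \setminus \{i\}$ one has $S_B = \emptyset$, so the grounding of $\beta$ gives $\gamma(\{j\}) = \beta(B)\alpha(\{j\})$; for $j \in B$ one has $S_A = \emptyset$, so $\gamma(\{j\}) = \partial_i\alpha(\emptyset)\beta(\{j\}) = \alpha(\{i\})\beta(\{j\})$, using that $\Gamma_A$ is grounded.

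Next I would treat the two corresponding families of coordinates of $z$. For $j \in A \setminus \{i\}$ the definition of the partial tensor product gives $z_j = \eta x_j = \beta(B)x_j$, where $\eta = \sum_{l \in B} y_l = \beta(B)$ because $y \in X(\Gamma_B)$; since $\Gamma_B$ is non-negative we have $\beta(B) \geq 0$, and since $x \in I(\Gamma_A)$ we have $x_j \geq \alpha(\{j\})$, whence $z_j = \beta(B)x_j \geq \beta(B)\alpha(\{j\}) = \gamma(\{j\})$. For $j \in B$ we have $z_j = x_i y_j$, and I must show $x_i y_j \geq \alpha(\{i\})\beta(\{j\})$. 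This is the one genuinely multiplicative estimate and is the main point of the argument: from the imputation conditions $x_i \geq \alpha(\{i\}) \geq 0$ and $y_j \geq \beta(\{j\}) \geq 0$, where non-negativity of $\alpha(\{i\})$ and of $\beta(\{j\})$ comes from $\Gamma_A$ and $\Gamma_B$ being non-negative, the elementary fact that $a \geq a' \geq 0$ and $b \geq b' \geq 0$ imply $ab \geq a'b'$ yields the claim. This establishes $z \in I(\Gamma_A \circ_i \Gamma_B)$.

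For the injectivity statement under the hypotheses $\alpha(\{i\}) > 0$ and $\beta(B) > 0$, I would reconstruct $(x,y)$ from $z = x \otimes_i y$ coordinate by coordinate. The coordinates indexed by $A \setminus \{i\}$ read $z_j = \beta(B) x_j$, so dividing by $\beta(B) > 0$ recovers $x_j$ for all $j \neq i$; the preimputation identity $\sum_{l \in A} x_l = \alpha(A)$ then pins down the remaining coordinate $x_i$, so $x$ is determined by $z$. Finally, each coordinate indexed by $j \in B$ reads $z_j = x_i y_j$, and since $x_i \geq \alpha(\{i\}) > 0$ we may divide to recover $y_j$; hence $y$ is determined as well, and the map is injective.

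I do not anticipate a serious obstacle: every step is a direct computation once the singleton values of $\gamma$ are recorded. The only place requiring mild care is the coordinate $j \in B$, where one must invoke non-negativity of the singleton values so that the two imputation inequalities may be multiplied; without it this product estimate can fail, mirroring the cautionary remark following Theorem \ref{proposition: capacities are stable under composition}.
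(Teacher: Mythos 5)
Your proposal is correct and follows essentially the same route as the paper: preimputation via Lemma \ref{lemma: inclusion of preimputations}, the two singleton estimates $\beta(B)x_j \geq \beta(B)\alpha(\{j\})$ and $x_iy_j \geq \alpha(\{i\})\beta(\{j\})$ for individual rationality, and coordinate-wise reconstruction of $(x,y)$ for injectivity. No gaps.
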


\begin{proof}
Let $x$ be in $I(\Gamma_A)$ and let $y$ be in $I(\Gamma_B)$. By Lemma \ref{lemma: inclusion of preimputations}, we know that their image via the partial tensor product is a preimputation. Let us check that $x \otimes_i y$ lies in $I(\Gamma_A \circ_i \Gamma_B)$, meaning we have 
\[
(x \otimes_i y)_j \geq \alpha \circ_i \beta(\{j\})~,
\]
for all $j \in A \diamond_i B$. If $j$ is in $A \setminus \{i\}$, then we have that 
\[
(x \otimes_i y)_j = \beta(B)x_j \geq \beta(B)\alpha(\{j\}) = \alpha \circ_i \beta(\{j\})~,
\]
since $\beta(B) \geq 0$ and since $x_j \geq \alpha(\{j\})$. If $j$ is in $B$, then 
\[
(x \otimes_i y)_j = x_iy_j \geq \alpha(\{i\})\beta(\{j\}) = \alpha \circ_i \beta(\{j\})~, 
\]
since $x_i \geq \alpha(\{i\}) \geq 0$ and $y_j \geq \beta(\{j\}) \geq 0$. 

\medskip

Let us show that it is injective: suppose that there exists $x'$ in $X(\Gamma_A)$ and let $y'$ in $X(\Gamma_B)$ such that $x' \otimes_i y' = x \otimes_i y$. Then clearly $\beta(B)x_j = \beta(B)x_j'$ for all $j \in A \setminus\{i\}$, which implies that $x_j = x_j'$ for all $j \in A \setminus\{i\}$ since $\beta(B) \neq 0$. Since $x$ and $x'$ are in $X(\Gamma_A)$, then 
\[
x_i = \beta(B) - \sum_{j \in B \setminus \{i\}} x_j = \beta(B) - \sum_{j \in B \setminus \{i\}} x_j' = x_i'~,  
\]
and therefore $x = x'$. This, in turn, implies that $y = y'$ since for all $j \in B$ we have $x_i y_j = x_i y_j'$ with $x_i \geq \alpha(\{i\}) > 0.$
\end{proof}

\begin{example}
    Let us provide a counterexample for the surjectivity of the partial tensor product map on imputations. Let \(\Gamma_A = (\{a_1, a_2\}, \alpha)\) and \(\Gamma_B = (\{b_1, b_2\}, \beta)\) be two games defined by 
    \[ \begin{aligned} 
    \alpha(a_1) = 1, \quad \alpha(a_2) = 1, \quad & \text{and} \quad \alpha(a_1 a_2) = 3 \\
    \beta(b_1) = 0, \quad \beta(b_2) = 2, \quad & \text{and} \quad \beta(b_1 b_2) = 3. 
    \end{aligned} \]
    The imputations simplices are given by 
    \[
    I(\Gamma_A) = \operatorname{conv} \left\{ \begin{pmatrix} 2 \\ 1 \end{pmatrix}, \begin{pmatrix} 1 \\ 2 \end{pmatrix} \right\}, \qquad \text{and} \qquad I(\Gamma_B) = \operatorname{conv} \left\{ \begin{pmatrix} 1 \\ 2 \end{pmatrix}, \begin{pmatrix} 0 \\ 3 \end{pmatrix} \right\}. 
    \]
    Hence, we have 
    \[
    I(\Gamma_A) \otimes_1 I(\Gamma_B) = \operatorname{conv} \left\{ v_1 = \begin{pmatrix} 2 \\ 4 \\ 3 \end{pmatrix}, v_2 = \begin{pmatrix} 1 \\ 2 \\ 6 \end{pmatrix}, v_3 = \begin{pmatrix} 0 \\ 6 \\ 3 \end{pmatrix}, v_4 = \begin{pmatrix} 0 \\ 3 \\ 6 \end{pmatrix} \right\}. 
    \]
    The composition \(\Gamma_C = \Gamma_A \circ_{a_1} \Gamma_B\) satisfies in particular 
    \[
    \gamma(a_2) = 3, \quad \gamma(b_1) = 0, \quad \gamma(b_2) = 2, \quad \text{and} \quad \gamma(b_1 b_2 a_2) = 9. 
    \]
    This yields the following imputation simplex
    \[
    I(\Gamma_A \circ_1 \Gamma_B) = \operatorname{conv} \left\{ w_1 = \begin{pmatrix} 0 \\ 2 \\ 7 \end{pmatrix}, w_2 = \begin{pmatrix} 0 \\ 6 \\ 3 \end{pmatrix}, w_3 = \begin{pmatrix} 4 \\ 2 \\ 3 \end{pmatrix} \right\}. 
    \]
    By counting the number of vertices, we see that \(I(\Gamma_A) \otimes_1 I(\Gamma_B)\) is distinct from \(I(\Gamma_A \circ_1 \Gamma_B)\). However, the image of $I(\Gamma_A) \times I(\Gamma_B)$ does lie in \(I(\Gamma_A \circ_1 \Gamma_B)\), as we have that: 
    \[
    v_1 = \frac{1}{2} w_2 + \frac{1}{2} w_3, \qquad v_2 = \frac{3}{4} w_1 + \frac{1}{4} w_3, \qquad v_3 = w_2, \qquad \text{and} \qquad v_4 = \frac{3}{4} w_1 + \frac{1}{4} w_2. 
    \]
\end{example}

\begin{remark}
    In the following, we use a regular abuse of notation concerning the name of some vectors and their dimension. We are only interested in the specific nonzero entries of vectors, which corresponds to specific players. So, we when write an equation such as \(\mathbf{1}^{\{p\}} \otimes_i \mathbf{1}^{\{q\}} = \mathbf{1}^{\{p\}}\) with \(p \neq i\), it is understood that the dimension of the vector on the right-hand side makes the equation valid, and the same player \(j\) has a payment of \(1\), while everyone else has \(0\). Finally, vectors must be more understood as linear games rather than actual vector expressed in a specific basis. 
\end{remark}

\begin{proposition}\label{prop: tensor of imputations is surjective}
Let \(\Gamma_A = (A, \alpha)\) and \(\Gamma_B =(B, \beta)\) be two non-negative games, and let \(i \in A\). If $\alpha(\{i\}) = 0$ and \(\beta(\{l\}) = 0\) for all \(l \in B\), the map
\[
(- \otimes_i -): I(\Gamma_A) \times I(\Gamma_B) \longrightarrow I(\Gamma_A \circ_i \Gamma_B)~, 
\]
is surjective. 
\end{proposition}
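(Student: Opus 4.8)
The plan is to reduce the statement to the vertices of the imputation simplex $I(\Gamma_A\circ_i\Gamma_B)$ and then to produce an explicit preimage of an arbitrary convex combination of these vertices. First I would record the singleton values and the cooperative surplus of the composite $\Gamma_C=\Gamma_A\circ_i\Gamma_B=(C,\gamma)$ under the hypotheses. A direct computation from Definition \ref{definition: partial composition of games} gives, for $j\in A\setminus\{i\}$, that $\gamma(\{j\})=\beta(B)\alpha(\{j\})$, while for $l\in B$ one has $\gamma(\{l\})=\partial_i\alpha(\emptyset)\beta(\{l\})=0$, since both $\alpha(\{i\})=0$ (so $\partial_i\alpha(\emptyset)=0$) and $\beta(\{l\})=0$. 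Using $\gamma(C)=\beta(B)\alpha(A)$ and $\alpha(\{i\})=0$, one then finds
\[
\gamma_{\mathrm{sp}}=\beta(B)\,\alpha_{\mathrm{sp}},\qquad \beta_{\mathrm{sp}}=\beta(B),
\]
so that the vertex of $I(\Gamma_B)$ concentrated at $l$ is simply $\beta^{(l)}=\beta(B)\,\mathbf{1}^{\{l\}}$. I will assume throughout that $\beta(B)>0$ and $\alpha_{\mathrm{sp}}\geq 0$, i.e.\ that the simplices involved are non-empty; in the remaining degenerate cases both source and target collapse and the statement is either vacuous or immediate.

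With these values in hand, the next step is to match the vertices $\gamma^{(p)}$ of $I(\Gamma_C)$ with partial tensor products of vertices of the components. For $j\in A\setminus\{i\}$ the vertex $\alpha^{(j)}$ has $i$-th coordinate $\alpha(\{i\})=0$, so the formula for $\otimes_i$ makes the $B$-block vanish and yields $\alpha^{(j)}\otimes_i y=\gamma^{(j)}$ for \emph{every} $y\in I(\Gamma_B)$. For $l\in B$, using the vertex $\alpha^{(i)}$, which places the whole surplus on the subdivided player so that $\alpha^{(i)}_i=\alpha_{\mathrm{sp}}$, one checks that $\alpha^{(i)}\otimes_i\beta^{(l)}=\gamma^{(l)}$. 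Thus each vertex of the target lies in the image; however, since $\otimes_i$ is \emph{bilinear} rather than linear, the image of the product $I(\Gamma_A)\times I(\Gamma_B)$ need not be convex, so this does not yet give surjectivity onto the convex hull.

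The heart of the argument is therefore to exhibit, for an arbitrary $z=\sum_{j\in A\setminus\{i\}}\lambda_j\gamma^{(j)}+\sum_{l\in B}\mu_l\gamma^{(l)}$ in $I(\Gamma_C)$, a single pair mapping onto it. Writing $s=\sum_{l\in B}\mu_l$, I would set
\[
x=\sum_{j\in A\setminus\{i\}}\lambda_j\,\alpha^{(j)}+s\,\alpha^{(i)}\in I(\Gamma_A),\qquad y=\tfrac{1}{s}\sum_{l\in B}\mu_l\,\beta^{(l)}\in I(\Gamma_B)
\]
when $s>0$, and take $y$ arbitrary when $s=0$ (in which case $x_i=0$). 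The key point is that the total weight $s$ carried by the $B$-block vertices is exactly absorbed as the coefficient of $\alpha^{(i)}$ in $x$: this makes $x_i=s\,\alpha_{\mathrm{sp}}$, which is precisely the mass that the partial tensor product then distributes over $B$ according to $y$. A coordinate-by-coordinate verification, using $x_i y_l=\mu_l\,\beta(B)\alpha_{\mathrm{sp}}$ on the $B$-block and $(x\otimes_i y)_j=\beta(B)\,x_j$ on the $A\setminus\{i\}$-block, then shows $x\otimes_i y=z$, with the convexity coefficients summing to one by construction.

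The main obstacle is precisely the non-convexity of the bilinear image: it forces us to go beyond ``every vertex is hit'' and to engineer a global preimage of each convex combination. The ingredient that resolves it is the use of the special vertex $\alpha^{(i)}$ together with the hypothesis $\alpha(\{i\})=0$, which decouples the two blocks and lets the simplicial weights on the $B$-side be rerouted into the factor $y$ while the weights on the $A\setminus\{i\}$-side remain in the factor $x$. Checking that $x$ and $y$ are genuine imputations and that the two blocks reassemble to $z$ completes the proof.
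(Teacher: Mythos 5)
Your proof is correct and follows essentially the same route as the paper's: compute the vertices of the three imputation simplices under the vanishing hypotheses, check that each vertex of $I(\Gamma_A \circ_i \Gamma_B)$ is a partial tensor product of vertices of the components, and then build a single global preimage of an arbitrary convex combination by absorbing the total weight of the $B$-block vertices as the coefficient of the special vertex $\alpha^{(i)}$ in the factor $x$ while normalizing the $B$-weights into the factor $y$. The paper performs exactly this construction (with $\lambda_B$ playing the role of your $s$), so no further comparison is needed.
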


\begin{proof}
The strategy of the proof is to first show that all the vertices of $I(\Gamma_A \circ_i \Gamma_B)$ can be reached by the partial tensor construction of the vertices of \(I(\Gamma_A)\) and \(I(\Gamma_B)\) and then deduce the general case. The vertices of \(I(\Gamma_A)\) and \(I(\Gamma_B)\) are respectively given by
    \[
    a^{(p)} \coloneqq \alpha_{\text{sp}} \mathbf{1}^{\{p\}} + \sum_{k \in A \setminus \{i\}} \alpha(\{k\}) \mathbf{1}^{\{k\}} \qquad \text{and} \qquad b^{(q)} \coloneqq \beta_{\text{sp}} \mathbf{1}^{\{q\}} = \beta(B) \mathbf{1}^{\{q\}},
    \]
since, by assumption,  \(\beta(\{l\}) = 0\) for all \(l \in B\). We first assume that \(p \neq i\). So, because \(a^{(p)}_i = 0\), the partial tensor product gives
    \[
    a^{(p)} \otimes_i b^{(q)} = \beta(B) a^{(p)}. 
    \]
    On the other hand, whenever \(p \in A \setminus \{i\}\), we have 
    \[
    c^{(p)} = \gamma_{\text{sp}} \mathbf{1}^{\{p\}} + \sum_{m \in A \diamond_i B} \gamma_m \mathbf{1}^{\{m\}} = \gamma_{\text{sp}} \mathbf{1}^{\{p\}} + \sum_{k \in A \setminus \{i\}} \gamma(\{k\}) \mathbf{1}^{\{k\}} + \sum_{l \in B} \gamma(\{l\}) \mathbf{1}^{\{l\}}. 
    \]
    Let us evaluate the coefficients in this sum. If \(m \in A \setminus \{i\}\), we have \(\gamma(\{m\}) = \beta(B) \alpha(\{m\})\), and if \(m \in B\), we have \(\gamma(\{m\}) = \alpha(\{i\}) \beta(\{m\}) = 0\). From this, we compute 
    \[ \begin{aligned}
    \gamma_{\text{sp}} = \gamma_C - \sum_{m \in A \diamond_i B} \gamma_m & = \alpha(A) \beta(B) - \sum_{k \in A\setminus \{i\}} \gamma(\{k\}) - \sum_{l \in B} \gamma(\{l\}) \\
    & = \alpha(A) \beta(B) - \beta(B) \sum_{k \in A \setminus \{i\}} \alpha(\{k\}) - \alpha(\{i\}) \sum_{l \in B} \beta(\{l\}) \\
    & = \beta(B) \alpha_{\text{sp}}. 
    \end{aligned} \]
    Then, 
    \[
    c^{(p)} = \beta(B) \alpha_{\text{sp}} \mathbf{1}^{\{p\}} + \beta(B) \sum_{k \in A \setminus \{i\}} \alpha(\{k\}) \mathbf{1}^{\{k\}} = \beta(B) a^{(p)} = a^{(p)} \otimes_i b^{(q)}. 
    \]
    Notice that in this case, \(c^{(p)}\) is the partial tensor product between \(a^{(p)}\) and any imputation of \(I(\Gamma_B)\). We finish the first part of the proof by looking at the tensor product
    \[ \begin{aligned} 
    a^{(i)} \otimes_i b^{(q)} & = \alpha_{\text{sp}} \mathbf{1}^{\{i\}} \otimes_i b^{(q)} + \sum_{k \in A \setminus \{i\}} \alpha(\{k\}) \mathbf{1}^{\{k\}} \otimes_i b^{(q)} \\
    & = \alpha_{\text{sp}} \beta(B) \mathbf{1}^{\{q\}} + \beta(B) \sum_{k \in A \setminus \{i\}} \alpha(\{k\})
    \mathbf{1}^{\{k\}}. 
    \end{aligned} \]
    On the other hand, for \(q \in B\), we have 
    \[
    c^{(q)} = \beta(B) \alpha_{\text{sp}} \mathbf{1}^{\{q\}} + \beta(B) \sum_{k \in A \setminus \{i\}} \alpha(\{k\}) \mathbf{1}^{\{k\}} = a^{(i)} \otimes_i b^{(q)}, 
    \]
    and so any vertex of \(I(\Gamma_A \circ_i \Gamma_B)\) is a partial tensor product of vertices of \(I(\Gamma_A)\) and \(I(\Gamma_B)\). 

    \medskip

    By convexity, any imputation \(z \in I(\Gamma_A \circ_i \Gamma_B)\) can be written as a convex combination
    \[
    z = \sum_{m \in A \diamond_i B} \lambda_m c^{(m)} = \sum_{k \in A \setminus \{i\}} \lambda_k c^{(k)} + \sum_{l \in B} \lambda_l c^{(l)}, 
    \]
    with \(\lambda_m, \lambda_k, \lambda_l\) all non-negative and \(\sum_{m \in A \diamond_i B} \lambda_m = \sum_{k \in A \setminus \{i\}} \lambda_k + \sum_{l \in B} \lambda_l = 1\). Set \(\lambda_A = \sum_{k \in A \setminus \{i\}} \lambda_k\) and \(\lambda_B = \sum_{l \in B} \lambda_l\). Define, for all \(k \in A \setminus \{i\}\) and \(l \in B\), the new coefficients \(\lambda'_k = \lambda_A^{-1} \lambda_k\) and \(\lambda'_l = \lambda_B^{-1} \lambda_l\). Hence, 
    \[
    \sum_{k \in A \setminus \{i\}} \lambda'_k = \lambda_A^{-1} \sum_{k \in A \setminus \{i\}} \lambda_k = 1, \qquad \text{and} \qquad \sum_{l \in B} \lambda'_l = \lambda_B^{-1} \sum_{l \in B} \lambda = 1. 
    \]
    We can finally write \(z\) as 
    \[
    z = \lambda_A \sum_{k \in A \setminus \{i\}} \lambda'_k c^{(k)} + \lambda_B \sum_{l \in B} \lambda'_l c^{(l)}, \qquad \text{with} \quad \lambda_A + \lambda_B = 1, \quad \lambda_A, \lambda_B \geq 0. 
    \]
    Using the fact that the vertices can split into partial tensor products, we have 
    \[ \begin{aligned} 
    z & = \sum_{k \in A \setminus \{i\}} \lambda_k a^{(k)} \otimes_i b^{(q_k)} + \sum_{l \in B} \lambda_l a^{(i)} \otimes_i b^{(l)} \\
    & = \sum_{k \in A \setminus \{i\}} \lambda_k a^{(k)} \otimes_i b^{(q_k)} + \lambda_B a^{(i)} \otimes_i \sum_{l \in B} \lambda'_l b^{(l)}. 
    \end{aligned} \]
    For \(k \in A \setminus \{i\}\), the vertex \(c^{(k)}\) decomposes using \(a^{(k)}\) and any imputation we want from \(I(\Gamma_B)\). Because \(\sum_{l \in B}  \lambda'_l = 1\) and \(\lambda'_l \geq 0\) for all \(l \in B\), we have that \(y = \sum_{l \in B} \lambda'_l b^{(l)}\) is a convex combination of the vertices of \(I(\Gamma_B)\), and by convexity, is itself a imputation, i.e., \(y \in I(\Gamma_B)\). Thus, 
    \[
    z = \sum_{k \in A \setminus \{i\}} \lambda_k a^{(k)} \otimes_i y + \lambda_B a^{(i)} \otimes_i y = \left( \lambda_B a^{(i)} +  \sum_{k \in A \setminus \{i\}} \lambda_k a^{(k)} \right) \otimes_i y. 
    \]
    Define \(x = \lambda_B a^{(i)} + \sum_{k \in A \setminus \{i\}} \lambda_k a^{(k)}\). By definition of \(\lambda_B\), we have that 
    \[
    \lambda_B + \sum_{k \in A \setminus \{i\}} \lambda_k = \lambda_B + \lambda_A = 1. 
    \]
    Moreover, \(\lambda_k \geq 0\) for all \(k \in A \setminus \{i\}\). So, \(x\) is a convex combination of the vertices of \(I(\Gamma_A)\), and therefore \(x \in I(\Gamma_A)\) by convexity. So, we have that 
    \[
    z = x \otimes_i y, \qquad \text{with} \qquad x \in I(\Gamma_A) \quad \text{and} \quad y \in I(\Gamma_B), 
    \]
    which concludes the proof. 
\end{proof}

\subsection{The core of a composite game}\label{Subsection: the core of a composite game}
As we explained in Paragraph~\ref{subsubsection: the core of a game}, the core is one of the most used solution concepts in cooperative game theory. Its elements correspond to ways of distributing the total wealth of the game $v(N)$ in such a way that any proper coalition is better off than if it stayed alone. Recall that, for a game \(\Gamma = (N, v)\), it is the polytope $C(\Gamma)$ given by 
\[
C(\Gamma) = \{x \in X(\Gamma) \mid x(S) \geq v(S), \hspace{2pt} \forall S \subseteq N\}. 
\]
It might be empty, and is non-empty if and only if the game $\Gamma$ is balanced by Theorem \ref{thm: bondareva-shapley}. 

\begin{theorem}\label{thm: inclusion of the tensor i of the cores}
Let \(\Gamma_A = (A, \alpha)\) and \(\Gamma_B = (B, \beta)\) be two non-negative games, and consider \(i \in A\). The partial tensor product restricts to a well defined map
    \[
    (- \otimes_i -): C(\Gamma_A) \times C(\Gamma_B) \longrightarrow C(\Gamma_A \circ_i \Gamma_B)~, 
    \]
which is furthermore injective whenever $\alpha(\{i\})$ and $\beta(B)$ are positive. 
\end{theorem}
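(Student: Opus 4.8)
The plan is to reduce the statement to a single coalition-wise inequality, since the affine (preimputation) part is already established. By Lemma~\ref{lemma: inclusion of preimputations}, for $x \in C(\Gamma_A) \subseteq X(\Gamma_A)$ and $y \in C(\Gamma_B) \subseteq X(\Gamma_B)$ the vector $z = x \otimes_i y$ already lies in $X(\Gamma_A \circ_i \Gamma_B)$, so it remains only to verify the core inequalities $z(S) \geq \gamma(S)$ for every $S \subseteq A \diamond_i B$, where $\gamma = \alpha \circ_i \beta$. First I would unwind the definition of the partial tensor product: writing $S_A = S \cap A$ and $S_B = S \cap B$ (so that $i \notin S_A$) and using $y(B) = \beta(B)$, one finds $z(S) = \beta(B)\, x(S_A) + x_i\, y(S_B)$, which is to be compared with $\gamma(S) = \beta(B)\alpha(S_A) + \partial_i\alpha(S_A)\,\beta(S_B)$.

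The main inequality is then extracted from the core conditions on the two factors. Since $x \in C(\Gamma_A)$ controls every coalition of $A$, I have $x(S_A) \geq \alpha(S_A)$ and $x(S_A \cup \{i\}) \geq \alpha(S_A \cup \{i\})$; since $y \in C(\Gamma_B)$ and $\beta$ is non-negative, I have $0 \leq \beta(S_B) \leq y(S_B) \leq \beta(B)$ (the upper bound coming from $y(B \setminus S_B) \geq \beta(B \setminus S_B) \geq 0$) together with $x_i \geq \alpha(\{i\}) \geq 0$. Substituting $x_i = x(S_A \cup \{i\}) - x(S_A)$ and regrouping gives the factorization $z(S) = x(S_A)\bigl(\beta(B) - y(S_B)\bigr) + x(S_A \cup \{i\})\, y(S_B)$, whence $z(S) \geq \alpha(S_A)\bigl(\beta(B) - y(S_B)\bigr) + \alpha(S_A \cup \{i\})\, y(S_B) = \beta(B)\alpha(S_A) + \partial_i\alpha(S_A)\, y(S_B)$, where the inequality uses $\beta(B) - y(S_B) \geq 0$, $y(S_B) \geq 0$, and the two core conditions for $x$.

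It then remains to pass from $y(S_B)$ to $\beta(S_B)$ in the term $\partial_i\alpha(S_A)\, y(S_B)$, and this is where I expect the only genuine subtlety, caused by the fact that we do \emph{not} assume monotonicity, so that $\partial_i\alpha(S_A)$ may be negative. I would split on its sign. If $\partial_i\alpha(S_A) \geq 0$, then $y(S_B) \geq \beta(S_B)$ yields $\partial_i\alpha(S_A)\, y(S_B) \geq \partial_i\alpha(S_A)\,\beta(S_B)$ and hence $z(S) \geq \gamma(S)$. If $\partial_i\alpha(S_A) < 0$, the previous bound is useless, so I argue directly instead: $z(S) = \beta(B) x(S_A) + x_i y(S_B) \geq \beta(B)\alpha(S_A)$ (using $\beta(B) \geq 0$, $x(S_A) \geq \alpha(S_A)$, and $x_i y(S_B) \geq 0$), while $\gamma(S) = \beta(B)\alpha(S_A) + \partial_i\alpha(S_A)\,\beta(S_B) \leq \beta(B)\alpha(S_A)$ because $\partial_i\alpha(S_A) < 0 \leq \beta(S_B)$. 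Either way $z(S) \geq \gamma(S)$, which proves that the map is well defined.

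Finally, the injectivity claim under $\alpha(\{i\}) > 0$ and $\beta(B) > 0$ follows verbatim from the argument already given in Proposition~\ref{prop: tenseur i des imputations}: since $C(\Gamma_A) \subseteq I(\Gamma_A)$ and $C(\Gamma_B) \subseteq I(\Gamma_B)$, the coordinates $x_l$ for $l \neq i$ are recovered from $z$ by dividing by $\beta(B) \neq 0$, then $x_i$ is forced by the preimputation identity $x(A) = \alpha(A)$, and finally each $y_j$ is determined from $z_j = x_i y_j$ because $x_i \geq \alpha(\{i\}) > 0$. I do not foresee any obstacle here beyond the bookkeeping already carried out in that proposition.
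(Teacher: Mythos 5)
Your proof is correct and follows essentially the same route as the paper's: reduce to the coalition-wise inequality $z(S)\geq \alpha\circ_i\beta(S)$ via Lemma~\ref{lemma: inclusion of preimputations}, rewrite everything in terms of $\alpha(S_A)$ and $\alpha(S_A\cup\{i\})$ with non-negative coefficients, apply the core conditions of the two factors, and deduce injectivity from Proposition~\ref{prop: tenseur i des imputations}. The only difference is cosmetic: the paper bounds $\alpha\circ_i\beta(S)=\left[\beta(B)-\beta(S_B)\right]\alpha(S_A)+\beta(S_B)\alpha(S_A\cup\{i\})$ upward, applying the $y$-core inequality only in the term $x_i\beta(S_B)\leq x_i y(S_B)$ where the coefficient $x_i\geq 0$, which renders your case split on the sign of $\partial_i\alpha(S_A)$ unnecessary.
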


\begin{proof}
If the cores are empty, then the proposition is trivial. So let $x \in C(\Gamma_A)$ and $y \in C(\Gamma_B)$. The element $z= x \otimes_i y$ is in core $C(\Gamma_A \circ_i \Gamma_B)$ if the following two conditions are satisfied.
\begin{enumerate}
    \item It is a preimputation, meaning that \(\sum_{j \in A \diamond_i B} z_j = (\alpha \circ_i \beta)(A \diamond_i B) = \alpha(A)\beta(B)\).
    
    \item It satisfies the inequality \(\sum_{j \in S} z_j \geq \alpha \circ_i \beta(S)\), for any $S \subseteq A \diamond_i B$. 
\end{enumerate}

First, notice that both $x$ and $y$ are non-negative vectors. Indeed, we have $x_i \geq v(i) \geq 0$ for all players $i$ in $A$ and likewise for $y$. The first point follows form Lemma \ref{lemma: inclusion of preimputations} since $x \in X(\Gamma_A)$ and $y \in X(\Gamma_B).$

\medskip

Now, we must show that \(z(S) \geq \alpha \circ_i \beta(S)\), for all \(S \subseteq A \diamond_i B\). Because \(x \in C(\Gamma_A)\), we have that
\[
\begin{aligned}
\alpha \circ_i \beta(S) & = \left[ \beta(B) - \beta(S_B) \right] \alpha(S_A) + \beta(S_B) \alpha(S_A \cup \{i\}) \\
& \leq \left[ \beta(B) - \beta(S_B) \right] \sum_{j \in S_A} x_j + \beta(S_B) \sum_{j \in S_A \cup \{i\}} x_j.
\end{aligned} \]
Now, we can factorize by \(\beta(S_B)\) and using the fact that \(y \in C(\Gamma_B)\) to get
\[
\alpha \circ_i \beta(S) \leq \beta(B) \sum_{j \in S_A} x_j + x_i \beta(S_B) \leq \left(\sum_{k \in B} y_k \right) \sum_{j \in S_A} x_j + x_i \sum_{k \in S_B} y_k = x \otimes_i y(S)~.
\]
Hence the element $z= x \otimes_i y$ is in the core of the composite. The injectivity then follows from Proposition \ref{prop: tenseur i des imputations} since $C(\Gamma_A \circ_i \Gamma_B) \subseteq I(\Gamma_A \circ_i \Gamma_B)$, any element in $C(\Gamma_A \circ_i \Gamma_B)$ has at most one pre-image by the partial tensor product which lies in $I(\Gamma_A) \times I(\Gamma_B)$, hence in $C(\Gamma_A) \times C(\Gamma_B).$
\end{proof}

\begin{remark}
    The fact that this map is well defined in the above theorem gives back \cite[Theorem 4]{OwenTensor} in the specific case of the total composition of normalized games
\end{remark}

\begin{example}
    We provide a small counterexample with two totally monotone, therefore convex, games for which the partial tensor product of the cores does not coincide with the core of the composite game. Let \(\Gamma_A = (A, \alpha)\) and \(\Gamma_B = (B, \beta)\) be the games defined in the previous example by 
    \[ \begin{aligned} 
    \alpha(a_1) = 1, \quad \alpha(a_2) = 1, \quad & \text{and} \quad \alpha(a_1 a_2) = 3 \\
    \beta(b_1) = 0, \quad \beta(b_2) = 2, \quad & \text{and} \quad \beta(b_1 b_2) = 3. 
    \end{aligned} \]
    The imputations simplices are given by 
    \[
    I(\Gamma_A) = \operatorname{conv} \left\{\begin{pmatrix} 2 \\ 1 \end{pmatrix}, \begin{pmatrix} 1 \\ 2 \end{pmatrix} \right\}, \quad \text{and} \quad I(\Gamma_B) = \operatorname{conv} \left\{ \begin{pmatrix} 1 \\ 2 \end{pmatrix}, \begin{pmatrix} 0 \\ 3 \end{pmatrix} \right\}. 
    \]
    Because they are \(2\)-player games, their cores coincide with their imputation simplices. Moreover, each of the vertices corresponds to a marginal vector, i.e., 
    \[
    m^{(12)}_\alpha = \begin{pmatrix} 2 \\ 1 \end{pmatrix}, \quad m^{(21)}_\alpha = \begin{pmatrix} 1 \\ 2 \end{pmatrix}, \quad m^{(12)}_\beta = \begin{pmatrix} 0 \\ 3 \end{pmatrix}, \quad \text{and} \quad m^{(21)}_\beta = \begin{pmatrix} 1 \\ 2 \end{pmatrix}.
    \]
    Hence, we have 
    \[
    C(\Gamma_A) \otimes_1 C(\Gamma_B) = \operatorname{conv} \left\{\begin{pmatrix} 0 \\ 6 \\ 3 \end{pmatrix}, \begin{pmatrix} 2 \\ 4 \\ 3 \end{pmatrix}, \begin{pmatrix} 0 \\ 3 \\ 6 \end{pmatrix},  \begin{pmatrix} 1 \\ 2 \\ 6 \end{pmatrix} \right\}. 
    \]
    The composition \(\Gamma_C = \Gamma_A \circ_i \Gamma_B\) is given by 
    \[ \begin{aligned} 
    \gamma(b_1) = 0, \quad & \gamma(b_2) = 2, \quad \gamma(a_2) = 3, \\
    \gamma(b_1b_2) = 3, \quad & \gamma(b_1 a_2) = 3, \quad \gamma(b_2 a_2) = 5, \\
    & \gamma(b_1 b_2 a_2) = 9. 
    \end{aligned} \]
    The marginal vectors of \(\Gamma_C\) are 
    \[ \begin{aligned} 
    m^{(123)}_\gamma = \begin{pmatrix} 0 \\ 3 \\ 6 \end{pmatrix} = m^{(21)}_\alpha \otimes_1 m^{(21)}_\beta, \qquad & m^{(132)}_\gamma = \begin{pmatrix} 0 \\ 6 \\ 3 \end{pmatrix} = m^{(12)}_\alpha \otimes_1 m^{(21)}_\beta, \\
    m^{(213)}_\gamma = \begin{pmatrix} 1 \\ 2 \\ 6 \end{pmatrix} = m^{(21)}_\alpha \otimes_1 m^{(12)}_\beta, \qquad & m^{(231)}_\gamma = \begin{pmatrix} 4 \\ 2 \\ 3 \end{pmatrix} \not \in C(\Gamma_A) \otimes_1 C(\Gamma_B), \\
    m^{(312)}_\gamma = \begin{pmatrix} 0 \\ 6 \\ 3 \end{pmatrix} = m^{(12)}_\alpha \otimes_1 m^{(21)}_\beta, \qquad & m^{(321)}_\gamma = \begin{pmatrix} 4 \\ 2 \\ 3 \end{pmatrix} \not \in C(\Gamma_A) \otimes_1 C(\Gamma_B). 
    \end{aligned} \]
\end{example}

\begin{theorem}\label{thm: fake surjectivity des coeurs}
Let \(\Gamma_A = (A, \alpha)\) and \(\Gamma_B = (B, \beta)\) be two non-negative games, and let \(i \in A\). Suppose that $\alpha(\{i\}) = 0$ and \(\beta(\{l\}) = 0\) for all \(l \in B\),  as well as $\beta(B) > 0$. Then any core element $z \in C(\Gamma_A \circ_i \Gamma_B)$ decomposes as $x \otimes_i y$ with $x \in C(\Gamma_A)$ and $y \in I(\Gamma_B)$. 
\end{theorem}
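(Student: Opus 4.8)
The plan is to reconstruct the two factors $x$ and $y$ explicitly from $z$ and then check that they land in the prescribed sets. Since $\beta(B) > 0$, I would define $x \in \mathbb{R}^A$ by setting $x_j \coloneqq z_j/\beta(B)$ for every $j \in A \setminus \{i\}$, and fixing the last coordinate by the efficiency requirement $x_i \coloneqq \alpha(A) - \sum_{j \in A \setminus \{i\}} x_j$, so that $x(A) = \alpha(A)$ holds by construction. The first key computation is that this already forces $z(B) = \beta(B)\, x_i$: indeed, since $z$ is a preimputation of the composite one has $z(A \diamond_i B) = \alpha(A)\beta(B)$, and subtracting $z(A \setminus \{i\}) = \beta(B)\big(\alpha(A) - x_i\big)$ yields the claim.

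The heart of the argument is showing $x \in C(\Gamma_A)$. I would verify $x(S') \geq \alpha(S')$ for every $S' \subseteq A$ by a dichotomy on whether $i \in S'$. When $i \notin S'$, testing the core inequality of $z$ on the coalition $S' \subseteq A \diamond_i B$ gives $\beta(B)\, x(S') = z(S') \geq (\alpha \circ_i \beta)(S') = \beta(B)\alpha(S')$ (using $\beta(S'_B) = 0$), and dividing by $\beta(B) > 0$ gives the inequality. When $i \in S'$, the correct coalition to test is $(S' \setminus \{i\}) \cup B$: a short computation using $\partial_i\alpha(S' \setminus \{i\}) = \alpha(S') - \alpha(S' \setminus \{i\})$ shows $(\alpha\circ_i\beta)\big((S'\setminus\{i\})\cup B\big) = \beta(B)\alpha(S')$, while the left-hand side evaluates to $z(S'\setminus\{i\}) + z(B) = \beta(B)\, x(S'\setminus\{i\}) + \beta(B)\, x_i = \beta(B)\, x(S')$ by the identity $z(B) = \beta(B)\, x_i$; dividing by $\beta(B)$ again finishes the case. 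In particular, the core inequality on $S' = \{i\}$ together with $\alpha(\{i\}) = 0$ gives $x_i \geq 0$.

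It remains to produce $y \in I(\Gamma_B)$ with $x \otimes_i y = z$. The core inequalities of $z$ on the singletons $\{l\}$, $l \in B$, read $z_l \geq (\alpha \circ_i \beta)(\{l\}) = \alpha(\{i\})\beta(\{l\}) = 0$, so these coordinates are non-negative. If $x_i > 0$, I would set $y_l \coloneqq z_l/x_i$; then $y(B) = z(B)/x_i = \beta(B)$ and $y_l \geq 0 = \beta(\{l\})$, so $y \in I(\Gamma_B)$, and a coordinatewise check (the $A\setminus\{i\}$-coordinates being $\beta(B)\, x_j = z_j$ and the $B$-coordinates being $x_i y_l = z_l$) gives $x \otimes_i y = z$. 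If instead $x_i = 0$, then $z(B) = \beta(B)\, x_i = 0$ forces $z_l = 0$ for all $l \in B$, and one may take $y$ to be \emph{any} imputation of $\Gamma_B$ (the simplex $I(\Gamma_B)$ is non-empty since $\beta(B) > 0$ and all $\beta(\{l\}) = 0$); the tensor product $x \otimes_i y$ still equals $z$ because its $B$-coordinates are $x_i y_l = 0$.

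The only genuinely delicate point is the degenerate case $x_i = 0$, which is precisely where the hypotheses $\alpha(\{i\}) = 0$ and $\beta(\{l\}) = 0$ do their work: they allow $x_i$ to vanish while still keeping $x$ in the core, and they make the reconstructed (or freely chosen) $y$ automatically imputation-valid. This is also the reason the statement only asserts a decomposition with $y \in I(\Gamma_B)$ rather than the bijective reconstruction available in the injective regime $\alpha(\{i\}),\, \beta(B) > 0$ of Theorem \ref{thm: inclusion of the tensor i of the cores}.
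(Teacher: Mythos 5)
Your proof is correct, and its central step --- establishing $x \in C(\Gamma_A)$ by a dichotomy on whether $i \in S'$, testing the core inequality of $z$ on $S'$ itself when $i \notin S'$ and on the coalition $(S' \setminus \{i\}) \cup B$ when $i \in S'$, together with the identity $(\alpha \circ_i \beta)\bigl((S'\setminus\{i\})\cup B\bigr) = \beta(B)\alpha(S')$ --- is exactly the computation in the paper's proof. Where you diverge is in how the factorization $z = x \otimes_i y$ is obtained in the first place: the paper simply invokes Proposition \ref{prop: tensor of imputations is surjective} (surjectivity of the partial tensor product on imputation simplices under these hypotheses) to get $x \in I(\Gamma_A)$ and $y \in I(\Gamma_B)$, and then upgrades $x$ to a core element; you instead reconstruct $x$ explicitly via $x_j = z_j/\beta(B)$ and efficiency, derive the identity $z(B) = \beta(B)x_i$, and build $y$ by division by $x_i$, treating the degenerate case $x_i = 0$ separately. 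Your route is self-contained --- it does not depend on the (somewhat lengthy) vertex-decomposition argument behind Proposition \ref{prop: tensor of imputations is surjective} --- and it makes visible exactly where the hypotheses $\alpha(\{i\}) = 0$ and $\beta(\{l\}) = 0$ are used, in particular in the $x_i = 0$ case where $y$ is not unique. What the paper's route buys in exchange is brevity and reuse of an already-established structural fact about imputations. Both arguments are complete and correct.
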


\begin{proof}
Let $z$ be an element in $C(\Gamma_A \circ_i \Gamma_B)$. Since $C(\Gamma_A \circ_i \Gamma_B) \subseteq I(\Gamma_A \circ_i \Gamma_B)$, then $z = x \otimes_i y$ where $x \in I(\Gamma_A)$ and $y \in I(\Gamma_B)$ by Proposition \ref{prop: tensor of imputations is surjective}. Let us show that $x$ is in $C(\Gamma_A)$, that is, that for all $S \subseteq A$, we have that \(x(S) \geq \alpha(S)\). For $S \subseteq A$, we consider the two following cases.

\medskip

\begin{enumerate}
    \item Suppose that $i \notin S$. Since $x \otimes_i y$ is in $C(\Gamma_A \circ_i \Gamma_B)$, we have that
    \[
    \beta(B)x(S) = x \otimes_i y(S) \geq \alpha \circ_i \beta(S) = \beta(B)\alpha(S)~,  
    \]
    which implies that \(x(S) \geq \alpha(S)\), since $\beta(B) > 0$. So $x$ satisfies the condition with respect to the coalition $S$. 

\medskip

    \item Suppose that $i \in S$. We denote $S_A = S \cap (A \setminus \{i\})$. Notice that, on the one side, we have 
    \[
    \beta(B)x(S) = \beta(B)x(S_A) + x_i\beta(B) = \beta(B)x(S_A) + x_i y(B) = x \otimes_i y(S \cup B)~,
    \]
    and on the other side, 
    \[
    \alpha \circ_i \beta(S \cup B) = \beta(B) \left( \alpha(S_A) + \partial_i \alpha(S_A) \right) = \beta(B)\alpha(S_A \cup \{i\}) = \beta(B) \alpha(S)~. 
    \]
    So, since  $x \otimes_i y$ is in $C(\Gamma_A \circ_i \Gamma_B)$, this gives that \(\beta(B)x(S) \geq \beta(B)\alpha(S)\), which implies that \(x(S) \geq \alpha(S)\), since $\beta(B) > 0$. So $x$ satisfies the condition with respect to $S$. 
\end{enumerate}
By combining the two different cases, we see that $x$ satisfies the condition with respect to any $S \subseteq A$ and therefore that $x$ is in $C(\Gamma_A).$
\end{proof}

\begin{remark}
    In particular, under the same assumptions as in Theorem~\ref{thm: fake surjectivity des coeurs}, if $C(\Gamma_A)$ is empty, so is $C(\Gamma_A \circ_i \Gamma_B)$. This means that, when the quotient game is not balanced, no matter how many details we get about the decision made by each player, it will not change this fact. Hence, the balancedness of a game, or the social context it represents, is robust under the level of details under study, and actually characterizes the situation modeled, rather than the model itself. 
\end{remark}

\begin{remark}
Notice that, up to \textit{strategic equivalence}, the value at every player of a game can be supposed to be zero, thus the hypothesis of Theorem \ref{thm: fake surjectivity des coeurs} are satisfied already if $\beta(B)$ is positive. For more details on the notion of strategic equivalence, see \cite[Pages 245-246]{vonNeumann2} or \cite{McKinsey}. 
\end{remark}

\subsection{Shapley value of composite cooperative games} The Shapley value associates to a cooperative game a unique vector. It satisfies many natural properties such as \emph{symmetry}, meaning that the Shapley value of two players contributing the same amount to the same coalitions is the same, \emph{efficiency}, the sum of the payment equals the value of the grand coalition, \emph{linearity} with respect to the game~\cite{Shapley1953}. See \cite{algaba2019handbook} for a detailed account. One can define it in terms of derivatives, as it was done originally in ~\cite{Shapley1953}, or in terms of the M{\"o}bius transform of the game, see \cite{harsanyi1958bargaining}. 

\begin{definition}[Shapley value]
Let \(\Gamma = (N, v)\) be a game. The Shapley value \(\phi_i\) of player \(i \in N\) is given by
\[
\phi_i(\Gamma) = \sum_{S \subseteq N \setminus \{i\}} \gamma_S^{-1} \partial_iv(S) = \sum_{\substack{S \subseteq N \\ S \ni i}} \frac{\mu^v(S)}{\lvert S \rvert},
\]
with \(\gamma_S\) being the binomial coefficient \(\binom{\lvert N \rvert - 1}{\lvert S \rvert}\) divided by \(\lvert N \rvert\). By considering the Shapley values with respect to all players $i \in N$, we get the Shapley value $\phi(\Gamma)$ which is a vector in $\mathbb{R}^N$.
\end{definition}

\begin{remark}
The Shapley value can be generalized to any coalitions (and not only to single players), as it was done by Grabisch~\cite{grabisch1997alternative} under the name of \emph{(Shapley) interaction value}. 
\end{remark}

As already noticed by Shapley himself, the Shapley value is not compatible with the composition, see \cite{ShapleyComposition}. It is in fact for this reason that Owen introduced a variant of the Shapley value which is compatible with the composition in \cite{Owen77}. Using the partial composition formula on the basis of unanimity games of Subsection \ref{subsection: unanimity basis}, we obtain an explicit formula for the Shapley value of a composite game. 

\medskip

There is a simple way to define the Shapley value using the basis of unanimity games: it is the unique linear function $\phi: \mathbb{G}(n) \longrightarrow \mathbb{R}^n$ which is given on this basis by 
\[
\phi_j \left( \mathbf{1}^S \right) = \begin{cases}
        \frac{1}{ \lvert S \rvert }, & \text{if } j \in S, \\
        0, & \text{otherwise}. 
    \end{cases}
\]
It follows from this that the Shapley value of a generic game $\Gamma = (N, v) \in \mathbb{G}(n)$, which written in this basis is given by 
\[
v = \sum_{S \subseteq [n]} \lambda_S u_S~,
\]
is the vector in $\mathbb{R}^n$ whose $j$-th coordinate is given by the formula
\[
\phi(\Gamma)_j = \sum_{\substack{S \subseteq N \\ S \ni j}}  \frac{\lambda_S}{|S|}~,
\]
for all $1 \leq j \leq n$. 

\begin{proposition}\label{prop: shapley value of a composite game}
    Let \(\Gamma_A = (A, \alpha)\) and \(\Gamma_B = (B, \beta)\) be two normalized games, let \(i \in A\), and let \(\Gamma_C = \Gamma_A \circ_i \Gamma_B\). Then, we have 
    \[ 
    \phi_j \left( \Gamma_C \right) = \begin{cases}
        \displaystyle \sum_{\substack{S \subseteq A \\ S \ni i}} \sum_{\substack{T \subseteq B \\ T \ni j}} \frac{\mu^\alpha_S \mu^\beta_T}{\lvert S \diamond_i T \rvert}, & \quad \text{if } j \in B, \\
        \displaystyle \phi_j \left( \Gamma_{A \setminus \{i\}} \right) + \sum_{\substack{S \subseteq A \\ S \supseteq \{i, j\}}} \sum_{T \subseteq B} \frac{\mu^\alpha_S \mu^\beta_T}{\lvert S \diamond_i T \rvert}, & \quad \text{if } j \in A \setminus \{i\}, 
    \end{cases}
    \]
    where \(\Gamma_{A \setminus \{i\}}\) denotes the restriction of \(\Gamma_A\) to the coalition \(A \setminus \{i\}\). 
\end{proposition}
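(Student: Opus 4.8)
The plan is to carry out a direct computation in the basis of unanimity games, combining the composition formula of Subsection~\ref{subsection: unanimity basis} with the unanimity-basis expression for the Shapley value recalled just above the statement. First I would write the two components in the unanimity basis as $\alpha = \sum_{S \subseteq A} \mu^\alpha_S u_S$ and $\beta = \sum_{T \subseteq B} \mu^\beta_T u_T$, where by Proposition~\ref{prop: Mobius changes the basis} the coefficients are precisely the Möbius transforms. Since $\Gamma_B$ is normalized we have $\sum_{T \subseteq B} \mu^\beta_T = \beta(B) = 1$, so the composition formula of Subsection~\ref{subsection: unanimity basis} simplifies to
\[
\gamma = \alpha \circ_i^{u} \beta = \sum_{\substack{S \subseteq A \\ S \not\ni i}} \mu^\alpha_S\, u_S + \sum_{\substack{S \subseteq A \\ S \ni i}} \sum_{T \subseteq B} \mu^\alpha_S \mu^\beta_T\, u_{S \diamond_i T}~.
\]

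Next I would apply the Shapley value formula $\phi_j(\Gamma) = \sum_{U \ni j} \lambda_U/|U|$ term by term, tracking for each player $j$ which coalitions in the above sum contain it. For $j \in B$, the player never lies in a first-sum coalition $S \subseteq A \setminus \{i\}$, and it lies in $S \diamond_i T = (S \setminus \{i\}) \cup T$ (using $i \in S$) if and only if $j \in T$; since $\lvert S \diamond_i T \rvert = \lvert S \rvert + \lvert T \rvert - 1$, this yields exactly the first displayed case. For $j \in A \setminus \{i\}$, the player lies in a first-sum coalition $S$ iff $j \in S$, contributing $\sum_{S \subseteq A \setminus \{i\},\, S \ni j} \mu^\alpha_S/\lvert S \rvert$, and it lies in $S \diamond_i T$ iff $j \in S$, which together with $i \in S$ gives the condition $S \supseteq \{i,j\}$; this produces the second term of the second case.

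To conclude I would identify the isolated first contribution with $\phi_j(\Gamma_{A \setminus \{i\}})$. The key point is that the Möbius transform of the restriction of $\alpha$ to $A \setminus \{i\}$ agrees with $\mu^\alpha_S$ on every $S \subseteq A \setminus \{i\}$, since the defining alternating sum only involves subsets of $S$, all of which avoid $i$. Hence $\phi_j(\Gamma_{A \setminus \{i\}}) = \sum_{S \subseteq A \setminus \{i\},\, S \ni j} \mu^\alpha_S/\lvert S \rvert$, which is precisely the term extracted above, completing the match with the second case.

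There is no genuine obstacle here beyond careful bookkeeping: once the composite is written in the unanimity basis, the entire argument is a single application of the linearity of the Shapley value. The one place demanding attention is the membership analysis of $j$ in the composite coalitions $S \diamond_i T$, where one must consistently use that $i \in S$ forces $S \diamond_i T = (S \setminus \{i\}) \cup T$ and keep straight whether $j$ falls in the $A$-part or the $B$-part; the only other subtlety is recognizing the restriction game and verifying that its Möbius coefficients coincide with the relevant $\mu^\alpha_S$.
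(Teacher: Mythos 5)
Your proposal is correct and follows essentially the same route as the paper: write the composite in the unanimity basis (using normalization of $\Gamma_B$ to drop the $\beta(B)$ prefactor), apply linearity of the Shapley value with $\phi_j(u_U) = 1/\lvert U \rvert$ for $j \in U$, and sort coalitions by whether $j$ lands in the $A$-part or the $B$-part of $S \diamond_i T$. Your explicit verification that the Möbius coefficients of the restriction $\Gamma_{A\setminus\{i\}}$ agree with $\mu^\alpha_S$ for $S \subseteq A \setminus \{i\}$ is a detail the paper leaves implicit, and is a welcome addition.
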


\begin{proof}
Recall that if $\alpha$ and $\beta$ are written in the basis of unanimity games, then their partial composition is given by the following formula:
\[
\alpha \circ_i^{u} \beta = \sum_{\substack{S \subseteq A \\ S \not \ni i}} \mu^\alpha_S u_S + \sum_{\substack{S \subseteq A \\ S \ni i}} \sum_{T \subseteq B} \mu^\alpha_S \mu^\beta_T u_{S \diamond_i T}~.
\]
Using this formula, we can directly compute what happens in each of these cases since the Shapley value is linear.  

\begin{enumerate}
    \item Let $j \in B$, we have that:
    	\[
    	\phi_j \left( \Gamma_C \right) = \phi_j \Bigg( \sum_{\substack{S \subseteq A \\ S \ni i}} \sum_{\substack{T \subseteq B \\ T \ni j}} \mu^\alpha_S \mu^\beta_T u_{S \diamond_i T} \Bigg) = \sum_{\substack{S \subseteq A \\ S \ni i}} \sum_{\substack{T \subseteq B \\ T \ni j}} \mu^\alpha_S \mu^\beta_T \phi_j \left( u_{S \diamond_i T}\right), 
    	\] 
    	where $\phi_j\left(u_{S \diamond_i T} \right) = 1/ \lvert S \diamond_i T \rvert$ whenever \(j \in S \diamond_i T\).
    	
    	\medskip
    	
    	\item Let $j \in A \setminus \{i \}$, we have that:
    \[
    	\phi_j \left( \Gamma_C \right) = \phi_j \Bigg( \sum_{\substack{S \subseteq A \setminus \{i\} \\ S \ni j}} \lambda_S u_S \Bigg) + \phi_j \Bigg( \sum_{\substack{S \subseteq A \\ S \supseteq \{i,j\}}} \sum_{\substack{T \subseteq B}} \mu^\alpha_S \mu^\beta_T u_{S \diamond_i T} \Bigg),
    	\] 
    	where the second sum can be easily identified with the second term in the formula of the proposition by the same arguments as above. It remains to identify the first term, which can be seen to correspond to the Shapley value of the restriction of $\Gamma_A$ to $A \setminus \{i\}$.   	
\end{enumerate}
\end{proof}

\subsection{Banzhaf value of composite cooperative games} The Banzhaf index, originally introduced by Penrose in \cite{Penrose}, and later reintroduced by Banzhaf in \cite{banzhaf1968one}, is a power index which measures the power of a voter in a voting game where voting rights are not equally distributed among players. A player's power in a voting game is here measured by the likelihood of a situation in which his individual vote is decisive. We refer to \cite{banzhaf1968one, OwenMultilinear, coleman2012control} for more details on the Banzhaf index. 

\begin{definition}[Banzhaf index]
    Let \(\Gamma = (N, v)\) be a cooperative game, and \(i \in N\) be a player. Its \emph{Banzhaf index} is defined~by 
    \[
    \psi_i(\Gamma) = \frac{1}{2^{n-1}} \sum_{S \subseteq N \setminus \{i\}} \partial_i v(S) = \frac{1}{2^{n-1}} \sum_{\substack{S \subseteq N \\ i \in S}} \mu^v_S~. 
    \]
    The Banzhaf indices give a single vector $\psi(\Gamma)$ in $\mathbb{R}^n$ whose $i$-th coordinate is $\psi_i(\Gamma)$. 
\end{definition}

Notice that the Banzhaf index resembles the Shapley value, and differs only on the weights of the linear combination of the M{\"o}bius coefficients. A mathematical advantage of the Banzhaf index compared to the Shapley value is that these weights do not depend on the associated coalition, and corresponds to the inverse of the number of coalitions (counting the empty set, as \(\partial_i v\) is not necessarily grounded) on which the partial derivative of the game is defined. 

\medskip 

Some authors call this index the \emph{normalized} Banzhaf index, and the non-normalized is obtained by dropping the \(\frac{1}{2^{n-1}}\) factor. We do not follow this convention here. The Banzhaf index can also be defined as the unique linear map whose image on this unanimity basis is given by 

\[ \begin{aligned} 
\psi: \mathbb{G}(n) & \xrightarrow{\hspace{1.5cm}} \mathbb{R}^n \\
u_S & \xmapsto{\hspace{1.5cm}} \frac{1}{2^{n-1}} \mathbf{1}^S. 
\end{aligned} 
\]
\vspace{0.1pc}

We show that the Banzhaf index of a composite game is fully determined by the partial tensor product of the Banzhaf value of the quotient game and the Shapley value of the inserted game.

\begin{theorem}\label{thm: Banzhaf value of a composite game}
    Let \(\Gamma_A = (A, \alpha)\) and \(\Gamma_B = (B, \beta)\) be two games, and let \(i \in A\). The Banzhaf index of the composite game is given by 
    \[
    \psi \left(\Gamma_A \circ_i \Gamma_B \right) = \frac{\psi(\Gamma_A) \otimes_i \phi(\Gamma_B)}{2^{\lvert B \rvert - 1}} ~,
    \]
    where these are two equal vectors in $\mathbb{R}^{\lvert A \rvert + \lvert B \rvert -1}$.
\end{theorem}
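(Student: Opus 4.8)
The plan is to exploit the linearity of both sides together with the explicit form of the partial composition in the unanimity basis established in Subsection~\ref{subsection: unanimity basis}. Writing $\alpha = \sum_S \mu^\alpha_S u_S$ and $\beta = \sum_T \mu^\beta_T u_T$ in terms of their Möbius transforms, the composition formula reads
\[
\alpha \circ_i \beta = \Big(\sum_{T \subseteq B} \mu^\beta_T\Big) \sum_{\substack{S \subseteq A \\ S \not\ni i}} \mu^\alpha_S\, u_S + \sum_{\substack{S \subseteq A \\ S \ni i}} \sum_{T \subseteq B} \mu^\alpha_S \mu^\beta_T\, u_{S \diamond_i T}~.
\]
Since $\psi$ is the linear map sending $u_W$ to $2^{-(\lvert A\rvert + \lvert B\rvert -2)} \mathbf{1}^W$ on the composite game (which has $\lvert A\rvert+\lvert B\rvert-1$ players), I would apply $\psi$ term by term and then read off each coordinate $p$ of the resulting vector, comparing it with the corresponding coordinate of $\tfrac{1}{2^{\lvert B\rvert-1}}\, \psi(\Gamma_A)\otimes_i\phi(\Gamma_B)$. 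Recall that $\sum_{T\subseteq B}\mu^\beta_T = \beta(B)$, so the scalar multiplying the first sum is exactly $\beta(B)$.

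First I would treat the coordinates $p \in A\setminus\{i\}$ lying in the quotient. Here only the coalitions $W$ with $p \in W$ contribute, and both sums of the composition formula feed into the coefficient of $\mathbf{1}^{\{p\}}$. Collecting them and using $\sum_{T\subseteq B}\mu^\beta_T=\beta(B)$ once more, the $p$-th coordinate of $\psi(\Gamma_A\circ_i\Gamma_B)$ becomes $\tfrac{\beta(B)}{2^{\lvert B\rvert-1}}\,\psi_p(\Gamma_A)$. On the other side, the outer blocks of the partial tensor product scale $\psi_p(\Gamma_A)$ by $\eta = \sum_{j\in B}\phi_j(\Gamma_B)$; the key point is that the Shapley value is \emph{efficient}, so $\eta = \beta(B)$, and this block matches cleanly after dividing by $2^{\lvert B\rvert-1}$. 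This is the step where the appearance of the Shapley value (rather than some other index) on the component is forced.

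The main obstacle is the component block $p \in B$. A transparent way to handle it is to bypass the Möbius bookkeeping and use the derivative formula of Lemma~\ref{lemma: dérivée de la composée}: for $p \in B$ one has $\partial_p(\alpha\circ_i\beta)(W) = \partial_i\alpha(W_A)\,\partial_p\beta(W_B)$, so the defining sum $\sum_{W} \partial_p(\alpha\circ_i\beta)(W)$ factorizes as a product of a sum over subsets of $A\setminus\{i\}$ and a sum over subsets of $B\setminus\{p\}$. This exhibits the $p$-th coordinate as a genuine product of a contribution of $\Gamma_A$ located at the player $i$ and a contribution of $\Gamma_B$ located at $p$, which is precisely the shape of the inner block $\psi_i(\Gamma_A)\cdot(\,\cdot\,)$ of the partial tensor product. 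The delicate part of the argument, and the place where the constant $2^{\lvert B\rvert-1}$ must be tracked with care, is the exact identification of this second factor and its normalization: one must reconcile the uniform $1/2^{\lvert B\rvert-1}$ weights carried by the Banzhaf index of the composite with the $1/\lvert T\rvert$ weights carried by the Shapley value $\phi(\Gamma_B)$ appearing on the right-hand side. I would carry out this normalization analysis last, since it is the crux on which the precise form of the stated identity rests.
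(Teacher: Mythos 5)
Your overall strategy --- expand both games in the unanimity basis, invoke Proposition~\ref{Proposition: universal property of the partial composition of games} and the linearity of $\psi$, then compare coordinates block by block --- is the same as the paper's, and your treatment of the coordinates $p \in A \setminus \{i\}$ is complete and correct (you rightly make explicit the efficiency identity $\sum_{j \in B}\phi_j(\Gamma_B) = \beta(B)$, which the paper leaves implicit). The genuine gap is the block $p \in B$: you identify the reconciliation of the uniform $1/2^{\lvert B\rvert-1}$ weights with the $1/\lvert T\rvert$ Shapley weights as ``the crux'' and then defer it. That deferred step is the entire content of the theorem --- it is the only place where the Shapley value enters --- so as written the proposal does not prove the statement. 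The paper disposes of it in one line via the identity $\mathbf{1}^{S\diamond_i T} = \mathbf{1}^S \otimes_i \tfrac{1}{\lvert T\rvert}\mathbf{1}^T$, which you would at minimum need to state and verify coordinate by coordinate.

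Moreover, the route you sketch for this block does not visibly close, and you should confront this rather than postpone it. Your factorization via Lemma~\ref{lemma: dérivée de la composée} is correct and yields, for $p \in B$,
\[
\psi_p(\Gamma_A \circ_i \Gamma_B) \;=\; \frac{1}{2^{\lvert A\rvert+\lvert B\rvert-2}}\Bigl(\sum_{W_A \subseteq A\setminus\{i\}}\partial_i\alpha(W_A)\Bigr)\Bigl(\sum_{W_B \subseteq B\setminus\{p\}}\partial_p\beta(W_B)\Bigr)\;=\;\psi_i(\Gamma_A)\,\psi_p(\Gamma_B)\,,
\]
so the second factor is a Banzhaf-type sum $\tfrac{1}{2^{\lvert B\rvert-1}}\sum_{T\ni p}\mu^\beta_T$, not the Shapley sum $\tfrac{1}{2^{\lvert B\rvert-1}}\sum_{T\ni p}\mu^\beta_T/\lvert T\rvert$ demanded by the right-hand side $\tfrac{1}{2^{\lvert B\rvert-1}}\psi_i(\Gamma_A)\phi_p(\Gamma_B)$. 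These two quantities already differ when $\Gamma_B$ is the two-player bargaining game, where $\psi_p(\Gamma_B) = \tfrac12$ while $\tfrac{1}{2^{\lvert B\rvert-1}}\phi_p(\Gamma_B) = \tfrac14$. So the ``normalization analysis'' you leave for last cannot be completed by careful bookkeeping alone: either you must locate an error in your factorization (there is none), or you must directly examine the paper's key identity $\mathbf{1}^{S\diamond_i T} = \mathbf{1}^S \otimes_i \tfrac{1}{\lvert T\rvert}\mathbf{1}^T$, which on the $B$-coordinates produces the value $\tfrac{1}{\lvert T\rvert}$ where $\mathbf{1}^{S\diamond_i T}$ has the value $1$. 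Until that tension is resolved explicitly, the proof is not done.
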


\begin{proof}
The main idea is to write the two games \(\Gamma_A = (A, \alpha)\) and \(\Gamma_B = (B, \beta)\) in the basis of unanimity games and to use the linearity of the Banzhaf index with respect to this basis to compute its value. If $\alpha$ and $\beta$ are written in the basis of unanimity games, we can compute that

    \[ \begin{aligned} 
    \psi( \Gamma_A \circ_i \Gamma_B) & = \psi \left( \sum_{S \subseteq A} \mu^\alpha_S u^A_S \circ_i \sum_{T \subseteq B} \mu^\beta_T u^B_T \right) \\
    & = \psi \left( \sum_{S \subseteq A} \sum_{T \subseteq B} \mu^\alpha_S \mu^\beta_T \left( u^A_S \circ_i u^B_T \right) \right) \\
    & = \sum_{S \subseteq A} \sum_{T \subseteq B} \mu^\alpha_S \mu^\beta_T \psi \left( u^{A \diamond_i B}_{S \diamond_i T} \right) \\
    & = \sum_{S \subseteq A} \sum_{T \subseteq B} \mu^\alpha_S \mu^\beta_T \frac{1}{2^{\lvert A \rvert + \lvert B \rvert - 2}} \mathbf{1}^{S \diamond_i T}~.  \\
    \end{aligned} \]
Now, the key input in this computation is the following equality:
\[
\mathbf{1}^{S \diamond_i T} = \mathbf{1}^S \otimes_i \frac{1}{\lvert T \rvert} \mathbf{1}^T~,
\]
which holds for all $S \subseteq A$ and $T \subseteq B$. Combining it with the equality 
\[
\frac{1}{2^{\lvert A \rvert + \lvert B \rvert - 2}} = \frac{1}{2^{\lvert A \rvert - 1}} \frac{1}{2^{\lvert B \rvert - 1}}~,
\]
we finally get that 
    \[ \begin{aligned} 
    \psi(\Gamma_A \circ_i \Gamma_B) & = \sum_{S \subseteq A} \sum_{T \subseteq B} \mu^\alpha_S \mu^\beta_T \frac{1}{2^{\lvert A \rvert - 1}} \frac{1}{2^{\lvert B \rvert - 1}} \mathbf{1}^S \otimes_i \frac{1}{\lvert T \rvert} \mathbf{1}^T \\
    & = \left( \sum_{S \subseteq A} \mu^\alpha_S \frac{1}{2^{\lvert A \rvert - 1}} \mathbf{1}^S \right) \otimes_i \left( \sum_{T \subseteq B} \mu^\beta_T \frac{1}{2^{\lvert B \rvert - 1}} \frac{1}{\lvert T \rvert} \mathbf{1}^T \right) \\
    & = \frac{1}{2^{\lvert B \rvert - 1}}\psi(\Gamma_A) \otimes_i \phi(\Gamma_B)~, \\
    \end{aligned}
    \]
where we also used the linearity of the partial tensor product $\otimes_i$ with respect to each variable. 
\end{proof}

\bibliographystyle{alpha}
\bibliography{bibabel}

@preamble{
  "\def\cprime{$'$} "}

@article{Whitney35,
 author = {Whitney, H.},
 title = {On the abstract properties of linear dependence.},
 fjournal = {American Journal of Mathematics},
 journal = {Am. J. Math.},
 issn = {0002-9327},
 volume = {57},
 pages = {509--533},
 year = {1935},
 language = {English},
 doi = {10.2307/2371182},
 keywords = {15A03,05B35},
 zbMATH = {2531385},
 JFM = {61.0073.03}
}

@misc{Owen77,
 author = {Owen, Guillermo},
 title = {Values of games with a priori unions},
 year = {1977},
 language = {English},
 howpublished = {Math. {Econ}. {Game} {Theory}, {Essays} in {Honor} of {O}. {Morgenstern}, {Lect}. {Notes} {Econ}. math. {Syst}. 141, 76-88 (1977).},
 keywords = {91A12},
 zbMATH = {3614541},
 Zbl = {0395.90095}
}

@article{AumannMaschler64,
 author = {Aumann, Robert J. and Maschler, Michael},
 title = {The bargaining set for cooperative games},
 fjournal = {Annals of Mathematics Studies},
 journal = {Ann. Math. Stud.},
 volume = {52},
 pages = {443--476},
 year = {1964},
 language = {English},
 zbMATH = {3214144},
 Zbl = {0132.14003}
}

@misc{DavisMaschler63,
 author = {Davis, Morton and Maschler, Michael},
 title = {The kernel of a cooperative game.},
 year = {1963},
 language = {English},
 howpublished = {Econometric research program, research memorandum no. 58, {June} 20, 1963. {Princeton}, {N}. {J}.: {Princeton} {University}. 54 p. (1963).},
 keywords = {91A12,91-02},
 zbMATH = {3225814},
 Zbl = {0139.13802}
}

@article{Hohle87,
title = {A general theory of fuzzy plausibility measures},
journal = {Journal of Mathematical Analysis and Applications},
volume = {127},
number = {2},
pages = {346-364},
year = {1987},
issn = {0022-247X},
doi = {https://doi.org/10.1016/0022-247X(87)90114-4},
url = {https://www.sciencedirect.com/science/article/pii/0022247X87901144},
author = {Ulrich Höhle},
abstract = {The purpose of this paper is to present a fuzzification of probability theory, or more precisely to give a fuzzification of plausibility measures first introduced by Shafer in 1976. Although plausibility measures include probability measures as well as possibility measures, it is a typical result of this theory that only a fuzzification of possibility measures is attainable, while a fuzzification of probability measures seems to be impossible. Moreover with regard to fuzzy plausibility measures we specify a concept of mean values and entropies, which can be considered as a direct generalization of the classical notions of mean value and entropy based upon probability measures.}
}

@Article{Penrose,
 Author = {Penrose, Lionel S.},
 Title = {The Elementary Statistics of Majority Voting},
 FJournal = {Advances in Applied Mathematics},
 Journal = {Journal of the Royal Statistical Society},
 ISSN = {},
 Volume = {109},
 Number = {1},
 Pages = {53-57},
 Year = {1946},
 Language = {English},
 DOI = {},
 Keywords = {},
 zbMATH = {},
 Zbl = {}
}

@book{algaba2019handbook,
  title={Handbook of the Shapley Value},
  author={Algaba, Encarnaci{\'o}n and Fragnelli, Vito and S{\'a}nchez-Soriano, Joaqu{\'\i}n},
  year={2019},
  publisher={CRC Press}
}

@Book{AguiarArdila,
 Author = {Aguiar, Marcelo and Ardila, Federico},
 Title = {Hopf Monoids and Generalized Permutahedra},
 FSeries = {Memoirs of the American Mathematical Society},
 Series = {Mem. Am. Math. Soc.},
 ISSN = {0065-9266},
 Volume = {1437},
 ISBN = {978-1-4704-6708-1; 978-1-4704-7592-5},
 Year = {2023},
 Publisher = {Providence, RI: American Mathematical Society (AMS)},
 Language = {English},
 DOI = {10.1090/memo/1437},
 Keywords = {16-02,05-02,16T30,18M80,52B05,52B40,05C31,05C15,05A15,05B35},
 zbMATH = {7753149}
}

@article{banzhaf1968one,
  title={One man, 3.312 votes: A mathematical analysis of the Electoral College},
  author={Banzhaf, John F.},
  journal={Villanova Law Review},
  volume={13},
  pages={304},
  year={1968}
}

@article{billera1971composition,
  title={On the composition and decomposition of clutters},
  author={Billera, Louis J},
  journal={Journal of Combinatorial Theory, Series B},
  volume={11},
  number={3},
  pages={234--245},
  year={1971},
  publisher={Elsevier}
}

@article{birnbaum1965modules,
  title={Modules of coherent binary systems},
  author={Birnbaum, Zygmund William and Esary, JD},
  journal={Journal of the Society for Industrial and Applied Mathematics},
  volume={13},
  number={2},
  pages={444--462},
  year={1965},
  publisher={SIAM}
}

@book{BoardmanVogt73,
 author = {Boardman, J. M. and Vogt, R. M.},
 title = {Homotopy Invariant Algebraic Structures on Topological Spaces},
 fseries = {Lecture Notes in Mathematics},
 series = {Lect. Notes Math.},
 issn = {0075-8434},
 volume = {347},
 year = {1973},
 publisher = {Springer, Cham},
 language = {English},
 keywords = {55P35,55P15,55Q10,18C15,55-02},
 zbMATH = {3447775},
 Zbl = {0285.55012}
}

@Article{Bondareva,
 Author = {Bondareva, O. N.},
 Title = {Theorie des {Kerns} im {{\(n\)}}-{Personen}-{Spiel}},
 FJournal = {Vestnik Leningradskogo Universiteta. Matematika, Mekhanika, Astronomiya},
 Journal = {Vestn. Leningr. Univ., Mat. Mekh. Astron.},
 ISSN = {0024-0850},
 Volume = {17},
 Number = {3},
 Pages = {141--142},
 year = {1962},
 Language = {Russian},
 Keywords = {91A06},
 zbMATH = {3197630},
 Zbl = {0122.15404}
}

@article{Borel1924-BORAPD,
	author = {Borel, Emile},
	journal = {Revue Philosophique},
	number = {n/a},
	pages = {321--336},
	publisher = {Presses Universitaires de France},
	title = {A Propos d'Un Trait{\'e} de Probabilit{\'e}s},
	volume = {98},
	year = {1924}
}

@InCollection{Chapoton01,
 Author = {Chapoton, Fr{\'e}d{\'e}ric},
 Title = {An endofunctor in the category of operads},
 BookTitle = {Dialgebras and Related Operads},
 ISBN = {3-540-42194-7},
 Pages = {105--110},
 Year = {2001},
 Publisher = {Berlin: Springer},
 Language = {French},
 Keywords = {17A32,18D50,55P48,17D99,18D10},
 zbMATH = {1644418},
 Zbl = {0999.17004}
}

@inproceedings{choquet1954theory,
  title={Theory of Capacities},
  author={Choquet, Gustave},
  booktitle={Annales de l'Institut Fourier},
  volume={5},
  pages={131--295},
  year={1954}
}

@incollection{coleman2012control,
  title={Control of collectivities and the power of a collectivity to act},
  author={Coleman, James S.},
  booktitle={Social Choice},
  pages={269--300},
  year={1971},
  publisher={Routledge}
}

@article{curiel1988cooperative,
  title={Cooperative Game Theory and Applications},
  author={Curiel, Imma J.},
  journal={Ph.D. dissertation, University of Nijmegen},
  year={1988}
}

@article{debreu1963limit,
  title={A limit theorem on the core of an economy},
  author={Debreu, Gerard and Scarf, Herbert},
  journal={International Economic Review},
  volume={4},
  number={3},
  pages={235--246},
  year={1963},
  publisher={JSTOR}
}

@article{dempster1967upper,
  title={Upper and Lower Probabilities Induced by a Multivalued Mapping},
  author={Dempster, AP},
  journal={The Annals of Mathematical Statistics},
  volume={38},
  number={2},
  pages={325--339},
  year={1967},
  publisher={Institute of Mathematical Statistics}
}

@Article{Polymatroids,
 Author = {Derksen, Harm and Fink, Alex},
 Title = {Valuative invariants for polymatroids},
 FJournal = {Advances in Mathematics},
 Journal = {Adv. Math.},
 ISSN = {0001-8708},
 Volume = {225},
 Number = {4},
 Pages = {1840--1892},
 Year = {2010},
 Language = {English},
 DOI = {10.1016/j.aim.2010.04.016},
 Keywords = {05B35,52B40,52B45,16T30},
 zbMATH = {5796750},
 Zbl = {1221.05031}
}

@article{fujimoto2007some,
  title={Some relations among values, interactions, and decomposability of non-additive measures},
  author={Fujimoto, Katsushige and Murofushi, Toshiaki},
  journal={International Journal of Uncertainty, Fuzziness and Knowledge-Based Systems},
  year={2007},
  volume={15},
  number={02},
  pages={175--191},
  publisher={World Scientific}
}

@article{funaki1998dual,
  title={Dual axiomatizations of solutions of cooperative games},
  author={Funaki, Yukihiko},
  journal={unpublished results},
  year={1998}
}

@article{gale1962college,
  title={College admissions and the stability of marriage},
  author={Gale, David and Shapley, Lloyd S.},
  journal={The American Mathematical Monthly},
  volume={69},
  number={1},
  pages={9--15},
  year={1962},
  publisher={Taylor \& Francis}
}

@article{grabisch1997alternative,
  title={Alternative representations of discrete fuzzy measures for decision making},
  author={Grabisch, Michel},
  journal={International Journal of Uncertainty, Fuzziness and Knowledge-Based Systems},
  volume={5},
  number={05},
  pages={587--607},
  year={1997},
  publisher={World Scientific}
}

@article{grabisch2010decade,
  title={A decade of application of the Choquet and Sugeno integrals in multi-criteria decision aid},
  author={Grabisch, Michel and Labreuche, Christophe},
  journal={Annals of Operations Research},
  volume={175},
  pages={247--286},
  year={2010},
  publisher={Springer}
}

@article{grabisch2013core,
  title={The core of games on ordered structures and graphs},
  author={Grabisch, Michel},
  journal={Annals of Operations Research},
  volume={204},
  pages={33--64},
  year={2013},
  publisher={Springer}
}

@Book{Grabisch,
 Author = {Grabisch, Michel},
 Title = {Set functions, games and capacities in decision making},
 FSeries = {Theory and Decision Library C. Game Theory, Social Choice, Decision Theory, and Optimization},
 Series = {Theory Decis. Libr. C, Game Theory Soc. Choice Decis. Theory Optim.},
 ISSN = {0924-6126},
 Volume = {46},
 ISBN = {978-3-319-30688-9; 978-3-319-30690-2},
 year = {2016},
 publisher = {Cham: Springer},
 Language = {English},
 DOI = {10.1007/978-3-319-30690-2},
 Keywords = {91-02,91B06,91B16,91A35,90C27},
 zbMATH = {6600764},
 Zbl = {1339.91003}
}

@phdthesis{harsanyi1958bargaining,
  title={A bargaining model for the cooperative n-person game},
  author={Harsanyi, John C.},
  year={1958},
  school={Department of Economics, Stanford University Stanford, CA, USA}
}

@article{kalai1982totally,
  title={Totally balanced games and games of flow},
  author={Kalai, Ehud and Zemel, Eitan},
  journal={Mathematics of Operations Research},
  volume={7},
  number={3},
  pages={476--478},
  year={1982},
  publisher={INFORMS}
}

@Article{jeuxrepdesgroupessymetriques1,
 Author = {Kleinberg, Norman L. and Weiss, Jeffrey H.},
 Title = {Equivalent n-person games and the null space of the {Shapley} value},
 FJournal = {Mathematics of Operations Research},
 Journal = {Math. Oper. Res.},
 ISSN = {0364-765X},
 Volume = {10},
 Pages = {233--243},
 Year = {1985},
 Language = {English},
 DOI = {10.1287/moor.10.2.233},
 Keywords = {91A12},
 zbMATH = {3924551},
 Zbl = {0577.90096}
}

@Article{jeuxrepdesgroupessymetriques2,
 Author = {Kleinberg, Norman L. and Weiss, Jeffrey H.},
 Title = {Algebraic structure of games},
 FJournal = {Mathematical Social Sciences},
 Journal = {Math. Soc. Sci.},
 ISSN = {0165-4896},
 Volume = {9},
 Pages = {35--44},
 Year = {1985},
 Language = {English},
 DOI = {10.1016/0165-4896(85)90005-8},
 Keywords = {91A12},
 zbMATH = {3910178},
 Zbl = {0569.90104}
}

@book{kohlas2013mathematical,
  title={A Mathematical Theory of Hints: An Approach to the Dempster-Shafer Theory of Evidence},
  author={Kohlas, J{\"u}rg and Monney, Paul-Andr{\'e}},
  volume={425},
  year={2013},
  publisher={Springer Science \& Business Media}
}

@misc{deuxiemepapier,
  doi = {},
  url = {},
  author = {Laplace Mermoud, Dylan and {R}oca {i} {L}ucio, Victor},
  keywords = {A},
  title = {Generalized permutahedra, polymatroids and operadic structures on set functions, \textit{in preparation}},
  publisher = {arXiv},
  year = {2025+},
  copyright = {arXiv.org perpetual, non-exclusive license}
}

@book{leonard2010neumann,
  title={Von Neumann, Morgenstern, and the Creation of Game tTheory: From chess to Social Science, 1900--1960},
  author={Leonard, Robert},
  year={2010},
  publisher={Cambridge University Press}
}

@book{Renaissance,
 editor = {Loday, Jean-Louis and Stasheff, James D. and Voronov, Alexander A.},
 title = {Operads: {Proceedings} of Renaissance Conferences. {Special} Session and International Conference on Moduli Spaces, Operads, and Representation Theory/Operads and Homotopy Algebra, {March} 1995/{May}--{June} 1995, {Hartford}, {CT}, {USA}/{Luminy}, {France}},
 fseries = {Contemporary Mathematics},
 series = {Contemp. Math.},
 issn = {0271-4132},
 volume = {202},
 isbn = {0-8218-0513-4},
 year = {1997},
 publisher = {Providence, RI: American Mathematical Society},
 language = {English},
 doi = {10.1090/conm/202},
 keywords = {00B25,17-06,55-06,18-06},
 zbMATH = {964701},
 Zbl = {0855.00018}
}

@InCollection{LodayRonco,
 Author = {Loday, Jean-Louis and Ronco, Mar{\'{\i}}a},
 Title = {Trialgebras and families of polytopes},
 BookTitle = {Homotopy theory: relations with algebraic geometry, group cohomology, and algebraic \(K\)-theory. Papers from the international conference on algebraic topology, Northwestern University, Evanston, IL, USA, March 24--28, 2002},
 ISBN = {0-8218-3285-9},
 Pages = {369--398},
 Year = {2004},
 Publisher = {Providence, RI: American Mathematical Society (AMS)},
 Language = {English},
 Keywords = {18D50,17A50,17D99,18G60},
 zbMATH = {2144715},
 Zbl = {1065.18007}
}

@Book{LodayVallette,
 Author = {Loday, Jean-Louis and Vallette, Bruno},
 Title = {Algebraic operads},
 FSeries = {Grundlehren der Mathematischen Wissenschaften},
 Series = {Grundlehren Math. Wiss.},
 ISSN = {0072-7830},
 Volume = {346},
 ISBN = {978-3-642-30361-6; 978-3-642-30362-3},
 Year = {2012},
 Publisher = {Berlin: Springer},
 Language = {English},
 DOI = {10.1007/978-3-642-30362-3},
 Keywords = {18-02,18D50,18G50,55P48,57T30,16S37,16E99,18G55,55U35,17A32,17A30},
 URL = {hdl.handle.net/21.11116/0000-0004-1D0F-D},
 zbMATH = {6043075},
 Zbl = {1260.18001}
}

@book{MarklStasheff,
 author = {Markl, Martin and Shnider, Steve and Stasheff, Jim},
 title = {Operads in Algebra, Topology and Physics},
 fseries = {Mathematical Surveys and Monographs},
 series = {Math. Surv. Monogr.},
 issn = {0076-5376},
 volume = {96},
 isbn = {0-8218-2134-2},
 year = {2002},
 publisher = {Providence, RI: American Mathematical Society (AMS)},
 language = {English},
 keywords = {18-02,55-02,55P48,18D50,81-02,57N10,57M25,57T05},
 zbMATH = {1757930},
 Zbl = {1017.18001}
}

@Article{DiagonalAssociahedra,
 Author = {Masuda, Naruki and Thomas, Hugh and Tonks, Andy and Vallette, Bruno},
 Title = {The diagonal of the associahedra},
 FJournal = {Journal de l'{\'E}cole Polytechnique -- Math{\'e}matiques},
 Journal = {J. {\'E}c. Polytech., Math.},
 ISSN = {2429-7100},
 Volume = {8},
 Pages = {121--146},
 Year = {2021},
 Language = {English},
 DOI = {10.5802/jep.142},
 Keywords = {18M75,18M70,52B11,55P35,06A07},
 zbMATH = {7299528},
 Zbl = {1455.18014}
}

@book{May72,
 author = {May, J. P.},
 title = {The Geometry of Iterated Loop Spaces},
 fseries = {Lecture Notes in Mathematics},
 series = {Lect. Notes Math.},
 issn = {0075-8434},
 volume = {271},
 year = {1972},
 publisher = {Springer, Cham},
 language = {English},
 keywords = {55P35,55N20,55P40,55R35},
 zbMATH = {3386593},
 Zbl = {0244.55009}
}

@Misc{McKinsey,
 Author = {McKinsey, J. C. C.},
 Title = {Isomorphism of games, and strategic equivalence},
 Year = {1950},
 Language = {English},
 HowPublished = {Contrib. {Theory} of {Games}, {Ann}. {Math}. {Stud}. 24, 117-131},
 Keywords = {91A05},
 zbMATH = {3062457},
 Zbl = {0041.25603}
}

@article{MegiddoKernel,
 author = {Megiddo, N.},
 title = {The kernel and the nucleolus of a product of simple games},
 fjournal = {Israel Journal of Mathematics},
 journal = {Isr. J. Math.},
 issn = {0021-2172},
 volume = {9},
 pages = {210--221},
 year = {1971},
 language = {English},
 doi = {10.1007/BF02771586},
 keywords = {91A12},
 zbMATH = {3331389},
 Zbl = {0208.23006}
}

@article{MegiddoNucleolus,
 author = {Megiddo, Nimrod},
 title = {Nucleoluses of compound simple games},
 fjournal = {SIAM Journal on Applied Mathematics},
 journal = {SIAM J. Appl. Math.},
 issn = {0036-1399},
 volume = {26},
 pages = {607--621},
 year = {1974},
 language = {English},
 doi = {10.1137/0126057},
 keywords = {91A12},
 zbMATH = {3402390},
 Zbl = {0254.90065}
}

@article{megiddo1975tensor,
  title={Tensor decomposition of cooperative games},
  author={Megiddo, Nimrod},
  journal={SIAM Journal on Applied Mathematics},
  volume={29},
  number={3},
  pages={388--405},
  year={1975},
  publisher={SIAM}
}

@book{Mendez15,
 author = {M{\'e}ndez, Miguel A.},
 title = {Set Operads in Combinatorics and Computer Science},
 fseries = {SpringerBriefs in Mathematics},
 series = {SpringerBriefs Math.},
 issn = {2191-8198},
 isbn = {978-3-319-11712-6; 978-3-319-11713-3},
 year = {2015},
 publisher = {Cham: Springer},
 language = {English},
 doi = {10.1007/978-3-319-11713-3},
 keywords = {18-02,18D50,68R05,05A15},
 zbMATH = {6351350},
 Zbl = {1338.18003}
}

@Book{Openproblems,
 Editor = {Nash, John Forbes jun. and Rassias, Michael Th.},
 Title = {Open Problems in Mathematics},
 ISBN = {978-3-319-32160-8; 978-3-319-32162-2},
 Year = {2016},
 Publisher = {Cham: Springer},
 Language = {English},
 DOI = {10.1007/978-3-319-32162-2},
 Keywords = {00B15,01A70,68Q15,11D41,11M26,35Q30,52C10},
 zbMATH = {6567541},
 Zbl = {1351.00027}
}

@article{OwenTensor,
  title={The tensor composition of nonnegative games},
  author={Owen, Guillermo},
  journal={Annals of Mathematics Studies},
  volume={52},
  pages={307--327},
  year={1964}
}

@Article{OwenMultilinear,
 Author = {Owen, Guillermo},
 Title = {Multilinear extensions of games},
 FJournal = {Management Science. Ser. B, Application Series},
 Journal = {Manage. Sci., Appl.},
 Volume = {18},
 Pages = {64--79},
 Year = {1972},
 Language = {English},
 DOI = {10.1287/mnsc.18.5.64},
 Keywords = {91A12},
 zbMATH = {3380433},
 Zbl = {0239.90049}
}

@article{owen1975core,
  title={On the core of linear production games},
  author={Owen, Guillermo},
  journal={Mathematical Programming},
  volume={9},
  number={1},
  pages={358--370},
  year={1975},
  publisher={Springer}
}

@Article{Postnikov,
 Author = {Postnikov, Alexander},
 Title = {Permutohedra, associahedra, and beyond},
 FJournal = {IMRN. International Mathematics Research Notices},
 Journal = {Int. Math. Res. Not.},
 ISSN = {1073-7928},
 Volume = {2009},
 Number = {6},
 Pages = {1026--1106},
 Year = {2009},
 Language = {English},
 DOI = {10.1093/imrn/rnn153},
 Keywords = {52B20,52B11},
 zbMATH = {5542016},
 Zbl = {1162.52007}
}

@Article{Rota1,
 Author = {Rota, Gian-Carlo},
 Title = {On the foundations of combinatorial theory. {I}: {Theory} of {M{\"o}bius} functions},
 FJournal = {Zeitschrift f{\"u}r Wahrscheinlichkeitstheorie und Verwandte Gebiete},
 Journal = {Z. Wahrscheinlichkeitstheor. Verw. Geb.},
 ISSN = {0044-3719},
 Volume = {2},
 Pages = {340--368},
 Year = {1964},
 Language = {English},
 DOI = {10.1007/BF00531932},
 Keywords = {06A07,05A15},
 zbMATH = {3195424},
 Zbl = {0121.02406}
}

@article{schmeidler1969nucleolus,
  title={The nucleolus of a characteristic function game},
  author={Schmeidler, David},
  journal={SIAM Journal on Applied Mathematics},
  volume={17},
  number={6},
  pages={1163--1170},
  year={1969},
  publisher={SIAM}
}

@book{shafer1976mathematical,
  title={A Mathematical Theory of Evidence},
  author={Shafer, Glenn},
  volume={42},
  year={1976},
  publisher={Princeton University Press}
}

@techreport{ShapleyCompoundIII,
  author="Shapley, Lloyd S.",
  title="Notes on the n-person game, {III}: {S}ome variants of the von {N}eumann-{M}orgenstern definition of solution",
  address="Santa Monica, CA",
  year="1952",
  doi="10.7249/RM817",
  institution="RAND Corporation"
}

@inbook{Shapley1953,
  title = {17. A Value for n-Person Games},
  ISBN = {9781400881970},
  url = {http://dx.doi.org/10.1515/9781400881970-018},
  DOI = {10.1515/9781400881970-018},
  booktitle = {Contributions to the Theory of Games (AM-28),  Volume II},
  publisher = {Princeton University Press},
  author = {Shapley,  Lloyd S.},
  year = {1953},
  month = dec,
  pages = {307–318}
}

@techreport{shapley1954simple,
  AUTHOR = {Shapley, Lloyd S.},
  TITLE = {Simple games: An outline of the descriptive theory},
  NUMBER = {RM-1384},
  INSTITUTION = {The RAND corporation},
  MONTH = {November},
  YEAR  = {1954},
  PAGES = {20},
  related = {shapley1962simple}, 
  note = {Later published in \textit{Behavorial Science, 7}(1):59-66, 1962}
}

@techreport{ShapleyCompoundI,
 author="Shapley, Lloyd S.",
 title="Compound simple games {I}: {S}olutions of sums and products",
 address="Santa Monica, CA",
 year="1962",
 doi="10.7249/RM3192",
 institution="RAND Corporation"
}

@techreport{ShapleyCompoundII,
  author="Shapley, Lloyd S.",
  title="Compound simple games {II}: {S}ome general composition theorems",
  address="Santa Monica, CA",
  year="1963",
  doi="10.7249/RM3643",
  institution="RAND Corporation"
}

@Article{ShapleyComposition,
 Author = {Shapley, Lloyd S.},
 Title = {Solutions of compound simple games},
 FJournal = {Annals of Mathematics Studies},
 Journal = {Ann. Math. Stud.},
 Volume = {52},
 Pages = {267--305},
 Year = {1964},
 Language = {English},
 zbMATH = {3205077},
 Zbl = {0126.16303}
}

@article{ShapleyCore,
author = {Shapley, Lloyd S.},
title = {On balanced sets and cores},
journal = {Naval Research Logistics Quarterly},
volume = {14},
number = {4},
pages = {453-460},
doi = {https://doi.org/10.1002/nav.3800140404},
url = {https://onlinelibrary.wiley.com/doi/abs/10.1002/nav.3800140404},
eprint = {https://onlinelibrary.wiley.com/doi/pdf/10.1002/nav.3800140404},
year = {1967}
}

@article{shapley1971cores,
  title={Cores of convex games},
  author={Shapley, Lloyd S.},
  journal={International Journal of Game Theory},
  volume={1},
  pages={11--26},
  year={1971},
  publisher={Springer}
}

@article{shapley1974cores,
  title={On cores and indivisibility},
  author={Shapley, Lloyd S. and Scarf, Herbert},
  journal={Journal of Mathematical Economics},
  volume={1},
  number={1},
  pages={23--37},
  year={1974},
  publisher={Elsevier}
}

@article{shapley1969market,
  title={On market games},
  author={Shapley, Lloyd S. and Shubik, Martin},
  journal={Journal of Economic Theory},
  volume={1},
  number={1},
  pages={9--25},
  year={1969},
  publisher={Elsevier}
}

@article{shapley1971assignment,
  title={The assignment game {I}: The core},
  author={Shapley, Lloyd S. and Shubik, Martin},
  journal={International Journal of Game Theory},
  volume={1},
  number={1},
  pages={111--130},
  year={1971},
  publisher={Springer}
}

@book{shubik1982game,
  title={Game Theory in the Social Sciences: Concepts and Solutions},
  author={Shubik, Martin},
  year={1982},
  publisher={Cambridge, MA: MIT Press}
}

@article{Sugeno,
  title={Theory of Fuzzy Integrals and its Applications, {P}h.{D}. {T}hesis},
  author={Sugeno, Michio},
  journal={Tokyo Institute of Technology},
  year={1974},
  volume={},
  pages={},
  url={https://api.semanticscholar.org/CorpusID:10079077}
}

@Article{Vallettepartition,
 Author = {Vallette, Bruno},
 Title = {Homology of generalized partition posets},
 FJournal = {Journal of Pure and Applied Algebra},
 Journal = {J. Pure Appl. Algebra},
 ISSN = {0022-4049},
 Volume = {208},
 Number = {2},
 Pages = {699--725},
 Year = {2007},
 Language = {English},
 DOI = {10.1016/j.jpaa.2006.03.012},
 Keywords = {18D50,05E25,18G35,55U10},
 zbMATH = {5121405},
 Zbl = {1109.18002}
}

@book{OperadicCalculus,
 editor = {Vallette, Bruno},
 title = {Higher structures and operadic calculus. {Papers} based on lectures given at the workshop, {Centre} de {Recerca} {Matem{\`a}tica}, {CRM}, {Barcelona}, {Spain}, {June} 21--25, 2021},
 fseries = {Advanced Courses in Mathematics -- CRM Barcelona},
 series = {Adv. Courses in Math. -- CRM Barc.},
 issn = {2297-0304},
 isbn = {978-3-031-77778-3; 978-3-031-77779-0},
 year = {2025},
 publisher = {Cham: Birkh{\"a}user},
 language = {English},
 doi = {10.1007/978-3-031-77779-0},
 keywords = {18-01,18Nxx,00B25},
 zbMATH = {7964770}
}

@Article{vonNewmann1,
 Author = {von Neumann, J.},
 Title = {On game theory.},
 FJournal = {Comptes Rendus Hebdomadaires des S{\'e}ances de l'Acad{\'e}mie des Sciences, Paris},
 Journal = {C. R. Acad. Sci., Paris},
 ISSN = {0001-4036},
 Volume = {186},
 Pages = {1689--1691},
 Year = {1928},
 Language = {French},
 zbMATH = {2577402},
 JFM = {54.0543.03}
}

@book{vonNeumann2,
 Author = {von Neumann, John and Morgenstern, Oskar},
 Title = {Theory of Games and Economic Behavior},
 Year = {1944},
 Language = {English},
 publisher = {Princeton, {New} {Jersey}: {Princeton} {University} {Press}. {XVIII}, 625 p.},
 Keywords = {91-02,91Axx,91Bxx},
 zbMATH = {3106184},
 Zbl = {0063.05930}
}

\end{document}